\newlength\longest
\newcommand{\N}{\mathbb{N}}
\newcommand{\Z}{\mathbb{Z}}
\newcommand{\R}{\mathbb{R}}
\newcommand{\C}{\mathbb{C}}
\newcommand{\B}{\mathbb{B}}
\newcommand{\eps}{\varepsilon}
\newcommand{\tens}{\otimes}
\newcommand{\del}{\partial}
\newcommand{\Sf}[1]{\mathbb{S}^{#1}}
\newcommand{\Sfp}{\mathbb{S}_+^n}
\newcommand{\sthat}{\text{ s.t. }}
\newcommand{\void}{\emptyset}
\newcommand{\ft}[1]{\widehat{#1}}
\newcommand{\indi}{\mathbbm 1}
\newcommand{\neigh}{\dot{\ni}}
\newcommand{\intern}{\dot{\in}}
\newcommand{\Ccal}{\mathcal C}
\newcommand{\Hcal}{\mathcal H}
\newcommand{\Cinf}{\mathcal C^{\infty}}
\newcommand{\Cinfdot}{\dot{\mathcal{C}}^\infty}
\newcommand{\Scal}{\mathcal S}
\newcommand{\Lcal}{\mathcal L}
\newcommand{\Ecal}{\mathcal E}
\newcommand{\Wcal}{\mathcal W}
\newcommand{\Bcal}{\mathcal{B}}
\newcommand{\Qcal}{\mathcal{Q}}
\newcommand{\Acal}{\mathcal{A}}
\newcommand{\sob}[1]{\mathit{H}_{#1}}
\newcommand{\sym}[1]{\mathit{S}^{#1}}
\newcommand{\leb}[1]{\mathit{L}^{#1}}
\newcommand{\psdo}[1]{\Psi^{#1}}
\newcommand{\sg}[1]{\mathit{SG}^{#1}}
\newcommand{\RG}{\mathcal{R}G}
\newcommand{\BG}[1]{\Bcal G^{#1}}
\newcommand{\HG}[1]{\mathit{HG}_{#1}}
\renewcommand{\lg}[1]{\mathit{LG}^{#1}}
\newcommand{\mi}{\mu}
\newcommand{\abs}[1]{\left|#1\right|}
\newcommand{\hnorm}[2]{\|#2\|_{\mathit{H}_{#1}}}
\newcommand{\lnorm}[2]{\|#2\|_{\mathit{L}^{#1}}}
\newcommand{\norm}[2]{\|#2\|_{#1}}
\newcommand{\push}[1]{{#1}_\ast}
\newcommand{\VF}{\mathfrak{X}}
\newcommand{\debar}{%
	\ooalign{\hidewidth $\de\,$\hidewidth\cr\rule[1.2ex]{1ex}{.6pt}}}
\renewcommand{\Im}{\mathcal{I}\mathit{m}\,}
\renewcommand{\bar}[1]{\overline{#1}}
\renewcommand{\sharp}{\#}
\renewcommand{\phi}{\varphi}
\def\partder#1#2{\frac{\partial #1}{\partial#2}}
\def\inner#1#2{\left(#1,#2\right)}
\def\setquotient#1#2{{}^{#1}\!\!\diagup_{#2}}
\def\SymG#1{\Sigma G^{#1}}
\def\sc#1{{}^{sc}{#1}}
\DeclareMathOperator{\dbar}{\debar\!}
\DeclareMathOperator{\de}{\mathrm{d}\!}
\DeclareMathOperator{\id}{\mathrm{id}}
\DeclareMathOperator{\ad}{\mathrm{ad}}
\DeclareMathOperator{\intprod}{\lrcorner}
\DeclareMathOperator{\spec}{\mathrm{spec}}
\DeclareMathOperator{\gr}{\mathrm{gr}}
\DeclareMathOperator{\Pol}{\mathrm{Pol}}
\DeclareMathOperator{\depth}{\mathrm{depth}}
\DeclareMathOperator{\graph}{\mathrm{graph}}
\DeclareMathOperator{\im}{\mathrm i}
\DeclareMathOperator{\FT}{\mathcal F}
\DeclareMathOperator{\supp}{\mathrm{supp}}
\DeclareMathOperator{\Op}{\mathrm{Op}}
\DeclareMathOperator{\Diff}{\mathrm{Diff}}
\theoremstyle{definition}
\newtheorem{defin}{Definition}[section]
\newtheorem{theo}[defin]{Theorem}
\newtheorem{prop}[defin]{Proposition}
\newtheorem{lemma}[defin]{Lemma}
\newtheorem{cor}[defin]{Corollary}
\newtheorem{rem}[defin]{Remark}
\newtheorem{eg}[defin]{Example}
\newtheorem{conv}{Convention}
\author{Alessandro Pietro Contini}
\title[$SG\Psi$DO, Singular Symplectic Geometry and OPIs]{$SG$-classes, Singular Symplectic Geometry and Order-Preserving Isomorphisms}
\begin{document}
    \maketitle
	\section*{Abstract}
        The geometric theory of pseudo-differential and Fourier Integral Operators relies on the symplectic structure of cotangent bundles. If one is to study calculi with some specific feature adapted to a geometric situation, the corresponding notion of cotangent bundle needs to be adapted as well and leads to spaces with a singular symplectic structure. Analysing these singularities is a necessary step in order to construct the calculus itself. 

        In this thesis we provide some new insights into the symplectic structures arising from asymptotically Euclidean manifolds. In particular, we study the action of the Poisson bracket on $SG$-pseudo-differential operators and define a new class of singular symplectomorphisms, taking into account the geometric picture. We then consider this notion in the context of the characterisation of order-preserving isomorphisms of the $SG$-algebra, and show that these are in fact given by conjugation with a Fourier Integral Operator of $SG$-type.

        \tableofcontents
	\section{Introduction}
        This thesis is concerned with aspects of the global calculus of $SG$-pseudo-differential operators, the corresponding classes of Fourier Integral Operators, and their relation as algebras and modules.

        Pseudo-differential operators ($\Psi$DOs in what follows) are one of the most important tools for the study of (elliptic) partial differential equations (PDEs) and have proven to be objects of interest for a number of different areas of modern mathematics. The basic idea is to construct a large class of operators where differential operators admit inverses, at least in an approximate sense, and with good formal properties which allow one to more or less freely take compositions, adjoints and so on, while at the same time being able to control the errors. This is achieved by a generalisation and formalisation of the techniques of asymptotic analysis, whose origin dates back at least to the 19th century, with the pioneering works of Laplace, Stokes and Kelvin on the method of stationary phase. In the 20th century, the study of singular integral operators, initiated by Hilbert and brought to completion by Mikhlin \cite{mikhlin1948singularintegral}, Calderón \cite{calderon1956singularintegrals},\cite{calderon1957singularintegraloperators}, was paired with the language of distributions of Schwartz fame and with many ideas from the world of quantum mechanics. This led Kohn and Nirenberg \cite{kohn1965pseudodifferentialoperators} and Hörmander \cite{hormander1965pseudodifferentialoperators} to develop a general calculus of $\Psi$DOs and study elliptic PDEs of a very general type, obtaining existence and uniqueness results for a swath of then-unsolved problems. In particular, on a compact manifold one can take advantage of the compactness of the Sobolev embeddings to prove regularity results for the solutions. Furthermore, thanks to the properties of the calculus, the parametrix construction of Hadamard, originally invented for differential equations, extends to pseudo-differential operators and shows that elliptic $\Psi$DOs on compact manifolds are Fredholm thanks to the fact that the ``residual'' operator of the construction is compact. Far-reaching subsequent generalisations led to a global theory of elliptic boundary-value problems on compact manifolds, including the global definition of the principal symbol of an operator as a function on the cotangent bundle, and finally to the celebrated index theorem of Atiyah and Singer \cite{atiyah1968index},\cite{atiyah1968index3}. This highlighted the incredible amount of topological and geometrical information these operators carried and became (in many senses still is to this day) one of the main motives of research in geometric and global analysis. Shortly thereafter, the study of limit and boundary-value problems for pseudo-differential equations led Louis Boutet de Monvel \cite{boutetdemonvel1971boundaryproblems} to construct a calculus of manifolds with boundary and to a topological index formula\footnote{See also \cite{fedosov1974analyticformulaellipticboundary1} for an analytical counterpart and the book \cite{rempel1985indextheoryboundary} for a overarching discussion.}.

        Tailored to the study of elliptic equations, the theory of $\Psi$DO required considerable effort to be adapted to other classes of PDEs. In relation to the blooming index theory, the study of the heat equation associated with a second order elliptic $\Psi$DO produced many insights into the analytical nature of the Atiyah-Singer formula and led to the local index theorem of Atiyah \cite{atiyah1973heatequationindex}. In the following years, a full-fledged theory of Dirac operators on spin manifolds, shedding light on their importance to mathematics and physics alike, was investigated and is to this day a very active area of research (we refer here to the books of Berline, Getzler and Vergne \cite{berline1992heatkernelsdirac} and Gilkey \cite{gilkey1995invariance} for a deep and interesting discussion). 

        On the other hand, even for simple hyperbolic equations it was clear that $\Psi$DOs could not provide a satisfactory answer on their own power and that a more general theory had to be developed. Building on ideas from geometrical optics and earlier work of Lax and Maslov, Hörmander \cite{hormander1971fourierintegraloperators} developed the calculus of Fourier Integral Operators (FIOs) and applied it\footnote{The name is historically controversial. While the theory of Hörmander is without a doubt more general, a great part of the key ideas can be found in \cite{maslov1972theoriedesperturbations}, which is a late translation from Russian of a 1965 opus of the same author. At the same time, it seems hard to criticise Dieudonné \cite[4]{dieudonne1978elementsdanalyse7} when he appends them the name ``opérateurs de Lax-Maslov'', glossing ``appelés malencontreusement aussi <<opérateurs intégraux de Fourier>>, ce qui est d'autant plus ridicule que la transformation de Fourier n'y joue aucun rôle''. While we acknowledge all these contributions and recognise the elements of truth, we stick here to the name of Fourier Integral Operators out of mere laziness.}, together with Duistermaat, to the study of hyperbolic systems \cite{duistermaat1972fourierintegraloperatorsapplications}. The theory of FIOs proved to be, in the following years, a fundamental tool to approach a large number of yet to be tackled problems, including but not limited to existence and uniqueness for hyperbolic equations, and invigorated the calculus of $\Psi$DOs by providing new methods to study elliptic equations. This is rendered possible by the celebrated theorem of Egorov \cite{egorov1969canonicaltransformation}, stating that conjugating a $\Psi$DO with principal symbol $p$ with an invertible FIO produces again a $\Psi$DO with a principal symbol given by pull-back of $p$ along an underlying canonical transformation of the cotangent bundle. Thanks to this fact, the theory of $\Psi$DOs and FIOs can be seen, in the context of the Heisenberg picture of quantum mechanics, as a quantisation scheme where observables are mapped to self-adjoint $\Psi$DOs and the evolution operator of the system, classically a canonical transformation, acts as an FIO on the space of observables. At the same time, this idea of ``quantised canonical transformation'' expressed by FIOs can be further characterised: by a theorem of Duistermaat and Singer \cite{duistermaat1976orderpreservingisomorphisms}, the only order-preserving isomorphisms (OPIs) of the algebra of (integer order, classical, properly supported) $\Psi$DOs are exactly given by conjugation with an invertible FIO. This reflects the classical property that if a diffeomorphism transforms Hamilton equations in Hamilton equations (namely, preserves the canonical 1-form on the phase bundle), then it is a canonical transformation.\footnote{We remark that here and later we use the terms \textit{quantisation} or \textit{quantisation scheme} without properly defining what we mean by it. In whole honesty, this correspondence does not give a full quantisation of a classical system according to the Dirac axioms, since we are not really addressing questions such as the classical limit, for example. In this sense the classical theory of FIOs is scale-invariant, since we could in principal work at the level of co-sphere bundles and contact forms, while a ``true'' quantisation should allow one to look at large scale approximation and makes more sense in the context of semi-classical analysis. Nevertheless the similarities are enough to justify our abuse.} 

        The question that at this point one might ask is: what can we say about calculi adapted to non-compact\footnote{The original calculus of Hörmander and the result of Duistermaat and Singer actually hold true for non-compact manifolds as well, under the assumption, necessary to define composition, that the involved operators are properly supported. This property, however, destroys the global effects of the operators and prevents one from studying the geometric structure ``at infinity'' of many specific examples.} manifolds? While the construction of the calculi with the same formal properties as above does not break down in this setting, one is confronted with the annoying fact that the residual operators of the parametrix construction, even though regularising (namely they smooth out all singularities of distribution on arbitrarily large compact sets), are not compact. This, together with the fact that the Sobolev embeddings are not compact, constitutes a fundamental obstacle to the process of constructing solutions with a certain regularity. The problem lies in the calculus itself: the standard class of $\Psi$DOs is only well-suited to control asymptotic behaviour in the ``cotangent direction'', namely if $(x,\xi)$ are coordinates on $\R^{2n}$, we define the class $S^m(\R^{2n})$ by imposing bounds for $x$ varying in a compact set $K$  and $\abs{\xi}\rightarrow\infty$. In particular, the behaviour of symbols in the $x$ variable is hardly restricted and it suffices that they are smooth. But then we cannot hope to get from this class any kind of reasonably sufficient information as $\abs{x}\rightarrow\infty$.

        In order to obviate to these issues, global calculi were introduced. The main feature\footnote{Notice also the earlier attempt of Grushin \cite{grushin1971boundedpseudodifferentialcalculus}, where only uniform bounds on $x$ are required.} of a global calculus (and main difference in comparison to ordinary $\Psi$DOs) on $\R^{n}$ is that we posit a bound on the symbols involving the spatial directions $x$, too. The two main (and more successful) examples in this setting are known as $\Gamma$-classes and $SG$-classes. Introduced by Shubin, the $\Gamma$-classes are also known as \textit{completely isotropic} symbols and contain those smooth functions $a(z)$ on $\R^{2n}$ such that $\abs{\del^\alpha a(z)}\lesssim\braket{z}^{m-\abs\alpha}$ for a fixed $m\in\R$ known as the order of $a$. The residual operators in this calculus are exactly integral operators with kernel in $\Scal(\R^{2n})$, which are known to be compact on $\leb{2}(\R^n)$. They have so far found wide ranging application to a number of different problems in index theory (\cite{elliott1996atiyahsingerclassicallimit,elliott1993heisenberggroup}), quantisation (\cite{fedosov1996deformationquantisationindex}), PDEs and spectral theory (\cite{shubin2001pseudodifferential}). Helffer \cite{helffer2013theoriespectraleglobalementelliptiques} has in addition introduced global FIOs modelled on the $\Gamma$-classes (at the level of the phase and the amplitude), and studied their spectral properties. However, to this day it seems that the classes haven't been defined on non-compact manifolds more general than $\R^n$. Furthermore, Helffer does not study the associated class of symplectomorphisms on $\R^{2n}$ that putatively should be quantised by his class of FIOs, but derives the properties of the calculus from purely analytical facts. 

        The other main approach (at least for our concerns) is that of $SG$-classes\footnote{Here $G$ stands for ``global''.}. Originally introduced by Parenti \cite{parenti1972operatoripseudodifferenzialiRn} to study PDEs on unbounded domains, the theory benefited from the contributions of many authors and in particular of Schrohe \cite{schrohe1987weightedsobolevspacesmanifolds,schrohe1988psistaralgebra}. Their usefulness was soon recognised by Cordes, who significantly enlarged the original calculus and presented a very general theory of $\Psi$DOs in \cite{cordes1995techniquepseudodifferentialoperators}. The core of the matter is as follows: instead of looking at completely isotropic symbols, introduce a new filtration to obtain a class $SG^{m_1,m_2}$, where $a$ is a symbol of bi-order $(m_1,m_2)$ if $a$ is bounded by (a positive constant times) $\braket{x}^{m_1}\braket{\xi}^{m_2}$ and each $x$-derivative, respectively $\xi$-derivative, improves the bound by 1 in $x$, respectively $\xi$. Then, the intersection of all these classes clearly consists of Schwartz functions and the residual operators are again exactly the integral operators with kernel in $\Scal(\R^{2n})$, but the more flexible structure of the bounds accounts for a more general class of operators to be studied (albeit of course with the extra complexity that has been introduced). In \cite{schrohe1987weightedsobolevspacesmanifolds}, the calculus has been generalised to a large class of non-compact manifolds, so-called $SG$-manifolds, although one might argue that the class might even be too large for some purposes (more on this later). The extra flexibility of $SG$-classes plays here a crucial role: apart from a technical condition on the charts (in practice, always satisfied), all it suffices to ask is that the changes of coordinates on the manifold have components which are in $\sg{1,0}$. This is of course to be expected if one wants to define $SG$-classes in an invariant way on a manifold, since in order to preserve the filtrations we have to require as a minimum that the transformed base, respectively cotangent, variables be of order $(1,0)$, respectively $(0,1)$. Therefore, the restrictions are in truth quite lax. $SG$-$\Psi$DOs have been applied to a variety of problems, including but not limited to spectral asymptotics on asymptotically Euclidean manifolds (cfr. \cite{coriasco2013spectralasymptoticsends, coriasco2020weyllawasymptoticallyeuclidean}), and mathematical physics (see for example \cite{battisti2011einsteinhilbert}). FIO calculi modelled on $SG$-classes have been introduced by Coriasco \cite{coriasco1999fourierintegraloperators} first and enriched by Andrews \cite{andrews2009SGFIOcomposition} in a second moment. Although they have been described only on $\R^n$ as globally defined operators, explicit and implicit hints to possible geometric generalisations were present in both pieces. However, to this day no such theory has been fully understood and in particular the study of canonical transformations associated with the existing classes hasn't been carried out. 

        In the context of analysis on non-compact spaces, another point of view has been introduced and studied by many authors falling, to various degrees, under the umbrella of the so-called ``Melrose school''. In this picture, one limits the study to classes of manifolds having a somewhat ``regular'' structure at infinity, namely one assumes that, outside of a compact centre, the non-compact manifold $X$ admits a Riemannian metric with a specific asymptotic behaviour as ``$\abs{x}\rightarrow\infty$'' (cf. \cite{melrose1995geometricscattering}). Then, one can construct (explicitly!) a compactified manifold with boundary $\bar{X}$ whose interior is diffeomorphic to the original manifold $X$, and obtain a metric on $\bar{X}$, smooth in the interior and with a prescribed singularity at the boundary. The main example is that of manifolds with \textit{ends}. Topologically these are just given by a compact manifold $X_0$ with boundary $\del X_0=B_1\cup\dots\cup B_d$, where each $B_j$ is a closed codimension 1 submanifold, and cylinders $C_1,\dots, C_d$, $C_j=\R^+\times B_j$, each glued to the corresponding connected component of $\del X_0$. These builds up the ``ends'' or ``exits'' of $X$. This setting, while included in $SG$-manifolds, allows for a more refined analysis of the metric structure on each end. For example, we might consider a metric which is asymptotically cylindrical, namely that on the end takes the form 
        \[
        g=\de t^2+h
        \]
        for $t$ the coordinate on $\R^+$ and $h$ a metric on $B_j$. Introducing the change of coordinates $x=e^{-t}$ maps the infinite cylinder to a finite one, and the above tensor is transformed to 
        \[
        \bar{g}=\frac{\de x^2}{x^2}+\bar{h},
        \]
        where $\bar{h}$ is simply the metric $h$ in the changed coordinates. Having so ``rescaled'' the metric structure, it becomes evident that we can consider our original manifold as the interior of a manifold with boundary by attaching a \textit{closed} cylinder to the boundary component $B_j$, provided that at the same time we keep in mind that we have performed a change of coordinates. The so-called $b$-geometry, namely the generalisation of this example to manifolds with corners, is built upon considering the properties of the Lie algebra of vector fields on $\bar{X}$ which are tangent to the boundary. Starting from this Lie algebra one constructs a calculus of differential and pseudo-differential operators, the so-called $b$-calculus\footnote{In this case the naming convention is not nearly as controversial as it is for FIOs, however these operators are sometimes known as \textit{totally characteristic}. This name, appearing for example in \cite{hormander1994analysispseudodifferential}, Section XVIII.3, directs the limelight more towards the fact that the operators indeed exhibit a singular behaviour at the boundary, whereas historically it turned out to be superseded by the shorter and suggestive ``$b$'', standing for \textit{boundary}.}. On the one hand, this is an extremely powerful tool and idea, on the other there are however a number of non-negligible technical difficulties. In particular, elliptic operators in the classical sense are not Fredholm and a sort of ``non-commutative boundary symbol'' (called \textit{indicial operator}) needs to be taken into account. Both the power and the difficulties of the $b$-calculus are beautifully expounded in \cite{melrose1993atiyahpatodisinger}, together with thorough discussion of the related aspects of index theory (specifically, the Atiyah-Patodi-Singer index theorem) on manifolds with boundary.

        With a similar approach one can work in the setting of asymptotically Euclidean manifolds by attaching cones to each $B_j$ instead of cylinders. While of course this produces the same underlying topological space as before, we are here imposing that the metric is conic ``at $\infty$''. After changing coordinates as before and compactifying, we are then working with a metric which near the boundary is of the form
        \[
        \bar g=\frac{\de x^2}{x^4}+\frac{\bar h}{x^2}.
        \]
        While this looks more singular at first glance (indeed recall that $x=0$ at the boundary), it turns out the associated Lie algebra of vector fields (the so-called \textit{scattering} vector fields) is much easier to study since it is actually commutative ``at the boundary''. Correspondingly, the differential and pseudo-differential operators posses a second \textit{commutative} symbol $\sigma_N$, defined at the boundary and related to the asymptotic behaviour as $\left|x\right|\rightarrow\infty$ of the full symbol in the $SG$-calculus. In fact, \textit{classical} $SG$-operators and \textit{classical} $sc$-pseudo-differential operators\footnote{Here $sc$ stands of course for ``scattering'', another instance of the above naming convention.} are two sides of the same coin: already in the '90s it was well known that the symbol classes for the two calculi are isomorphic\footnote{See for example \cite{egorov1997pseudodifferentialsingularities}, Section 8.2.2 or the more recent \cite{coriasco2017lagrangiansubmanifolds}.}. The choice of which approach to exploit over the other is a mainly just a matter of personal preference but they have advantages and disadvantages, in that the first is more explicitly computable whereas the latter is more manifestly global in nature, being defined directly on a class of manifolds. Part of the goal of this thesis is to explore this correspondence further, especially in its relation with the symplectic geometry of the cotangent bundle.

        Whereas $\Psi$DOs are a very well-explored topic in both these examples (and many others), the state--of--the--art of FIOs in singular situations, including their ellipticity properties, index formul\ae\;and the study of their geometrical theory has lagged behind. In fact, attempts here are somewhat scarce. For the case of the $b$-calculus, the early paper of Melrose \cite{melrose1981transformationboundaryproblems} already contains a study of the class of Lagrangian distributions of interest, based on the geometrical properties that one should expect from a Lagrangian relation on a manifold with boundary. However, a study of the respective ellipticity, Fredholmness and index properties has not been carried out (as far as the author knows, the only Atiyah-Singer--type formula for the index of an FIO has been derived in the setting of closed manifolds by Epstein and Melrose \cite{epstein1998contactdegreeindex} and Leichnam, Nest and Tsygan \cite{leichtnam2001localindexformula}). To this end, one would need to analyse closely the behaviour of the FIO at the boundary and construct a ``full calculus'' for these objects\footnote{This is Melrose-speak for a calculus in which the residual operators of the parametrix construction for elliptic operators are compact.}. More in the spirit of Boutet de Monvel, a calculus of FIOs on manifolds with boundary has been constructed by Battisti, Coriasco and Schrohe \cite{battisti2015FIOmanifoldswithboundary}. There, the geometric and analytical conditions at the boundary were studied in great detail and the authors were able to prove the Fredholm property for the elliptic elements in their calculus, thereby setting up the frame for an index problem in the spirit of Weinstein \cite{weinstein1997indexquantisedcanonicaltransformation}. In particular they showed that the notion of boundary canonical transformation of Section III in \cite{melrose1981transformationboundaryproblems} produces appropriate operators in this calculus. However, the peculiarities of the Boutet de Monvel calculus complicate the analytical picture and an index formula was not established.

        A third point of view is that of associating $\Psi$DO and FIO calculi with a \textit{groupoid}. The philosophy behind this is akin to the singular analytical approach: one considers operators with a specific degeneracy/peculiarity, tries to encode their properties in a geometric object (a groupoid in this case, in contrast to the metric on the compactified space in the Melrose approach), and takes advantage of an overarching calculus structure defined in general on/for the geometric object. This has been successfully brought to completion in a multitude of contributions. For $\Psi$DOs, Nistor, Weinstein and Xu \cite{nistor1999pseudodifferentialgroupoids} and Monthubert \cite{monthubert2003pseudodifferentialgroupoids} introduced the first calculi\footnote{Also notice the recent approaches for nilpotent Lie groups and filtered manifolds of van Erp and Yuncken \cite{vanerp2019pseudodifferentialgroupoids} and Ewert \cite{ewert2023pseudodifferentialgeneralisedfixedpoint}.}, while FIOs have required considerable more effort and only appeared in such a general setting very recently in \cite{lescure2017fiogroupoids}. Despite covering a lot of previously examined settings, it appears that an analysis of the conditions under which the calculus of FIOs contains Fredholm operators on an appropriately defined scale of Sobolev spaces is yet to be examined. Indeed, a direct specialisation of the techniques in the above papers only recovers the ``small calculus'', namely\footnote{This is lingo for a family of operators in which a parametrix construction for elliptic operators makes sense, but does not necessarily produce a compact remainder on Sobolev spaces.} constructs operator classes adapted to the geometric situation but without regard for the Fredholmness ``at the boundary''. Since these aspects are paramount to us, we shall not touch on this subject any further.

        The author's interest in the global calculi stems from an idea of Schrohe that a result like Theorem 1 in \cite{duistermaat1976orderpreservingisomorphisms}, namely the characterisation of order-preserving isomorphisms of $\Psi$DOs, might hold true for the classes $LG^{m_1,m_2}$ of $SG$-$\Psi$DOs which are classical and of order $(m_1,m_2)\in\Z\times\Z$. This is the main problem we set out to tackle in the thesis. While this looks like a fairly reasonable expectation (indeed, for example, the class $LG^{0,m_2}$ is a subclass of $\psdo{m_2}(\R^n)$ and the properly supported property, required for composition in the usual calculus, is substituted in the $SG$ picture by the estimates as $\abs{x}\rightarrow\infty$), it quickly turned out that a proof along the lines of the original paper and completely in terms of the ``local picture'' of the $SG$-calculus was cumbersome to say the least. On the other hand, the scattering approach, while being conceptually advantageous, is less explicit and requires to pick specific local coordinates for computations. Together with the fact that the existing parametrization results for Lagrangian submanifolds in the $SG$ setting already employed a ``mixed'' approach, we resolved to try and take as much advantage as possible of this double point of view.

        We describe briefly the organization of the manuscript. Section \ref{chap:sg calculus} contains the basics of the (classical) $SG$-calculus on $\R^n$. We follow the exposition in \cite{egorov1997pseudodifferentialsingularities} rather closely, especially in regard to classicality, however we prefer amplitudes over double symbols when it comes to composition. Most proofs are here omitted for the sake of brevity, and can be found in the cited literature. We proceed to analyse the relation of the symplectic structure with $SG$-symbols, in particular delineating the action of the Poisson brackets on principal symbols. We give an overview of the class of $SG$-FIOs of type $\Qcal$ introduced by Andrews \cite{andrews2009SGFIOcomposition}, which generalises the operators of Coriasco \cite{coriasco1999fourierintegraloperators} and appears naturally at the end of Chapter \ref{chap:scattering geometry}. Most of the material in this chapter is taken almost directly from the cited sources. Notable exceptions are Section 1.2, containing the analysis of the relation between the Poisson bracket and the principal symbol maps, and the $SG$-Egorov Theorem at the end of Section 1.3, which slightly generalises Proposition 14 in \cite{coriasco1999fourierintegraloperators}.

        Section \ref{chap:scattering geometry} is an introduction to the geometric structure underlying the scattering calculus of Melrose, as presented in \cite{melrose1995geometricscattering} and \cite{melrose1994spectralscatteringtheory}. We start with an overview of manifolds with corners and the corresponding spaces of distributions and vector fields. We proceed with a discussion of the scattering cotangent bundle and the symbol spaces, together with the associated operator classes and the symbol maps. We specialise thereafter to the example of the radial compactification of $\R^n$, on which the equivalence between the classical $SG$- and $sc$-calculi is mostly evident, and which will be our main focus in Chapter \ref{chap:OPI}. Here again we refer the reader to the cited literature for the majority of well-known proofs. Novel work starts to appear here: We introduce a definition of ``scattering canonical transformation'' (SCT), analyse its geometric properties, and show that, locally in a suitable sense, its graph admits a parametrisation via an $SG$-phase function, parallel to previous work on $sc$-Lagrangian distributions.

        Section \ref{chap:OPI} contains the main results we obtained. We employ the machinery exposed in the previous Chapters, together with the ideas of Mathai and Melrose \cite{mathai2017geometrypseudodifferentialalgebra}, to give a proof of the $SG$-analogue of Lemma 2 in \cite{duistermaat1976orderpreservingisomorphisms}. In particular, we prove that the notion of scattering canonical transformation introduced in Chapter \ref{chap:scattering geometry} appears naturally. The approximation scheme of the original paper is then adapted to show that the OPI is ascertained at the level of the formal symbol algebra by an elliptic $SG$-FIO of type $\Qcal$, associated with the scattering canonical transformation above. We exploit Lemma 3 of \cite{duistermaat1976orderpreservingisomorphisms} to find an Eidelheit-type isomorphism in our setting and compare it to the $SG$-FIO appearing at the formal level. We prove that this composition is given by an $SG$-$\Psi$DO and show that its mapping properties determine it to be the identity up to an operator with kernel in the Schwartz class. This allows us to conclude that the Eidelheit isomorphism is itself, up to a smoothing operator, an operator of type $\Qcal$, thereby bringing our task to a close.
        
        The author would like to thank Sandro Coriasco and Philipp Schmitt, for many interesting discussions and comments that have led to a better understanding and exposition and Elmar Schrohe, under whose supervision the thesis was completed.
	
	\section{The $SG$-calculus}
\label{chap:sg calculus}

We present here a collection of concepts and facts concerning the $SG$-calculus, beginning with a discussion of symbol spaces. A thorough analysis of classical symbols is included, before moving to the associated operators and the relation between these classes. 

\subsection{$SG$-symbols and operators}

For later reference, we start by defining H\"ormander classes.
\begin{defin}
	\label{def:hormander symbols}
	The class of \textit{H\"ormander symbols} of order $m\in\R$ is the set $\sym{m}(\R^n\times\R^N)$ containing all functions $p\in\Cinf(\R^{n}\times\R^{N})$ such that for each $\alpha\in\N^n,\beta\in\N^N$ and each compact $K\subset\R^n$ one has
	\begin{equation}
		\label{eq:hormander symbols}
		\abs{\del^\alpha_x\del^\beta_\xi p(x,\xi)}\lesssim_{\alpha,\beta,K}\braket{\xi}^{m-\abs{\beta}}.\quad x\in K,\xi\in\R^N.
	\end{equation}
	A symbol $p\in\sym{m}(\R^n\times\R^N)$ with $m\in\Z$ is said to be \textit{classical} if for all $j\geq 0$ there exists a smooth function $p_{m-j}(x,\xi)$, $\xi$-homogeneous of degree $m-j$ outside of a compact neighbourhood of $0\in\R^N$, such that for all $M\in\N$ we have the asymptotic expansion
	\begin{equation}
		\label{eq:classical symbols}
		p(x,\xi)-\sum_{j=0}^{M}p_{m-j}(x,\xi)\in\sym{m-M-1}(\R^n\times\R^N).
	\end{equation}
	In case the original symbol does not depend on $x$, we write $\sym{m}(\R^N)$ and speak about \textit{global classical symbols}.
\end{defin}
\begin{defin}
	\label{def:sgsymbols}
	The class $\sg{m}(\R^n\times\R^N)$ of \textit{SG-symbols} of order $m=(m_e,m_\psi)\in\R^2$ is the set of all $\Cinf$ functions $a\colon\R^n\times\R^N\rightarrow\R$ such that for all $\alpha\in\N^n,\beta\in\N^N$ there exists $c=c(\alpha,\beta)>0$ with
	\begin{equation}
		\label{eq:sgestimates}
		\abs{\del^\alpha_x\del^\beta_\xi a(x,\xi)}\lesssim_{\alpha,\beta} \braket{x}^{m_e-\abs{\alpha}}\braket{\xi}^{m_\psi-\abs{\beta}},\quad x\in\R^n,\xi\in\R^N.
	\end{equation}
	These are all Fréchet spaces with respect to the semi-norms $\norm{(\alpha,\beta)}{\cdot}$ given by the best possible $c(\alpha,\beta)$ in \eqref{eq:sgestimates}.
	We often write $\sg{m}=\sg{m}(\R^n\times\R^n)$ since we will work mainly on $\R^n\times\R^n\cong T^\ast\R^n$. We call $m_e$ the \textit{exit order} and $m_\psi$ the \textit{pseudo-differential order}, see Remark \ref{rem:exit and pseudo boxdot}.
\end{defin}

We collect basic properties of these classes in the Lemma \ref{lemma:sg filtration properties}.

\begin{lemma}
	\label{lemma:sg filtration properties}
	The following holds true.
	\begin{enumerate}
		\item There is a double filtration on the union $\sg{}(\R^n\times\R^N)$ of the classes $\sg{m}$, that is, if $p=(p_e,p_\psi)\leq m=(m_e,m_\psi)$, then $\sg{p}(\R^n\times\R^N)\subset\sg{m}(\R^n\times\R^N)$. 
		\item The projective limits $\sg{m_e,-\infty}$ and $\sg{-\infty,m_\psi}$ are isomorphic to $\Scal(\R^N,S^{m_e}(\R^n))$ and $\Scal(\R^n,S^{m_\psi}(\R^N))$, respectively, while the (double) projective limit $\sg{-\infty\indi}(\R^n\times\R^N)$ is a Fréchet space equalling the class of Schwartz functions $\Scal(\R^n\times\R^N)$ (we call elements of these projective limits \textit{$\psi$-smoothing, $e$-smoothing} and {smoothing}, respectively). 
		\item Pointwise multiplication on $\Cinf(\R^n\times\R^N)$ restricts to $\sg{}(\R^n\times\R^N)$ to make it (together with addition) into a commutative bi-filtered algebra.
		\item For $m\in\R^2$ the functions $\lambda^m(x,\xi)=\braket{x}^{m_e}\braket{\xi}^{m_\psi}\in\sg{m}(\R^n\times\R^N)$ are nowhere zero. Multiplication by $\lambda^m(x,\xi)$ induces isomorphisms of Fréchet algebras $\sg{p}(\R^n\times\R^N)\rightarrow\sg{m+p}(\R^n\times\R^N)$ for all $p\in\R^2$.
	\end{enumerate}
\end{lemma}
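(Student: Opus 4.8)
The plan is to derive all four statements directly from the defining estimates \eqref{eq:sgestimates}, using only the Leibniz rule and the trivial inequality $\braket{y}\ge 1$. For (1), if $p\le m$ componentwise then $\braket{x}^{p_e-\abs\alpha}\le\braket{x}^{m_e-\abs\alpha}$ and $\braket{\xi}^{p_\psi-\abs\beta}\le\braket{\xi}^{m_\psi-\abs\beta}$, so \eqref{eq:sgestimates} at order $p$ implies the same estimate at order $m$ with the same constants; in particular the inclusion $\sg{p}\hookrightarrow\sg{m}$ does not increase seminorms and is continuous. For (3), given $a\in\sg{a'}$, $b\in\sg{b'}$ I would expand $\del^\alpha_x\del^\beta_\xi(ab)$ by Leibniz and bound the generic summand $(\del^{\alpha'}_x\del^{\beta'}_\xi a)(\del^{\alpha-\alpha'}_x\del^{\beta-\beta'}_\xi b)$ by $\braket{x}^{a'_e-\abs{\alpha'}}\braket{\xi}^{a'_\psi-\abs{\beta'}}\braket{x}^{b'_e-\abs{\alpha-\alpha'}}\braket{\xi}^{b'_\psi-\abs{\beta-\beta'}}=\braket{x}^{a'_e+b'_e-\abs\alpha}\braket{\xi}^{a'_\psi+b'_\psi-\abs\beta}$, so $ab\in\sg{a'+b'}$; the same computation estimates $\norm{(\alpha,\beta)}{ab}$ by a finite sum of products $\norm{(\alpha',\beta')}{a}\,\norm{(\alpha-\alpha',\beta-\beta')}{b}$, giving joint continuity of the multiplication maps. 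Commutativity, associativity and distributivity are inherited from $\Cinf(\R^n\times\R^N)$, so $\sg{}(\R^n\times\R^N)$ is a commutative bi-filtered algebra.

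For (4), I would first record the standard fact that $\braket{x}^s\in\sg{(s,0)}(\R^n\times\R^N)$ for every $s\in\R$: an induction on $\abs\alpha$ shows $\del^\alpha_x\braket{x}^s$ is a finite linear combination of terms $\braket{x}^{s-2k}q_k(x)$ with $q_k$ a polynomial of degree $\le 2k-\abs\alpha$, each of which is $\lesssim\braket{x}^{s-\abs\alpha}$; symmetrically $\braket{\xi}^s\in\sg{(0,s)}$. By (3), $\lambda^m=\braket{x}^{m_e}\braket{\xi}^{m_\psi}\in\sg{m}$ and $\lambda^{-m}\in\sg{-m}$, both nowhere zero (since $\braket{x},\braket{\xi}\ge 1$) and with $\lambda^m\lambda^{-m}\equiv 1$. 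Hence multiplication by $\lambda^m$ maps $\sg{p}$ continuously into $\sg{m+p}$, multiplication by $\lambda^{-m}$ maps $\sg{m+p}$ continuously into $\sg{p}$, the two are mutually inverse, and both are $\sg{0}$-linear; this makes multiplication by $\lambda^m$ a topological isomorphism $\sg{p}\to\sg{m+p}$ respecting the natural $\sg{0}$-module (and, in the case $p=0$, algebra) structure.

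The only point requiring genuine care is (2), which I expect to be the main obstacle — not because it is deep, but because an otherwise purely bookkeeping argument hides a \emph{vector-valued smoothness} statement. Writing $\sg{m_e,-\infty}=\bigcap_{m_\psi}\sg{m_e,m_\psi}$, an element $a$ of this intersection satisfies, for every $\beta$ and every $N_0$, a bound $\abs{\del^\alpha_x\del^\beta_\xi a(x,\xi)}\lesssim\braket{x}^{m_e-\abs\alpha}\braket{\xi}^{-N_0}$, and conversely. Reading the $x$-estimates as the statement that $\xi\mapsto a(\cdot,\xi)$ takes values in $S^{m_e}(\R^n)$, and the rapid decay of all its $\xi$-derivatives as the Schwartz condition, one obtains precisely an element of $\Scal(\R^N,S^{m_e}(\R^n))$; the two families of seminorms coincide up to reindexing, so the bijection is a topological isomorphism. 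The subtlety to be checked is that ``$a\in\Cinf(\R^n\times\R^N)$ with these bounds'' upgrades to ``$\xi\mapsto a(\cdot,\xi)$ is $\Cinf$ as a map into the Fréchet space $S^{m_e}(\R^n)$'': the $\del_\xi$-estimates make the difference quotients Cauchy in every seminorm of $S^{m_e}(\R^n)$, and Taylor's formula with integral remainder identifies the limit with the pointwise $\xi$-derivative. I would isolate this as a small lemma and reuse it for $\sg{-\infty,m_\psi}$ (which is symmetric) and for the double limit $\sg{-\infty\indi}=\bigcap_m\sg{m}$, where $\abs{\del^\alpha_x\del^\beta_\xi a}\lesssim\braket{x}^{-N}\braket{\xi}^{-N'}$ for all $N,N'$ is, the two weight systems being mutually cofinal, exactly the Schwartz estimate on $\R^n\times\R^N$, with matching seminorms; thus $\sg{-\infty\indi}(\R^n\times\R^N)=\Scal(\R^n\times\R^N)$ as Fréchet spaces, being also automatically Fréchet as a countable projective limit.
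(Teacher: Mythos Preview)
Your proposal is correct. The paper does not give a proof of this lemma at all: it is one of the results for which the author explicitly says ``Most proofs are here omitted for the sake of brevity, and can be found in the cited literature,'' so there is nothing to compare your argument against beyond noting that it is the standard verification the paper takes for granted. Your treatment of (2), in particular the observation that one must check Fr\'echet-differentiability of $\xi\mapsto a(\cdot,\xi)$ and the cofinality of the weight systems $\braket{x}^{-N}\braket{\xi}^{-N'}$ versus $\braket{(x,\xi)}^{-M}$, is exactly the content that makes the statement nontrivial, and your sketch handles it adequately.
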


The symbols $\lambda^m$ will be used to give a characterization of the following scale of Sobolev spaces adapted to the $SG$-calculus. 

\begin{defin}
	\label{def:sg sobolev spaces}
	For $m=(m_e,m_\psi)\in\R^2$ we define the $\leb{2}$-based \textit{SG-Sobolev spaces} as
	\begin{equation}
		\label{eq:sg sobolev spaces}
		\HG{m}\equiv\braket{x}^{-m_e}\sob{m_\psi}(\R^n).
	\end{equation}
\end{defin}

Much like for H\"ormander classes, a notion of asymptotic expansion is defined and the principle of asymptotic completeness holds true. The existence of the second filtration implies that we can define multiple notions of asymptotic sums, so we summarize them in the following theorem.

\begin{theo}
	\label{theo:asymptotic completeness}
	The following holds true.
	\begin{enumerate}
		\item Let $a_j(x,\xi)\in\sg{m^{(j)}}(\R^n\times\R^N)$ be a sequence of functions with $m^{(j)}=(m_e^{(j)},m_\psi^{(j)})\rightarrow-\infty\indi$ as $j\rightarrow\infty$. There exists $a\in\sg{m}(\R^n\times\R^N), m=(\max m_e^{(j)},\max m_\psi^{(j)}),$ such that, given any $c\in\R$, we can find $K=K(c)\in\N$ with
		\begin{equation}
			\label{eq:full asymptotic completeness}
			a(x,\xi)-\sum_{j=0}^K a_j(x,\xi)\in\sg{m-c\indi}(\R^n\times\R^N),
		\end{equation}
		and $a$ is furthermore unique mod $\Scal(\R^n\times\R^N)$;
		\item Let $a_j(x,\xi)\in\sg{m_e^{(j)},m_\psi}(\R^n\times\R^N)$ be a sequence of functions with $m_e^{(j)}\rightarrow-\infty$ as $j\rightarrow\infty$. There exists $a\in\sg{m}(\R^n\times\R^N), m=(\max m_e^{(j)},m_\psi),$ such that, given any $c\in\R$, we can find $K=K(c)\in\N$ with
		\begin{equation}
			\label{eq:e-asymptotic completeness}
			a(x,\xi)-\sum_{j=0}^K a_j(x,\xi)\in\sg{m-c\indi_e}(\R^n\times\R^N),
		\end{equation}
		and $a$ is furthermore unique mod $\sg{-\infty,m_\psi}(\R^n\times\R^N);$
		\item Let $a_j(x,\xi)\in\sg{m_e,m_\psi^{(j)}}(\R^n\times\R^N)$ be a sequence of functions with $m_\psi^{(j)}\rightarrow-\infty$ as $j\rightarrow\infty$. There exists $a\in\sg{m}(\R^n\times\R^N), m=(m_e,\max m_\psi^{(j)}),$ such that, given any $c\in\R$, we can find $K=K(c)\in\N$ with
		\begin{equation}
			\label{eq:psi-asymptotic completeness}
			a(x,\xi)-\sum_{j=0}^K a_j(x,\xi)\in\sg{m-c\indi_\psi}(\R^n\times\R^N),
		\end{equation}
		and $a$ is furthermore unique mod $\sg{m_e,-\infty}(\R^n\times\R^N).$
	\end{enumerate}
	In all of the above cases we write $a\sim\sum a_j$ to indicate that $a$ is the asymptotic sum of the sequence $a_j$. The scale which we refer to will be in general clear from the context.
\end{theo}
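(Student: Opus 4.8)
All three statements are a single Borel-type summation argument; only the choice of cut-off functions --- and hence the space modulo which $a$ turns out to be unique --- changes from case to case. I would fix $\psi\in\Cinf(\R)$ with $\psi\equiv 0$ on $(-\infty,1]$ and $\psi\equiv 1$ on $[2,\infty)$, and, for an increasing sequence $t_j\to+\infty$ to be chosen, put $\chi_j(x,\xi)=\psi\bigl((\braket{x}^2+\braket{\xi}^2)/t_j^{2}\bigr)$ in case (1), $\chi_j(x,\xi)=\psi(\braket{x}/t_j)$ in case (2), and $\chi_j(x,\xi)=\psi(\braket{\xi}/t_j)$ in case (3). A routine check with the chain rule shows that each $\chi_j$ is a symbol of order $0\indi$ with semi-norms bounded uniformly in $j$, that $\chi_j\equiv 1$ outside, and $1-\chi_j$ is supported in, a region where the ``cut'' variable is $\le 2t_j$; this region is a compact subset of $\R^n\times\R^N$ in case (1), a set $K\times\R^N$ with $K$ compact in case (2), and $\R^n\times K$ in case (3). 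Hence $(1-\chi_j)a_j$ lies in $\Scal(\R^n\times\R^N)=\sg{-\infty\indi}$, resp.\ in $\sg{-\infty,m_\psi}$, resp.\ in $\sg{m_e,-\infty}$.

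I would then set $a:=\sum_j\chi_j a_j$; on any bounded set all but finitely many terms vanish, so this is a locally finite sum of smooth functions, hence smooth. The heart of the matter is the diagonal choice of the radii. On $\supp\chi_j$ the cut variable is $\gtrsim t_j$, so whenever $k\le j$ is such that all order components of $m^{(j)}$ that are allowed to decrease are $<$ the corresponding components of $m-k\indi$ (resp.\ $m-k\indi_e$, $m-k\indi_\psi$), Leibniz' rule together with the uniform bounds on $\chi_j$ gives $\norm{(\alpha,\beta)}{\chi_j a_j}\le C_{\alpha,\beta,k}\,t_j^{-1}$ in the scale $\sg{m-k\indi}$ (resp.\ $\sg{m-k\indi_e}$, $\sg{m-k\indi_\psi}$), for all $|\alpha|+|\beta|\le j$. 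Imposing, for each $j$, that all of these (finitely many) semi-norms be $\le 2^{-j}$ pins down a sufficiently large $t_j$. Since $m^{(j)}\le m$ componentwise and, for $j$ large, $m^{(j)}$ satisfies the condition with $k=1$, the series $\sum_j\chi_j a_j$ converges in every semi-norm of $\sg{m}$, so $a\in\sg{m}$ with $m=(\max_j m_e^{(j)},\max_j m_\psi^{(j)})$ (resp.\ $(\max_j m_e^{(j)},m_\psi)$, $(m_e,\max_j m_\psi^{(j)})$), the maxima being attained because the sequences tend to $-\infty$.

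For the expansion, split for each $K$
\[
a-\sum_{j=0}^{K}a_j=\sum_{j=0}^{K}(\chi_j-1)a_j+\sum_{j>K}\chi_j a_j .
\]
By the first paragraph the finite sum lies in $\Scal$ (resp.\ $\sg{-\infty,m_\psi}$, $\sg{m_e,-\infty}$), hence in $\sg{m-c\indi}$ (resp.\ the one-sided scale) for every $c$. Given $c$, pick an integer $k\ge c$ and then $K=K(c)\ge k$ so large that for all $j>K$ the decreasing components of $m^{(j)}$ are $<$ those of $m-k\indi$ (possible, the orders going to $-\infty$); the semi-norm bounds of the second paragraph then make the tail $\sum_{j>K}\chi_j a_j$ converge in $\sg{m-k\indi}\subset\sg{m-c\indi}$ (resp.\ the one-sided scale). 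This establishes \eqref{eq:full asymptotic completeness}, \eqref{eq:e-asymptotic completeness} and \eqref{eq:psi-asymptotic completeness}. For uniqueness, if $a,a'$ both satisfy the expansion then $a-a'\in\bigcap_c\sg{m-c\indi}$ (resp.\ $\bigcap_c\sg{m-c\indi_e}$, $\bigcap_c\sg{m-c\indi_\psi}$); reading off \eqref{eq:sgestimates}, this intersection equals $\Scal(\R^n\times\R^N)$ (resp.\ $\sg{-\infty,m_\psi}$, $\sg{m_e,-\infty}$), which is the asserted uniqueness.

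The one genuinely delicate point --- as always with Borel's lemma --- is the simultaneous diagonal choice of the $t_j$ making the series converge in all the relevant semi-norms at once; here it is slightly heavier than usual because of the second filtration and because, in case (1), one must excise in the joint variable $\braket{x}^2+\braket{\xi}^2$ instead of separately in $x$ and $\xi$. It is exactly this joint excision, combined with the fact that for large $j$ \emph{both} order components of $m^{(j)}$ have strictly dropped below those of $m$, that furnishes the gain in both directions at once and yields uniqueness modulo $\Scal$ in part (1), rather than merely modulo one of the one-sided projective limits as in parts (2) and (3).
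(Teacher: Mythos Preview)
Your proposal is correct and follows the standard Borel-summation approach that the paper merely sketches: the paper omits the full proof, referring to the literature, and only indicates the construction $a=\sum_{j\ge 0}\chi(\xi/c_j)a_j$ with $c_j\to\infty$ sufficiently fast, remarking that ``the same process works for $SG$-classes with respect to both sets of variables separately''. The only cosmetic difference is that for part~(1) you use a single joint cut-off in $\braket{x}^2+\braket{\xi}^2$, whereas the paper's wording suggests separate excisions in $x$ and in $\xi$; both choices work and yield the same uniqueness modulo $\Scal$.
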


Recall that, in the classical theory of $\Psi$DOs, homogeneous functions can be turned into symbols with the help of an excision function, namely if $b\in\Cinf(\R^n\times\R^N_0)$ is homogeneous of degree $k$ in $\xi$ and\footnote{Namely, $b(x,\mi \xi)=\mi^kb(x,\xi)$ for all $x,\xi\in\R^n,\xi\neq 0,\mi>0$.} $\chi(\xi)=0$ near 0 and $\chi(\xi)=1$ for large $\abs\xi$, then $a(x,\xi)=\chi(\xi)b(x,\xi)\in S^k(\R^n\times\R^N)$ and $a(x,\xi)-b(x,\xi)=(1-\chi(\xi))b(x,\xi)$ is compactly supported in $\xi$. Similarly, asymptotic sums can be made convergent with the help of such a $\chi$ and a sequence $\R^+\ni c_j\rightarrow\infty$ sufficiently fast as $j\rightarrow\infty$, by setting
\begin{equation}
	\label{eq:convergent asymptotic sum}
	a(x,\xi)\equiv\sum_{j\geq 0}\chi\left(\frac{\xi}{c_j}\right)a_j(x,\xi).
\end{equation}
The same process works for $SG$-classes with respect to both sets of variables separately, so that each of the asymptotic sums in Theorem \ref{theo:asymptotic completeness} can be made convergent up to some smoothing term. 

Our main object of interest is the subclass of \textit{classical} (also known as poly-homogeneous) symbols. Since the situation is slightly more complex than in the case of the Hörmander classes, we exercise some extra care here in order to define the notion. In particular, the following relaxed notions of homogeneity are required.

\begin{defin}
	\label{def:polyhomogneousfunctions}
	For $\bullet\in\{e,\psi\}$, define the classes of \textit{partially $m_\bullet$-homogeneous functions}, $m_\bullet\in\R$, by
	\begin{equation}
		\label{eq:partiallyhomogeneousfunctions}
		\begin{aligned}
			\Hcal^{(m_\psi)}_{\psi}&=\left\{a(x,\xi)\in\Cinf(\R^n\times\R^N_0)\sthat \forall \lambda>0,x\in\R^n,\xi\in\R^N_0\; a(x,\lambda \xi)=\lambda^{m_\psi}a(x,\xi)\right\}\\
			\Hcal^{(m_e)}_e&=\left\{a(x,\xi)\in\Cinf(\R^n_0\times\R^N)\sthat \forall \lambda>0,x\in\R^n_0,\xi\in\R^N \;a(\lambda x,\xi)=\lambda^{m_e}a(x,\xi)\right\}.
		\end{aligned}
	\end{equation}	
	Also define the class of \textit{bi-homogeneous functions}, letting for each $m=(m_e,m_\psi)\in\R^2$
	\begin{multline}
		\label{eq:bi-homogeneous functions}
		\Hcal^{(m)}=\left\{a(x,\xi)\in\Cinf(\R^n_0\times\R^N_0)\sthat\right. \\
		\left.\forall \lambda,\mi>0,(x,\xi)\in \R^n_0\times\R^N_0\; a(\lambda x,\mi \xi)=\lambda^{m_e}\mi^{m_\psi}a(x,\xi)\right\}.
	\end{multline}
	The conditions defining these classes can be relaxed to define \textit{eventually homogeneous} functions, that is, homogeneous outside $\B_c(0)$ for some $c>0$. Namely
	\begin{equation}
		\label{eq:eventually homogeneous functions}
		\begin{aligned}
			\Hcal^{[m_\psi]}_\psi&=\left\{a(x,\xi)\in\Cinf(\R^{n+N})\sthat\forall \mi\geq 1,x\in\R^n,\abs{\xi}>c,\; a(x,\mi\xi)=\mi^{m_\psi}a(x,\xi)\right\}\\
			\Hcal^{[m_e]}_e&=\left\{a(x,\xi)\in\Cinf(\R^{n+N})\sthat\forall \lambda\geq 1,\abs{x}>c,\xi\in\R^N,\; a(\lambda x,\xi)=\lambda^{m_e}a(x,\xi)\right\}\\
			\Hcal^{[m_e,m_\psi]}&=\left\{a(x,\xi)\in\Cinf(\R^{n+N})\sthat\forall \lambda,\mi\geq 1,\abs{x},\abs{\xi}>c, \; a(\lambda x,\mi\xi)=\lambda^{m_e}\mi^{m_\psi}a(x,\xi)\right\}.
		\end{aligned}
	\end{equation}
	We have then \textit{homogeneous symbols}:
	\begin{equation}
		\label{eq:homogeneous symbols}
		\begin{aligned}
			\sg{m_e,[m_\psi]}&=\Hcal^{[m_\psi]}_\psi\cap\sg{m_e,m_\psi}\\
			\sg{[m_e],m_\psi}&=\Hcal^{[m_e]}_e\cap\sg{m_e,m_\psi}\\
			\sg{[m_e],[m_\psi]}&=\Hcal^{[m_e],[m_\psi]}\cap\sg{m_e,m_\psi}
		\end{aligned}
	\end{equation}
\end{defin}
\begin{defin}
	\label{def:classicalsymbols}
	The spaces of \textit{$\xi$-classically homogeneous SG-symbols} and \textit{$\xi$-classical SG-symbols} are:
	\begin{equation}
		\label{eq:psi-classical symbols}
		\begin{aligned}
			\sg{[m_e],m_\psi}_{cl(\psi)}&=\left\{a\in\sg{[m_e],m_\psi}\,\sthat \exists a_k\in\sg{[m_e],[m_\psi-k]}\forall N\; a-\sum_{k=0}^N a_k\in\sg{m_e,m_\psi-N-1}\right\},\\
			\sg{m_e,m_\psi}_{cl(\psi)}&=\left\{a\in\sg{m_e,m_\psi}\,\sthat \exists a_k\in\sg{m_e,[m_\psi-k]}\forall N \;a-\sum_{k=0}^N a_k\in\sg{m_e,m_\psi -N-1}\right\}.
		\end{aligned}
	\end{equation}
	Similarly we have $x$-classically homogeneous and $x$-classical \textit{SG}-symbols:
	\begin{equation}
		\label{eq:x-classical symbols}
		\begin{aligned}
			\sg{m_e,[m_\psi]}_{cl(e)}&=\left\{a\in\sg{m_e,[m_\psi]}\,\sthat \exists a_k\in\sg{[m_e-k],[m_\psi]}\forall N\; a-\sum_{k=0}^N a_k\in\sg{m_e-N-1,m_\psi}\right\},\\
			\sg{m_e,m_\psi}_{cl(e)}&=\left\{a\in\sg{m_e,m_\psi}\,\sthat \exists a_k\in\sg{[m_e-k],m_\psi}\forall N \;a-\sum_{k=0}^N a_k\in\sg{m_e-N-1,m_\psi}\right\}.
		\end{aligned}
	\end{equation}
\end{defin}

\begin{defin}
	\label{def:classicalsgsymbols}
	The space of \textit{classical SG-symbols} or \textit{classical symbols with exit condition} is the set $\sg{m}_{cl}$ consisting of those symbols $a\in\sg{m}$ satisfying:
	\begin{enumerate}
		\item $\forall k\in\N \;\exists a^\psi_k\in\sg{m_e,[m_\psi-k]}_{cl(e)}\sthat\forall N$
		\begin{equation}
			\label{eq:psi-asymptotic expansion}
			a(x,\xi)-\sum_{k=0}^N a^\psi_k(x,\xi)\in\sg{m_e,m_\psi-N-1}_{cl(e)};
		\end{equation} 
		\item $\forall j\in\N \;\exists a^e_j\in\sg{[m_e-j],m_\psi}_{cl(\psi)}\sthat\forall N$
		\begin{equation}
			\label{eq:e-asymptotic expansion}
			a(x,\xi)-\sum_{j=0}^N a^e_j(x,\xi)\in\sg{m_e-N-1,m_\psi}_{cl(\psi)}.
		\end{equation} 
	\end{enumerate}
	We will, from now on, only deal with classical $SG$-symbols (and later operators). Therefore, the subscripts $cl,cl(e),cl(\psi)$ will be omitted and existence of asymptotic expansions tacitly assumed throughout.
\end{defin}

\begin{rem}
	\label{rem:global classical symbols}
	We remark that an alternative definition for classical $SG$-symbols has been given in \cite{witt1998pseudodifferentialnonsmoothsymbols}. Therein, the structure of symbol classes with values in a Fréchet space is explored and, in particular, it is proven that $S^m_{cl}(\R^n;S^l_{cl}(\R^n))\cong  S^m_{cl}\hat\tens_\pi S^l_{cl}\cong \sg{m,l}_{cl}.$ Here $S^m_{cl}$ denotes here the space of \textit{global classical symbols in one variable of order $m$}, namely the space of those smooth functions $a(\xi)$ on $\R^n$ which satisfy symbol estimates of order $m$ and admit an asymptotic expansion in homogeneous functions $a_k(\xi)$ of degree $m-k$. This would justify the terminology ``product-type symbols'' for the $SG$-classes. However, we refrain from its use since it might be easily confused with other, similar classes (e.g. the bi-singular operators defined by Rodino \cite{rodino1975pseudodifferentialoperatorsproduct}). We remark that, with this definition, it is also directly possible to define classical operators of complex order $(s_e,s_\psi)$ by saying that they are exactly those operators with symbol in $\sym{s_e}(\R^n)\hat\tens_\pi\sym{s_\psi}(\R^n)$.
\end{rem}

\begin{rem}
	\label{rem:exit and pseudo boxdot}
	We will often use the terms "exit $\boxdot$" and "pseudo-differential $\boxdot$" when speaking about properties of an object $\boxdot$ associated with the $e$-asymptotic expansion and the $\psi$-asymptotic expansion. For example, we will speak in a short while of ``exit symbol of order $m_e-k$'' and ``pseudo-differential symbol of order $m_\psi-j$'' for the maps $\sigma^{m_e-k}_e$ and $\sigma^{m_\psi-j}_\psi$, respectively. Also, for brevity's sake and convenience of notation, we often shorten $\sigma_\bullet^{m_\bullet-l}(a)$ with $a^\bullet_{m_\bullet-l}$ for $\bullet\in\{e,\psi,\psi e\}$ and, in particular, $a_\bullet\equiv\sigma_\bullet^{m_\bullet}(a)$. We call the maps $\sigma_\bullet^{m_\bullet}$ the $\bullet$-\textit{principal symbol maps}. We will see in the next chapter how helpful this lingo is in identifying properties of functions defined on different boundary hyper-surfaces of the scattering cotangent bundle.
\end{rem}

\begin{rem}
	\label{rem:classical symbols and homogeneity}
	We will, in general, use round brackets to denote homogeneity in the respective part of the domain (functions will be defined outside of the corresponding ``zero section''), while square brackets indicate eventual homogeneity as in \eqref{eq:eventually homogeneous functions}. It is then clear that, being interested only in the asymptotic behaviour of the symbols, we can pass from round to square brackets and vice-versa by multiplying with an excision function and adapting therefore the notion of ``convergence'' of asymptotic sums as in \eqref{eq:convergent asymptotic sum}. For a symbol $a$ in $\sg{[m_e],m_\psi}$, respectively $\sg{m_e,[m_\psi]}$, the asymptotic expansion \eqref{eq:e-asymptotic expansion}, respectively \eqref{eq:psi-asymptotic expansion}, is then trivial, in the sense that it consists of the function itself. 
\end{rem}
 
\begin{rem}
	\label{rem:asymptotic expansions are unique}
	The terms in the asymptotic sums of Definition \ref{def:classicalsgsymbols} are uniquely determined modulo elements in $\sg{-\infty,m_\psi}$ and $\sg{m_e,-\infty},$ respectively. That is, the \textit{eventual} behaviour (outside a compact neighbourhood of 0) is well-defined.
\end{rem}

The conditions in Definition \ref{def:classicalsgsymbols} allow us to canonically identify maps
\begin{equation}
	\label{eq:homogeneous symbol maps}
	\begin{aligned}
		\sigma^{m_e-k}_e&\colon\sg{m}\rightarrow\Hcal^{(m_e-k)}_e, \quad\sigma^{m_e-k}_e(a)(x,\xi)=a^e_{m_e-k}(x,\xi),\\
		\sigma^{m_\psi-j}_\psi&\colon\sg{m}\rightarrow\Hcal^{(m_\psi-j)}_\psi, \quad\sigma^{m_\psi-j}_\psi(a)(x,\xi)=a^\psi_{m_\psi-j}(x,\xi),
	\end{aligned}
\end{equation}
taking values, respectively, in the classes $\sg{(m_e-k),m_\psi}$ and $\sg{m_e,(m_\psi-j)}$. In particular, $\sigma_e^{m_e-k}(a)$ admits an asymptotic expansion in the classes $\sg{(m_e-k),m_\psi-j}, j\geq0$, so that we can canonically identify bi-homogeneous elements $\sigma_{\psi}^{m_\psi-j}\sigma_e^{m_e-k}(a)\in\Hcal^{(m_e-k,m_\psi-j)}$. The same process, applied to $\sigma_\psi^{m_\psi-j}(a)$ in the classes $\sg{m_e-k,(m_\psi-j)}$, $k\geq0$, produces bi-homogeneous functions $\sigma_{e}^{m_e-k}\sigma_\psi^{m_\psi-j}(a)\in\Hcal^{(m_e-k,m_\psi-j)}$, so that we naturally are interested in the relation between the two. The following Lemma (Exercise 3, Section 8.2 in \cite{egorov1997pseudodifferentialsingularities}) tells us that it is actually (and luckily) quite simple.

\begin{lemma}
	\label{lemma:symbol maps compatibility}
	For any $m\in\R^2, k,j\in\N$ the maps $\sigma_\psi^{m_\psi-k}$ and $\sigma_e^{m_e-j}$ commute and define functions $a^{\psi e}_{jk}\equiv\sigma_{\psi e}^{m_e-k,m_\psi-j}(a)\equiv\sigma_e^{m_e-k}\sigma_\psi^{m_\psi-j}(a)$. In particular, with any classical $SG$-symbol of order $m\in\R^2$, there is canonically associated an ``infinite-dimensional matrix'' (we sometimes call this an \textit{asymptotic matrix}) of bi-homogeneous functions $\{a^{\psi e}_{jk}\}_{j,k\geq 0}$ with $a^{\psi e}_{jk}\in\Hcal^{(m_e-k,m_\psi-j)}$, such that each ``row $j$'' or ``column $k$'' can be asymptotically summed to give $\sigma_\psi^{m_\psi-k}(a)$ or $\sigma_e^{m_e-j}(a),$ respectively. 
\end{lemma}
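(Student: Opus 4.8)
The plan is to unwind Definitions~\ref{def:classicalsymbols} and~\ref{def:classicalsgsymbols} and reduce the statement to the uniqueness of bi-homogeneous components, which a scaling argument forces once the two ways of building the $a^{\psi e}_{jk}$ have been compared modulo a genuinely bi-filtered remainder. Concretely, I would fix $a\in\sg{m}_{cl}$ and set $a^\psi_j:=\sigma_\psi^{m_\psi-j}(a)\in\sg{m_e,[m_\psi-j]}_{cl(e)}$ and $a^e_k:=\sigma_e^{m_e-k}(a)\in\sg{[m_e-k],m_\psi}_{cl(\psi)}$ for the terms of the two expansions of Definition~\ref{def:classicalsgsymbols}. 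Since $a^\psi_j$ is $x$-classical and $a^e_k$ is $\xi$-classical, Definition~\ref{def:classicalsymbols} equips them with expansions whose terms $a^{\psi e}_{jk}:=\sigma_e^{m_e-k}(a^\psi_j)$ and $a^{e\psi}_{kj}:=\sigma_\psi^{m_\psi-j}(a^e_k)$ both lie in $\sg{[m_e-k],[m_\psi-j]}$, i.e. are eventually bi-homogeneous of bi-degree $(m_e-k,m_\psi-j)$; by Remark~\ref{rem:classical symbols and homogeneity} each determines a canonical genuinely bi-homogeneous representative in $\Hcal^{(m_e-k,m_\psi-j)}$ coinciding with it for $\abs x,\abs\xi$ large. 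The statement then amounts to the identity $a^{\psi e}_{jk}=a^{e\psi}_{kj}$ for all $j,k$: with $j$ fixed, $\sum_k a^{\psi e}_{jk}\sim a^\psi_j=\sigma_\psi^{m_\psi-j}(a)$ is exactly the $e$-expansion of $a^\psi_j$, and with $k$ fixed, $\sum_j a^{e\psi}_{kj}\sim a^e_k=\sigma_e^{m_e-k}(a)$ is the $\psi$-expansion of $a^e_k$, so once the identity holds the rows and columns of the matrix $\{a^{\psi e}_{jk}\}$ sum to the asserted principal symbols and the two maps commute by construction.

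The step I expect to be the crux is producing a clean joint remainder. Simply adding the two expansions of $a$ only gives the coarse remainder class $\sg{m_e,m_\psi-N-1}+\sg{m_e-M-1,m_\psi}$, which is too weak to separate the components. Instead I would apply the $e$-expansion \emph{to the $\psi$-remainder} $R_N:=a-\sum_{j\le N}a^\psi_j\in\sg{m_e,m_\psi-N-1}_{cl(e)}$; by linearity of the homogeneous-component maps $\sigma_e^{m_e-k}(R_N)=a^e_k-\sum_{j\le N}a^{\psi e}_{jk}$, so Definition~\ref{def:classicalsymbols} yields
\[
a-\sum_{j\le N}a^\psi_j-\sum_{k\le M}a^e_k+\sum_{j\le N}\sum_{k\le M}a^{\psi e}_{jk}\in\sg{m_e-M-1,\,m_\psi-N-1}.
\]
Running the symmetric computation (the $\psi$-expansion applied to the $e$-remainder of $a$) and subtracting, the first three groups cancel and one is left with
\[
\sum_{j\le N}\sum_{k\le M}\bigl(a^{\psi e}_{jk}-a^{e\psi}_{kj}\bigr)\in\sg{m_e-M-1,\,m_\psi-N-1}\qquad\text{for all }M,N\ge 0.
\]

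Finally I would run an induction on $j+k$ to show each $e_{jk}:=a^{\psi e}_{jk}-a^{e\psi}_{kj}$ vanishes. At stage $(j_0,k_0)$, taking $M=k_0$, $N=j_0$ and invoking the inductive hypothesis — every pair $(j,k)$ with $j\le j_0$, $k\le k_0$, $(j,k)\ne(j_0,k_0)$ satisfies $j+k<j_0+k_0$, hence $e_{jk}$ already vanishes for $\abs x,\abs\xi$ large — reduces the displayed membership to $\abs{e_{j_0k_0}(x,\xi)}\lesssim\braket x^{m_e-k_0-1}\braket\xi^{m_\psi-j_0-1}$ on $\{\abs x>c,\abs\xi>c\}$. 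But there $e_{j_0k_0}$ is bi-homogeneous of bi-degree $(m_e-k_0,m_\psi-j_0)$, so substituting $(\lambda x,\mu\xi)$ with $\lambda,\mu\ge 1$ and letting $\lambda,\mu\to\infty$ forces $e_{j_0k_0}\equiv 0$ on $\{\abs x>c,\abs\xi>c\}$; hence its genuinely bi-homogeneous representative vanishes identically, i.e. $\sigma_e^{m_e-k_0}\sigma_\psi^{m_\psi-j_0}(a)=\sigma_\psi^{m_\psi-j_0}\sigma_e^{m_e-k_0}(a)$. The induction and the scaling estimate are routine; the only genuine subtlety is the one flagged above — iterating one expansion inside the remainder of the other so as to land in $\sg{m_e-M-1,m_\psi-N-1}$ — together with the minor bookkeeping that square-bracket and round-bracket homogeneous representatives agree only for $\abs x,\abs\xi$ large, which is harmless since every estimate used lives at infinity.
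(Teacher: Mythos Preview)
Your argument is correct: the key idea of iterating the $e$-expansion inside the $\psi$-remainder (and vice versa) to produce a genuinely bi-filtered remainder in $\sg{m_e-M-1,\,m_\psi-N-1}$ is exactly what is needed, and your subsequent induction on $j+k$ together with the scaling argument cleanly forces each $e_{jk}$ to vanish. The linearity of $\sigma_e^{m_e-k}$ you invoke is justified by the uniqueness in Remark~\ref{rem:asymptotic expansions are unique}, and the observation that $(j,k)\neq(j_0,k_0)$ with $j\le j_0,\,k\le k_0$ implies $j+k<j_0+k_0$ makes the induction well-founded.

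There is no proof in the paper to compare against: the lemma is stated with a reference to Exercise~3, Section~8.2 of \cite{egorov1997pseudodifferentialsingularities} and left unproven. Your write-up constitutes a complete solution to that exercise, and the approach --- separating the two orders of expansion and comparing via the joint remainder --- is the standard one.
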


\begin{rem}
	In order to lighten the notation, we often omit the superscript $\psi e$ when dealing with asymptotic matrices. Notice, in addition, that an asymptotic matrix can always be considered as a single asymptotic expansion. It suffices to consider the triangular enumeration of $\N^2$ and sum first each diagonal (a finite sum), so that we are left with a usual asymptotic expansion and we can determine its sum modulo $\Scal$. Therefore, with the help of an excision function, we can always sum an asymptotic matrix.
\end{rem}

Similarly to the standard class $\psdo{}(\R^n)$, we define a notion of principal symbol. 

\begin{defin}
	\label{def:principal sg-symbol}
	For $a\in\sg{m}$, the \textit{principal symbol} of $a$ is the triple of functions $\sigma_{pr}^m(a)\equiv(\sigma_e^{m_e}(a),
	\sigma_\psi^{m_\psi}(a),\sigma_{\psi e}^{m}(a))\equiv(a_e,a_\psi,a_{\psi e})$ canonically associated with $a$ as in Lemma \ref{lemma:symbol maps compatibility}.
\end{defin}

\begin{prop}[Properties of the principal symbol]
	\label{prop:principalsgsymbols}
	The following holds true:
	\begin{enumerate}
	\item 
		For $m\in\R^2$ the quotient $\SymG{m}\equiv\setquotient{\sg{m}}{\sg{m-\indi}}$ contains the principal symbols $(a_e,a_\psi,a_{\psi e})$ of the $a\in\sg{m}$. Equivalently, $\SymG{m}$ contains the pairs $(a_e,a_\psi)$ with $a_e\in\sg{(m_e),m_\psi}$ and $a_\psi\in\sg{m_e,(m_\psi)}$ such that $\sigma_e^{m_e}(a_\psi)=\sigma_\psi^{m_\psi}(a_e)$. 
	\item 
		The direct sum of the principal symbol spaces $\SymG{}=\bigoplus_{m\in\Z}\SymG{m}$ has the structure of a commutative graded module over $\SymG{0}$.
	\item 
		We can compute the exit and pseudo-differential symbols as
		\begin{equation}
			\label{eq:principal symbol as limit}
			\begin{aligned}
				\sigma_{e}^{m_e}(a)(x,\xi)&=\lim_{\mi\rightarrow\infty}\mi^{-m_e}a(\mi x,\xi),\\
				\sigma_{\psi}^{m_\psi}(a)(x,\xi)&=\lim_{\mi\rightarrow\infty}\mi^{-m_\psi}a(x,\mi\xi).
			\end{aligned}
		\end{equation}
	\item 
		The $\bullet$-principal symbol maps are multiplicative on the respective components, i.e. $\sigma_\bullet^{m_\bullet+l_\bullet}(ab)=\sigma_\bullet^{m_\bullet}(a)\sigma_\bullet^{l_\bullet}(b)$ if $a\in\sg{m},b\in\sg{l}$. 
		
	\item 
	    If $a\in\sg{m}$ and $\sigma^{m_e}_{e}(a)=0=\sigma^{m_\psi}_{\psi}(a)$, then $a\in\sg{m-\indi}.$ 
	\item
	    $\sigma_{pr}\equiv\bigoplus_{m\in\Z}\sigma_{pr}^m$ defines a surjective homomorphism of $\sg{}$ onto $\SymG{}$.
	\end{enumerate}
\end{prop}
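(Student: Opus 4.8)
The plan is to handle the six items in the order (3), (4), (5), (1), and then (2) and (6), since the analytic core is (5) and everything else is either a direct computation or an algebraic repackaging.

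First I would establish the limit formulas (3). Fixing $x\neq 0$, I use the leading term of the $e$-asymptotic expansion of Definition \ref{def:classicalsgsymbols}(2) written as $a(x,\xi)=\chi_e(x)\,\sigma_e^{m_e}(a)(x,\xi)+r(x,\xi)$, with $\chi_e$ an excision function near $0$ and $r\in\sg{m_e-1,m_\psi}$. For $\mi$ large one has $\chi_e(\mi x)=1$ and, by $e$-homogeneity, $\mi^{-m_e}\sigma_e^{m_e}(a)(\mi x,\xi)=\sigma_e^{m_e}(a)(x,\xi)$, while $\abs{\mi^{-m_e}r(\mi x,\xi)}\lesssim\mi^{-1}\abs{x}^{m_e-1}\braket{\xi}^{m_\psi}\to 0$; the second formula is symmetric. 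Part (4) then follows from (3): once one notes that a product of classical $SG$-symbols is classical (multiply the two asymptotic matrices of Lemma \ref{lemma:symbol maps compatibility}, using that a product of eventually $\bullet$-homogeneous functions is eventually $\bullet$-homogeneous of the summed degree), multiplicativity of the limit gives $\sigma_\bullet^{m_\bullet+l_\bullet}(ab)=\sigma_\bullet^{m_\bullet}(a)\,\sigma_\bullet^{l_\bullet}(b)$, and iterating separately in $x$ and in $\xi$ yields the corresponding identity for $\sigma_{\psi e}$.

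The hard part is (5), and the point requiring care is that the naive argument breaks down: $\sigma_e^{m_e}(a)=0$ only yields $a\in\sg{m_e-1,m_\psi}$, $\sigma_\psi^{m_\psi}(a)=0$ only yields $a\in\sg{m_e,m_\psi-1}$, and $\sg{m_e-1,m_\psi}\cap\sg{m_e,m_\psi-1}$ is \emph{strictly larger} than $\sg{m-\indi}$ (it only controls the minimum of the two weights). What saves the statement is classicality, used twice. Vanishing of $\sigma_e^{m_e}(a)$ forces the leading term of the $e$-expansion to be $e$-smoothing, so $a\in\sg{m_e-1,m_\psi}$; being the difference of classical symbols, $a$ is still a \emph{classical} $SG$-symbol of this lower exit order. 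I then look at the $\psi$-expansion of $a$ \emph{within} $\sg{m_e-1,m_\psi}$: its leading term is $\sigma_\psi^{m_\psi}(a)$, which by the intrinsic formula (3) is independent of the ambient class and vanishes by hypothesis, so that leading term is $\psi$-smoothing and $a\in\sg{m_e-1,m_\psi-1}=\sg{m-\indi}$. The corner symbol needs no separate treatment: by Lemma \ref{lemma:symbol maps compatibility} it is determined by either of $\sigma_e^{m_e}(a)$, $\sigma_\psi^{m_\psi}(a)$ and hence also vanishes.

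With (5) available, (1) follows quickly: $\sigma_{pr}^m$ descends to $\SymG{m}$ (which is $\sg{m}$ modulo $\sg{m-\indi}$) because elements of $\sg{m-\indi}$ have vanishing $e$- and $\psi$-principal symbols of orders $m_e,m_\psi$, and it is injective by (5). For surjectivity onto the set of compatible pairs I would reconstruct, from $a_e\in\sg{(m_e),m_\psi}$ and $a_\psi\in\sg{m_e,(m_\psi)}$ with $\sigma_e^{m_e}(a_\psi)=\sigma_\psi^{m_\psi}(a_e)=:a_{\psi e}$, the symbol $a=\chi_e a_e+\chi_\psi a_\psi-\chi_e\chi_\psi a_{\psi e}\in\sg{m}$ with excision functions $\chi_e(x),\chi_\psi(\xi)$; a short computation with (3) gives $\sigma_e^{m_e}(a)=a_e$ and $\sigma_\psi^{m_\psi}(a)=a_\psi$, while the compatibility together with Lemma \ref{lemma:symbol maps compatibility} pins the corner to $a_{\psi e}$, so $\sigma_{pr}^m(a)$ is exactly the triple $(a_e,a_\psi,a_{\psi e})$; this simultaneously gives surjectivity and the ``equivalently'' reformulation, and independence of the class of $a$ on the choice of $\chi_e,\chi_\psi$ is once more (5). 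Finally (2) and (6) are bookkeeping: pointwise multiplication descends to $\SymG{}$ since $\sg{-\indi}\cdot\sg{m}\subset\sg{m-\indi}$ by Lemma \ref{lemma:sg filtration properties}, the graded-module axioms over $\SymG{0}$ and the grading $\SymG{m}\cdot\SymG{l}\subset\SymG{m+l}$ are inherited from $\Cinf$ and (4), and $\sigma_{pr}=\bigoplus_m\sigma_{pr}^m$ is then additive, graded-multiplicative, surjective, with kernel $\sg{m-\indi}$ on each $\sg{m}$ by (5) --- i.e.\ the symbol map exhibiting $\SymG{}$ as the graded algebra associated with the filtered algebra $\sg{}$. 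I expect (5), and specifically the discipline of \emph{not} intersecting the two lower-order classes but instead propagating classicality through the first reduction, to be the only genuine obstacle.
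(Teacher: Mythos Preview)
The paper does not prove this proposition: it is stated among the preliminary facts in Section~\ref{chap:sg calculus} under the blanket remark that ``most proofs are here omitted for the sake of brevity, and can be found in the cited literature.'' So there is no paper proof to compare against, and your argument has to be judged on its own.

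Your proof is correct and well organised; in particular you are right that (5) is the only point with genuine content and that the naive intersection $\sg{m_e-1,m_\psi}\cap\sg{m_e,m_\psi-1}$ is not good enough. The one place I would tighten is your sentence ``being the difference of classical symbols, $a$ is still a classical $SG$-symbol of this lower exit order.'' The $e$-classicality of $a$ at order $m_e-1$ is clear (drop the $e$-smoothing zeroth term from the $e$-expansion), but what needs a word is the $\psi$-classicality at the new exit order: you must check that the terms $a_k^\psi\in\sg{m_e,[m_\psi-k]}_{cl(e)}$ of the original $\psi$-expansion actually lie in $\sg{m_e-1,[m_\psi-k]}_{cl(e)}$. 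This follows from Lemma~\ref{lemma:symbol maps compatibility}: the $e$-leading part of each $a_k^\psi$ is $\sigma_e^{m_e}(a_k^\psi)=a^{\psi e}_{0k}=\sigma_\psi^{m_\psi-k}\bigl(\sigma_e^{m_e}(a)\bigr)=0$, so the same ``strip off the vanishing leading term'' step applies to every $a_k^\psi$ individually. With that one line added, classicality propagates and your second reduction in the $\psi$-direction goes through exactly as written. Your preimage construction in (1) is precisely the associated symbol of Definition~\ref{def:associated sg symbol}, whose principal-symbol property the paper records as Lemma~\ref{lemma:associated and principal symbols} immediately afterwards.
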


\begin{rem}
	\label{rem:principal symbols are functions}
	Importantly, in Proposition \ref{prop:principalsgsymbols} we are \textit{not} identifying functions with asymptotic expansions in $\sigma_{\psi}\sg{m}$ and $\sigma_e\sg{m}$, that is, we are not working in $\setquotient{\sg{m}}{\sg{-\infty,m_\psi}}$ and $\setquotient{\sg{m}}{\sg{m_e,-\infty}}$. In these quotient spaces the natural operation to consider is the Leibniz product, namely the one defined as the (asymptotic expansion of the) symbol of the composition of pseudo-differential operators (defined below in Proposition \ref{prop:sg calculus}). While these can be endowed with the structure of a commutative algebra, we shall need to speak of the pointwise value of elements in the quotient later in Chapter \ref{chap:OPI}. Were we to identify elements up to a Schwartz function, we would lose this property. Thus, the components $a_\bullet$ are well-defined smooth \textit{functions}, homogeneous of degree $m_\bullet$ in the corresponding set of variables, which \textit{admit} an asymptotic expansion in terms of homogeneous functions in the other variables. 
\end{rem}

The notion of principal symbol just introduced has but one problem: it is not given by a single function. While this is often not an issue, it might be comfortable to have at hand a function containing in itself the composite asymptotic information of the principal symbol without keeping track of all the terms in the asymptotic matrix. The following concept of associated symbol fulfils this rôle, however it depends on the choice of an excision function, which is used to patch together the components defined on different spaces.

\begin{defin}
	\label{def:associated sg symbol}
	For a symbol $p\in\sg{m}$ denote
	\begin{equation}
		\label{eq:associated sg symbol}
		\check{p}(x,\xi)\equiv \chi (\xi)p_\psi(x,\xi)+\chi(x)p_{e}(x,\xi)-\chi(x)\chi(\xi)p_{\psi e}(x,\xi),
	\end{equation}
	for $\chi$ a smooth excision function in $\R^n$. $\check{p}$ is called the \textit{associated symbol} or \textit{principal part} of $p$, terminology which is justified in view of Lemma \ref{lemma:associated and principal symbols} below.
\end{defin}

\begin{lemma}
	\label{lemma:associated and principal symbols}
	For any $p\in\sg{m}$ we have $\check{p}\in\sg{m}$ and $p-\check{p}\in\sg{m-\indi}$. That is, $\sigma_{pr}(\check{p})=\sigma_{pr}(p)$.
\end{lemma}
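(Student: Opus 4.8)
The plan is to treat the two assertions separately: the membership $\check p\in\sg m$ is purely formal, while $p-\check p\in\sg{m-\indi}$ reduces, via Proposition \ref{prop:principalsgsymbols}, to a short computation of $e$- and $\psi$-principal symbols using the limit formulas \eqref{eq:principal symbol as limit}.

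First I would observe that each of the three summands in \eqref{eq:associated sg symbol} already lies in $\sg m$. Indeed, $p_\psi$ is $\xi$-homogeneous of degree $m_\psi$, so multiplying by the excision function $\chi(\xi)$ produces an eventually $\xi$-homogeneous $SG$-symbol, $\chi(\xi)p_\psi\in\sg{m_e,[m_\psi]}\subset\sg m$ (cf. Definition \ref{def:polyhomogneousfunctions}, \eqref{eq:homogeneous symbols} and Remark \ref{rem:classical symbols and homogeneity}); symmetrically $\chi(x)p_e\in\sg{[m_e],m_\psi}\subset\sg m$ and $\chi(x)\chi(\xi)p_{\psi e}\in\sg{[m_e],[m_\psi]}\subset\sg m$. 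Since $\sg m$ is a vector space (Lemma \ref{lemma:sg filtration properties}), $\check p\in\sg m$, and hence so is $p-\check p$.

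Next, by Proposition \ref{prop:principalsgsymbols}(5) it suffices to check that $\sigma_e^{m_e}(p-\check p)=0=\sigma_\psi^{m_\psi}(p-\check p)$, i.e., by linearity of the principal symbol maps, that $\sigma_e^{m_e}(\check p)=p_e$ and $\sigma_\psi^{m_\psi}(\check p)=p_\psi$. I would compute these from \eqref{eq:principal symbol as limit}, using that for fixed $x\neq 0$ (resp. $\xi\neq 0$) one has $\chi(\mi x)\to 1$ (resp. $\chi(\mi\xi)\to 1$) as $\mi\to\infty$, while $p_\psi,p_{\psi e}$ are $\xi$-homogeneous and $p_e,p_{\psi e}$ are $x$-homogeneous, so the homogeneity factors exactly cancel the $\mi^{-m_\bullet}$. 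Term by term this yields, for the $\psi$-symbol, $\sigma_\psi^{m_\psi}(\chi(\xi)p_\psi)=p_\psi$, $\sigma_\psi^{m_\psi}(\chi(x)p_e)=\chi(x)\,\sigma_\psi^{m_\psi}(p_e)=\chi(x)p_{\psi e}$ and $\sigma_\psi^{m_\psi}(\chi(x)\chi(\xi)p_{\psi e})=\chi(x)p_{\psi e}$, where the identification $\sigma_\psi^{m_\psi}(p_e)=\sigma_\psi^{m_\psi}\sigma_e^{m_e}(p)=p_{\psi e}$ uses the commutation Lemma \ref{lemma:symbol maps compatibility}; summing with the signs of \eqref{eq:associated sg symbol} leaves $p_\psi$. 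The $e$-symbol is entirely analogous and gives $\chi(\xi)p_{\psi e}+p_e-\chi(\xi)p_{\psi e}=p_e$. Hence $\sigma_e^{m_e}(p-\check p)=0=\sigma_\psi^{m_\psi}(p-\check p)$, so Proposition \ref{prop:principalsgsymbols}(5) gives $p-\check p\in\sg{m-\indi}$; since $\sigma_{pr}^m$ factors through $\SymG m=\sg m/\sg{m-\indi}$ (Proposition \ref{prop:principalsgsymbols}(1),(6)), this is precisely $\sigma_{pr}(\check p)=\sigma_{pr}(p)$.

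There is no substantial obstacle; the only point needing a little care is the bookkeeping around the excision functions — namely, that after multiplying the components $p_\bullet$ (a priori defined only off the relevant zero section) by $\chi$ they become genuine elements of $\sg m$ whose $\bullet$-principal symbols are indeed recovered by the limits \eqref{eq:principal symbol as limit}, and the correct matching of the cross-terms $\sigma_\psi^{m_\psi}(p_e)=\sigma_e^{m_e}(p_\psi)=p_{\psi e}$ provided by Lemma \ref{lemma:symbol maps compatibility}, which is exactly what makes the correction term $-\chi(x)\chi(\xi)p_{\psi e}$ cancel the double counting.
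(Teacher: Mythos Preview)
Your argument is correct. The paper does not actually supply a proof of this lemma (it is one of the results deferred to the cited literature), so there is nothing to compare against; your approach via the limit formulas \eqref{eq:principal symbol as limit} together with Proposition~\ref{prop:principalsgsymbols}(5) is the standard and expected route, and the cancellation you identify between $\sigma_\psi^{m_\psi}(\chi(x)p_e)=\chi(x)p_{\psi e}$ and $\sigma_\psi^{m_\psi}(\chi(x)\chi(\xi)p_{\psi e})=\chi(x)p_{\psi e}$ (and its $e$-counterpart) is exactly the mechanism that makes the inclusion--exclusion structure of \eqref{eq:associated sg symbol} work.
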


\begin{rem}
	\label{rem:heuristics of associated symbols}
	Any choice of an excision function produces therefore a 1-1 correspondence between principal and associated symbols, as the process of constructing $\check{p}$ from $\sigma_{pr}(p)$ is easily reversed by computing $\sigma_{pr}(\check{p})$.
	Heuristically speaking, the associated symbol is the ``sum'' of the outermost row and column of the asymptotic matrix. Lemma \ref{lemma:associated and principal symbols} is then just the statement that subtracting these from an asymptotic matrix reduces the order by $\indi$. 
\end{rem}

We recall hereafter some facts about ellipticity in the $SG$-calculus. The notion we define here is a refinement of the usual ellipticity condition for operators on compact manifolds.

\begin{defin}
	\label{def:elliptic sg-symbol}
	$a\in\sg{m}$ is \textit{elliptic} if all components of its principal symbol do not vanish on their respective domains.
\end{defin}

The previous discussion shows that the principal symbol is the equivalence class of $p\in\sg{m}$ in the quotient $\SymG{m}$. Elliptic symbols admit ``inverses'' in $\SymG{-m}$. Namely, it suffices to construct the symbol
\[
q(x,\xi)=\chi(x) p_e(x,\xi)^{-1}+\chi(\xi) p_\psi(x,\xi)^{-1}-\chi(x)\chi(\xi)p_{\psi e}(x,\xi)^{-1}
\]
to see that the products $pq,qp$ lie in $\sg{0}$ and their principal symbols equal (1,1,1). 

Parallel to the classical theory of Hörmander, we can also introduce amplitudes modelled after $SG$-symbols to gain more flexibility.
\begin{defin}
	\label{def:sg amplitude}
	A function $a\in\Cinf(\R^{n}\times\R^n\times\R^k)$ is called an \textit{amplitude of $SG$-type}, or just $SG$-amplitude, of order $(m_1,m_2,m_3)$, if for all $\alpha,\beta\in\N^n,\gamma\in\N^k$ it satisfies the global estimate on $\R^n$
	\begin{equation}
		\label{eq:sg amplitude}
		\abs{\del^\alpha_x\del^\beta_y\del^\gamma_{\xi}a}\lesssim\braket{x}^{m_1-\abs\alpha}\braket{y}^{m_2-\abs\beta}\braket{\xi}^{m_3-\abs\gamma}.
	\end{equation}
\end{defin}

Notice that an $SG$-symbol of order $(m_e,m_\psi)$ is just an $SG$-amplitude of order $(m_e,0,m_\psi)$ which is independent of $y$. The discussion on classicality can be generalised directly to the case of an amplitude by asking that it admits asymptotic expansions separately in all three sets of variables. We come now to $SG$-pseudo-differential operators. We remark that most definitions and results below make sense or could be phrased for non-classical operators as well. However, in view of our future needs, we limit ourselves to classical objects and, as before, drop the subscripts ``cl'' from our notation.

\begin{defin}
	\label{def:sgoperator}
	For $a\in\sg{m_1,m_2,m_3}$ and $u\in\Scal$ let (in the sense of oscillatory integrals)
	\begin{equation}
		\label{eq:sgoperator}
		\Op(a)u(x)\equiv\int e^{\im (x-y)\xi} a(x,y,\xi)u(y)\de y\dbar\xi
	\end{equation}
	and call it the \textit{operator} defined by the amplitude $a$. 
\end{defin}
\begin{lemma}
	\label{lemma:operators with amplitudes or symbols}
	Le $A=\Op(a)$ be an $SG$-pseudo-differential operator defined by an $SG$-amplitude of order $(m_1,m_2,m_3)$. There exists an $SG$-symbol $b\in\sg{m_1+m_2,m_3}$ such that $\Op(a)=\Op(b)$. Vice-versa, for every $SG$-symbol $b\in\sg{m_e,m_\psi}$ and any $t\in\R$ we can find an $SG$-amplitude $a_t$ of order $(t,m_e-t,m_\psi)$ with $\Op(a_t)=\Op(b)$.
\end{lemma}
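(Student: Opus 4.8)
The plan is to prove the two halves in order, each time writing down the new amplitude/symbol explicitly and then checking that it belongs to the right class. For the first half, given $a\in\sg{m_1,m_2,m_3}$ I would define, as an oscillatory integral,
\[
b(x,\xi)\equiv\iint e^{-\im z\eta}\,a(x,x+z,\xi+\eta)\,\de z\,\dbar\eta .
\]
The identity $\Op(b)=\Op(a)$ is then a manipulation of oscillatory integrals: in the Schwartz kernel $K_b(x,y)=\iint e^{\im(x-y)\xi}b(x,\xi)\,\dbar\xi$ of $\Op(b)$ one substitutes $\xi\mapsto\xi+\eta$ and performs the $\eta$-integration, which yields a Dirac mass in $(x-y)+z$; this collapses the $z$-integration and returns the kernel $\iint e^{\im(x-y)\xi}a(x,y,\xi)\,\dbar\xi$ of $\Op(a)$.

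The substantial point, and the heart of the proof, is that $b\in\sg{m_1+m_2,m_3}$. To make the defining integral absolutely convergent I would insert $(1-\Delta_z)^M e^{-\im z\eta}=\braket{\eta}^{2M}e^{-\im z\eta}$ and $(1-\Delta_\eta)^L e^{-\im z\eta}=\braket{z}^{2L}e^{-\im z\eta}$ and integrate by parts, reducing the integrand to $\braket{z}^{-2L}\braket{\eta}^{-2M}$ times $y$- and $\xi$-derivatives of $a$ evaluated at $(x,x+z,\xi+\eta)$. Then Peetre's inequality in the forms $\braket{x+z}^{\mu}\lesssim\braket{x}^{\mu}\braket{z}^{\abs{\mu}}$ and $\braket{\xi+\eta}^{\nu}\lesssim\braket{\xi}^{\nu}\braket{\eta}^{\abs{\nu}}$ separates the mixed weights, and for $L,M$ large (depending on $m_2,m_3,n$) the remaining $(z,\eta)$-integral converges and produces $\abs{b(x,\xi)}\lesssim\braket{x}^{m_1+m_2}\braket{\xi}^{m_3}$. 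Differentiating under the integral sign, a $\xi$-derivative improves the $\braket{\xi}$-order by one while an $x$-derivative falls on both the first and the second slot of $a(x,x+z,\xi+\eta)$ and in either case improves the exit order by one (re-applying Peetre and raising $L$ if necessary to absorb the extra powers of $\braket{z}$); hence $b\in\sg{m_1+m_2,m_3}$. If $a$ is classical, feeding its homogeneous components into the formal Leibniz expansion $b\sim\sum_{\alpha}\tfrac{1}{\alpha!}\del_\xi^\alpha D_y^\alpha a(x,y,\xi)\big|_{y=x}$ (with $D_{y_j}=-\im\del_{y_j}$) and regrouping terms of equal homogeneity in $x$ and in $\xi$ shows $b$ is classical as well, so $b\in\sg{m_1+m_2,m_3}$ in the standing (classical) sense.

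For the converse, fix $b\in\sg{m_e,m_\psi}$ and $t\in\R$. I would build $a_t$ by successive approximation, starting from $a^{(0)}(x,y,\xi)\equiv b(x,\xi)\braket{x}^{t-m_e}\braket{y}^{m_e-t}$; this is a classical $SG$-amplitude of order $(t,m_e-t,m_\psi)$ (a product of the classical symbols $b$, $\braket{x}^{t-m_e}$ and $\braket{y}^{m_e-t}$), and since the $\alpha=0$ term of its reduction to a symbol (via the first half) is $b(x,\xi)\braket{x}^{t-m_e}\braket{x}^{m_e-t}=b(x,\xi)$, the symbol of $\Op(b)-\Op(a^{(0)})$ lies in $\sg{m_e-1,m_\psi-1}$. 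Iterating, with $b^{(k)}\in\sg{m_e-k,m_\psi-k}$ one sets $a^{(k)}\equiv b^{(k)}(x,\xi)\braket{x}^{t-m_e+k}\braket{y}^{m_e-k-t}$, a classical amplitude of order $(t,m_e-t-k,m_\psi-k)$, and lets $b^{(k+1)}\in\sg{m_e-k-1,m_\psi-k-1}$ be the symbol of $\Op(b^{(k)})-\Op(a^{(k)})$. Asymptotic completeness (in the $y$- and $\xi$-orders, in the amplitude setting) yields a classical amplitude $a_t^{(0)}\sim\sum_k a^{(k)}$ of order $(t,m_e-t,m_\psi)$; telescoping the identities $\Op(b^{(k)})=\Op(a^{(k)})+\Op(b^{(k+1)})$ and letting the order of the remainder tend to $-\infty\indi$ shows $\Op(b)-\Op(a_t^{(0)})=\Op(s)$ for a Schwartz symbol $s$, so $a_t\equiv a_t^{(0)}+s$ is a classical $SG$-amplitude of order $(t,m_e-t,m_\psi)$ with $\Op(a_t)=\Op(b)$.

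I expect the main obstacle to be the estimate in the first half — making the oscillatory integral defining $b$ absolutely convergent and extracting exactly the bi-order $(m_1+m_2,m_3)$ through repeated, bookkeeping-heavy use of Peetre's inequality — together with, in the second half, the care needed to obtain equality of operators on the nose (rather than only modulo lower order) while staying inside the classical amplitude classes; the rest is routine oscillatory-integral algebra.
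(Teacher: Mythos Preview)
The paper does not actually prove this lemma; it is one of the standard background results stated without proof in Section~1.1, with the blanket remark that ``most proofs are here omitted for the sake of brevity, and can be found in the cited literature'' (e.g.\ \cite{cordes1995techniquepseudodifferentialoperators}, \cite{egorov1997pseudodifferentialsingularities}). So there is no paper proof to compare against, and the question is simply whether your argument stands on its own.

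It does. Your first half is the standard reduction of an amplitude to a left symbol: the oscillatory-integral definition of $b$, the kernel identity, the regularisation by $(1-\Delta_z)^M$ and $(1-\Delta_\eta)^L$, and the Peetre separation of the weights are exactly what one finds in the cited references, and your remark on classicality via the Leibniz expansion $b\sim\sum_\alpha\frac{1}{\alpha!}\del_\xi^\alpha D_y^\alpha a|_{y=x}$ is the right way to handle the standing convention that everything is classical. Your second half is also correct: the iterative correction scheme with $a^{(k)}=b^{(k)}(x,\xi)\braket{x}^{t-m_e+k}\braket{y}^{m_e-k-t}$ does produce amplitudes of the claimed orders, the telescoping argument gives $\Op(b)-\Op(a_t^{(0)})\in\RG$, and adding back the Schwartz remainder yields exact equality. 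The only point where a reader might pause is your appeal to asymptotic completeness ``in the amplitude setting'', since Theorem~\ref{theo:asymptotic completeness} is phrased for symbols; but the extension to three-variable amplitudes is immediate by the same excision-function construction, so this is not a gap. Overall your proof is a faithful reconstruction of the standard argument the paper is implicitly citing.
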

\begin{defin}
	\label{def:sgoperators}
	We let $\lg{m}$ denote the class of pseudo-differential operators defined by either symbols or amplitudes of $SG$-type and symbol order $m$ and employ the notation $\RG\equiv\lg{-\infty,-\infty}$. $\RG$ is a two-sided ideal in each $\lg{m}$ and we let $\BG{}$ denote the quotient algebra $\setquotient{\lg{}}{\RG}$.
\end{defin}
\begin{lemma}
	\label{lemma:sg operators are continuous}
	For any $m\in\R^2$ the operators in $\lg{m}(\R^n)$ act continuously between $\Cinf_c(\R^n)$ and $\Cinf(\R^n)$, and also on $\Scal(\R^n)$. Moreover operators $A\in\lg{0,m_\psi}$ act continuously on $\Cinf_b(\R^n)$ and we can compute the symbol via
	\begin{equation}
		\label{eq:recovery of symbol}
		\sigma(A)(x,\xi)=e^{-\im x\xi}Ae^{\im x\xi}.
	\end{equation}
\end{lemma}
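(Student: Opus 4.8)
The plan is to reduce to an operator given by a left symbol, dispatch the statements on $\Scal$ and $\Cinf_c$ by a single oscillatory-integral computation, and then treat the genuinely new point, which is continuity on $\Cinf_b$ and the recovery formula. By Lemma~\ref{lemma:operators with amplitudes or symbols} I may assume $A=\Op(b)$ with $b\in\sg{m}$, so that for $u\in\Scal(\R^n)$ one has $\Op(b)u(x)=\int e^{\im x\xi}b(x,\xi)\ft u(\xi)\dbar\xi$. From $\partial_{x_j}\bigl(e^{\im x\xi}b\bigr)=e^{\im x\xi}(\im\xi_j b+\partial_{x_j}b)$ one sees that $\partial_x^\alpha\Op(b)u=\Op(b_\alpha)u$ with $b_\alpha\in\sg{m_e,m_\psi+\abs\alpha}$, so it is enough to bound $\langle x\rangle^{2L}\abs{\Op(c)u(x)}$ for arbitrary $c\in\sg{m_e,m_\psi'}$ and $L\in\N$. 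Writing $\langle x\rangle^{2L}e^{\im x\xi}=(1-\Delta_\xi)^L e^{\im x\xi}$ and integrating by parts in $\xi$ turns the integrand into a finite sum of products $\partial_\xi^\gamma c(x,\xi)\,\partial_\xi^\delta\ft u(\xi)$, with $\abs{\partial_\xi^\gamma c}\lesssim\langle x\rangle^{m_e}\langle\xi\rangle^{m_\psi'}$ and $\partial_\xi^\delta\ft u\in\Scal$; hence $\abs{\Op(c)u(x)}\lesssim_L\langle x\rangle^{m_e-2L}$, and letting $L\to\infty$ shows $\Op(b)u\in\Scal$ with continuous dependence on $u$. Since $\Cinf_c(\R^n)\hookrightarrow\Scal(\R^n)\hookrightarrow\Cinf(\R^n)$ continuously, this also yields continuity between $\Cinf_c$ and $\Cinf$.

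The core of the lemma is continuity on $\Cinf_b$, and this is exactly where the hypothesis $m_e=0$ is used. As above it suffices to bound $\Op(b)u$ in sup-norm for $b\in\sg{0,m_\psi}$ and $u\in\Cinf_b$, the bounds on derivatives following by the same reduction. I would write the Schwartz kernel $K_A(x,y)=\int e^{\im(x-y)\xi}b(x,\xi)\dbar\xi$ and split it with a cutoff $\psi\in\Cinf_c$, $\psi\equiv1$ near $0$. On the off-diagonal part $(1-\psi(x-y))K_A$, repeated integration by parts using $\langle x-y\rangle^{2M}e^{\im(x-y)\xi}=(1-\Delta_\xi)^M e^{\im(x-y)\xi}$ together with $m_e=0$ gives $\abs{\partial_x^\alpha K_A(x,y)}\lesssim_{\alpha,N}\langle x-y\rangle^{-N}$ \emph{uniformly in $x$}, so this piece pairs with any bounded $u$ to produce a function in $\Cinf_b$. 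On the near-diagonal part, the substitution $z=x-y$ rewrites $\int\psi(x-y)K_A(x,y)u(y)\,\de y$ as $\int b(x,\xi)\,\ft{v_x}(-\xi)\dbar\xi$ with $v_x(z)=\psi(z)u(x-z)$; as $u\in\Cinf_b$, the family $\{v_x\}$ is bounded in $\Cinf_c$, hence $\{\ft{v_x}\}$ is bounded in $\Scal$, and the bound $\abs{b(x,\xi)}\lesssim\langle\xi\rangle^{m_\psi}$ \emph{uniform in $x$ thanks to exit order $0$} makes the integral converge uniformly in $x$. Differentiating under the integral only costs extra $\xi$-decay from $\ft{v_x}$ and keeps the exit order $\le 0$, which handles all derivatives and proves $\Op(b)\colon\Cinf_b\to\Cinf_b$ continuously.

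Finally I would read off the recovery formula by applying $A=\Op(b)$ to $u(y)=e^{\im y\xi}\in\Cinf_b$ — admissible precisely by the previous step, which is why the hypothesis asks for exit order $0$ — and carrying out the $y$-integration as an oscillatory integral (Fourier inversion): $\int e^{\im(x-y)\eta}b(x,\eta)e^{\im y\xi}\,\de y\dbar\eta=e^{\im x\xi}b(x,\xi)$, whence $e^{-\im x\xi}Ae^{\im x\xi}=b(x,\xi)=\sigma(A)(x,\xi)$; here $\sigma(A)$ is unambiguous because $\Op$ is injective on symbols, the kernel determining $b$ by Fourier inversion. I expect the only real work to be the $\Cinf_b$ statement: one must reconcile the non-integrability of $K_A$ near the diagonal — dealt with via the local, properly-supported structure exposed by the substitution $z=x-y$ — with uniform control as $\abs x\to\infty$, which is exactly what the exit order $0$ provides; everything else is routine bookkeeping with oscillatory integrals.
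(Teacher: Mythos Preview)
The paper does not supply a proof of this lemma; it is one of the results that are stated and referred to the literature (cf.\ the remark at the beginning of the section that ``most proofs are here omitted for the sake of brevity''). Your argument is correct and is in fact the standard one found in the references (e.g.\ Cordes or Egorov--Schulze): integration by parts in $\xi$ for the $\Scal$-continuity, the diagonal/off-diagonal kernel split for $\Cinf_b$ with the crucial observation that the exit order $0$ is exactly what makes the bounds uniform in $x$, and the oscillatory-integral computation of $A e^{\im\,\cdot\,\xi}$ once the action on $\Cinf_b$ is available. The one place where a referee might ask for a word more is the passage from the kernel definition of $Au$ for $u\in\Cinf_b$ to the Fourier-inversion computation in the last step: strictly speaking one should check that the extension of $A$ to $\Cinf_b$ obtained via your kernel split agrees with the limit $\lim_{\eps\to0}A(\chi(\eps\,\cdot)e^{\im\,\cdot\,\xi})$, but this is routine and does not affect the validity of the argument.
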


\begin{prop}
	\label{prop:sg calculus}
	The following holds true:
	\begin{enumerate}
		\item Each $A\in\RG$ is of the form $\Op(a)$ with $a\in\Scal(\R^{2n})$;
		\item The map $\Op\colon\sg{m}\rightarrow \lg{m}$ is bijective.
	\end{enumerate}
\end{prop}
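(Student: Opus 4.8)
The plan is to prove the two assertions separately, the first being a structural consequence of the amplitude-to-symbol reduction and the second the usual injectivity-plus-surjectivity argument for the quantisation map.

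For (1), let $A\in\RG=\lg{-\infty,-\infty}$. By Definition \ref{def:sgoperators} and Lemma \ref{lemma:operators with amplitudes or symbols} one may write $A=\Op(b)$ for some $SG$-symbol $b$, and since $A$ is of symbol order $(-\infty,-\infty)$ we have $b\in\bigcap_{m\in\R^2}\sg{m}=\sg{-\infty\indi}$. By item (2) of Lemma \ref{lemma:sg filtration properties}, $\sg{-\infty\indi}(\R^n\times\R^n)=\Scal(\R^{2n})$, so $b\in\Scal(\R^{2n})$ and the representation $\Op(b)$ with Schwartz symbol $b$ is already of the desired form (regarding $b=b(x,\xi)$ as an amplitude independent of $y$, which is trivially in $\Scal(\R^{3n})$ only in the degenerate sense; more precisely one keeps it as a symbol, which is the genuine content of the statement). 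Conversely, any $\Op(a)$ with $a\in\Scal(\R^{2n})$ has, by inspection of \eqref{eq:sgoperator}, a kernel $K(x,y)=\int e^{\im(x-y)\xi}a(x,\xi)\dbar\xi$ which is the partial inverse Fourier transform of a Schwartz function, hence Schwartz on $\R^{2n}$; such an operator maps into every $\HG{m}$ and therefore lies in $\RG$. This proves the characterisation.

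For (2), surjectivity of $\Op\colon\sg{m}\to\lg{m}$ is immediate from Definition \ref{def:sgoperators} together with Lemma \ref{lemma:operators with amplitudes or symbols}: every $A\in\lg{m}$ is by definition $\Op(a)$ for an amplitude $a$ of symbol order $m$, and the Lemma produces a \emph{symbol} $b\in\sg{m}$ with $\Op(b)=\Op(a)=A$. The substance is injectivity: if $b\in\sg{m}$ and $\Op(b)=0$ as an operator, then $b=0$. Here I would use the symbol-recovery formula. For $A=\Op(b)$ with $b\in\sg{m}$, test against the functions $e_\xi(x)=e^{\im x\xi}$; by Lemma \ref{lemma:sg operators are continuous} (after reducing the $\xi$-order by composing with an elliptic $SG$-$\Psi$DO of order $(0,-m_\psi)$, or directly via the oscillatory-integral computation $e^{-\im x\xi}\Op(b)e^{\im x\xi}=\int e^{\im(x-y)(\eta+\xi)-\im x\xi}b(x,\eta)\,dy\,\dbar\eta = b(x,\xi)$ by Fourier inversion) one gets $b(x,\xi)=e^{-\im x\xi}(Ae_\xi)(x)$. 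If $A=0$ this forces $b\equiv 0$, giving injectivity.

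The main obstacle is making the symbol-recovery computation rigorous as a statement about oscillatory integrals rather than absolutely convergent ones: the function $e_\xi$ is not Schwartz, so one cannot simply plug it into \eqref{eq:sgoperator}. The clean fix is the one indicated in Lemma \ref{lemma:sg operators are continuous}, namely that for $A\in\lg{0,m_\psi}$ the operator acts on $\Cinf_b(\R^n)\ni e_\xi$ and \eqref{eq:recovery of symbol} holds; to deduce injectivity for general $m=(m_e,m_\psi)$ one precomposes with multiplication by $\lambda^{-m_e}$-type elliptic factors (item (4) of Lemma \ref{lemma:sg filtration properties}) to land in a class where \eqref{eq:recovery of symbol} applies, using multiplicativity of the construction to transport the conclusion back. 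Everything else is bookkeeping: the continuity statements are already granted, and the equality of the two sides in \eqref{eq:recovery of symbol} is a standard Fourier-inversion manipulation once the oscillatory integral is interpreted via the usual regularisation.
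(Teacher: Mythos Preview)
The paper does not supply a proof of this proposition: it is one of the background results in Section~1 for which the author explicitly omits arguments and refers to the cited literature (Egorov--Schulze, Cordes, etc.). There is therefore nothing to compare your argument against.

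On the substance of your proposal: the strategy is the standard one and is correct, but there is a small logical dependency you gloss over. Your argument for (1) asserts that if $A\in\RG$ then the symbol $b$ with $A=\Op(b)$ lies in $\bigcap_{m}\sg{m}$. This only follows once you know that the symbol is \emph{uniquely} determined by the operator; otherwise $A\in\lg{m}$ for every $m$ merely gives, for each $m$, \emph{some} symbol $b_m\in\sg{m}$ with $\Op(b_m)=A$, not that a single $b$ works for all $m$. So (1) really rests on the injectivity half of (2), and you should either prove (2) first or at least flag the dependence. Your injectivity argument via \eqref{eq:recovery of symbol} is fine; the reduction to $m_e=0$ is cleanest by observing that for left quantisation one has exactly $M_{\braket{x}^{-m_e}}\Op(b)=\Op(\braket{x}^{-m_e}b)$, so no asymptotic composition formula is needed. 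The displayed oscillatory-integral line in your sketch has a garbled phase, but the intended Fourier-inversion computation is the right one.
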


\begin{theo}[Theorem 7, Section 8.2 in \cite{egorov1997pseudodifferentialsingularities}]
	\label{theo:composition}
	Let $A=\Op(a)\in\lg{m}, B=\Op(b)\in\lg{l}$. Then $AB=\Op(c)\in\lg{m+l}$ and we have the asymptotic expansion (the so-called \textit{Leibniz product} of $a$ and $b$)
	\begin{equation}
		\label{eq:asymptotic expansion composition}
		c(x,\xi)\sim\sum_{\alpha\geq 0}\frac{(-\im)^{\abs\alpha}}{\alpha!}\partder{{}^\alpha a}{\xi^\alpha}(x,\xi)\partder{{}^\alpha b}{x^\alpha}(x,\xi).
	\end{equation}
	Moreover, the principal symbol maps are multiplicative, i.e. 
	\begin{equation}
		\label{eq:principal symbol is multiplicative}
		\begin{aligned}
			\sigma_e^{m_e+l_e}(AB)&=\sigma_e^{m_e}(A)\sigma_e^{l_e}(B),\\
			\sigma_\psi^{m_\psi+l_\psi}(AB)&=\sigma_\psi^{m_\psi}(A)\sigma_\psi^{l_\psi}(B),\\
			\sigma_{\psi e}^{m+l}(AB)&=\sigma_{\psi e}^{m}(A)\sigma_{\psi e}^{l}(B).
		\end{aligned}
	\end{equation}
	In particular, there is an algebra isomorphism $\BG{}\cong\setquotient{\sg{}}{\sg{-\infty\indi}}$, where on the left we have composition and on the right the Leibniz product. 
\end{theo}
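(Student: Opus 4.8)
The plan is to reduce the composition to that of two operators given by honest left-symbols, to write the symbol of the product as an oscillatory integral, and then to analyse that integral. By Lemma~\ref{lemma:operators with amplitudes or symbols} one may assume $A=\Op(a)$ with $a\in\sg{m}$ and $B=\Op(b)$ with $b\in\sg{l}$. Expressing the Schwartz kernel of $AB$ as an iterated oscillatory integral and carrying out the substitution $w=y-x$, $\zeta=\xi-\eta$ leads to the candidate symbol
\begin{equation*}
	c(x,\eta)=\int\!\!\int e^{-\im w\zeta}\,a(x,\eta+\zeta)\,b(x+w,\eta)\,\de w\,\dbar\zeta ,
\end{equation*}
read as an oscillatory integral, so that formally $AB=\Op(c)$. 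The remaining work is to make sense of this and to recognise $c$ as a classical $SG$-symbol of order $m+l=(m_e+l_e,m_\psi+l_\psi)$ with the asserted expansion.

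The analytic core --- and the step I expect to be the most laborious --- is to prove that the displayed oscillatory integral converges, that $c\in\sg{m+l}$, and that the remainder after truncating the expansion lands in the right lower class. The two ingredients are the Peetre-type inequalities $\braket{x+w}^{l_e}\lesssim\braket{x}^{l_e}\braket{w}^{\abs{l_e}}$, $\braket{\eta+\zeta}^{m_\psi}\lesssim\braket{\eta}^{m_\psi}\braket{\zeta}^{\abs{m_\psi}}$, and the two complementary regularising identities $e^{-\im w\zeta}=\braket{w}^{-2M}(1-\Delta_\zeta)^M e^{-\im w\zeta}=\braket{\zeta}^{-2M'}(1-\Delta_w)^{M'}e^{-\im w\zeta}$. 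Integrating by parts with these and splitting the domain of integration according to whether $\abs\zeta$ is small or large compared with $\braket{\eta}$ (and likewise $\abs w$ against $\braket{x}$), one trades the possible growth of $b(x+w,\eta)$ in $w$ when $l_e>0$, and of $a(x,\eta+\zeta)$ in $\zeta$ when $m_\psi>0$, for the decay furnished by the integrations by parts; differentiating under the integral sign then yields the $\sg{m+l}$-estimates. Taylor-expanding $a(x,\eta+\zeta)$ in $\zeta$ at $\zeta=0$ to order $N$, using $\zeta^\alpha e^{-\im w\zeta}=(\im\del_w)^\alpha e^{-\im w\zeta}$, integrating by parts in $w$, and evaluating $\int e^{-\im w\zeta}\,\dbar\zeta=\delta(w)$ reproduces the Leibniz terms $\tfrac{(-\im)^{\abs\alpha}}{\alpha!}\,\del_\xi^\alpha a\,\del_x^\alpha b$, while the same estimates applied to the integral Taylor remainder place it in $\sg{m_e+l_e-N,m_\psi+l_\psi-N}$ --- exactly the situation of Theorem~\ref{theo:asymptotic completeness}(1).

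Classicality of $c$ is then formal: the $\alpha$-th term of the expansion is a pointwise product of classical symbols, hence classical by Lemma~\ref{lemma:sg filtration properties}(3), and its bi-order $(m_e+l_e-\abs\alpha,m_\psi+l_\psi-\abs\alpha)$ tends to $-\infty\indi$, so by Theorem~\ref{theo:asymptotic completeness} its asymptotic sum --- which coincides with $c$ modulo $\Scal(\R^{2n})$ --- is a classical symbol, whence so is $c$. For the multiplicativity of the principal symbol maps one notes that for $\alpha\neq0$ the term $\del_\xi^\alpha a\,\del_x^\alpha b$ has exit order $m_e+l_e-\abs\alpha<m_e+l_e$, pseudo-differential order $m_\psi+l_\psi-\abs\alpha<m_\psi+l_\psi$, and bi-order below $(m_e+l_e,m_\psi+l_\psi)$ in both entries; hence all three components of $\sigma_{pr}^{m+l}(AB)$ are read off the single term $ab$, and Proposition~\ref{prop:principalsgsymbols}(4) delivers $\sigma_e^{m_e+l_e}(AB)=\sigma_e^{m_e}(a)\sigma_e^{l_e}(b)$ and the analogous identities for $\sigma_\psi$ and $\sigma_{\psi e}$.

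For the algebra isomorphism $\BG{}\cong\setquotient{\sg{}}{\sg{-\infty\indi}}$, I would combine the bijectivity of $\Op\colon\sg{m}\to\lg{m}$ from Proposition~\ref{prop:sg calculus}(2) with the identification $\RG=\lg{-\infty,-\infty}=\Op(\Scal(\R^{2n}))=\Op(\sg{-\infty\indi})$, which follows from Proposition~\ref{prop:sg calculus}(1) and Lemma~\ref{lemma:sg filtration properties}(2); consequently $\Op$ descends to a linear bijection $\setquotient{\sg{}}{\sg{-\infty\indi}}\to\BG{}$. That this bijection intertwines the Leibniz product with operator composition is exactly the relation $\Op(a)\Op(b)=\Op(c)$ obtained above, $c$ being the Leibniz product of $a$ and $b$ modulo $\Scal$, so the induced map is an isomorphism of algebras.
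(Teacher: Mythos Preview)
The paper does not give its own proof of this theorem: it is stated with attribution to Egorov--Schulze and, as the author remarks at the start of the chapter, ``Most proofs are here omitted for the sake of brevity, and can be found in the cited literature.'' So there is no in-paper argument to compare against.

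Your sketch is the standard textbook derivation (essentially the one in the cited reference) and is correct in outline: the oscillatory-integral representation of $c$, the regularisation via Peetre's inequality together with the two complementary integrations by parts in $w$ and $\zeta$, the Taylor expansion of $a(x,\eta+\zeta)$ in $\zeta$ to produce the Leibniz terms, and the remainder estimate placing the tail in $\sg{m+l-N\indi}$ are exactly the expected steps. The deductions of classicality from Lemma~\ref{lemma:sg filtration properties} and Theorem~\ref{theo:asymptotic completeness}, of multiplicativity of the three principal-symbol components from Proposition~\ref{prop:principalsgsymbols}(4), and of the algebra isomorphism from Proposition~\ref{prop:sg calculus} are all sound. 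One small remark: in the step ``splitting the domain of integration according to whether $\abs\zeta$ is small or large compared with $\braket{\eta}$'' you should make explicit that on the large-$\zeta$ region you need enough $(1-\Delta_w)^{M'}$-integrations to beat both the $\braket{\zeta}^{\abs{m_\psi}}$ growth from Peetre and the $\braket{w}^{\abs{l_e}}$ growth simultaneously, and symmetrically for the $w$-split; this is routine but is where the double-filtration bookkeeping actually happens.
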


\begin{theo}
	\label{theo:adjoint}
	Let $A\in\lg{m}$ and $A^\dagger$ be the formal $\leb{2}$-adjoint, defined by $\inner{Au}{v}=\inner{u}{A^\dagger v}$ for all $u,v\in\Scal$. Then $A^\dagger\in\lg{m}$ and, if $A=\Op(a)$, we have $A^\dagger=\Op(a^\dagger)$ for $a^\dagger$ admitting the asymptotic expansion
	\begin{equation}
		a^\dagger(x,\xi)\sim\sum_{\alpha\geq 0}\frac{(-\im)^{\abs\alpha}}{\alpha!}\del^\alpha_x \del^\alpha_\xi\bar{a(x,\xi)}.
	\end{equation}
\end{theo}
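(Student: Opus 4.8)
The plan is to reduce everything to the symbol level and then invoke the amplitude-to-symbol reduction already underlying Lemma \ref{lemma:operators with amplitudes or symbols}. First, by Proposition \ref{prop:sg calculus} I may assume $A=\Op(a)$ for a unique symbol $a\in\sg{m}$. For $u,v\in\Scal$ I would write
\[
\inner{Au}{v}=\iiint e^{\im(x-y)\xi}a(x,\xi)u(y)\bar{v(x)}\,\de x\,\de y\,\dbar\xi,
\]
which is an honest absolutely convergent iterated integral once the $\xi$-integration is regularised in the usual oscillatory sense (integrating by parts in $\xi$ produces a factor $\langle x-y\rangle^{-2N}$, and on $\Scal$ this is dominated by $\langle x\rangle^{-N}\langle y\rangle^{-N}$ so that Fubini applies). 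Exchanging the names of $x$ and $y$ and conjugating then identifies $\inner{Au}{v}=\inner{u}{\Op(c)v}$ with
\[
c(x,y,\xi)=\bar{a(y,\xi)}.
\]
Since $c$ is independent of $x$ and complex conjugation does not change the moduli of derivatives, the estimates \eqref{eq:sgestimates} for $a$ show at once that $c$ is an $SG$-amplitude of order $(0,m_e,m_\psi)$; hence Lemma \ref{lemma:operators with amplitudes or symbols} gives $A^\dagger=\Op(c)\in\lg{0+m_e,m_\psi}=\lg{m}$ together with a symbol $a^\dagger\in\sg{m}$ representing it.

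It then remains to pin down the asymptotic expansion of $a^\dagger$. The tool is the reduction formula behind Lemma \ref{lemma:operators with amplitudes or symbols}: for an $SG$-amplitude $c(x,y,\xi)$ one has $\Op(c)=\Op(b)$ with $b(x,\xi)\sim\sum_{\alpha\geq0}\frac{(-\im)^{\abs\alpha}}{\alpha!}\del_\xi^\alpha\del_y^\alpha c(x,y,\xi)\big|_{y=x}$. I would obtain this in the standard way: substitute $z=x-y$, replace the $z$-dependence of $e^{\im z\xi}c(x,x-z,\xi)$ by its Taylor polynomial at $z=0$ with integral remainder, rewrite each power $z^\alpha$ as $\del_\xi^\alpha$ acting on $e^{\im z\xi}$, integrate by parts, and estimate the remainders by integration by parts in $\xi$ (using $\langle z\rangle^{-2}(1-\Delta_\xi)e^{\im z\xi}=e^{\im z\xi}$) combined with the Peetre inequality $\langle y\rangle^{s}\lesssim\langle x\rangle^{\abs s}\langle z\rangle^{\abs s}$, so that every term stays inside the $SG$-scale and the $N$-th remainder lands in $\sg{m-(N+1)\indi}$-type classes. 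Applying this with $c(x,y,\xi)=\bar{a(y,\xi)}$ and noting that $\del_\xi^\alpha\del_y^\alpha c$ restricted to $y=x$ is $\del_x^\alpha\del_\xi^\alpha\bar{a(x,\xi)}$ yields exactly the claimed expansion.

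The main obstacle is the reduction formula in the second paragraph, i.e. the careful bookkeeping of the oscillatory integral and of the Taylor remainders in the $SG$-category; if one simply quotes Lemma \ref{lemma:operators with amplitudes or symbols} and the expansion it supplies, the rest is routine. Nothing else is delicate: the membership $A^\dagger\in\lg{m}$ and the symbol formula both follow mechanically once the amplitude $c(x,y,\xi)=\bar{a(y,\xi)}$ has been correctly identified from the definition of the formal adjoint.
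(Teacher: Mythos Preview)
Your argument is correct and is exactly the standard route: identify $A^\dagger$ as the operator with amplitude $c(x,y,\xi)=\bar{a(y,\xi)}$, invoke Lemma~\ref{lemma:operators with amplitudes or symbols} to get $A^\dagger\in\lg{m}$, and read off the asymptotic expansion from the amplitude-to-left-symbol reduction formula. The paper does not actually prove this theorem---it is one of the results whose proof is explicitly ``omitted for the sake of brevity, and can be found in the cited literature''---so there is nothing to compare against, but your outline is precisely what one finds in the standard references (e.g.\ Cordes or Egorov--Schulze).
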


\begin{lemma}
	\label{lemma:principal symbol sequence}
	There is a short exact sequence for any $m\in\Z^2$
	\begin{equation}
		\label{eq:principal symbol sequence}
		0\rightarrow \lg{m-\indi}\rightarrow\lg{m}\xrightarrow{\sigma_{pr}}\SymG{m}\rightarrow 0.
	\end{equation}
\end{lemma}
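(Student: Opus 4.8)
The plan is to transport the whole statement to the level of symbols via the bijection $\Op\colon\sg{m}\to\lg{m}$ of Proposition \ref{prop:sg calculus}, under which the principal symbol of an operator is by definition $\sigma_{pr}(A)=\sigma_{pr}^m(\Op^{-1}A)$, so that the diagram of operator classes is literally the image under $\Op$ of the diagram of symbol classes. Since $\Op$ also restricts to a bijection $\sg{m-\indi}\to\lg{m-\indi}$ compatibly with the filtration inclusions $\sg{m-\indi}\hookrightarrow\sg{m}$ and $\lg{m-\indi}\hookrightarrow\lg{m}$ (Lemma \ref{lemma:sg filtration properties}(1) and Proposition \ref{prop:sg calculus}(2)), it suffices to prove exactness of
\[
0\longrightarrow\sg{m-\indi}\longrightarrow\sg{m}\xrightarrow{\;\sigma_{pr}^m\;}\SymG{m}\longrightarrow 0 .
\]

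I would verify the three exactness conditions in turn. Injectivity of $\sg{m-\indi}\hookrightarrow\sg{m}$ is immediate from the filtration. For exactness in the middle I would show $\ker\sigma_{pr}^m=\sg{m-\indi}$: the inclusion $\sg{m-\indi}\subseteq\ker\sigma_{pr}^m$ follows because, for $a\in\sg{m-\indi}$, the estimates \eqref{eq:sgestimates} give $\mi^{-m_e}a(\mi x,\xi)\to 0$ and $\mi^{-m_\psi}a(x,\mi\xi)\to 0$ as $\mi\to\infty$ (for $x\ne 0$, resp. $\xi\ne 0$), so $a_e=a_\psi=0$ by the limit formulas of Proposition \ref{prop:principalsgsymbols}(3), and hence $a_{\psi e}=\sigma_e^{m_e}(a_\psi)=0$ as well; thus $\sigma_{pr}^m(a)=0$. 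Conversely, if $\sigma_{pr}^m(a)=0$ then in particular $\sigma_e^{m_e}(a)=0=\sigma_\psi^{m_\psi}(a)$, and Proposition \ref{prop:principalsgsymbols}(5) forces $a\in\sg{m-\indi}$. Finally, surjectivity of $\sigma_{pr}^m$ onto $\SymG{m}$ is exactly the content of Proposition \ref{prop:principalsgsymbols}(1) (which identifies $\SymG{m}=\sg{m}/\sg{m-\indi}$ with the set of principal symbols of elements of $\sg{m}$), equivalently the degree-$m$ component of the surjective homomorphism $\sigma_{pr}$ of Proposition \ref{prop:principalsgsymbols}(6).

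Assembling these points yields the exact sequence on symbols, and applying $\Op$ together with its inverse — which intertwines $\sigma_{pr}^m$ with $\sigma_{pr}$ on operators by the very definition of the latter — produces the claimed sequence for $\lg{\bullet}$. I do not expect any genuine obstacle: the lemma is essentially a bookkeeping repackaging of Propositions \ref{prop:principalsgsymbols} and \ref{prop:sg calculus}, and the only points deserving a line of care are that the left-hand arrow is the honest filtration inclusion and that $\Op$ respects that inclusion, both of which have already been recorded.
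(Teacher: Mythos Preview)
Your proposal is correct. The paper states this lemma without proof, treating it as an immediate consequence of Propositions \ref{prop:principalsgsymbols} and \ref{prop:sg calculus}; your argument simply makes explicit the bookkeeping behind that implication, and there is nothing to compare.
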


We point out that for operators in $\lg{}$ we have a characterization of the Fredholm property in terms of the ellipticity. This was historically one of the reasons for the introduction of global calculi.

\begin{theo}
	\label{theo:fredholm}
	The following holds true:
	\begin{enumerate}
		\item The space $\lg{-\infty\indi}(\R^n)$ consists of compact operators on $\leb{2}(\R^n)$;
		\item An operator $P\in\lg{m}$ is elliptic if and only if it admits a parametrix $Q\in\lg{-m}$, i.e. an operator $Q$ such that $PQ-I,QP-I\in\lg{-\infty\indi}$;
		\item The following are equivalent: 
		\begin{enumerate}
			\item $P\in\lg{m}$ is elliptic.
			\item $P$ extends to a Fredholm operator $P\colon\HG{l}\rightarrow\HG{l-m}$ for some $l\in\Z^2$.
			\item $P$ extends to a Fredholm operator $P\colon\HG{l}\rightarrow\HG{l-m}$ for all $l\in\Z^2$.
		\end{enumerate}
	\end{enumerate}
\end{theo}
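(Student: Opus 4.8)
\textbf{Proof plan for Theorem~\ref{theo:fredholm}.}
The three parts build on one another, so I would organise the proof accordingly. For part~(1), the claim is that $\RG=\lg{-\infty\indi}$ consists of compact operators on $\leb2(\R^n)$. By Proposition~\ref{prop:sg calculus}(1) such an operator is $\Op(a)$ with $a\in\Scal(\R^{2n})$; the standard argument is that its Schwartz kernel $K(x,y)=\int e^{\im(x-y)\xi}a(x,y,\xi)\dbar\xi$ (after the amplitude reduction of Lemma~\ref{lemma:operators with amplitudes or symbols}, or directly) lies in $\Scal(\R^{2n})\subset\leb2(\R^{2n})$, so the operator is Hilbert--Schmidt, hence compact. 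I would state this, noting one only needs $K\in\leb2$, and that the oscillatory integral converges to a Schwartz function because each $\xi$-derivative of $a$ gains decay.

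For part~(2), the ``if'' direction is immediate: if $PQ-I\in\lg{-\infty\indi}$ then $\sigma_{pr}(P)\sigma_{pr}(Q)=(1,1,1)$ in $\SymG0$, so $P$ is elliptic by Definition~\ref{def:elliptic sg-symbol}. For the ``only if'' direction, the plan is the usual Neumann-series parametrix construction adapted to the double filtration. Given $P=\Op(p)$ elliptic of order $m$, form $q_0\in\sg{-m}$ with principal symbol the componentwise inverse $(p_e^{-1},p_\psi^{-1},p_{\psi e}^{-1})$, as exhibited just before Definition~\ref{def:sg amplitude}; by Theorem~\ref{theo:composition} and the multiplicativity of the principal symbol, $\Op(q_0)P=I-R_1$ with $R_1\in\lg{-\indi}$. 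Iterating, $\Op(q_0)P=I-R_1$, and setting $Q_N=\bigl(\sum_{k=0}^{N}R_1^{k}\bigr)\Op(q_0)$ gives $Q_NP-I\in\lg{-(N+1)\indi}$. Using the asymptotic completeness of Theorem~\ref{theo:asymptotic completeness}(1) on the symbols of the $R_1^k\Op(q_0)$ (whose orders tend to $-\infty\indi$), one obtains a genuine left parametrix $Q\in\lg{-m}$ with $QP-I\in\RG$; the analogous construction on the right, or the standard argument that left and right parametrices agree modulo $\RG$, yields a two-sided parametrix.

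For part~(3), the implication (a)$\Rightarrow$(c) follows from part~(2): a parametrix $Q\in\lg{-m}$ extends, by the mapping properties of $SG$-operators on the scale $\HG\bullet$ (the isomorphisms induced by the $\lambda^m$ of Lemma~\ref{lemma:sg filtration properties}(4) together with continuity of $\lg0$ on $\leb2$), to $\HG{l-m}\to\HG l$, and $QP-I$, $PQ-I\in\RG$ act compactly on every $\HG\bullet$ by part~(1) conjugated with the weight isomorphisms; hence $P\colon\HG l\to\HG{l-m}$ is Fredholm with parametrix $Q$. The implication (c)$\Rightarrow$(b) is trivial. For (b)$\Rightarrow$(a), I would argue contrapositively: if $P$ is not elliptic, one of $p_e$, $p_\psi$, $p_{\psi e}$ vanishes at some point of its domain, and one constructs a Weyl-type sequence of Schwartz functions (concentrating in frequency near a zero of $p_\psi$, or in space near a zero of $p_e$, or both for $p_{\psi e}$), normalised in $\HG l$, whose images tend to $0$ in $\HG{l-m}$ while having no convergent subsequence in $\HG l$ --- this contradicts the Fredholm property, which would force $P$ to have closed range of finite codimension and finite-dimensional kernel. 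The construction of these singular sequences, keeping track of all three components of the principal symbol and the correct Sobolev normalisations, is the step I expect to be the main technical obstacle; the parametrix construction in part~(2), while longer, is routine given the machinery already assembled.
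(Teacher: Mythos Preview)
Your proposal is correct and follows the standard route for this result. Note, however, that the paper does not actually supply a proof of Theorem~\ref{theo:fredholm}: it is stated in the review section on the $SG$-calculus, where the author explicitly omits most proofs and refers to the cited literature (in particular \cite{egorov1997pseudodifferentialsingularities}). So there is no ``paper's own proof'' to compare against beyond the implicit reference to the standard argument, which is precisely what you have outlined: Schwartz kernels give Hilbert--Schmidt hence compact operators for~(1); the Neumann-series parametrix construction via asymptotic completeness for~(2); and parametrix-plus-compactness for (a)$\Rightarrow$(c), with a Weyl-sequence argument for (b)$\Rightarrow$(a) in~(3). Your identification of the singular-sequence construction as the only genuinely delicate step is accurate.
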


We end this section by recalling the existence of so-called \textit{order reducing operators}.

\begin{lemma}
	\label{lemma:classical order reductions}
	There exist classical, elliptic, invertible operators $P\in\lg{\indi_e},Q\in\lg{\indi_\psi}$ giving isomorphisms $\lg{m}\rightarrow\lg{m+\indi_\bullet}$ by composition. In particular we can take $P=\Op\braket{x}, Q=\Op\braket{\xi}$.
\end{lemma}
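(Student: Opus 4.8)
The plan is to verify the claim by explicit construction, taking $P=\Op\braket{x}$ and $Q=\Op\braket{\xi}$ and checking the required properties in turn. First I would observe that $\braket{x}\in\sg{\indi_e}$ and $\braket{\xi}\in\sg{\indi_\psi}$ by Lemma \ref{lemma:sg filtration properties}(4), so that $P\in\lg{\indi_e}$ and $Q\in\lg{\indi_\psi}$ by Proposition \ref{prop:sg calculus}. Ellipticity is immediate from Definition \ref{def:elliptic sg-symbol}: the principal symbol components of $\braket{x}$ are $\abs{x}$, $\braket{x}$, $\abs{x}$ (computed, e.g., via the limit formulas \eqref{eq:principal symbol as limit}), none of which vanish on their respective domains, and symmetrically for $\braket{\xi}$. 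For classicality, one notes that $\braket{x}=\abs{x}(1+\abs{x}^{-2})^{1/2}$ admits, for large $\abs{x}$, a convergent expansion $\abs{x}\sum_{k\geq 0}\binom{1/2}{k}\abs{x}^{-2k}$ in terms eventually homogeneous of degree $1-2k$, and this is an honest $e$-asymptotic expansion in the sense of Definition \ref{def:classicalsgsymbols}; the $\psi$-expansion is trivial since $\braket{x}\in\sg{\indi_e,[0]}$, cf. Remark \ref{rem:classical symbols and homogeneity}. The same argument handles $\braket{\xi}$.

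Next I would address invertibility. Since $\braket{x}$ is elliptic, Theorem \ref{theo:fredholm}(2) produces a parametrix $R\in\lg{-\indi_e}$ with $PR-I,RP-I\in\RG$. To upgrade this to an exact two-sided inverse, I would use that $P$ is actually invertible \emph{as an operator}: multiplication by $\braket{x}$ is a bijection on $\Scal(\R^n)$ and on each $\HG{m}$ (indeed $\HG{m}=\braket{x}^{-m_e}\sob{m_\psi}$, so $\braket{x}\colon\HG{m}\to\HG{m-\indi_e}$ is bijective by Definition \ref{def:sg sobolev spaces}), with inverse given by multiplication by $\braket{x}^{-1}\in\sg{-\indi_e}$, which is again a classical elliptic $SG$-operator by the same reasoning as above. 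Thus $P^{-1}=\Op\braket{x}^{-1}\in\lg{-\indi_e}$ exactly, and likewise $Q^{-1}=\Op\braket{\xi}^{-1}\in\lg{-\indi_\psi}$. Composition $A\mapsto PA$ then maps $\lg{m}$ into $\lg{m+\indi_e}$ by Theorem \ref{theo:composition}, and $A\mapsto P^{-1}A$ provides the two-sided inverse of this map, so it is an isomorphism of vector spaces $\lg{m}\xrightarrow{\sim}\lg{m+\indi_e}$; the statement for $Q$ and $\indi_\psi$ is identical.

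I do not expect a serious obstacle here, as the operators are given explicitly and all the ingredients (filtration, ellipticity, composition, the Sobolev scale) are already in place. The only point requiring a little care is the classicality of $\braket{x}$ and $\braket{x}^{-1}$: one must genuinely exhibit the homogeneous expansion terms $\braket{x}^{\indi_e}_{1-2k}$ and check they lie in the correct classes $\sg{[\indi_e-2k],0}$ with the remainders of the right order, rather than merely invoking smoothness — but this is the standard binomial-series computation for $(1+\abs{x}^{-2})^{\pm 1/2}$, uniformly convergent for $\abs{x}$ bounded away from $0$, and the cross ($\psi$-) expansions are trivial because these symbols are (eventually) homogeneous of degree $0$ in $\xi$. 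Once classicality is in hand, everything else is a direct application of the results recalled above.
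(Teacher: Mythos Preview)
Your proof is correct, and the paper itself does not supply a proof of this lemma, so there is nothing to compare against. The explicit verification you outline---symbol membership via Lemma~\ref{lemma:sg filtration properties}(4), classicality via the binomial expansion of $(1+\abs{x}^{-2})^{\pm 1/2}$ combined with the trivial $\psi$-expansion from Remark~\ref{rem:classical symbols and homogeneity}, ellipticity via \eqref{eq:principal symbol as limit}, and exact invertibility by recognising $\Op\braket{x}$ as multiplication and $\Op\braket{\xi}$ as a Fourier multiplier---is exactly what is needed to substantiate the statement.
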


Taking advantage of Lemma \ref{lemma:operators with amplitudes or symbols} we introduce a notion of ellipticity for amplitudes.

\begin{defin}
	\label{def:elliptic amplitudes}
	We say that an amplitude $a\in\sg{m_1,m_1m_e}$ is \textit{elliptic} if can be quantised to an elliptic symbol. Namely, $a$ is elliptic if and only if $a$ defines an operator $A\in\lg{m_1+m_2,m_3}$ whose symbol is elliptic.
\end{defin}

\subsection{$SG$-symbols and the symplectic structure}

We equip $\R^{2n}\cong T^\ast \R^n$ with the standard symplectic structure $\omega=\de\xi_i\wedge\de x^i$, where $\xi_i$ is the canonically dual coordinate to $x^i$. Recall that this induces a Poisson bracket on smooth functions by 
\begin{equation}
	\label{eq:poisson bracket}
	\{f,g\}=\partder{f}{\xi_i}\partder{g}{x^i}-\partder{f}{x^i}\partder{g}{\xi_i}.
\end{equation}
The interplay between this operation and the $SG$-calculus will help us clarify the situation for the study of singular symplectomorphisms in Chapter \ref{chap:scattering geometry}.

\begin{prop}
	\label{prop:symplectic properties of sg}
	The following holds true:
	\begin{enumerate}
		\item 
			$\sg{}\subset\Cinf(\R^n\times\R^n)$ is a commutative algebra with respect to the pointwise product, which is, in addition, bi-filtered.
		\item 
			The Poisson bracket gives the structure of a Lie algebra to $\sg{}$ and in particular is a bi-filtered bi-derivation of $\sg{}$. That is, $\{a,b\}\in\sg{m+k-\indi}$ if $a\in\sg{m},b\in\sg{k},m,k\in\R^2$. 
		\item 
			$\sg{0}$ is a sub-algebra and Lie sub-algebra of $\sg{}$, and $\SymG{0}$ inherits the structure of a commutative Lie algebra (namely, the Poisson bracket is trivial in the quotient).
		\item 
			The Poisson bracket induces Lie algebra structures on $\sg{\indi}$ and $\SymG{\indi}$, and the principal symbol map is then an homomorphism of Lie algebras. More specifically, the $\bullet$-principal symbol of $\{a,b\}$ only depends on the $\bullet$-principal symbols of $a$ and $b$ and can be computed explicitly as
			\begin{equation}
				\begin{aligned}
					\sigma_\psi^1(\{a,b\})&=\{\sigma_\psi(a),\sigma_\psi(b)\},\\
					\sigma_e^1 (\{a,b\})&=\{\sigma_e(a),\sigma_e(b)\},\\
					\sigma_{\psi e}^\indi(\{a,b\})&=\{\sigma_{\psi e(a)},\sigma_{\psi e}(b)\}.
				\end{aligned}
			\end{equation}
		\item 
			$\lg{\indi}$ is a Lie algebra with respect to the commutator and we have an isomorphism \[(\setquotient{\lg{\indi}}{\lg0},[\,,])\cong(\SymG{\indi},\im\{\,,\}).\]			
	\end{enumerate}
\end{prop}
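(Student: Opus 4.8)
The plan is to handle the five items in turn, each one reducing to the symbolic calculus already set up. Item (1) is a verbatim restatement of Lemma \ref{lemma:sg filtration properties}(3). For item (2), the crucial remark is that \eqref{eq:sgestimates} gives $\del_{\xi_j}a\in\sg{m_e,m_\psi-1}$ and $\del_{x^j}a\in\sg{m_e-1,m_\psi}$ whenever $a\in\sg{m}$, so that each summand $\del_{\xi_j}a\,\del_{x^j}b$, $\del_{x^j}a\,\del_{\xi_j}b$ of the Poisson bracket \eqref{eq:poisson bracket} lies, by Lemma \ref{lemma:sg filtration properties}(3), in $\sg{m_e+k_e-1,\,m_\psi+k_\psi-1}=\sg{m+k-\indi}$; hence $\{a,b\}\in\sg{m+k-\indi}$. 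Bilinearity, antisymmetry, the Leibniz rule $\{a,bc\}=\{a,b\}c+b\{a,c\}$, and the Jacobi identity are the classical identities of the Poisson bracket on $\Cinf(\R^{2n})$, and they pass to $\sg{}$ because it is stable both under pointwise products and, by the order count just performed, under $\{\,,\}$. Item (3) is then immediate: $\sg{0}$ is a subalgebra (orders add under multiplication) and $\{\sg{0},\sg{0}\}\subset\sg{-\indi}\subset\sg{0}$, so it is also a Lie subalgebra, while $\{a,b\}\in\sg{-\indi}$ for $a,b\in\sg{0}$ forces the induced bracket on $\SymG{0}=\setquotient{\sg{0}}{\sg{-\indi}}$ to vanish.

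For item (4), the order count of (2) with $m=k=\indi$ shows that $\sg{\indi}$ is closed under $\{\,,\}$, hence a Lie algebra, and with $m=\indi$, $k=0$ that $\{\sg{\indi},\sg{0}\}\subset\sg{0}$, so $\sg{0}$ is a Lie ideal of $\sg{\indi}$ and the bracket descends to $\SymG{\indi}\cong\setquotient{\sg{\indi}}{\sg{0}}$ (the identification being Proposition \ref{prop:principalsgsymbols}(1)). The substance of (4) is the three explicit formulas. For these I would first record the elementary commutation relations
\[
\sigma_\psi^{m_\psi-1}(\del_{\xi_j}a)=\del_{\xi_j}\sigma_\psi^{m_\psi}(a),\qquad \sigma_\psi^{m_\psi}(\del_{x^j}a)=\del_{x^j}\sigma_\psi^{m_\psi}(a),
\]
and their mirrors under $e\leftrightarrow\psi$, $x\leftrightarrow\xi$, all obtained by differentiating under the limit in \eqref{eq:principal symbol as limit}. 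Since for $a,b\in\sg{\indi}$ both products $\del_{\xi_j}a\,\del_{x^j}b$ and $\del_{x^j}a\,\del_{\xi_j}b$ already lie in $\sg{\indi}$, applying the (linear) map $\sigma_\psi^1$ together with the multiplicativity of the principal symbol maps from Proposition \ref{prop:principalsgsymbols}(4) gives
\[
\sigma_\psi^1(\{a,b\})=\sum_j\bigl(\sigma_\psi^0(\del_{\xi_j}a)\,\sigma_\psi^1(\del_{x^j}b)-\sigma_\psi^1(\del_{x^j}a)\,\sigma_\psi^0(\del_{\xi_j}b)\bigr)=\{\sigma_\psi(a),\sigma_\psi(b)\},
\]
the last equality being the displayed commutation relations. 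The identity for $\sigma_e^1$ is the mirror computation, and the one for $\sigma_{\psi e}^{\indi}$ follows by applying the $\sigma_e^1$ version to $\sigma_\psi(a),\sigma_\psi(b)\in\sg{1,(1)}$ and invoking the fact that $\sigma_e$ and $\sigma_\psi$ commute (Lemma \ref{lemma:symbol maps compatibility}). Running the same two computations on $\sigma_\psi(a_e),\sigma_\psi(b_e)$ and on $\sigma_e(a_\psi),\sigma_e(b_\psi)$ then shows that $\bigl(\{a_e,b_e\},\{a_\psi,b_\psi\},\{a_{\psi e},b_{\psi e}\}\bigr)$ satisfies the compatibility constraint of Proposition \ref{prop:principalsgsymbols}(1), so that the componentwise bracket on $\SymG{\indi}$ is well defined; and since each formula expresses $\sigma_\bullet(\{a,b\})$ solely in terms of $\sigma_\bullet(a)$ and $\sigma_\bullet(b)$, the $\bullet$-principal symbol of the bracket depends only on the $\bullet$-principal symbols of $a$ and $b$. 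With the bracket on $\SymG{\indi}$ so defined, the three formulas say precisely that $\sigma_{pr}$ is a homomorphism of Lie algebras.

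For item (5), Theorem \ref{theo:composition} gives, for $A,B\in\lg{\indi}$, that $AB,BA\in\lg{2\indi}$ with $\sigma_{pr}^{2\indi}(AB)=\sigma_{pr}^{\indi}(A)\,\sigma_{pr}^{\indi}(B)=\sigma_{pr}^{\indi}(B)\,\sigma_{pr}^{\indi}(A)=\sigma_{pr}^{2\indi}(BA)$ (the principal symbol being multiplicative and $\SymG{}$ commutative), whence $\sigma_{pr}^{2\indi}([A,B])=0$ and so $[A,B]\in\lg{\indi}$ by Proposition \ref{prop:principalsgsymbols}(5); the same argument with $B\in\lg{0}$ shows $\lg{0}$ is a Lie ideal, hence $(\setquotient{\lg{\indi}}{\lg{0}},[\,,])$ is a Lie algebra, and Lemma \ref{lemma:principal symbol sequence} with $m=\indi$ makes $\sigma_{pr}$ a linear isomorphism onto $\SymG{\indi}$. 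To compute its effect on brackets I would use the Leibniz product \eqref{eq:asymptotic expansion composition}: writing $A=\Op(a)$, $B=\Op(b)$ with $a,b\in\sg{\indi}$, the symbol of $[A,B]$ is
\[
\sum_{\abs\alpha\geq1}\frac{(-\im)^{\abs\alpha}}{\alpha!}\bigl(\del_\xi^\alpha a\,\del_x^\alpha b-\del_\xi^\alpha b\,\del_x^\alpha a\bigr),
\]
whose $\abs\alpha=1$ part equals $\tfrac{1}{\im}\{a,b\}\in\sg{\indi}$ while every $\abs\alpha\geq2$ term lies in $\sg{(2-\abs\alpha)\indi}\subset\sg{0}$; hence $\sigma_{pr}^{\indi}([A,B])=\tfrac{1}{\im}\{\sigma_{pr}(a),\sigma_{pr}(b)\}$ by item (4). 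Thus $\sigma_{pr}$ identifies the commutator on $\setquotient{\lg{\indi}}{\lg{0}}$ with $\tfrac{1}{\im}\{\,,\}$ on $\SymG{\indi}$, and since $\tfrac{1}{\im}=-\im$ and $X\mapsto -X$ is a Lie isomorphism of $(\SymG{\indi},-\im\{\,,\})$ onto $(\SymG{\indi},\im\{\,,\})$, composing these two maps yields the claimed isomorphism.

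As for difficulty, nothing here is deep: items (1)--(3) and (5) are bookkeeping built on Theorem \ref{theo:composition} and Lemma \ref{lemma:principal symbol sequence}, and the only step calling for genuine care is item (4) --- pinning down the precise homogeneity-shifting behaviour of the maps $\sigma_\psi^{\bullet},\sigma_e^{\bullet}$ under the derivatives $\del_{x^j},\del_{\xi_j}$, so that the three Poisson-bracket formulas emerge with the correct homogeneities and turn out to be pairwise compatible. Once that is in place, everything else follows.
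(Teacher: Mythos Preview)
Your proposal is correct and follows essentially the same route as the paper. The only notable difference in presentation is item (4): the paper first carries out a rather laborious expansion of $\{a,b\}$ in terms of the asymptotic matrices of $a$ and $b$ before reaching the same endpoint via the limit formula \eqref{eq:principal symbol as limit}, whereas you go straight to the commutation relations $\sigma_\bullet(\del a)=\del\sigma_\bullet(a)$ combined with multiplicativity (Proposition \ref{prop:principalsgsymbols}(4)). Your packaging is cleaner and avoids the bookkeeping of \eqref{eq:asymptotic expansion product}--\eqref{eq:poisson bracket first order asymptotic expansion}, but the underlying computation---differentiating under the limit in \eqref{eq:principal symbol as limit}---is identical to the paper's \eqref{eq:poisson bracket exit symbol}. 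For item (5) both you and the paper reduce to the Leibniz expansion \eqref{eq:asymptotic expansion composition} and observe that the $\abs\alpha\geq2$ terms drop into $\sg{0}$; your explicit handling of the sign via composition with $X\mapsto -X$ is a welcome clarification.
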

\begin{proof}
	The structure of $\sg{}$ as an algebra with respect to the pointwise product has already been analysed in the previous section. We turn to the statements concerning the Poisson bracket.
	Recall that asymptotic expansions and derivatives commute, since the derivative of a classical symbol is again classical. Therefore if $a\sim\sum_{j,k\geq0}a_{kj}^{\psi e},b\sim\sum_{j,k\geq0}b^{\psi e}_{kj}$ we can write the Poisson bracket $\{a,b\}$ asymptotically as (omitting the superscript $\psi e$ and writing $\del^r=\partder{}{\xi_r}$ and $\del_s=\partder{}{x^s}$ for convenience):
	\begin{equation}
		\label{eq:asymptotic expansion product}
		\begin{aligned}
			\del^ra\del_sb&\sim\del^ra_{00}\del_sb_{00}\\
			&+\sum_{j\geq 1}\del^ra_{00}\del_s b_{0j}+\sum_{k\geq 1}\del^ra_{00}\del_sb_{k0}+\sum_{k\geq 1}\del^ra_{k0}\del_sb_{00}+\sum_{j\geq 1}\del^ra_{0j}\del_sb_{00}\\
			&+\sum_{j,k\geq1}\del^r a_{k0}\del_s b_{0j}+\sum_{k,k'\geq1}\del^ra_{k0}\del_s b_{k'0}+\sum_{k,j\geq 1}\del^ra_{00}\del_sb_{kj}\\
			&+\sum_{j,j'\geq 1}\del^ra_{0j}\del_sb_{0j'}+\sum_{j,k\geq1}\del^ra_{0j}\del_s b_{k0}+\sum_{j,k\geq 1}\del^ra_{kj}\del_sb_{00}\\
			&+\sum_{j,j',k\geq 1}\del^ra_{0j}\del_sb_{kj'}+\sum_{k,k',j\geq 1}\del^ra_{k0}\del_sb_{k'j}+\sum_{j,k,k'\geq 1}\del^ra_{kj}\del_sb_{k'0}+\sum_{j,j',k\geq 1}\del^ra_{kj}\del_sb_{0j'}\\
			&+\sum_{k,k',j,j'\geq1}\del^ra_{kj}\del_sb_{k'j'}.
		\end{aligned}
	\end{equation}
	We group the terms according to their homogeneity to obtain
	\begin{equation}
		\label{eq:asymptotic expansion product 2}
		\begin{aligned}
			\del^ra\del_sb&\sim\del^ra_{00}\del_s b_{00}\\
			&\quad+\del^ra_{10}\del_sb_{00}+\del^ra_{01}\del_s b_{00}+\del^r a_{00}\del_s b_{10}+\del^ra_{00}\del_sb_{01}\\
			&\quad+\del^ra_{20}\del_sb_{00}+\del^ra_{02}\del_sb_{00}+\del^ra_{00}\del_sb_{20}+\del^ra_{00}\del_sb_{02}\\
			&\quad+\del^ra_{11}\del_sb_{00}+\del^ra_{10}\del_sb_{10}+\del^ra_{10}\del_sb_{01}\\
			&\quad+\del^ra_{01}\del_sb_{10}+\del^ra_{01}\del_sb_{01}+\del^ra_{00}\del_sb_{11}\\
			&\quad+\dots\\
			&=\sum_{m\geq0}\sum_{\substack{k,j\geq 0\\k+j=m}}c^r_s(k,j),
		\end{aligned}
	\end{equation}
	where
	\[
	c^r_s(k,j)=\sum_{\substack{l_1+l_3=k\\l_1,l_3\geq 0}}\sum_{\substack{l_2+l_4=j\\l_2,l_4\geq0}}\del^ra_{l_1l_2}\del_sb_{l_3l_4}\in\Hcal^{(m_e+l_e-1-k,m_\psi+l_\psi-1-j)}
	\]
	are the components of the asymptotic matrix of $\del^ra\del_sb$. We can of course obtain a similar expression for $\del_sa\del^rb$. Taking the trace $r=s$ and subtracting the two expressions gives then the asymptotic matrix of the Poisson bracket $\{a,b\}$ for symbols of general orders $m,l\in\R^2$. 
	
	Since taking traces and differences in $\sg{m}$ cannot increase the order of the symbols in any fashion, the class of $\{a,b\}$ in the quotient spaces $\SymG{}$ can be computed from the classes of $a$ and $b$, namely, from their principal symbols. Indeed we see directly from \eqref{eq:asymptotic expansion product 2} that the outer row and column of the asymptotic matrix of $\del^ra\del_s b$ correspond to taking $k=0$ or $j=0$ in $c^r_s(k,j)$, and thus only depend on the outer row and column of $a$ and $b$. Furthermore it is clear that the Poisson bracket commutes with our asymptotic expansions: indeed, the bracket of classical symbols is again a classical symbol.
	
	We are mainly interested in the algebraic structure of the spaces $\SymG{0}$ and $\SymG{\indi}$. For the former, notice that $a,b\in\sg{0}$ implies that $\del^ra\del_sb\in\sg{-\indi}$ and the same must be true for $\{a,b\}$. Hence, the Poisson bracket vanishes on $\SymG{0}$. For the latter, we start from \eqref{eq:asymptotic expansion product 2} to compute
	\begin{equation}
		\label{eq:poisson bracket first order asymptotic expansion}
		\{a,b\}\sim\{a_{00},b_{00}\}+\sum_{k\geq 1}\sum_{l_1+l_2=k}\{a_{l_10},b_{l_20}\}+\sum_{j\geq 1}\sum_{l_3+l_4=j}\{a_{0l_3},b_{0l_4}\}+\sum_{j,k\geq 1}(c^r_r(k,j)-\tilde{c}^r_r(k,j)),
	\end{equation}
	where $\tilde{c}^r_s$ are the components of the asymptotic matrix of $\del_sa\del^rb$. Now, all the terms in the last sum are at most in $\sg{0}$, while the others are just, respectively, the symbol $\sigma_{\psi e}(\{a,b\})$, the $e$-asymptotic expansion of $\sigma_\psi(\{a,b\}-\{a_{00},b_{00}\})$ and the $\psi$-asymptotic expansion of $\sigma_e(\{a,b\}-\{a_{00},b_{00}\})$.
	
	In fact, more is true: the $\bullet$-principal symbol of $\{a,b\}$ only depends on the $\bullet$-principal symbols of $a$ and $b$ and can be computed explicitly (notice that the previous computation only gives the classes of $\sigma_\bullet(\{a,b\})$ up to $\sg{-\indi_\bullet\infty})$. To show this, consider for example symbols $a,b\in\sg{1,k}$ and $r\in\sg{0,l}$, and look at
	\begin{equation}
		\label{eq:poisson bracket first order}
		\{a+r,b\}=\{a,b\}+\{r,b\}.
	\end{equation}
	It is clear that $\{a,b\}\in\sg{1,k+l-1}$ and $\{r,b\}\in\sg{0,k+l-1}$, in view of the properties of the calculus. Then, the class of $\{a+r,b\}$ in $\SymG{1,\bullet}$ does not depend on $r$, the limits $\lim\limits_{\lambda\rightarrow\infty}\lambda^{-1}\del^j(a+r)(\lambda x,\xi)\del_jb(\lambda x,\xi)$ and $\lim\limits_{\lambda\rightarrow\infty}\lambda^{-1}\del_j(a+r)(\lambda x,\xi)\del^jb(\lambda x,\xi)$ exist, and do not depend on $r$ either. We can then directly compute, for $x\neq 0$, that
	\begin{equation}
		\label{eq:poisson bracket exit symbol}
		\begin{aligned}
			\sigma^1_e(\{a,b\})&=\lim\limits_{\lambda\rightarrow\infty}\lambda^{-1}(\del^ja(\lambda x,\xi)\del_jb(\lambda x,\xi)-\del_j a(\lambda x,\xi)\del^jb(\lambda x,\xi))\\
			&=\del^ja_e(x,\xi)\del_jb_e(x,\xi)-\del_ja_e(x,\xi)\del^jb_e(x,\xi)\\
			&=\{a_e,b_e\}.
		\end{aligned}
	\end{equation}
	Since this can be done in the same way for the other components of $\sigma_{pr}(\{a,b\})$, we conclude that $\SymG{\indi}$ is a Lie algebra with respect to the Poisson bracket acting component-wise.
	
	We now turn to examine the commutator of two $SG$-pseudo-differential operators. For $A,B\in\lg{\indi}$ with symbols $a,b$ the asymptotic expansions for the products $AB, BA$ can be written as
	\begin{equation}
		\label{eq:asymptotic expansions products}
		\begin{aligned}
			\sigma(AB)&\sim ab+\del_{\xi_j}a(x,\xi)D_{x^j}b(x,\xi)+\sum_{\abs\alpha\geq 2}\frac{1}{\alpha!}\del_{\xi}^\alpha a(x,\xi)D_{x}^\alpha b(x,\xi),\\
			\sigma(BA)&\sim ab+\del_{\xi_j}b(x,\xi)D_{x^j}a(x,\xi)+\sum_{\abs\alpha\geq 2}\frac{1}{\alpha!}\del_{\xi}^\alpha b(x,\xi)D_{x}^\alpha a(x,\xi),
		\end{aligned}
	\end{equation}
	so that, taking the difference, we obtain
	\begin{equation}
		\label{eq:asymptotic expansion commutator}
		\begin{aligned}
			\sigma([A,B])&\sim \im\left(\del_{\xi_j}a(x,\xi)\del_{x^j}b(x,\xi)-\del_{\xi_j}b(x,\xi)\del_{x^j}a(x,\xi)\right)\\
			&+\sum_{\abs\alpha\geq 2}\frac{\im^\alpha}{\alpha!}\left(\del_\xi^\alpha a\del_x^\alpha b-\del_x^\alpha a\del_\xi^\alpha b\right).
		\end{aligned}
	\end{equation}
	In view of the properties of $SG$-symbols we have $\del_\xi^\alpha a,\del_\xi^\alpha b\in\sg{-1,1},\del_{x}^\alpha a,\del_x^\alpha b\in\sg{1,-1}$ whenever $\abs{\alpha}\geq 2$, so that every time we take a product as in the second term in \eqref{eq:asymptotic expansion commutator} we obtain at most a symbol in $\sg{0}$. Hence, in the quotient it holds true that
	\begin{equation}
		\label{eq:principal symbol commutator}
		\sigma([A,B])\sim\im\{a,b\},
	\end{equation}
	and we can take advantage of \eqref{eq:poisson bracket exit symbol} and its $\psi-$ and $\psi e-$counterparts to get the required formulas at the level of principal symbols.
\end{proof}

\begin{rem}
	As one can directly deduce from the proof of Proposition \ref{prop:symplectic properties of sg}, it holds true that $\sigma_{pr}(\{a-\check{a},b\})=0$ for any $a,b\in\sg{\indi}$. Accordingly, we can also compute the Poisson bracket from the associated symbols as
	\begin{equation}
		\label{eq:associated symbol poisson bracket}
		\begin{aligned}
			\sigma^1_\psi(\{\check{p},\check{q}\})&=\{p_\psi,q_\psi\},\\
			\sigma^1_{e}(\{\check{p},\check{q}\})&=\{p_{e},q_{e}\},\\
			\sigma^\indi_{\psi e}(\{\check{p},\check{q}\})&=\{p_{\psi e},q_{\psi e}\}.
		\end{aligned}
	\end{equation}
\end{rem}

\subsection{$SG$-Fourier Integral Operators}
In this Section we briefly describe the calculus of $\Qcal$-operators of Andrews \cite{andrews2009SGFIOcomposition}. This is the class of FIOs $A\colon\Scal\rightarrow\Scal$ which we will need in our discussion in Chapter \ref{chap:OPI}. Most of the material hereafter is taken directly from \cite{andrews2009SGFIOcomposition}, with some notable exceptions. We begin with the standard class $\Qcal$, before introducing a slightly modified (compared with the original source) \textit{generalised type $\Qcal$ class}, in that we localise and assume classicality throughout. Subsequently, we give a small, albeit important to our purposes, generalisation of Coriasco's Egorov--type Theorem (Proposition 14 in \cite{coriasco1999fourierintegraloperators}) for operators in the class $\Qcal$. The functions $f,g$ appearing in the next definition, and also later in the definition of the generalised class, will be referred to as \textit{phase components}.

\begin{defin}
	\label{def:Q-phase function}
	We say that a real-valued function $\phi(x,y,\xi)=f(x,\xi)+g(y,\xi)$ is a \textit{type $\Qcal$ phase function}, and write $\phi=f+g\in\Qcal$, if the following assumptions are satisfied:
	\begin{enumerate}
		\item $f,g\in\sg{\indi}(\R^n\times\R^n)$;
		\item $\braket{\nabla_x f(x,\xi)},\braket{\nabla_y g(y,\xi)}\sim\braket{\xi}$
		\item $\braket{\nabla_\xi f(x,\xi)}\sim\braket{x};$
		\item $\braket{\nabla_\xi g(y,\xi)}\sim\braket{y};$
		\item $\det(\del_{x^i}\del_{\xi_j}f(x,\xi)),\det(\del_{y^i}\del_{\xi_j}g(y,\xi))\gtrsim 1;$
	\end{enumerate}
\end{defin}

\begin{defin}
	\label{def:Q-FIO}
	A \textit{type $\Qcal$ Fourier Integral Operator} ($\Qcal$-FIO) is an operator $A\colon\Scal(\R^n)\rightarrow\Scal(\R^n)$, defined by an oscillatory integral
	\begin{equation}
		\label{eq:Q FIO}
		FIO(\phi,a)u(x)=\int e^{\im(f(x,\xi)+g(y,\xi))}a(x,y,\xi)u(y)\de y\dbar\xi,
	\end{equation}
	with phase $\phi=f+g\in\Qcal$ and amplitude $a(x,y,\xi)\in\sg{{m_1,m_2,m_3}}(\R^{3n})$.
\end{defin}
\begin{prop}[Properties of $\Qcal$-FIOs]
	\label{prop:properties of QFIOs}
	Let $A$ be a $\Qcal$-FIO with symbol $a$ and phase $\phi=f+g$. Then
	\begin{enumerate}
		\item $A\colon\Scal\rightarrow\Scal$ is well-defined and continuous;
		\item With respect to the inner product $\inner{u}{v}=\int u(x)\bar{v(x)}\de x$ we have that the formal adjoint $A^\dagger$ is given by the $\Qcal$-FIO with phase $\phi^\dagger(x,y,\xi)=-g(x,\xi)-f(y,\xi)$ and symbol $a^\dagger(x,y,\xi)=\bar{a(y,x,\xi)}$;
		\item $A$ extends to $A\colon\Scal'\rightarrow\Scal'$ continuously.		
	\end{enumerate}
\end{prop}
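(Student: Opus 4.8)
The three assertions are of rather different character: (1) is the analytic core, (2) is a relabelling computation once one may manipulate oscillatory integrals, and (3) is a soft duality argument. I would treat them in that order. Throughout, the integral in \eqref{eq:Q FIO} is understood via regularisation, inserting a factor $\chi(\eps\xi)$ with $\chi\in\Cinf_c(\R^n)$, $\chi(0)=1$, and letting $\eps\to0$; every manipulation below is carried out on the regularised integral, and the bounds produced are uniform in $\eps$, which simultaneously establishes convergence and gives the estimates for $Au$.

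\emph{Claim (1).} The plan is a \emph{two-step integration by parts} exploiting the split $\phi=f(x,\xi)+g(y,\xi)$. Since $\nabla_y\phi=\nabla_y g$ and $\braket{\nabla_y g}\sim\braket\xi$ by Definition \ref{def:Q-phase function}(2), the first-order operator $^tL_1=|\nabla_y g|^{-2}\,\nabla_y g\cdot D_y$ fixes $e^{\im g}$, and each application contributes a factor $|\nabla_y g|^{-1}\sim\braket\xi^{-1}$; Definition \ref{def:Q-phase function}(5) ensures that $|\nabla_y g|^2$ is an elliptic $SG$-symbol of order $(0,2)$ on $\{\,\abs\xi\ge R\,\}$, so the coefficients of $L_1$ and all their derivatives are genuine $SG$-symbols there. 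Splitting the $\xi$-integral into $\{\abs\xi\le R\}$ (absolutely convergent, as $u$ is Schwartz and $a$ polynomially bounded) and $\{\abs\xi\ge R\}$, and iterating $L_1$ sufficiently often on the latter, shows that the $y$-integral converges and that
\begin{equation}
	\label{eq:intermediate b}
	b(x,\xi)\equiv\int e^{\im g(y,\xi)}a(x,y,\xi)u(y)\de y\in\sg{m_1,-\infty}(\R^n\times\R^n)\cong\Scal\bigl(\R^n_\xi,\sym{m_1}(\R^n_x)\bigr),
\end{equation}
the last identification being Lemma \ref{lemma:sg filtration properties}(2). Consequently $Au(x)=\int e^{\im f(x,\xi)}b(x,\xi)\dbar\xi$ with an absolutely convergent $\xi$-integral.

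\emph{Claim (1), continued.} It remains to see that $Au\in\Scal$ with $u\mapsto Au$ continuous. Differentiating in $x$ brings down powers of $\nabla_x f\in\sg{0,1}$, polynomially bounded in $\xi$ and hence harmless against the $\xi$-Schwartzness of $b$, while $\del_x$ hitting $b$ only lowers its $x$-order. To produce decay in $x$ one integrates by parts in $\xi$ with $^tL_2=|\nabla_\xi f|^{-2}\,\nabla_\xi f\cdot D_\xi$, which fixes $e^{\im f}$: by Definition \ref{def:Q-phase function}(3) one has $\braket{\nabla_\xi f}\sim\braket x$, so for $\abs x\ge R'$ the function $|\nabla_\xi f|^2$ is an elliptic $SG$-symbol of order $(2,0)$ (again using (5)), and each application of $L_2$ gains a factor $\braket x^{-1}$ while preserving Schwartzness in $\xi$. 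After enough iterations the integrand is bounded, for every $K$, by $\braket x^{-K}$ times a fixed Schwartz function of $\xi$; together with the trivial bound on $\{\abs x\le R'\}$ this yields $x^\gamma\del_x^\beta(Au)\in\leb\infty$ for all $\gamma,\beta$, i.e.\ $Au\in\Scal$. Since every constant that appears is a product of finitely many Schwartz seminorms of $u$ with constants depending only on $f,g,a$, the map $A\colon\Scal\to\Scal$ is continuous.

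\emph{Claims (2) and (3).} For $u,v\in\Scal$ one writes out $\inner{Au}{v}$ on the regularised integral, where Fubini applies, and passes to the limit using the estimates of Claim (1); relabelling the variables identifies the result as $\inner{u}{A^\dagger v}$ with $A^\dagger v(y)=\int e^{-\im(f(x,\xi)+g(y,\xi))}\bar{a(x,y,\xi)}v(x)\de x\dbar\xi$, which read with $y$ as output variable and $x$ as integration variable is exactly $FIO(\phi^\dagger,a^\dagger)$ with $\phi^\dagger(x,y,\xi)=-g(x,\xi)-f(y,\xi)$ and $a^\dagger(x,y,\xi)=\bar{a(y,x,\xi)}$. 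That $\phi^\dagger\in\Qcal$ is checked directly from Definition \ref{def:Q-phase function}: its conditions (1)--(5) for $\phi^\dagger$ are precisely conditions (1)--(5) for $\phi$ with the roles of $f$ and $g$ exchanged, using $\braket{\nabla_\xi g(x,\xi)}\sim\braket x$ from (4), $\braket{\nabla_\xi f(y,\xi)}\sim\braket y$ from (3), and $\bigl|\det(\del_{x^i}\del_{\xi_j}(-g))\bigr|=\bigl|\det(\del_{x^i}\del_{\xi_j}g)\bigr|\gtrsim1$; moreover $a^\dagger\in\sg{m_2,m_1,m_3}(\R^{3n})$. Hence $A^\dagger$ is again a $\Qcal$-FIO and, by Claim (1), maps $\Scal\to\Scal$ continuously, so $\inner{Au}{v}=\inner{u}{A^\dagger v}$ is a legitimate identity on $\Scal\times\Scal$, proving (2). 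For (3), the same computation with the bilinear pairing $\langle\cdot,\cdot\rangle$ in place of $\inner{\cdot}{\cdot}$ exhibits the formal transpose $^tA$ as the $\Qcal$-FIO with phase $g(x,\xi)+f(y,\xi)\in\Qcal$ and amplitude $a(y,x,\xi)$; by Claim (1) it maps $\Scal\to\Scal$ continuously, so $\langle Au,v\rangle\equiv\langle u,{}^tA v\rangle$ for $u\in\Scal'$, $v\in\Scal$ defines a continuous extension of $A$ to $\Scal'$, agreeing with the original operator on $\Scal\subset\Scal'$.

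\emph{Main obstacle.} The crux is Claim (1): the double integration by parts has to be arranged so that at each stage the coefficients are bona fide $SG$-symbols --- this is where Definition \ref{def:Q-phase function}(5) enters, and why one separates the regions where $\nabla_y g$ (small $\xi$), respectively $\nabla_\xi f$ (small $x$), may degenerate --- and the resulting bounds must be tracked through finitely many seminorms to secure continuity. Claims (2) and (3) then reduce, respectively, to a careful but routine use of Fubini in the oscillatory sense together with the observation that $\Qcal$ is closed under the symmetry $f\leftrightarrow g$, and to a soft duality argument.
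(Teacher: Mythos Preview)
The paper does not actually prove this proposition: it is stated without proof as part of the background material taken from Andrews~\cite{andrews2009SGFIOcomposition}. Your argument is the standard one and is substantively correct; the two-step integration by parts (first in $y$ to render the $\xi$-integral absolutely convergent, then in $\xi$ to produce decay in $x$) is exactly how such results are established, and your treatment of (2) and (3) via relabelling and duality is the canonical route.

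One small correction: you repeatedly invoke condition~(5) of Definition~\ref{def:Q-phase function} to justify that $|\nabla_y g|^2$ and $|\nabla_\xi f|^2$ are elliptic $SG$-symbols. This is a misattribution. Condition~(5) concerns the non-degeneracy of the mixed Hessians $\det(\del_{y^i}\del_{\xi_j}g)$ and $\det(\del_{x^i}\del_{\xi_j}f)$, which plays no role in the mapping properties you are proving (it enters instead in the composition calculus and symbol reductions). What you actually need is condition~(1), which gives $g\in\sg{\indi}$ and hence $\nabla_y g\in\sg{0,1}$, so that $|\nabla_y g|^2\in\sg{0,2}$ is an $SG$-symbol; and condition~(2), $\braket{\nabla_y g}\sim\braket\xi$, which furnishes the ellipticity on $\{\abs\xi\ge R\}$. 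Likewise, conditions~(1) and~(3) are what you need for $|\nabla_\xi f|^2$. With that adjustment the argument goes through as written.
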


We now introduce a generalised class of phases. We remark first that our upcoming definition differs slightly from the one in the original work of Andrews, in that he defines FIOs whose phases were asked to satisfy asymmetric assumptions in $x$ and $y$. Indeed, the conditions below were required to hold true globally in $y$ and only locally in $x$, in order to retain the global non-degeneracy of the second phase component. However, for our purposes, we only need the assumption of non-degeneracy to hold true on the supports of locally chosen amplitudes on a certain (singular) Legendrian submanifold. In other words, our discussion will always be localised and we have accordingly decided to ask that the same conditions hold true only locally for $x$ and $y$ variables.

We consider functions depending on $x,y\in\R^n$ and $\theta\in\R^{n+d}$ for some $n>0,d\geq 0$ (if $d=0$ then we set $\Qcal(a)=\Qcal$ in what follows). Also we relax some of the conditions imposed on $\Qcal$ to hold true only on the support of a given amplitude.
\begin{defin}
	\label{def:generalised Q-class}
	Let $a\in\sg{m_1,m_2,m_3}(\R^n\times\R^n\times\R^{n+d})$ be an $SG$-amplitude and $\phi(x,y,\theta)=f(x,\theta)+g(y,\theta)$ for some smooth $f,g$ and $(x,y,\theta)\in\R^n\times\R^n\times\R^{n+d}$. We write $\phi\in\Qcal(a)$ if, on $\supp (a)$, the following conditions hold true: we have $f,g\in\sg{\indi}$ with $\braket{\nabla_{\!x}\! f(x,\theta)},\braket{\nabla_{\!y}g(y,\theta)}\sim \braket{\theta}$, and we can find (possibly after rearranging) a splitting $\theta=(\xi,\eta)\in\R^n\times\R^d$ and an open set $V_\phi\subset\R^d$ with $\supp(a)\subset \R^n\times\R^n\times\R^n\times V_\phi$ such that:
	\begin{enumerate}
		\item
			$\braket{\nabla_{\!\xi}g(y,\theta)}\sim\braket{y}$ on $\R^n\times\R^{n}\times V_\phi$;
		\item
			$(\del_{y^i}\del_{\xi_j}g(y,\theta))$ has maximal rank on $\R^n\times\R^{n}\times V_\phi$ and the absolute value of its determinant is uniformly bounded away from 0;
		\item
			$\del_{y^i}\del_{\xi_j} g(y,\theta)\lesssim 1$ on $\R^n\times\R^{n}\times V_\phi$;
		\item
			For every fixed $y\in\R^n, \eta\in V_\phi$ we have $\abs{\de_{\;y}\!g(y,\theta)}\rightarrow\infty$ as $\abs\xi\rightarrow\infty$;
		\item 
			The same assumptions 1.--4. hold true also for $f(x,\theta)$ with $y^i$ replaced by $x^i$.	
	\end{enumerate}
	Given an amplitude $a\in\sg{m_1,m_2,m_3}$ and a phase $\phi\in\Qcal(a)$, the Fourier Integral Operator associated with $a$ and $\phi$ is defined by \eqref{eq:Q FIO}, replacing the variables $\xi$ with $\theta$. We will use the notation $\Qcal_{gen}$ to speak about operators in the classes $\Qcal(a)$ for an arbitrary $a\in\sg{m_1,m_2,m_3}$, and we will refer to the variables $\xi$ in a splitting as above as the \textit{regular variables} of the phase.
\end{defin}

The next result is (a specialisation to our setting of) Theorem 8.5.1 of \cite{andrews2009SGFIOcomposition}.
\begin{theo}[Composition of $\Qcal_{gen}$-operators]
	\label{theo:composition of Q operators}
	Given $a\in\sg{m_1,m_2,m_3},b\in\sg{l_1,l_2,l_3}$, phases $\phi=f(x,\theta)+g(y,\theta)\in\Qcal(a),\eta=u(y,\kappa)+v(z,\kappa)\in\Qcal(b)$ and corresponding operators $A=FIO(\phi,a),B=FIO(\varphi,b)$, the composition $A\circ B$ is, modulo a compact operator on $\R^n$, a Fourier Integral Operator of type $\Qcal_{gen}$. In particular, for each $p,q\in\R$ such that $p+q=m_1+m_2+l_1+l_2$ we can find an amplitude $c\in\sg{p,q,m_3+l_3}$ and a phase $\Phi\in\Qcal(c)$ such that $A\circ B=FIO(\Phi,c)$. If $\theta=(\xi,\eta)\in\R^{n+d_1}$ and $\kappa=(\mi,\nu)\in\R^{n+d_2}$ for $\xi,\mi\in\R^n$, the phase $\Phi$ has the form $\Phi(x,z,\gamma)=f(x,\theta)+h(x,\theta,\kappa)+v(z,\kappa)$ where $\gamma=(\mi,\tilde{y},\theta,\nu)$ are $3n+d_1+d_2$ frequency variables with $\mi$ the regular ones in a splitting as in Definition \ref{def:generalised Q-class}. Moreover $f+h$ and $v$ are phase components and $h$ satisfies:
	\begin{equation}
		\label{eq:extra phase term composition Q operators}
		d_1=d_2,\; g(y,\xi)=-u(y,\xi)\implies h(x,\theta,\kappa)=0.
	\end{equation}
\end{theo}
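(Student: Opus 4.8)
The plan is to reduce $A\circ B$ to a single oscillatory integral over an enlarged set of frequency variables, to peel off a residual (hence compact) remainder by a partition of unity, and then to bring what is left into the stated product form by an $SG$-uniform normal-form reduction modelled on the non-degeneracy hypotheses of $\Qcal(a)$ and $\Qcal(b)$.

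First I would write the Schwartz kernels $K_A(x,y)=\int e^{\im(f(x,\theta)+g(y,\theta))}a(x,y,\theta)\,\dbar\theta$ and $K_B(y,z)=\int e^{\im(u(y,\kappa)+v(z,\kappa))}b(y,z,\kappa)\,\dbar\kappa$, and form $K_{A\circ B}(x,z)=\int K_A(x,y)K_B(y,z)\,\de y$. Using $A,B\colon\Scal\to\Scal$ (Proposition \ref{prop:properties of QFIOs}) and the usual regularisation of oscillatory integrals (insert a cutoff tending to $1$, apply Fubini, pass to the limit), this is a single oscillatory integral over the $3n+d_1+d_2$ frequency variables $\gamma_0=(\tilde y,\theta,\kappa)$, with $\tilde y$ the old intermediate variable $y$, raw phase $\Phi_0(x,z,\gamma_0)=f(x,\theta)+g(\tilde y,\theta)+u(\tilde y,\kappa)+v(z,\kappa)$, and amplitude $c_0(x,z,\gamma_0)=a(x,\tilde y,\theta)\,b(\tilde y,z,\kappa)$. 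Since $\tilde y$ is now a frequency variable, the $\braket{\tilde y}$-estimates of $a$ and $b$ in their spatial slots become frequency estimates, so $c_0$ is an $SG$-amplitude in the sense of Definition \ref{def:sg amplitude}, of an order read off from those of $a$ and $b$; the exit orders combine to $m_1+m_2+l_1+l_2$ and may be redistributed between the two spatial slots by Lemma \ref{lemma:operators with amplitudes or symbols}.

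Now $\Phi_0$ is in general not a type $\Qcal$ phase: with any common set of regular variables the conditions of Definition \ref{def:generalised Q-class} fail (for instance $\Phi_0$ couples $x$ only to $\theta$ and never to $\kappa$). I would therefore split $c_0$ by a partition of unity. On the region where $\nabla_{\tilde y}\!\bigl(g(\tilde y,\theta)+u(\tilde y,\kappa)\bigr)$ stays away from $0$ this gradient is $SG$-elliptic in $\tilde y$ and, using also the $\theta$- and $\kappa$-gradients of $\Phi_0$ to confine $\tilde y$, repeated integration by parts yields a contribution with Schwartz kernel, i.e.\ an operator in $\RG$, which is compact by Theorem \ref{theo:fredholm}. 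On a conic neighbourhood of the critical set one has $\braket{\theta}\sim\braket{\kappa}$ (the two gradients must match in size there) and $\tilde y$ is correspondingly bound to $x$ and $z$, so the product structure of the $\Qcal$-class becomes attainable; the factor $\braket{\tilde y}^{m_2+l_1}$ in $c_0$ is then absorbed into the exit weights, accounting for the rebalancing $(m_1,l_2)\rightsquigarrow(p,q)$ with $p+q=m_1+m_2+l_1+l_2$ (again with the freedom of Lemma \ref{lemma:operators with amplitudes or symbols}), and the pseudo-differential order drops to $m_3+l_3$. On this region I would run a Kuranishi-type normal-form reduction: from the non-degeneracy of $(\del_{\tilde y}\del_\xi g)$, $(\del_x\del_\xi f)$ given by $\phi\in\Qcal(a)$ and of $(\del_{\tilde y}\del_\mu u)$, $(\del_z\del_\mu v)$ given by $\varphi\in\Qcal(b)$, the implicit function theorem, applied $SG$-uniformly, provides a change of the frequency variables (allowed to depend on $x$ and $z$) taking $\gamma_0$ to $\gamma=(\mi,\tilde y,\theta,\nu)$ and $\Phi_0$ to $\Phi(x,z,\gamma)=f(x,\theta)+h(x,\theta,\kappa)+v(z,\kappa)$ with $f+h$ and $v$ phase components and $\mi$ the regular variables. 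This transforms $c_0$ by a Jacobian and a substitution; the transformed amplitude together with the full tail of the associated expansion is summed into a single $SG$-amplitude $c\in\sg{p,q,m_3+l_3}$ by asymptotic completeness (Theorem \ref{theo:asymptotic completeness}), the expansion's remainder being residual and so compact. One then checks $\Phi\in\Qcal(c)$: on $\supp c$ one has $\braket{\nabla_x(f+h)},\braket{\nabla_z v}\sim\braket{\gamma}$ (this is where the support localisation enters), conditions 1.--4.\ of Definition \ref{def:generalised Q-class} for the $z$-component $v$ are inherited verbatim from $\varphi\in\Qcal(b)$, and condition 5.\ for $f+h$ is exactly the non-degenerate $x$--$\mi$ coupling produced by the reduction. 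For the displayed special case $d_1=d_2$ and $g(y,\xi)=-u(y,\xi)$, the intermediate phase equals $u(\tilde y,\kappa)-u(\tilde y,\theta)$, the phase of an elliptic $SG$-$\Psi$DO in $\tilde y$ with respect to $\theta,\kappa$; the $\tilde y$- and $\theta$-integrations then collapse onto $\theta=\kappa$ by a Fourier-inversion argument in the spirit of Lemma \ref{lemma:sg operators are continuous} and Theorem \ref{theo:composition}, leaving $\Phi=f(x,\theta)+v(z,\theta)$, i.e.\ $h=0$.

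The hard part will be the $SG$-uniform normal-form reduction on the critical region: showing that the intermediate-variable critical point exists, is unique, and depends on all parameters with derivatives controlled by the $SG$ weights; that every term and the remainder of the resulting expansion obey $SG$-estimates; and, most delicately, that the reduced phase $\Phi$ genuinely satisfies condition 5.\ of Definition \ref{def:generalised Q-class} with respect to the new regular variables $\mi$, i.e.\ that the reduction really creates a non-degenerate $x$--$\mi$ Hessian. Tracking how exit versus pseudo-differential orders rebalance through the reduction is the main bookkeeping burden; by contrast, the passage from the iterated to a single oscillatory integral, the $SG$-amplitude property of $c_0$, the non-stationary estimates, and the $z$-side of the $\Qcal(c)$ conditions are all routine given the results recalled above.
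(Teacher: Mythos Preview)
The paper does not give its own proof of this theorem: it is stated as ``(a specialisation to our setting of) Theorem 8.5.1 of \cite{andrews2009SGFIOcomposition}'' and then used as a black box. So there is no in-paper argument to compare against; the relevant benchmark is Andrews' proof.

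Your outline is essentially the route Andrews takes: form the composed kernel as a single oscillatory integral with raw phase $\Phi_0=f+g+u+v$ and the old $y$ promoted to a frequency variable $\tilde y$; cut into a region where $\nabla_{\tilde y}(g+u)$ is $SG$-elliptic (integration by parts yields a Schwartz-kernel, hence compact, remainder) and a neighbourhood of the stationary set where $\braket{\theta}\sim\braket{\kappa}$; on the latter, run an $SG$-uniform stationary-phase/Kuranishi reduction driven by the non-degeneracy hypotheses on $g$ and $u$. Two remarks on points where your sketch is looser than what is actually needed. First, the passage from $g(\tilde y,\theta)+u(\tilde y,\kappa)$ to the single term $h$ is not a change of frequency variables in the usual Kuranishi sense but rather the substitution of the critical point $\tilde y=\tilde y(\theta,\kappa)$ obtained from $\nabla_{\tilde y}(g+u)=0$ (via the global inverse function theorem for $SG$-maps, using the Hessian bounds in Definition \ref{def:generalised Q-class}); the $\tilde y$ slot survives in $\gamma$ only through the amplitude expansion, not through the phase, which is why $\Phi$ as stated carries no explicit $\tilde y$. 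Second, the verification that $f+h$ is a genuine phase component with $\mi$ as regular variables---in particular the non-degeneracy of $\del_x\del_{\mi}(f+h)$ on $\supp c$---does not follow from the $x$--$\xi$ non-degeneracy of $f$ alone; one has to chain the implicit-function estimates for $\tilde y(\theta,\kappa)$ through $g,u$ to produce the required mixed Hessian bound. This is the step you flag as ``most delicate'', and rightly so: it is where most of the work in Andrews' Chapter 8 sits.
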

\begin{cor}
	\label{cor:calculus of Q-operators}
	The class $\Qcal_{gen}$ satisfies:
	\begin{enumerate}
		\item 
			$\lg{}\circ\Qcal_{gen}\circ\lg{}\subset \Qcal_{gen}$.
		\item 
			If $A\in\Qcal_{gen}$ with phase $\phi=f+g$ and amplitude $a(x,y,\theta)$, then $A^\dagger\in\Qcal_{gen}$ with phase $\phi^\dagger=(-g)+(-f)$ and amplitude $a^\dagger(x,y,\theta)=a(y,x,\theta)$. In particular if $a(x,y,\theta)=a(y,x,\theta)$ then $A\in\Qcal(a)$ if and only if $A^\dagger\in\Qcal(a)$.
		\item 
			For each $a\in\sg{m_1,m_2,m_3}$ we have $\Qcal(a)\circ\Qcal(a)^\dagger\subset\lg{},\Qcal(a)^\dagger\circ\Qcal(a)\subset\lg{}.$
		\item 
			$A\in\Qcal_{gen}$ extends to a continuous operator $A\colon\Scal'\rightarrow\Scal'$.
	\end{enumerate}
\end{cor}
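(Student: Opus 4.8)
The strategy is to deduce every item from the composition theorem (Theorem \ref{theo:composition of Q operators}) together with the adjoint formula of Proposition \ref{prop:properties of QFIOs}, once we record that each $SG$-$\Psi$DO is a $\Qcal$-FIO with degenerate phase. Indeed, for $P=\Op(p)\in\lg{m}$ we may write $P=FIO(\phi_0,p)$ with $\phi_0(x,y,\xi)=(x-y)\cdot\xi$; its phase components $f_0(x,\xi)=x\xi$ and $g_0(y,\xi)=-y\xi$ lie in $\sg{\indi}$, satisfy $\braket{\nabla_x f_0}=\braket{\nabla_y g_0}=\braket{\xi}$, $\braket{\nabla_\xi f_0}=\braket{x}$, $\braket{\nabla_\xi g_0}=\braket{y}$, and have mixed Hessians $\pm\id$, so $\phi_0$ is a type $\Qcal$ phase in the sense of Definition \ref{def:Q-phase function}, while $p$, regarded as a $y$-independent $SG$-amplitude of order $(m_e,0,m_\psi)$, is an admissible amplitude. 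Hence $\RG\subset\lg{}\subset\Qcal\subset\Qcal_{gen}$.

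For item (1), write an arbitrary $A\in\Qcal_{gen}$ as $FIO(\phi,a)$ with $\phi=f(x,\theta)+g(y,\theta)\in\Qcal(a)$ and apply Theorem \ref{theo:composition of Q operators} to $P\circ A$ and $A\circ Q$ for $P,Q\in\lg{}$: since one factor carries the linear phase $\phi_0$, the theorem returns a $\Qcal_{gen}$-FIO (with phase $f_0$ or $f$ on the left, $g$ or $g_0$ on the right, plus the reduction term $h$). The one point of care is that Theorem \ref{theo:composition of Q operators} gives equality only modulo a compact operator; when a $\Psi$DO factor is involved the residual error is in fact regularizing, and since $\RG\subset\Qcal_{gen}$ and $\phi$ is non-degenerate on the support of the composed amplitude, such an error is reabsorbed into the $\Qcal_{gen}$-representation (a Schwartz kernel admitting an oscillatory-integral representation with the phase $\phi$). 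Iterating yields $\lg{}\circ\Qcal_{gen}\circ\lg{}\subset\lg{}\circ\Qcal_{gen}\subset\Qcal_{gen}$.

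For item (2), the computation establishing Proposition \ref{prop:properties of QFIOs}(2) is purely formal in $(x,y,\theta)$ --- one only transposes the oscillatory integral defining $FIO(\phi,a)$ against $u,v\in\Scal$ --- so it carries over verbatim with $\xi$ replaced by $\theta$, giving $A^\dagger=FIO(\phi^\dagger,a^\dagger)$ with $\phi^\dagger(x,y,\theta)=-g(x,\theta)-f(y,\theta)$ and $a^\dagger(x,y,\theta)=a(y,x,\theta)$ (the conjugate of Proposition \ref{prop:properties of QFIOs}(2) being absent since $SG$-amplitudes are real-valued, cf.\ Definition \ref{def:sgsymbols}). That $\phi^\dagger\in\Qcal(a^\dagger)$ follows because $\supp(a^\dagger)$ is the image of $\supp(a)$ under $(x,y,\theta)\mapsto(y,x,\theta)$, the roles of $f$ and $g$ are exchanged up to an overall sign, and every clause of Definition \ref{def:generalised Q-class} --- membership in $\sg{\indi}$, $\braket{\nabla_x f}\sim\braket{\theta}$, $\braket{\nabla_\xi g}\sim\braket{y}$, maximal rank and boundedness of the mixed Hessian, and $\abs{\de_y g}\to\infty$ as $\abs{\xi}\to\infty$ --- is invariant under swapping the base variables and under negation, the same regular-variable splitting $\theta=(\xi,\eta)$ and the same $V_\phi$ being used. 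The symmetric-amplitude clause is then immediate: $a(x,y,\theta)=a(y,x,\theta)$ gives $a^\dagger=a$ and $\phi^\dagger\in\Qcal(a)\iff\phi\in\Qcal(a)$.

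For item (3), given $A=FIO(\phi,a)\in\Qcal(a)$ with $\phi=f+g$, apply Theorem \ref{theo:composition of Q operators} to $A\circ A^\dagger$ and $A^\dagger\circ A$: the leading phase component of $A^\dagger$ is $-g$, so the hypothesis of \eqref{eq:extra phase term composition Q operators} holds, $h\equiv0$, and the composite phase collapses to $f(x,\theta)-f(z,\theta)$, the surplus frequency variables being integrated against the amplitude alone to produce an amplitude in $(x,z,\theta)$. By the global non-degeneracy of $f$ this phase is equivalent, after a change of frequency variable, to the standard phase $(x-z)\cdot\eta$, so the composition is of the form $\Op(c)$ for an $SG$-amplitude $c$ and lies in $\lg{}$ by Lemma \ref{lemma:operators with amplitudes or symbols} --- and likewise whenever two phases compatible with $a$ are paired so as to trigger \eqref{eq:extra phase term composition Q operators}. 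I expect this to be the main obstacle: one must track the frequency variables through the two successive stationary-phase reductions behind Theorem \ref{theo:composition of Q operators} carefully enough to confirm that the residual integral really yields a bona fide $\Psi$DO amplitude, rather than merely the compact remainder available in general. Finally, item (4) is immediate from (2): $A^\dagger\in\Qcal_{gen}$ maps $\Scal\to\Scal$ continuously (the $\Qcal_{gen}$-analogue of Proposition \ref{prop:properties of QFIOs}(1), whose estimates are local in nature), so its transpose is a continuous map $\Scal'\to\Scal'$ restricting to $A$ on $\Scal$, exactly as in Proposition \ref{prop:properties of QFIOs}(3).
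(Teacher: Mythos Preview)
Your proposal is correct and follows the same route the paper intends: the paper gives no explicit proof of this corollary, remarking only that ``in view of the last theorem and in particular of the properties of the function $h$, the statements in the corollary are a generalisation of the corresponding assertions in Proposition~\ref{prop:properties of QFIOs}''. You fill in precisely these details---embedding $\lg{}$ into $\Qcal$ via the linear phase $x\xi-y\xi$, invoking Theorem~\ref{theo:composition of Q operators} for items (1) and (3), extending the adjoint computation of Proposition~\ref{prop:properties of QFIOs} to the $\Qcal_{gen}$ setting for item (2), and deducing (4) by duality.

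One small comment: for item (3) you reduce the phase $f(x,\theta)-f(z,\theta)$ to standard form by hand and flag the stationary-phase bookkeeping as ``the main obstacle''. In the paper's framework this step is already absorbed into Andrews' composition results (cf.\ Theorem~\ref{theo:composition of QFIO}(2) for the $\Qcal$ case, whose proof extends to $\Qcal(a)$), so one can simply cite that rather than redo the reduction. Also, your justification for dropping the complex conjugate in $a^\dagger$ via Definition~\ref{def:sgsymbols} is literally consistent with the paper, though note that elsewhere (e.g.\ Theorem~\ref{theo:adjoint}) the paper does write $\bar{a}$, so this is a harmless ambiguity in the source rather than a feature of the argument.
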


Notice that, in view of the last theorem and in particular of the properties of the function $h$, the statements in the corollary are a generalisation of the corresponding assertions in Proposition \ref{prop:properties of QFIOs}. The following specialised composition results for the class $\Qcal$, Theorem 7.2.1 in \cite{andrews2009SGFIOcomposition}, also follow at once from the composition theorem for $\Qcal_{gen}$.

\begin{theo}
	\label{theo:composition of QFIO}
	Let $A,B$ be $\Qcal$-FIOs with amplitudes $a,b$ (of arbitrary orders $m,l\in\R^3$) and phases $\phi=f+g,\mathfrak{p}=r+s$. 
	\begin{enumerate}
		\item If $g(y,\xi)=-r(y,\xi)$ then $AB\in\Qcal$ with phase $f+s$ and symbol $c\in\sg{p,q,m_3+l_3}$, where we can choose $p,q$ so that $p+q=m_1+m_2+l_1+l_2$.
		\item If in addition $f(x,\xi)=-s(x,\xi)$, then $AB$ is an $SG$-$\Psi$DO and we can choose again $p,q$ with $p+q=m_1+m_2+l_1+l_2$ so that we have an amplitude $\tilde{c}\in\sg{p,q,m_3+l_3}$.
	\end{enumerate}
	Moreover, the amplitudes of $AB$ in both cases admits an asymptotic expansion (cf. \cite{andrews2009SGFIOcomposition}, Proposition 4.0.4).
\end{theo}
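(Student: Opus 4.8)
The plan is to deduce both parts from the composition theorem \ref{theo:composition of Q operators} by exploiting the special algebraic shape of the phases; for plain $\Qcal$-phases there are no auxiliary frequency variables, so the objects produced by that theorem simplify drastically. Concretely, I would start from the iterated oscillatory integral
\[
(ABu)(x)=\int e^{\im\left[f(x,\xi)+g(y,\xi)+r(y,\mi)+s(z,\mi)\right]}a(x,y,\xi)\,b(y,z,\mi)\,u(z)\,\de y\,\de z\dbar\xi\dbar\mi .
\]
For part (1), the hypothesis $g(y,\xi)=-r(y,\xi)$ is precisely the triggering condition of \eqref{eq:extra phase term composition Q operators}, so the cross term $h$ in Theorem \ref{theo:composition of Q operators} vanishes identically; equivalently, the $y$-phase reduces to $r(y,\mi)-r(y,\xi)$. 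I would then show that its critical manifold is exactly $\{\mi=\xi\}$, combining $\braket{\nabla_y r}\sim\braket{\mi}$ with the non-degeneracy $\det(\del_{y^i}\del_{\mi_j}r)\gtrsim1$ (which makes $\mi\mapsto\nabla_y r(y,\mi)$ injective with controlled inverse), and that the $(y,\mi)$-Hessian there has the block form $\bigl(\begin{smallmatrix}0 & B\\ \transp{B} & D\end{smallmatrix}\bigr)$ with $B=\del_y\del_\mi r$ invertible, hence is non-degenerate with $\abs{\det}\gtrsim1$. The $SG$-version of the stationary-phase lemma underlying Proposition 4.0.4 of \cite{andrews2009SGFIOcomposition} then applies and collapses the phase to $f(x,\xi)+s(z,\xi)=(f+s)(x,z,\xi)$, leaving an amplitude $c(x,z,\xi)$ with an asymptotic expansion whose orders add up to $p+q=m_1+m_2+l_1+l_2$ in the spatial slots and $m_3+l_3$ in $\xi$, the freedom in $(p,q)$ coming from Lemma \ref{lemma:operators with amplitudes or symbols}. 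That $f+s\in\Qcal$ is immediate, since $f$ and $s$ each inherit the defining conditions of Definition \ref{def:Q-phase function} from $\phi$ and $\mathfrak p$.

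For part (2), the extra hypothesis $f(x,\xi)=-s(x,\xi)$ turns the phase just obtained into $f(x,\xi)-f(z,\xi)$, a pseudo-differential-type phase, which I would linearise by Kuranishi's trick: Taylor's formula with integral remainder gives
\[
f(x,\xi)-f(z,\xi)=(x-z)\cdot\Psi(x,z,\xi),\qquad \Psi(x,z,\xi)=\int_0^1\nabla_x f\bigl(z+t(x-z),\xi\bigr)\,\de t ,
\]
and condition (5) of Definition \ref{def:Q-phase function} makes $\del_\xi\Psi$ uniformly invertible while condition (2) gives $\braket{\Psi(x,z,\xi)}\sim\braket{\xi}$. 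Hence $\xi\mapsto\eta:=\Psi(x,z,\xi)$ is a diffeomorphism with $SG$-controlled Jacobian preserving the weight $\braket{\xi}$; carrying out this substitution brings the integral to the standard form $\int e^{\im(x-z)\eta}\tilde c(x,z,\eta)\,u(z)\,\de z\dbar\eta$ with $\tilde c$ an $SG$-amplitude of order $(p,q,m_3+l_3)$, that is, $AB\in\lg{}$ by Definition \ref{def:sgoperator} and Lemma \ref{lemma:operators with amplitudes or symbols}. The asymptotic expansions of $c$ and $\tilde c$, separately in all three groups of variables, are again those furnished by Proposition 4.0.4 of \cite{andrews2009SGFIOcomposition}.

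The step I expect to cause the most trouble is the bookkeeping of the two $SG$-orders (exit order and pseudo-differential order) through the stationary-phase reduction in $(y,\mi)$ and through the change of variables $\xi\mapsto\eta$: one has to check that the Gaussian factors and Jacobian determinants produced by stationary phase are themselves $SG$-amplitudes of the expected orders, and that the polynomial growth in $x,y,z$ stays compatible with the phase conditions (3) and (4), which are $SG$-type estimates ``at spatial infinity''. Since the underlying critical geometry turns out to be clean, however, this is a lengthy but essentially routine adaptation of the classical stationary-phase computations, and most of it is already packaged in the proof of Theorem \ref{theo:composition of Q operators}.
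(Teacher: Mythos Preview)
The paper does not supply its own proof of this theorem: it is stated as Theorem 7.2.1 of \cite{andrews2009SGFIOcomposition} and the surrounding text only remarks that it ``follow[s] at once from the composition theorem for $\Qcal_{gen}$'' (Theorem \ref{theo:composition of Q operators}). Your sketch is a correct and standard way to unpack that remark, and is essentially the argument Andrews gives.

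One small point of presentation: you open by saying that the hypothesis $g=-r$ ``is precisely the triggering condition of \eqref{eq:extra phase term composition Q operators}, so the cross term $h$ vanishes''. That is true, but if you stop there and read off the phase from Theorem \ref{theo:composition of Q operators} you obtain $f(x,\xi)+s(z,\mi)$ with the $3n$ frequency variables $(\mi,\tilde y,\xi)$, which is still in $\Qcal_{gen}$ rather than $\Qcal$. The actual content of your argument is the direct stationary-phase reduction in $(y,\mi)$ that you carry out immediately afterwards, which collapses $\mi$ onto $\xi$ and eliminates $\tilde y$; this is what brings you back to a genuine $\Qcal$-phase $f(x,\xi)+s(z,\xi)$ in $n$ variables. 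So the reference to \eqref{eq:extra phase term composition Q operators} is motivational rather than logically load-bearing, and you might phrase it that way. Apart from this, the non-degeneracy check via the block Hessian and the Kuranishi linearisation for part (2) are exactly the right tools, and your identification of the $SG$-order bookkeeping as the main technical burden is accurate.
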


\begin{rem}
	We have to remark that Andrews does not assume classicality for any of the operator classes he introduces. However, looking at the asymptotic expansions he obtains, it becomes clear that the assumptions of classicality (for both phases and amplitudes) and integer order for all operators are preserved by his composition formul\ae\, and therefore we obtain without fuss a sub-calculus modelled after $\lg{}$.
\end{rem}

\begin{rem}
	\label{rem:QFIO and Coriasco-FIO}
	The FIOs of type $\Qcal$ are a direct generalisation of the operator calculus introduced by Coriasco \cite{coriasco1999fourierintegraloperators}. This calculus corresponds to the subclass where we always take $g(y,\xi)=-y\xi$ (type I operators) or $f(x,\xi)=x\xi$ (type II operators). It is an important feature for us that the Egorov theorem for type I and II operators (namely, Proposition 14 in \cite{coriasco1999fourierintegraloperators}) extends to $\Qcal$. It suffices for this to look at the first term in the asymptotic expansion of Proposition 4.0.4 of \cite{andrews2009SGFIOcomposition} and use the same formal proof as given by Coriasco. At the same time, we remark that $\Qcal$-FIOs are given by arbitrary compositions of type I and type II operators. Indeed according to 1.\ in Theorem \ref{theo:composition of QFIO}, when we compose a type I operator with phase $\phi=f(x,\xi)-y\xi$ and a type II operator with phase $\mathfrak{p}=x\xi-g(y,\xi)$, we obtain the $\Qcal$-operator with phase $\mathfrak{q}=f+g$. Furthermore, applying this line of thought in reverse, we see that any $\Qcal$-operator with phase $\mathfrak{q}=f+g$ can be written as a composition of a type I operator with phase $\phi=f(x,\xi)-y\xi$ and a type II operator with phase $\mathfrak{p}=x\xi-g(x,\xi)$. This justifies the following definition as a direct generalisation of Definition 9 in \cite{coriasco1999fourierintegraloperators}.
\end{rem}
\begin{defin}
	\label{def:elliptic FIO}
	An FIO of type $\Qcal$ is called \textit{elliptic} if its amplitude is elliptic in the sense of Definition \ref{def:elliptic amplitudes}.
\end{defin}
\begin{prop}
	Let $A=FIO(\phi,a)$ be an elliptic FIO of type $\Qcal$. Then $A$ admits a parametrix $A^\sharp\in\Qcal$, namely an operator such that $AA^\sharp-I,A^\sharp A-I\in\RG$.
\end{prop}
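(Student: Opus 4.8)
The plan is to reduce the statement to the already available parametrix construction for elliptic $SG$-$\Psi$DOs (Theorem \ref{theo:fredholm}(2)) by passing through the formal adjoint, and to absorb the phase bookkeeping into the factorisation of a $\Qcal$-FIO as a type~I composed with a type~II operator recalled in Remark \ref{rem:QFIO and Coriasco-FIO}. Throughout, ``$\equiv$'' means ``equal modulo $\RG$''.

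First I would treat a single elliptic type~I operator $B=FIO(\alpha(x,\xi)-y\xi,b)$. By Proposition \ref{prop:properties of QFIOs}(2) its adjoint $B^\dagger$ is the type~II operator with phase $x\xi-\alpha(y,\xi)$ and amplitude $\overline{b(y,x,\xi)}$, which is again elliptic, since ellipticity of an amplitude only involves the non-vanishing of the principal symbol it quantises (Definition \ref{def:elliptic amplitudes}) and this is preserved under $(x,y,\xi)\mapsto\overline{b(y,x,\xi)}$. The phases of $B$ and $B^\dagger$ satisfy the matching hypotheses of Theorem \ref{theo:composition of QFIO}(2) on both sides, so $B^\dagger B$ and $BB^\dagger$ are $SG$-$\Psi$DOs. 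I would then check that they are \emph{elliptic}: by the asymptotic expansion for the composed amplitude (Theorem \ref{theo:composition of QFIO}, i.e.\ Proposition 4.0.4 of \cite{andrews2009SGFIOcomposition}), the leading symbol of $B^\dagger B$ is, up to a factor of the form $|\det(\del_{x^i}\del_{\xi_j}\alpha)|^{-1}$, the restriction of $|b|^2$ to the stationary set; conditions \textit{(2)--(5)} of Definition \ref{def:Q-phase function} force that determinant to be elliptic of order $(0,0)$ (bounded above as $\alpha\in\sg{\indi}$, and away from $0$ by \textit{(5)}), while \textit{(3)} controls the exit behaviour, so that all three components $\sigma_e,\sigma_\psi,\sigma_{\psi e}$ of the principal symbol of the leading term are non-vanishing on their domains. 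Hence $B^\dagger B,BB^\dagger\in\lg{}$ are elliptic and, by Theorem \ref{theo:fredholm}(2), admit parametrices $P,P'\in\lg{}$. Now $PB^\dagger$ is a $\Psi$DO composed on the left of a type~II operator, hence (matched phases again, Theorem \ref{theo:composition of QFIO}(1)) itself a type~II operator, and $(PB^\dagger)B=P(B^\dagger B)\equiv I$, so $PB^\dagger\in\Qcal$ is a \emph{left} parametrix of $B$; while $B^\dagger P'$ is a \emph{right} parametrix ($B(B^\dagger P')=(BB^\dagger)P'\equiv I$), a priori only in $\Qcal_{gen}$ by Corollary \ref{cor:calculus of Q-operators}(1). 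The standard identity $L\equiv L(BR)=(LB)R\equiv R$ — legitimate because $\RG$ is a two-sided ideal and every operator in play acts continuously on $\Scal$ and on $\Scal'$ (Proposition \ref{prop:properties of QFIOs}), so $\RG$ is absorbed on either side — shows $PB^\dagger\equiv B^\dagger P'$, whence $PB^\dagger$ is a two-sided parametrix of $B$ lying in $\Qcal$. Symmetrically, every elliptic type~II operator admits a two-sided parametrix which is a type~I operator.

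For the general elliptic $A\in\Qcal$, factor $A=A_1A_2$ with $A_1$ an elliptic type~I and $A_2$ an elliptic type~II operator (Remark \ref{rem:QFIO and Coriasco-FIO}) and set $A^\sharp:=A_2^\sharp A_1^\sharp$, with $A_1^\sharp$ (type~II) and $A_2^\sharp$ (type~I) the parametrices just built. Then $A_2^\sharp A_1^\sharp$ is a (type~I)$\,\circ\,$(type~II) composition with matched intermediate phase, so it lies in $\Qcal$ by Theorem \ref{theo:composition of QFIO}(1). Telescoping gives $AA^\sharp=A_1(A_2A_2^\sharp)A_1^\sharp\equiv A_1A_1^\sharp\equiv I$ and $A^\sharp A=A_2^\sharp(A_1^\sharp A_1)A_2\equiv A_2^\sharp A_2\equiv I$, using again that $\RG$ is a two-sided ideal absorbed by $\Qcal$-operators. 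Thus $A^\sharp\in\Qcal$ is the desired parametrix.

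The main obstacle is the ellipticity of $B^\dagger B$ and $BB^\dagger$ as $SG$-$\Psi$DOs: one must extract the three homogeneous principal-symbol components of the composed operator from the leading stationary-phase term of the composition formula and show that none of them degenerates. This is exactly where the full strength of the type~$\Qcal$ hypotheses enters — the uniform lower bound on the mixed Hessian $|\det\del_x\del_\xi f|\gtrsim1$ and the conditions $\braket{\nabla_\xi f}\sim\braket{x}$, $\braket{\nabla_\xi g}\sim\braket{y}$ governing the $\abs{x}\to\infty$ behaviour; dropping any of them could annihilate one of $\sigma_e,\sigma_\psi,\sigma_{\psi e}$ and break the reduction to Theorem \ref{theo:fredholm}. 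A secondary, bookkeeping-type point is keeping the parametrix inside $\Qcal$ rather than merely $\Qcal_{gen}$, which is precisely why the construction is routed through the type~I/II factors and the left-equals-right argument instead of forming $P\cdot A^\dagger$ directly for a general $\Qcal$-FIO $A$.
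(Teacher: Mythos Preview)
The paper does not give a proof of this proposition; it is stated without argument, with the implicit justification being Remark \ref{rem:QFIO and Coriasco-FIO} together with Coriasco's parametrix results for type~I and type~II operators (Theorem 10 and Corollary 2 in \cite{coriasco1999fourierintegraloperators}). This is exactly the strategy visible in the paper's proof of Theorem \ref{theo:SG Egorov}, where $A$ is factored as $BC$ with $B$ type~I and $C$ type~II, both elliptic, and the existence of $B^\sharp,C^\sharp$ is simply asserted.

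Your proposal follows the same route---factor $A=A_1A_2$, take $A^\sharp=A_2^\sharp A_1^\sharp$, and telescope---but goes further by reconstructing the type~I/II parametrix via the adjoint method ($B^\dagger B$ elliptic in $\lg{}$, invert that, left$\,=\,$right). This is the classical construction and is correct; it is precisely what Coriasco does, so you are effectively reproving the cited lemma rather than doing something new. One small point: when you invoke Remark \ref{rem:QFIO and Coriasco-FIO} to factor an \emph{elliptic} $A$ into \emph{elliptic} type~I and type~II pieces, note that the remark only asserts the factorisation at the level of phases; the amplitude splitting (e.g.\ placing all of $a$ on one factor and $1$ on the other, or the $\sqrt{a}$ used in the paper's Egorov proof) needs a word, though it is routine. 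Otherwise your argument is sound and matches the paper's intended one.
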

\begin{theo}[Egorov's Theorem for $\Qcal$]
	\label{theo:SG Egorov}
	Let $A$ be an elliptic global $\Qcal$l-FIO with phase $f(x,\theta)+g(y,\theta)$ and amplitude $a\in\sg{0}$, and let $P\in\lg{m}$. Then $A^\sharp PA\in\lg{m}$ and $\sigma_{pr}(A^\sharp P A)=C^\ast\sigma_{pr}(P)$ where $C$ is the triple of homogeneous symplectic maps defined by the principal symbols of the phase function $f+g$. Namely $C$ is given as a triple $(C_e,C_\psi,C_{\psi e})$ with each map acting by pull-back on the respective component of the principal symbol and such that $\phi_\bullet$ is a phase function parametrising the graph of $C_\bullet$ as 
	\[
	\nabla_\theta\phi_\bullet=0\implies \graph C_\bullet=(x,\nabla_x\phi_\bullet,y,\nabla_y\phi_\bullet).
	\]
\end{theo}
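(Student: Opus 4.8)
The plan is to reduce the statement to the composition calculus for type-$\Qcal$ operators together with Coriasco's Egorov theorem for type~I and type~II operators, and then to extract the geometry of the canonical transformation separately on each of the three principal symbols.

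First I would decompose $A=A_{\mathrm I}\circ A_{\mathrm{II}}$ into a type~I operator with phase $f(x,\theta)-y\theta$ and a type~II operator with phase $x\theta-g(y,\theta)$, as in Remark~\ref{rem:QFIO and Coriasco-FIO}, the whole amplitude being carried by $A_{\mathrm I}$. Since $a\in\sg{0}$ is elliptic, so are $A_{\mathrm I}$, $A_{\mathrm{II}}$ and $A$; choosing parametrices $A_{\mathrm I}^\sharp$ (type~II) and $A_{\mathrm{II}}^\sharp$ (type~I), the operator $A_{\mathrm{II}}^\sharp\circ A_{\mathrm I}^\sharp$ is a parametrix for $A$, hence $\equiv A^\sharp\bmod\RG$, and therefore
\[
A^\sharp PA\equiv A_{\mathrm{II}}^\sharp\,\bigl(A_{\mathrm I}^\sharp\,P\,A_{\mathrm I}\bigr)\,A_{\mathrm{II}}\bmod\RG .
\]
Coriasco's Egorov theorem (Proposition~14 in \cite{coriasco1999fourierintegraloperators}), applied to $A_{\mathrm I}^\sharp P A_{\mathrm I}$ and then to $A_{\mathrm{II}}^\sharp(\,\cdot\,)A_{\mathrm{II}}$, shows on the one hand that each factor lies in $\lg{m}$ --- so that $A^\sharp PA\in\lg{m}$ --- and on the other hand that at each step the principal symbol is transformed by pull-back along the homogeneous symplectomorphism parametrised by the corresponding phase component. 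One could instead compose directly: $P\circ A\in\Qcal_{gen}$ by Corollary~\ref{cor:calculus of Q-operators}(1) with the canonical relation of $A$, and since $A^\sharp$ inverts $A$ the relation of $A^\sharp\circ(P\circ A)$ collapses to the diagonal, so Theorem~\ref{theo:composition of QFIO}(2) gives $A^\sharp PA\in\lg{m}$ and the leading term of Andrews' expansion \cite{andrews2009SGFIOcomposition} (cf.\ Remark~\ref{rem:QFIO and Coriasco-FIO}) gives its symbol.

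The symbol computation itself is the one behind Coriasco's proof: the critical-point relations of the composed phase collapse all intermediate space and frequency variables and produce an $SG$-$\Psi$DO; at the critical point the leading amplitude is the product of the leading amplitudes of $A^\sharp$, of $P$ and of $A$ together with the stationary-phase Hessian factor; and since $A^\sharp$ is a parametrix of the elliptic $A$, this product collapses to $\sigma_{pr}(P)$ at top order, evaluated along the canonical relation parametrised by $\phi=f+g$. Tracking the composition order through $A=A_{\mathrm I}\circ A_{\mathrm{II}}$, this relation is $C=\chi_f\circ\chi_g$, characterised by $\nabla_\theta\phi=0\implies\graph C=\{(x,\nabla_x\phi,y,\nabla_y\phi)\}$, and $C^\ast$ acts componentwise on $\sigma_{pr}(P)$.

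It remains to identify $C$ as a triple of homogeneous symplectomorphisms. Conditions (1)--(5) of Definition~\ref{def:Q-phase function} are $SG$-estimates --- bi-order $(0,0)$ for the mixed Hessians $\del_{x^i}\del_{\theta_j}f$, $\del_{y^i}\del_{\theta_j}g$, with uniform lower bounds on their determinants, and bi-orders $(1,0)$, $(0,1)$ for the relevant gradients --- and since $\del_x,\del_\theta$ commute with the $\bullet$-principal symbol maps and both $\det$ and the gradient norms are continuous, all of them descend to each $f_\bullet,g_\bullet$, $\bullet\in\{e,\psi,\psi e\}$. Hence each $\phi_\bullet=f_\bullet+g_\bullet$ is a non-degenerate operator phase function (homogeneous in $x$, in $\theta$, or bi-homogeneous), its critical set $\{\nabla_\theta\phi_\bullet=0\}$ is the graph of a map $C_\bullet$, and $C_\bullet$ --- being a graph inside $(T^\ast\R^n\times T^\ast\R^n,\,\omega\ominus\omega)$ --- is a symplectomorphism of the corresponding homogeneity type; together with the multiplicativity (Theorem~\ref{theo:composition}) and compatibility (Lemma~\ref{lemma:symbol maps compatibility}) of the principal symbol maps, the top-order symbol found above assembles into the element $C^\ast\sigma_{pr}(P)=(C_e^\ast p_e,\,C_\psi^\ast p_\psi,\,C_{\psi e}^\ast p_{\psi e})$ of $\SymG{m}$. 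The main obstacle is exactly this last step --- showing that $\Qcal$ non-degeneracy survives the passage to the three principal symbols, so that the $C_\bullet$ are honest homogeneous symplectomorphisms and not mere relations, and that the three leading symbols glue into a single well-defined element of $\SymG{m}$; the rest is bookkeeping on the composition calculus already in hand and on Coriasco's formal computation.
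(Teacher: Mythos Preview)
Your proposal is correct and follows essentially the same route as the paper: factor $A$ as a type~I times a type~II operator and apply Coriasco's Proposition~14 twice. The paper's proof is in fact terser than yours --- it splits the amplitude as $\sqrt{a}$ on each factor and stops after the two applications of Proposition~14, without spelling out (as you do) why the non-degeneracy conditions descend to each principal component $\phi_\bullet$ or why the three $C_\bullet$ glue to an element of $\SymG{m}$; your final paragraph makes explicit what the paper leaves implicit.
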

\begin{proof}
    Write $A=BC$ for $B$ a type I operator with phase $f(x,\theta)+y\theta$ and amplitude $\sqrt{a}$ and $C$ a type II operator having phase $x,\theta+g(x,\theta)$  and amplitude $\sqrt{a}$ in the notation of Remark \ref{rem:QFIO and Coriasco-FIO}. Since $A$ is elliptic, both $B$ and $C$ are elliptic and admit parametrices $B^\sharp$ and $C^\sharp$, respectively of type II with phase $-x\theta-f(y,\theta)$ and of type I with phase $-g(x,\theta)-y\theta$. Moreover $APA^\sharp=BCPC^\sharp B^\sharp$, so that applying Proposition 14 in \cite{coriasco1999fourierintegraloperators} to $CPC^\sharp=Q$ gives that this composition is a $SG-\Psi$DO of order $m$. A second application of the same result to $BQB^\sharp$ gives the final claim.
\end{proof}

\section{Scattering geometry}
\label{chap:scattering geometry}
\setcounter{equation}{0}

\subsection{Manifolds with corners and scattering geometry}
We give hereafter a short account of basic definitions of manifolds with corners, smooth structures with corners and so on, adopting in essence the same conventions as in \cite{coriasco2019lagrangiandistributions}. A more detailed exposition can be found, for example, in \cite{melroseXXXXanalysismanifoldswithcorners}, while a comparison of the different existing notions can be found in \cite{joyce2010manifoldswithcorners}. Notice that, for the sake of simplicity and clarity, we prefer here an extrinsic approach.

\begin{defin}
	\label{def:chart with corners}
	A \textit{parametrised patch of dimension $d$, with corners of codimension $k$}, $0\leq k\leq d$, on a para-compact Hausdorff topological space $Z$, is a pair $(U,\phi)$ where $V\subset [0,\infty)^k\times\R^{d-k}$ is open and $\phi\colon U\rightarrow\phi(U)\subset Z$ is a homeomorphism. If we can choose $k=0$ then $(U,\phi)$ is just a parametrisation of an interior patch on $Z$ (namely, $\phi(U)\cap\del Z=\void$), while if we can have $k=1$ we say that $(U,\phi)$ is a parametrisation of a boundary patch. We say that a pair $(V,\psi)$ is a \textit{chart of dimension $d$, with corners of codimension $k$}, if $(U,\phi)\equiv(\phi(U),\psi^{-1})$ is a parametrised patch of dimension $d$, with corners of codimension $k$. We adopt the same terminology with respect to interior and boundary charts.
\end{defin}
\begin{defin}
	\label{def:manifold with corners}
	A \textit{$\Cinf$-manifold of dimension $d$, with corners of (maximal) codimension $k$} is a para-compact, second countable, Hausdorff topological space $Z$, together with a collection $\{(U_i,\phi_i)\}$ of parametrised patches of dimension $d$ and corners of codimension $k$, such that $\{\phi(U_i)\}$ covers $Z$ and, whenever $\phi_j(U_j)\cap\phi_i(U_i)\neq \void$, the changes of coordinates $\phi_{ij}=\phi_i^{-1}|_{\phi_j(U_j)\cap\phi_i(U_i)}\circ\phi_j\colon U_j\rightarrow U_i$ are smooth maps, in the sense that there exists a smooth map $\tilde{\phi}_{ij}\colon\tilde{U}_j\rightarrow\tilde{U}_i$, with open sets $\tilde{U}_i,\tilde{U}_j\subset\R^{k}\times \R^{d-k}$ containing $U_i,U_j$, respectively, satisfying $\tilde{\phi}_{ij}|_{U_j}=\phi_{ij}$. If, at every point $p\in Z$, we can find parametrised patches with $k=0$, then $Z$ is a \textit{smooth manifold}. Similarly, if all the patches can be picked with $k=1$, then $Z$ is a \textit{smooth manifold with boundary}.
\end{defin}
\begin{lemma}
	\label{lemma:manifold with corners}
	Let $Z$ be a $\Cinf$-manifold of dimension $d>0$, with corners of codimension $k$. There exists a smooth manifold $\tilde{Z}$ of dimension $d$, without boundary, such that $\mathring{Z}$ is open and non-empty in $\tilde{Z}$.
\end{lemma}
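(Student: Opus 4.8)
The plan is to build $\tilde Z$ by gluing together the ``thickened'' coordinate patches that are implicit in Definition~\ref{def:manifold with corners}. Since $Z$ is second countable, we would first pass to a countable, locally finite atlas $\{(U_i,\phi_i)\}_{i\in\N}$ with $U_i\subset[0,\infty)^{k_i}\times\R^{d-k_i}\subset\R^d$. For each pair $(i,j)$ whose chart images overlap, the definition furnishes an open subset of $\R^d$ containing the overlap domain $\Omega_j=U_j\cap\phi_j^{-1}(\phi_i(U_i))$ together with a smooth extension $\tilde\phi_{ij}$ of the transition map $\phi_{ij}$. Before gluing I would record two facts. First, $\tilde\phi_{ij}$ is a local diffeomorphism near $\Omega_j$: on the dense open subset $\Omega_j\cap((0,\infty)^{k_j}\times\R^{d-k_j})$ of $\Omega_j$ the map is an honest transition of the smooth manifold $\mathring Z$, hence has invertible differential; since $\tilde\phi_{ji}\circ\tilde\phi_{ij}$ equals the identity on that dense set, continuity of the differential together with the chain rule force $d\tilde\phi_{ij}$ to be invertible on all of $\Omega_j$, and the inverse function theorem lets us shrink so that each $\tilde\phi_{ij}$ is a diffeomorphism onto an open subset of $\R^d$. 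Second, such a diffeomorphism must carry the ``exterior'' of the model corner to the exterior, because on the genuine overlap it respects the corner stratifications of the two charts.

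Next I would form $\tilde Z$ as the quotient of $\bigsqcup_i\tilde U_i$ by the equivalence relation $\sim$ generated by $(x,i)\sim(\tilde\phi_{ji}(x),j)$ whenever the right-hand side is defined, where $\tilde U_i$ is a (suitably small) open neighbourhood of $U_i$ in $\R^d$ to be fixed in the last step. Since the $\tilde\phi_{ij}$ and their inverses are open maps, $\sim$ is an open equivalence relation and the quotient map $\pi$ is open; hence — once $\tilde Z$ is known to be a manifold — the images $\pi(\tilde U_i)$, with coordinates inherited from $\tilde U_i\subset\R^d$, form a smooth atlas modelled on open subsets of $\R^d$, so $\tilde Z$ is a smooth $d$-manifold \emph{without boundary}, and second countable by construction. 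On the interior pieces $U_i^\circ=U_i\cap((0,\infty)^{k_i}\times\R^{d-k_i})$ the extensions agree with the original transition maps, so $\phi_i(x)\mapsto\pi(x,i)$ defines a smooth open map $\mathring Z\to\tilde Z$ whose image $\bigcup_i\pi(U_i^\circ)$ is open; it is non-empty because every chart domain meets $(0,\infty)^{k}\times\R^{d-k}$. (The same formula on $\bigsqcup_i U_i$ in fact embeds all of $Z$ in $\tilde Z$.)

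The hard part is the simultaneous shrinking of the thickenings $\tilde U_i$ needed to make $\tilde Z$ Hausdorff and to ensure that $\sim$ introduces no spurious identifications — in particular that the map $\mathring Z\to\tilde Z$ above is injective. This step is delicate precisely because the chosen extensions $\tilde\phi_{ij}$ are non-canonical and need not satisfy the cocycle identity, so one really is forced to work with the \emph{generated} equivalence relation rather than to glue along a strict atlas. Here I would use the local finiteness of the atlas, the Hausdorffness of $Z$ itself, and the exterior-to-exterior property above, shrinking the $\tilde U_i$ far enough that: each $\pi|_{\tilde U_i}$ is injective; the graph of $\sim$ is closed in $\bigl(\bigsqcup_i\tilde U_i\bigr)^{2}$, which yields Hausdorffness; and any chain of identifications joining two interior points stays among the interior pieces, where it collapses to a chain of honest transition maps of $\mathring Z$. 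Granting this, $\mathring Z$ is an open, non-empty submanifold of the boundaryless manifold $\tilde Z$, as required. Everything apart from this shrinking is routine bookkeeping with the definitions.
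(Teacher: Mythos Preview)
The paper does not give a proof of this lemma; it is stated as background and attributed to the standard literature on manifolds with corners (Melrose, Joyce). So there is no ``paper's proof'' to compare with, and your attempt must stand on its own.

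Your overall architecture is the natural one, and you are right to flag the shrinking step as the crux. However, the specific mechanism you propose---passing to the \emph{generated} equivalence relation and then shrinking the thickenings $\tilde U_i$---cannot by itself repair the failure of the cocycle identity, and this breaks condition~(a) (injectivity of $\pi|_{\tilde U_i}$). Concretely: take a boundary point $p$ lying in three charts $U_1,U_2,U_3$, and an exterior point $\tilde x_1\in\tilde U_1\setminus U_1$ arbitrarily close to the coordinates of $p$. Then $(\tilde x_1,1)\sim(\tilde\phi_{21}(\tilde x_1),2)$ directly, while also $(\tilde x_1,1)\sim(\tilde\phi_{31}(\tilde x_1),3)\sim(\tilde\phi_{23}\tilde\phi_{31}(\tilde x_1),2)$. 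If the extensions do not satisfy $\tilde\phi_{23}\tilde\phi_{31}=\tilde\phi_{21}$ on the exterior (and generically they will not), two \emph{distinct} points of $\tilde U_2$ get identified, so $\pi|_{\tilde U_2}$ fails to be injective. Since any open $\tilde U_i\supset U_i$ must contain exterior points arbitrarily close to the triple intersection, no amount of shrinking removes this; the defect sits right up against $\partial Z$.

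The fix is not to shrink but to \emph{choose} the extensions so that the cocycle holds on a neighbourhood of $Z$. By local finiteness only finitely many triple (and higher) overlaps touch each chart, and on the closed set $U_i\cap\phi_i^{-1}(\phi_j(U_j)\cap\phi_k(U_k))$ the required identity already holds; one can then inductively redefine $\tilde\phi_{ik}$ near that set by $\tilde\phi_{jk}\circ\tilde\phi_{ij}$ and patch. With a genuine cocycle the gluing is along a strict atlas, $\pi|_{\tilde U_i}$ is automatically injective, and Hausdorffness reduces to the routine argument you sketched. Alternatively, one can bypass the gluing entirely by embedding $Z$ properly into some $\R^N$ (the standard Whitney-type argument works for manifolds with corners) and taking $\tilde Z$ to be a tubular neighbourhood of the image, or by using an iterated doubling across the boundary hypersurfaces.
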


\begin{defin}
	\label{def:smooth structure}
	The space of \textit{$\Cinf$-functions} on $Z$ is the set $\Cinf(Z)$ consisting of all restrictions of smooth functions from $\tilde{Z}$ to $Z$. If we do not specify further, every geometric object (for example, vector bundles, differential of a smooth map, and so on) defined on $Z$ is obtained as the restriction of the corresponding concept from $\tilde{Z}$.
\end{defin}

\begin{conv}
	In what follows, we \textit{always} assume that $Z$ is compact and that there exist a \textit{finite} collection of smooth functions $\rho_i, i\in I,$ on $\tilde{Z}$ such that $Z=\{p\in \tilde{Z}\sthat \forall i\in I\;\rho_i(p)\geq 0\}$ and such that, whenever for a sub-collection $J\subset I$ it holds true $\rho_j(p)=0$ for all $j\in J$, then the differentials $\de\rho_j $ are linearly independent. Near points $p$, for which each patch containing $p$ has codimension $k>0$ corners, we \textit{always} use coordinates in the form $(\rho_{i_1},\dots,\rho_{i_k},z_{j_1},\dots,z_{j_{d-k}})$ for $z$ coordinates on the codimension $k$ corner in the patch $U$. 
\end{conv}

\begin{rem}
	The structure of the spaces defined by our axioms corresponds to the notion of \textit{manifold with embedded corners} of Joyce \cite{joyce2010manifoldswithcorners}. Accordingly, we can always pick a local boundary-defining function, and since corners are embedded sub-manifolds we can always find a global one associated with any boundary hyper-surface.
\end{rem}

\begin{lemma}
	\label{lemma:stratified boundary}
	Any $\Cinf$-manifold $Z$ of dimension $d$, with corners of codimension $k$, admits a stratification $\cup_{i=0}^{k}Z_i$, where $Z_i$ is a $\Cinf$-manifold of dimension $d-i$, with corners of codimension $k-i$. We call the union of the strata $Z_i$ for $i\geq 1$ the \textit{boundary} of the manifold with corners $Z$.
\end{lemma}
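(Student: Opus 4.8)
The plan is to argue locally, using a normal form for the coordinates near a point together with the transversality of the global boundary-defining functions $\rho_i$ from our Convention. For $p\in Z$ set $\depth(p)=\#\{i\in I:\rho_i(p)=0\}$; this is manifestly well-defined. Since the $\de\rho_i$ vanishing at $p$ are linearly independent, and in any patch at most $k$ of the coordinates are of the form $[0,\infty)$, one checks that $\depth(p)$ coincides with the corner codimension of any patch through $p$, and in particular $\depth(p)\le k$; moreover, using that the corners are embedded, near $p$ one can choose a patch $(U,\phi)$ with $U\subset[0,\infty)^m\times\R^{d-m}$, $m=\depth(p)$, whose first $m$ coordinates $t_1,\dots,t_m$ are exactly the defining functions vanishing at $p$ and whose remaining coordinates $y_1,\dots,y_{d-m}$ are free. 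In such a patch a nearby point has depth $i$ precisely when exactly $i$ of $t_1,\dots,t_m$ vanish there. We then set $Z_i=\{p\in Z:\depth(p)=i\}$.

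First I would check that $\{Z_i\}_{i=0}^{k}$ partitions $Z$: the sets are pairwise disjoint by definition, their union is $Z$ because $\depth(p)\le k$ always, $Z_0=\mathring{Z}$, and from the local picture $\overline{Z_i}\subseteq\bigcup_{j\ge i}Z_j$, so the incidences are those expected of a stratification. For the manifold structure on $Z_i$: intersecting the adapted patch above with $Z_i$ and fixing which $i$ of the $t$'s vanish gives $\phi(\{t_{a_1}=\dots=t_{a_i}=0,\ t_b>0\text{ for the remaining }b\le m\})$, parametrised by $(y,(t_b)_{b\notin\{a_1,\dots,a_i\}})\in\R^{d-m}\times(0,\infty)^{m-i}$, an open subset of $\R^{d-i}$. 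These charts cover $Z_i$, and their transition maps are restrictions of the (smooth) transition maps of $Z$, the only ambiguity being the permutation and rescaling of the $t$-coordinates coming from the freedom in the choice of defining functions, which is itself smooth. Hence $Z_i$ is a $\Cinf$-manifold of dimension $(d-m)+(m-i)=d-i$.

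To obtain the stated corner codimension one passes from the open stratum $Z_i$ to the associated boundary faces: a codimension-$i$ face is the closure of a connected component of $Z_i$, and in the adapted coordinates around a point of depth $j\ge i$ its trace, after fixing the vanishing subset, is $(0,\infty)^{m-j}\times[0,\infty)^{j-i}\times\R^{d-m}$, a manifold with corners of codimension $j-i$, which is $\le k-i$ and equals $k-i$ at the deepest corners. Taking the disjoint union of all codimension-$i$ faces therefore produces a $\Cinf$-manifold of dimension $d-i$ with corners of codimension $k-i$; this is the manifold-with-corners structure we attach to $Z_i$. Finally one sets $\del Z=\bigcup_{i\ge 1}Z_i$ and verifies from the same local model that this is exactly the complement of $\mathring{Z}$, i.e. the boundary.

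The step I expect to be the main obstacle is precisely this passage to boundary faces: the naive set-theoretic union $\bigcup_{i\ge 1}\overline{Z_i}$ of boundary points need not itself be a manifold with corners — already the $1$-skeleton of a cube fails to be a $1$-manifold — so one must realise $Z_i$ as a disjoint union of embedded faces and argue face by face rather than on the ambient subset. Here the embeddedness hypothesis from the Convention, namely that each boundary hypersurface carries a global defining function and that the $\de\rho_j$ are independent wherever they simultaneously vanish, is exactly what supplies both the normal form used above and the fact that each face is an embedded submanifold; it is also what makes the identification of $\depth$ with the local corner codimension, and the attendant bookkeeping, go through cleanly.
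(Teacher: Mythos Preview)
The paper states this lemma without proof, treating it as a standard structural fact about manifolds with corners that is being recalled for later use; there is no argument to compare against. Your proposal supplies a correct and careful proof along the expected lines, and you rightly flag the one genuine subtlety: the set $\{p:\depth(p)=i\}$ is an open $(d-i)$-manifold with no corners, so to match the phrasing ``with corners of codimension $k-i$'' one must pass to the codimension-$i$ boundary faces (closures of the connected components), exactly as you do.
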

\begin{defin}
	\label{def:depth of a point}
	The \textit{depth} of a point $p\in Z$, $\depth(p)$, is the number of independent boundary-defining functions vanishing at $p$. Equivalently, it is the codimension of the boundary stratum $Z_i$ to which $p$ belongs.  
\end{defin}

\begin{rem}
	\label{rem:boundary embedded or not}
	Joyce makes a distinction between the boundary of $Z$, interpreted as a manifold with corners in its own right and admitting the stratification of Lemma \ref{lemma:stratified boundary}, and the embedded boundary of $Z$, which is in general only a topological manifold. In our definition we insist that the corners are embedded, and we must therefore adopt the second point of view. This forces us to define ``smooth'' functions on $\del Z$ as the restriction of a smooth function on $Z$ to $\del Z$, which does not agree in general with the concept of smooth function on the boundary interpreted according to Joyce's point of view. This is however only a minor inconvenience for our future purposes and we shall stick with this definition, more widespread in the context of singular analysis.
\end{rem}

\begin{rem}
	\label{rem:other singularities}
	Notice that our definition of manifolds with corners \textit{does not} include many other singular situations that have been considered in the literature. Indeed, on manifolds with corners, the function $\depth$ has the following property: if $\depth(p)=k$ and $0\leq s<k$, then any neighbourhood of $p$ (in the topology of $Z$) contains a point $q\neq p$ with $\depth q=s$. This is obviously untrue in other settings. For example, any closed cone (more generally, any manifold with a conical singularity) clearly doesn't have this property.
\end{rem}

\begin{defin}
	\label{def:interior sets and smooth functions}
	A relatively open set $U\subset Z$ is said to be \textit{interior} if $\bar{U}\cap\del Z=\void$. In case $\bar{U}\cap\del Z\neq 0$, we always assume that $\bar{U}\cap\del Z\subset U$, and we call $U$ either a \textit{boundary neighbourhood} or a \textit{corner neighbourhood}, depending on whether $U$ intersects only the stratum of codimension 1 or not. The set $\Cinf(U)$ of smooth functions on $U$ consists of all the restrictions of smooth functions on $Z$ to $U$. For $U$ a boundary neighbourhood, intersecting a corner of codimension $s$, the set $\rho_{i_1}^{m_1}\dots\rho_{i_s}^{m_s}\Cinf(U)$ consists of those functions $h\in\Cinf(\intern{U})$ such that $\rho_{i_1}^{-m_1}\dots\rho_{i_s}^{-m_s}h$ extends to a smooth function $\tilde{u}\in\Cinf(U)$. We have then a natural notion of smooth functions on a boundary hyper-surface, namely, the restriction of a function on a boundary neighbourhood $U$ to $U\cap\del Z$.
\end{defin}

\begin{rem}
	\label{rem:smooth functions across the corner}
	Notice that, in our setup, the boundary $\del Z$ is not itself a manifold with corners and does not carry a natural smooth structure. We obviate to this problem by choosing the following notion of smoothness. For a relatively open $V\subset \del Z$, intersecting the codimension 2 stratum $Z_2$ and no higher-codimensional stratum, the smooth functions $h\in\Cinf(V)$ are given by a pair of smooth functions $h=(f,g)$ on the two boundary hyper-surfaces such that $f|_{V\cap Z_2}=g|_{V\cap Z_2}$. The notion for higher-codimensional strata is defined accordingly. This notion of smoothness \textit{across the corner}, while in a certain sense arbitrary, fulfils the natural requirement that the function $f|_{Z_2}$ identifies with a smooth function \textit{on the corner}. We will see later that, for the scattering calculus of pseudo-differential operators, the principal symbols are identified with continuous function on the boundary of a certain manifold with corners, smooth across the corner according to this definition. Notice how this contrasts with Joyce's convention, which implies that the boundary of $Z$ carries a natural smooth structure given by considering it as a manifold with corners on its own right.
\end{rem}

\begin{defin}
	\label{def:smooth and vanishing at the boundary, extendible distributions}
	The space $\Cinfdot(Z)$ consists of those functions $f\colon Z\rightarrow \C$ such that $f$ and all it derivatives vanish at the boundary. The space of \textit{extendible distributions} on $Z$, $\Ecal'(Z)$, is the (topological) dual space of $\Cinfdot(Z,\Omega (Z))$, the sections of the density bundle having coefficients in $\Cinfdot(Z)$.
\end{defin}

On any manifold with corners there is a natural Lie sub-algebra $\VF_b(Z)$ of $\VF(Z)$, consisting of vector fields which are tangent to all boundary hyper-surfaces. Namely, on an interior neighbourhood $U$ we have $\VF_b(U)\cong\VF(U)$, while if $U$ is a boundary neighbourhood with corners of codimension $k$ then $\VF_b(U)$ is the Lie algebra generated, over $\Cinf(Z)$ and in standard local coordinates for a patch with codimension $k$ corners, by 
\begin{equation}
	\label{eq:generators of b-vector fields}
	\rho_{1}\del_{\rho_1},\dots,\rho_k\del_{\rho_k},\del_{x_1},\dots,\del_{x_{d-k}}.
\end{equation}
Equivalently, $V$ is a $b$-vector fields if $V\rho_i=\alpha_i\rho_i$ for any boundary-defining function on $Z$, where $\alpha_i$ are smooth functions. These have become known as \textit{b-vector fields} ($b$ for "boundary"). The dual $\Cinf(Z)$-module is the module of $b$-differential 1-forms ${}^b\Lambda^1(Z)$. Namely, it is the module generated, locally near the boundary, by 
\begin{equation}
	\label{eq:generators of b-differential 1-forms}
	\frac{\de\rho_1}{\rho_1},\dots, \frac{\de\rho_k}{\rho_k},{\de x^1},\dots,\de x^{d-k}.
\end{equation} 
There is an obvious perfect duality ${}^b\Lambda^1(Z)\times\VF_b(Z)\rightarrow \C$. Namely, this pairing identifies ${}^b\Lambda^1(Z)$ with the dual of $\VF_b(Z)$ and vice-versa.
\begin{lemma}
	\label{lemma:b-vector fields and 1-forms}
	\begin{enumerate}
		\item 
			There exist vector bundles ${}^bTZ\rightarrow Z$ and ${}^bT^\ast Z$ such that $\VF_b(Z)$ and ${}^b\Lambda^1(Z)$ are respectively the spaces of smooth sections of ${}^bTZ$ and ${}^bT^\ast Z$ over $Z$.  
		\item 
			If $U\subset Z$ is interior, then $\VF_b(U)\cong\VF(U)$ and ${}^b\Lambda^1(U)\cong\Lambda^1(Z)$.
		\item 
			There are natural vector bundle maps ${}^bTZ\rightarrow TZ$ and $T^\ast Z\rightarrow {}^bT^\ast Z$, dual to each other, which are isomorphisms over any interior neighbourhood.
		\item 
			The $b$-vector fields on a manifold with boundary $(Z,\rho)$ are identified, in a collared neighbourhood of $\del Z$, with the sections of $TZ$ having \textit{bounded length} with respect to the \textit{exact $b$-metric}
			\begin{equation}
				\label{eq:cylindrical metric}
				g=\frac{\de \rho^2}{\rho^2}+h,
			\end{equation}
			that is, $g(V,V)<\infty$ for any $b$-vector field $V$. Here $h$ is the pull-back of a metric $h_{\del Z}$ on the embedded boundary $\del Z$ to the collared neighbourhood $\del Z\times[0,1)$.
	\end{enumerate}
\end{lemma}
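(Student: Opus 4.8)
The plan is to build ${}^bTZ$ directly by patching the local frames given by the generators \eqref{eq:generators of b-vector fields}, take ${}^bT^\ast Z$ to be its dual, and then read off parts (2)--(4) from the explicit form of the transition maps and of the natural map into $TZ$. For (1): cover $Z$ by patches $U$ as in the Convention. On an interior patch set ${}^bTZ|_U=TZ|_U$; on a patch with codimension-$k$ corners and standard coordinates $(\rho_1,\dots,\rho_k,x^1,\dots,x^{d-k})$ declare ${}^bTZ|_U=U\times\R^d$ with distinguished frame $\rho_1\del_{\rho_1},\dots,\rho_k\del_{\rho_k},\del_{x^1},\dots,\del_{x^{d-k}}$, i.e.\ exactly the generators of $\VF_b(U)$. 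The substantive point is to check that on an overlap the change-of-frame matrix lies in $GL_d\bigl(\Cinf(U\cap U')\bigr)$. This rests on the Convention's hypotheses: the corners are embedded, so a boundary-defining function of a fixed hypersurface is well-defined up to a strictly positive smooth factor, $\rho_i'=a_i\rho_i$; and tangential coordinates transform smoothly. From $\rho_i'=a_i\rho_i$ one gets $\rho_i'\del_{\rho_i'}=\rho_i\del_{\rho_i}+(\Cinf\text{-combination of the other generators})$, and $\del_{x'^j}$ is an honest smooth combination of the generators, so the transition matrix is smooth and invertible up to the corner. This yields a $\Cinf$ vector bundle of rank $d$ with section module $\VF_b(Z)$ (globalising the patchwise statement with a subordinate partition of unity). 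Its dual ${}^bT^\ast Z$ has dual frame \eqref{eq:generators of b-differential 1-forms} and section module ${}^b\Lambda^1(Z)$, using the perfect pairing already noted. Equivalently one may invoke Serre--Swan after checking $\VF_b(Z)$ is finitely generated --- by compactness --- and locally free over $\Cinf(Z)$; this replaces the transition computation by the same local calculation.

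Part (2) is then immediate, since over an interior neighbourhood no boundary-defining function occurs and the generators \eqref{eq:generators of b-vector fields} and \eqref{eq:generators of b-differential 1-forms} reduce to the usual coordinate frames. For (3), the inclusion $\VF_b(Z)\hookrightarrow\VF(Z)$ is induced by a bundle map $\iota\colon{}^bTZ\to TZ$ sending $\rho_i\del_{\rho_i}$ to the honest vector field $\rho_i\del_{\rho_i}$ and $\del_{x^j}$ to $\del_{x^j}$; in the frames above its matrix is $\diag(\rho_1,\dots,\rho_k,1,\dots,1)$, invertible exactly where all $\rho_i\neq0$, hence an isomorphism over interior neighbourhoods. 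Dualising $\iota$ gives the required map $T^\ast Z\to{}^bT^\ast Z$ with transposed matrix, establishing the duality and the interior isomorphism.

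For (4): in a collar $\del Z\times[0,1)$ with defining function $\rho$ and tangential coordinates $x$, $\VF_b$ is generated over $\Cinf(Z)$ by $\rho\del_\rho$ and the $\del_{x^j}$; with $g=\rho^{-2}\de\rho^2+h$ one computes $g(\rho\del_\rho,\rho\del_\rho)=1$ and $g(\del_{x^i},\del_{x^j})=h_{ij}$, all smooth hence bounded on the compact $Z$, so every $b$-vector field has finite $g$-length. Conversely if $V=a\,\del_\rho+\sum_j b_j\del_{x^j}$ is smooth with $g(V,V)=a^2/\rho^2+|b|_h^2$ bounded near $\rho=0$, then continuity of $a$ forces $a|_{\rho=0}=0$, so by Hadamard's lemma $a=\rho\tilde a$ with $\tilde a\in\Cinf$, giving $V=\tilde a(\rho\del_\rho)+\sum_j b_j\del_{x^j}\in\VF_b(Z)$. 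Hence, under the identification provided by $\iota$, the $b$-vector fields are precisely the smooth sections of $TZ$ of bounded $g$-length.

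The genuine obstacle in all of this is the smoothness up to the boundary and corners of the transition matrices in (1) --- equivalently, local freeness of $\VF_b(Z)$ over $\Cinf(Z)$ in the Serre--Swan approach; everything else is a direct unwinding of definitions, the only analytic input being the one-line Hadamard-lemma argument just used.
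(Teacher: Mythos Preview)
The paper does not supply a proof of this lemma; it is stated as background, with the reader referred to Melrose's monograph on analysis on manifolds with corners. Your argument is correct and is essentially the standard one: patch the local frames \eqref{eq:generators of b-vector fields} (or equivalently invoke Serre--Swan after verifying local freeness), read off the anchor map and its interior invertibility from the diagonal matrix $\diag(\rho_1,\dots,\rho_k,1,\dots,1)$, and handle the metric characterisation by the Hadamard/Taylor lemma. The one place to be slightly more careful in writing it up is the overlap computation in (1): on a general overlap the two charts may have different corner codimensions, and the change of coordinates need not send each $\rho_i$ to a positive multiple of a single $\rho_j'$ --- one should instead use the coordinate-free characterisation $V\in\VF_b\iff V\rho_i\in\rho_i\Cinf$ for every boundary-defining function, from which local freeness (hence smooth transition matrices) follows cleanly. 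With that tweak the proof is complete.
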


\begin{rem}
	\label{rem:algebroids}
	Lemma \ref{lemma:b-vector fields and 1-forms} could be reformulated in the language of Lie algebroids. In particular, $\VF_b(Z)$ is a Lie algebroid with anchor map given by the natural bundle map of item 3. 
\end{rem}

Having concluded our brief recap on manifolds with corners, we recall the notion of \textit{scattering structure} of Melrose, which is known to yield a pseudo-differential calculus equivalent on $\R^d$ to the classical $SG$-calculus. This will be stated precisely later on.

\begin{defin}
	\label{def:scattering manifold}
	A topological space $X$ is called a \textit{scattering manifold} if $X$ is a compact manifold with boundary, with boundary defining function $\rho$, equipped with a Riemannian metric $g$ which in a collared neighbourhood of the boundary takes the form
	\begin{equation}
		\label{eq:scattering metric}
		g=\frac{\de \rho^2}{\rho^4}+\frac{h}{\rho^2}.
	\end{equation}
	In \eqref{eq:scattering metric} $h$ is a symmetric, 2-covariant tensor field containing no $\de\rho$ factors. Namely it is the pull-back of metric on $\del X$ to the collared neighbourhood $\del X\times [0,\eps)$.
\end{defin}

On a scattering manifold $X$, we have obviously a notion of $b$-vector fields given by the geometric structure. However, the scattering metric on $X$ is quite different from a $b$-metric. The structure of the manifolds near the boundary (``at infinity'') can be identified either with a cone (scattering) or with a cylinder ($b$-metric). Correspondingly, there is another Lie algebra of vector fields which describes the scattering structure. These so-called scattering vector fields are the elements of 
\begin{equation}
	\label{eq:scattering vector fields}
	\VF_{sc}(X)\equiv\rho\VF_b(X),
\end{equation}
that is, they are sections of $TX$ which are tangent to the boundary and have \textit{bounded length} w.r.t.\ $g$, i.e. $g(V,V)<+\infty$ for $V\in\VF_{sc}(X)$. They are generated (near the boundary $\rho=0$, parametrized by coordinates $y$) by 
\begin{equation}
	\label{eq:scattering vactor fields generators}
	\rho^2\del_\rho,\quad\rho\del_y.
\end{equation}
Furthermore, they are the sections of a vector bundle over $X$ called the \textit{scattering tangent bundle}, $\sc{T X}$. 

\begin{rem}
	\label{rem:algebroids 2}
	The process of constructing these Lie sub-algebras of $\VF(X)$, adapted to the geometric situation of interest, can be described in a much more general framework by the process of \textit{rescaling} of Lie algebroids. Building on ideas of Melrose (who first dealt with the rescaling of Lie sub-algebras of $\VF(X)$) and Scott (see \cite{scott2016geometryb^kmanifolds}), Lanius \cite{lanius2021symplecticpoissonscattering} described the process for a general Lie algebroid and initiated the study of scattering-symplectic manifolds, at the same time exploring the Poisson-geometric side of the matter. In this picture, the scattering algebroid $\sc{TX}$ is exactly the rescaling of the $b$-algebroid ${}^bTX$ along the algebroid of so-called $0$-vector fields of Mazzeo and Melrose, namely those vector fields which vanish at the boundary. However, at the boundary the $b$- and $0$-calculus are highly non-trivial, in the sense that there is a non-commutative algebra of ``indicial operators'' which need to be inverted when considering ellipticity. We will see that the situation for the scattering structure is much nicer.
\end{rem}

\begin{eg}
	\label{example:compactification on Rn}
	We can turn $\R^n$ into a scattering manifold by considering the \textit{radial compactification}\footnote{Different schools in the literature use different names for the object here described. The continental school tends to stick to the name \textit{stereographic compactification} for the map $R$ we are about to introduce, and reserves the term \textit{radial compactification} for the construction of the upcoming Remark \ref{rem:the other compactification 1}. Overseas, this last concept takes the name \textit{quadratic compactification}.}. It is obtained from the stereographic projection as follows. Consider $\Sfp$, the upper closed half-sphere of radius $1$ in $\R^{n+1}$ with coordinates $(x^1,\dots, x^{n+1})$, and identify $\R^n$ with the hyperplane $x^{n+1}=1$ in $\R^{n+1}$. A point $p\in\R^n$ is mapped bijectively to $q\in \mathring{\mathbb{S}}^n_{+}$ by taking the line $l_p$ joining $p$ to the origin and setting $q=$intersection of $l_p$ with $\Sfp$. Let us denote by $R$ this embedding. Then we ``add the points at $\infty$'' to $\R^n$ by embedding $\Sf{n-1}$ as the boundary of $\Sfp=\{(x^0,\dots,x^{n})\in\Sf{n}\sthat x^{n+1}\geq 0\}$ in the radially compactified picture. The lingo is justified by the fact that, approaching $\Sf{n-1}$ along a (half of a) maximal circle of $\Sf{n}$, we are in fact going to infinity along the corresponding ray in $\R^n$. We introduce coordinates near the boundary of $\Sfp$ as follows. Describe $\R^n$, at least outside a compact neighbourhood of $0$, using polar coordinates $(r,y)$ with $y$ angular coordinates (that is, coordinates on $\Sf{n-1}\subset\R^n$). Using $R$ we map this description to coordinates on the open half-sphere $\mathring{\mathbb{S}}^n_{+}$, and take $\rho\equiv1/r$. One shows easily that $\rho$ is a boundary-defining function and that the pull-back via $R$ of the Euclidean metric to $\Sfp$ produces a metric of the form \eqref{eq:scattering metric}, with $h=$ standard metric on $\Sf{n-1}$ embedded in $\R^n$. We also remark that the only (eventually) homogeneous functions on $\R^n$ which extend to a smooth function to $\Sfp$ are those of non-positive order. Specifically, those of order $0$ extend to the boundary with their radial limit while those of negative order take value $0$ at the boundary.
\end{eg}
\begin{rem}
	\label{rem:the other compactification 1}
	In the previous example, one might wonder why we don't consider, as a compactified space, the projection of $\Sfp$ onto the plane $x^{n+1}=1$, namely a closed ball $\B^n$ of radius $1$ in $\R^n$. Notice that, with such a choice, the projection map, certainly bijective and smooth $\Sfp\rightarrow \B^n$, does not have a smooth inverse, since this has a square-root singularity. On the other hand, stereographic projection from $(0,\dots,0,-1)$ onto $x^{n+1}=1$, restricted to the upper closed half-sphere, gives a diffeomorphism from $\Sfp$ to $\B^n$, so we can understand this from both points of view, if only with the need to make the correct identifications.  Notice, in addition, that, in the literature with an $SG$ approach, one often uses a different boundary defining function, namely one takes a diffeomorphism $Q$ of $\R^n$ onto the open ball $B_1(0)$, given for $\abs{x}>3$ by $Q(x)=\frac{x}{\abs{x}}\left(1-\frac{1}{\abs{x}}\right)$. For $[x]$ any smooth function such that $[x]=\abs{x}$ for $\abs{x}>3$, we obtain that $(Q^{-1})^\ast[x]$ is a boundary defining function. It can be checked directly that it is equivalent to $(R^{-1})^\ast \braket{x}$. Namely that, in sufficiently small neighbourhoods of the boundary, they are just a multiple of each other by a positive smooth function. It follows that the two approaches are really equivalent. A third approach, yet again equivalent, would be to map a point $x$ to $\frac{x}{\braket{x}}\in\B_1(0)$ and applying the same process as before. This would result in another choice of ``standard'' boundary-defining function. We will mainly stick to $\Sfp$ and $(R^{-1})^\ast\braket{x}$ for conceptual purposes. However we will at times switch to a different picture for convenience of notation.
\end{rem}

With any scattering manifold, as we have seen, is associated a rescaling of the tangent bundle. The dual construction applied to $T^\ast X$ yields the \textit{scattering cotangent bundle} $\sc{T^\ast X}$. Namely, it is the bundle whose sections are the rescaled 1-forms $\sc{\Lambda^1}(X)$, generated, as a $\Cinf(X)$-module near the boundary, by
\begin{equation}
	\label{eq:scattering cotangent bundle generators}
	\frac{\de\rho}{\rho^2},\quad \frac{\de y}{\rho}.
\end{equation}

In the scattering approach, it turns out that it's quite convenient to consider a compactified version of this space. Namely, given $X$ a scattering manifold and $\sc{T^\ast X}$ its scattering cotangent bundle, we compactify each fibre from $\R^n$ to $\Sfp$ with the map $R$ and consider the total space so obtained, which we denote by $\sc{\bar{T}^\ast X}$. This is now a manifold with corners. Indeed, we have two boundary defining functions $\rho_e$ and $\rho_\psi$, respectively, for the boundary $\del X$ and the boundary of the half-spheres in the compactification of the fibres. The common zero locus of these functions, i.e. the space $\rho_e=\rho_\psi=0$, is a codimension 2 corner. 

\begin{eg}
	\label{example:compactification of Rn 2}
	In the example of $X=\Sfp$, i.e.\ of the compactification of $\R^n$, we have that $\sc{T^\ast X}$ is trivial, so the compactification process in the fibres yields the manifold $\Sfp\times\Sfp$. The boundary of this manifold is traditionally called the \textit{$SG$-wave-front space} (cfr. \cite{coriasco2003wavefrontsetatinfinity} and \cite{cordes1995techniquepseudodifferentialoperators}) and can be decomposed as
	\begin{equation}
		\label{eq:boundary components scattering cotangent bundle}
		\del(\Sfp\times\Sfp)=(\Sf{n-1}\times \R^n)\;\dot\cup\; (\R^n\times\Sf{n-1})\;\dot\cup\;(\Sf{n-1}\times\Sf{n-1}).
	\end{equation}
	For later reference we denote the three pieces, respectively, by $\widetilde{\Wcal}_e,\widetilde{\Wcal}_\psi,\widetilde{\Wcal}_{\psi e}$.
\end{eg}

The \textit{scattering differential operators} on $X$ are the elements of $\Diff_{sc}(X)$, the $\Cinf(X)$-enveloping algebra of $\VF_{sc}(X)$. That is to say, $\Diff_{sc}(X)$ is the filtered algebra generated, on a boundary neighbourhood $U$, by $\rho^2 \del_\rho,\rho \del_y,1$ over $\Cinf(U)$, and isomorphic to $\Diff(U)$, the usual differential operators, if $U$ is an interior neighbourhood. There is a well-defined (principal) symbol map $\sigma_{sc}$ on scattering operators defined as follows. For a scattering vector field $V$, consider it as a section of $\sc{TX}$. At each point $p$, we can identify $V(p)$ with a linear map on the fibres of the dual bundle (since a finite-dimensional vector space is canonically isomorphic with its bi-dual), so $V(p)\colon \sc{T^\ast_p X}\rightarrow \C$, and obtain a smooth function on $\sc{T^\ast X}$. Set then $\sigma_{sc,1}(V)\equiv \im V$ and extend it multiplicatively to the whole $\Diff_{sc}^m(X)$ to a map $\sigma_{sc,m}$, taking values in $\Pol^{(m)}(\sc{T^\ast X})$ (as before, round brackets mean homogeneity). This gives the usual short exact sequence
\begin{equation}
	\label{eq:scattering symbol sequence}
	0\rightarrow \Diff_{sc}^{m-1}(X)\rightarrow\Diff_{sc}^m(X) \xrightarrow{\sigma_{sc,m}}\Pol^{(m)}(\sc{T^\ast X})\rightarrow0.
\end{equation}
Moreover, in view of the homogeneity of the polynomials in \eqref{eq:scattering symbol sequence}, we can identify the principal symbol$\sigma_{sc,m}(P)$ with a smooth function on $\sc{\Sf{\ast}X}$, the \textit{scattering co-sphere bundle} of $X$. This is just the sub-bundle of $\sc{T^\ast X}$ with fibre the sphere of radius 1 with respect to the inverse of the metric \eqref{eq:scattering metric}.

The main difference with the usual differential operators is the fact that invertibility of $\sigma_{sc,m}(P)$ does not guarantee the existence of a ``good'' parametrix (one with compact remainder). This is due to the fact that the coefficients of an ``elliptic'' operator might not have good growth/decay properties as $\abs{x}\rightarrow\infty$. There is, on the other hand, a way to take this behaviour into account, which we describe hereafter. For scattering vector fields, the Lie bracket satisfies
\begin{equation}
	\label{eq:commutator of scattering vector fields}
	[\VF_{sc}(X),\VF_{sc}(X)]\subset \rho\VF_{sc}(X),
\end{equation}
so that for each point $p\in\del X$ the evaluation map defines a Lie algebra homomorphism into a trivial (namely, commutative) Lie algebra
\begin{equation}
	\label{eq:normal homomorphism}
	N_{sc,p}\colon\VF_{sc}(X)\rightarrow \sc{T_pX}.
\end{equation}
Functions can be evaluated at a point, too, and the two evaluations are compatible (that is, $(fV)(p)=f(p)V(p)$), so we have a unique multiplicative extension to $\Diff_{sc}(X)$ with values in translation-invariant (namely, constant coefficients) differential operators on $\sc{T_pX}$. On a vector space, the Fourier transform identifies these with (non-homogeneous) polynomial functions, so that, at each point $p\in\del X$, we obtain a map
\begin{equation}
	\label{eq:scattering normal operator}
	\ft{N}_{sc,p}\colon\Diff_{sc}^m(X)\rightarrow \Pol^m(\sc{T^\ast_pX}).
\end{equation}

This is known as the \textit{normal symbol} or \textit{normal operator} and gives another short exact sequence,
\begin{equation}
	\label{eq:normal symbol sequence}
	0\rightarrow\rho\Diff_{sc}^m\rightarrow\Diff_{sc}^m\xrightarrow{\ft{N}_{sc,p}}\Pol^m(\sc{T^\ast_p X})\rightarrow0.
\end{equation}

Notice that the only relation between the symbol and the normal operator is that evaluation of the symbol at a boundary point should equal the leading term of the normal operator at that point (compare with the $SG$-principal symbol). Namely, at $p\in\del X$ it holds true
\begin{equation}
	\label{eq:scattering compatibility condition}
	\sigma_{sc,m}(P)|_p-\ft{N}_{sc,p}(P)\in\Pol^{m-1}(\sc{T^\ast_p X}).
\end{equation}

The two symbol maps are combined in the so-called \textit{joint symbol} map
\begin{equation}
	\label{eq:joint symbol}
	j_{sc,m}(P)\equiv(\sigma_{sc,m}(P),\ft{N}_{sc}(P))\in\sc{\widetilde{\Pol}^{m,0}}(X),
\end{equation}
where $\sc{\widetilde{\Pol}^{m,0}}(X)$ is the space of all pairs of functions $(q,\ft{N})$ with $q\in\Pol^{(m)}(\sc{T^\ast X})$, $\ft{N}\in\Pol^m(\sc{T^\ast_{\del X}X})$ and such that $\ft{N}-p|_{\del X}\in\Pol^{m-1}(\sc{T^\ast_{\del X}X})$. There is then a combined short exact sequence:
\begin{equation}
	\label{eq:scattering combined sequence}
	0\rightarrow \rho\Diff_{sc}^{m-1}(X)\rightarrow\Diff^m_{sc}(X)\xrightarrow{j_{sc,m}}\sc{\widetilde{\Pol}^{m,0}}(X)\rightarrow 0.
\end{equation}

\begin{lemma}
	\label{lemma:principal symbol scattering}
	The space $\sc{\widetilde{\Pol}^{m,0}}(X)$ can be canonically identified with a subalgebra of $\rho_{\sigma}^{-m} \Cinf(\del(\sc{\bar{T}^\ast X}))$. 
\end{lemma}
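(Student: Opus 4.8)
The plan is to construct an explicit injective linear map
\[
\Theta\colon\sc{\widetilde{\Pol}^{m,0}}(X)\longrightarrow\rho_\sigma^{-m}\Cinf\bigl(\del(\sc{\bar{T}^\ast X})\bigr)
\]
compatible with products, the ``subalgebra'' being then its image. Recall that $\sc{\bar{T}^\ast X}$ is a manifold with corners with two boundary hypersurfaces: the \emph{fibre face} $\mathrm{ff}=\{\rho_\sigma=0\}$ (the defining function denoted $\rho_\psi$ in the construction of $\sc{\bar{T}^\ast X}$), canonically the scattering co-sphere bundle $\sc{\Sf{\ast}X}$ over all of $X$, and the \emph{base face} $\mathrm{bf}=\{\rho_e=0\}$, the fibrewise radial compactification $\sc{\bar{T}^\ast_{\del X}X}$ of $\sc{T^\ast X}$ over $\del X$; they meet along the corner $\mathrm{ff}\cap\mathrm{bf}=\sc{\Sf{\ast}_{\del X}X}$ (the model of Example~\ref{example:compactification of Rn 2} being $\widetilde{\Wcal}_\psi$, $\widetilde{\Wcal}_e$, $\widetilde{\Wcal}_{\psi e}$). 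By Remark~\ref{rem:smooth functions across the corner}, $\Cinf(\del(\sc{\bar{T}^\ast X}))$ is the set of pairs $(u_{\mathrm{ff}},u_{\mathrm{bf}})\in\Cinf(\mathrm{ff})\times\Cinf(\mathrm{bf})$ agreeing on the corner, and I read $\rho_\sigma^{-m}\Cinf(\del(\sc{\bar{T}^\ast X}))$ as the restrictions to $\del(\sc{\bar{T}^\ast X})$ of the functions $f$ on the interior with $\rho_\sigma^{m}f\in\Cinf(\sc{\bar{T}^\ast X})$, such a restriction recording the leading coefficient $\rho_\sigma^{m}f|_{\mathrm{ff}}$ at $\mathrm{ff}$ and the corner-singular boundary value $f|_{\mathrm{bf}}$ at $\mathrm{bf}$.

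First I would build $\Theta$. Given $(q,\ft{N})\in\sc{\widetilde{\Pol}^{m,0}}(X)$: fibrewise homogeneity of $q\in\Pol^{(m)}(\sc{T^\ast X})$ gives $\widetilde q\equiv\rho_\sigma^{m}q\in\Cinf(\sc{\bar{T}^\ast X})$, with $\widetilde q|_{\mathrm{ff}}$ the principal symbol $\sigma_{sc,m}$ read as a smooth function on $\sc{\Sf{\ast}X}$ as after \eqref{eq:scattering symbol sequence}. Since $\ft{N}\in\Pol^{m}(\sc{T^\ast_{\del X}X})$ is a fibrewise polynomial of degree $\le m$, in the fibre compactification each monomial $\zeta^\alpha$ with $\abs\alpha\le m$ is $\rho_\sigma^{-\abs\alpha}$ times a smooth function, so $\rho_\sigma^{m}\ft{N}\in\Cinf(\mathrm{bf})$ (and it is polynomial, hence smooth, in the fibre interior). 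The compatibility condition \eqref{eq:scattering compatibility condition}, multiplied by $\rho_\sigma^{m}$, puts $\rho_\sigma^{m}(\ft{N}-q|_{\del X})$ in $\rho_\sigma\Cinf(\mathrm{bf})$, so it vanishes at $\rho_\sigma=0$; hence $\widetilde q|_{\mathrm{ff}}$ and $\rho_\sigma^{m}\ft{N}$ agree on $\mathrm{ff}\cap\mathrm{bf}$, i.e.\ $(\widetilde q|_{\mathrm{ff}},\rho_\sigma^{m}\ft{N})\in\Cinf(\del(\sc{\bar{T}^\ast X}))$ in the sense of Remark~\ref{rem:smooth functions across the corner}. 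I then set $\Theta(q,\ft{N})\equiv\rho_\sigma^{-m}(\widetilde q|_{\mathrm{ff}},\rho_\sigma^{m}\ft{N})\in\rho_\sigma^{-m}\Cinf(\del(\sc{\bar{T}^\ast X}))$.

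Injectivity follows since $\Theta(q,\ft{N})=0$ forces $\widetilde q|_{\mathrm{ff}}=0$, hence the fibrewise homogeneous polynomial $q$ vanishes on the unit co-sphere bundle and is $0$, and $\rho_\sigma^{m}\ft{N}=0$ on $\mathrm{bf}$, hence $\ft{N}$ vanishes in the fibre interiors and is the zero polynomial. For the product: the multiplication on $\sc{\widetilde{\Pol}^{\bullet,0}}(X)$ coming from composition in $\Diff_{sc}(X)$ — under which $\sigma_{sc}$ and $\ft{N}_{sc}$, hence the joint symbol, are multiplicative — is $(q_1,\ft{N}_1)(q_2,\ft{N}_2)=(q_1q_2,\ft{N}_1\ft{N}_2)$, again satisfying \eqref{eq:scattering compatibility condition} because the leading fibre-homogeneous parts multiply; $\Theta$ sends this to pointwise multiplication of the associated boundary functions, with $\Theta(\sc{\widetilde{\Pol}^{m_1,0}})\cdot\Theta(\sc{\widetilde{\Pol}^{m_2,0}})\subset\rho_\sigma^{-m_1-m_2}\Cinf(\del(\sc{\bar{T}^\ast X}))$. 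Thus $\bigcup_m\Theta(\sc{\widetilde{\Pol}^{m,0}}(X))$ is a filtered subalgebra of $\bigcup_m\rho_\sigma^{-m}\Cinf(\del(\sc{\bar{T}^\ast X}))$, each graded piece being a copy of $\sc{\widetilde{\Pol}^{m,0}}(X)$; this is the assertion.

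I expect the main obstacle to be the corner bookkeeping in the middle paragraph: verifying that the factor $\rho_\sigma^{-m}$ stripped from $q$ on $\mathrm{ff}$ and from $\ft{N}$ on $\mathrm{bf}$ is taken relative to one and the same defining function, so that \eqref{eq:scattering compatibility condition} becomes the corner-matching of Remark~\ref{rem:smooth functions across the corner} exactly, with no spurious lower-order discrepancy, and that $\widetilde q$ really extends smoothly across both faces. It is cleanest to run this first in the model $X=\Sfp$, $\sc{\bar{T}^\ast X}=\Sfp\times\Sfp$ of Example~\ref{example:compactification of Rn 2}, where $\rho_e,\rho_\sigma$ are global defining functions induced by $\braket{x}^{-1},\braket{\xi}^{-1}$ and homogeneity becomes an explicit Euler relation in the corner chart, and then patch; since the construction is local over $X$ and independent of the auxiliary choices up to the usual identifications, the general scattering manifold follows at once.
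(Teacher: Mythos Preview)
Your proof is correct and follows essentially the same approach as the paper: both identify the fibre face with the co-sphere bundle via homogeneity of $q$, place $\ft{N}$ on the base face, and use \eqref{eq:scattering compatibility condition} to match at the corner. Your version is considerably more detailed than the paper's very brief argument---in particular, you explicitly verify injectivity, multiplicativity, and the role of the $\rho_\sigma^{-m}$ weight, which the paper leaves implicit.
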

\begin{proof}
	A function $f$ is an element of $\Cinf(\del(\sc{\bar{T}^\ast X}))$ if it is given as $f=(f_N,f_\sigma)$ for two smooth functions $f_N\in\Cinf(\sc{T^\ast_{\del X} X})$, $f_\sigma\in\Cinf(\sc{\Sf{\ast}}X)$ satisfying \eqref{eq:scattering compatibility condition}. Clearly the normal symbol $\ft{N}_{sc}$ is such a $f_N$. On the other hand, identifying the boundary of the fibre-wise compactification with the co-sphere bundle $\Sf{\ast}X$, the function $\sigma_{sc,m}$ is determined by homogeneity by an element $f_\sigma\in\Cinf(\sc{\Sf{\ast}X})$. For a $P\in\Diff_{sc}^m(X)$, then we obtain a pair of functions as above, The proof is complete.
\end{proof}

To define pseudo-differential operators on a scattering manifold, we start with the model case of $\Sfp$. The Weyl calculus of Hörmander with respect to the temperate metric $g=\braket{x}^{-2}\de x^2+\braket{\xi}^{-2}\de\xi^2$ gives a class of operators on $\Scal(\R^n)$ having distributional kernels given by 
\begin{equation}
	\label{eq:scattering operators schwartz kernel}
	K(x,y)=\int e^{\im(x-y)\xi}p_L(x,\xi)\dbar \xi,
\end{equation}
where the function $p_L(x,\xi)$ is the \textit{left-symbol} of the operator $P$. Then, the function $p_L$ satisfies the estimates \eqref{eq:sgestimates}. Namely, $p_L$ is a symbol with respect to the above metric, the standard symplectic form and the order/weight function $\braket{x}^{l}\braket{\xi}^m$. Using the stereographic projection $R$ (recall the definition in Example \ref{example:compactification on Rn}) we can transfer these to $\Sfp$. Set $\Cinfdot(\Sfp)$ to be the space of all smooth functions on $\Sfp$ which vanish at the boundary together with all their derivatives. 
\begin{defin}
	\label{def:scattering pseudo-differential operator}
	The space of \textit{scattering-conormal pseudo-differential operators} on $\Sfp$, $\Psi^{l,m}_{scc}(\Sfp)$, is the set of all the linear operators $A\colon\Cinfdot(\Sfp)\rightarrow\Cinfdot(\Sfp)$ such that, if $P$ is defined by $R^\ast(Au)=P (R^\ast u)$ for all $u\in\Cinfdot(\Sfp)$, then $P$ is given as an operator with Schwartz kernel as in \eqref{eq:scattering operators schwartz kernel}, with a left symbol $p_L$ of order $(l,m)$.
\end{defin}
We let $R_2\equiv R\times R\colon\R^n\times\R^n\rightarrow\Sfp\times\Sfp$ be separate radial compactification in each factor and choose boundary defining functions $\rho_\sigma\colon\Sfp\times\Sf{n-1}\times [0,1)\rightarrow \R$, $\rho_N\colon\Sf{n-1}\times[0,1)\times\Sfp\rightarrow \R$ for the two boundary hyper-surfaces (for example, $R^\ast\rho_\sigma=\braket{\xi}^{-1},R^\ast\rho_N=\braket{x}^{-1}).$ Let $\Diff_{b}(\Sfp\times\Sfp)$ be the enveloping algebra (over $\Cinf(\Sfp\times\Sfp)$) of $\VF_b(\Sfp\times\Sfp)$, the so-called \textit{totally characteristic} or \textit{b-differential operators}. We define a space of distributions, conormal to boundary in the sense of Hörmander, of order $(l,m)$ as
\begin{equation}
	\label{eq:scattering conormal distributions}
	\begin{aligned}
		I^{l,m}(\Sfp\times\Sfp)\equiv\{u&\in\rho_N^{-l}\rho_\sigma^{-m}\leb{\infty}(\Sfp\times\Sfp)\\
		&\sthat\Diff_{b}(\Sfp\times\Sfp)u\subset\rho_N^{-l}\rho_\sigma^{-m}\leb{\infty}(\Sfp\times\Sfp)\}.	
	\end{aligned}
\end{equation}
This defines a global space of kernels whose microlocal representation is given by oscillatory integrals as in \eqref{eq:scattering operators schwartz kernel}. Indeed, it is easily seen that $p_L$ satisfies \eqref{eq:sgestimates} if and only if $p_L\in R_2^\ast I^{l,m}(\Sfp\times\Sfp)$ with $m_e=l,m_\psi=m$ (confer \cite{melrose1994spectralscatteringtheory}, Section 4).

\begin{rem}
	\label{rem:b and 0 normal symbol}
	For $b$- and $0$-differential operators, one has a well-defined normal operator $N_{p}$ at the boundary. However, the condition \eqref{eq:commutator of scattering vector fields} fails. Indeed, no extra vanishing factors appear when commuting elements of the forms $\rho\del_\rho,\rho\del_y,\del_y$, so the normal homomorphism takes values in a non-commutative algebra. This is the reason why those structures are much more complicated from an analytical perspective.
\end{rem}

To obtain classical operators, we refine Definition \ref{def:scattering pseudo-differential operator} by asking that the left-symbol is actually a (weighted) smooth function on $\Sfp\times\Sfp$.
\begin{defin}
	\label{def:classical scattering operators}
	The space of \textit{classical scattering pseudo-differential operators} $\Psi^{l,m}_{sc}(\Sfp)$ is the subspace of $\Psi^{l,m}_{scc}(\Sfp)$ consisting of those operators with
	\begin{equation}
		p_L\in R_2^\ast (\rho_N^{-l}\rho_\sigma^{-m}\Cinf(\Sfp\times\Sfp)).
	\end{equation}
	Despite a lot of what follows being true \textit{mutatis mutandis} also for the larger class $\Psi_{scc}$, in the sequel we will consider classical operators, avoiding repeated explicit mention.
\end{defin}

For the sake of completeness, we include a definition of scattering $\Psi$DOs on a general scattering manifold. Although this will not be needed in the sequel (we will only concern ourselves with classical scattering operators on $\Sfp\times\Sfp$), it reveals that the more ``global'' nature of the scattering calculus transfers more easily to general settings in comparison with the $SG$-calculus. On the other hand, however, we recall that the class of $SG$-manifolds as defined by Schrohe \cite{schrohe1987weightedsobolevspacesmanifolds} (or even just the class of $\Scal$-manifolds in the sense of Cordes \cite{cordes1995techniquepseudodifferentialoperators}) is significantly larger. The following lemma expresses coordinate invariance and is a direct consequence of the calculus of \cite{melrose1994spectralscatteringtheory}.

\begin{lemma}
	\label{lemma:scattering class is coordinate invariant}
	Let $F\colon\Sfp\rightarrow\Sfp$ be a diffeomorphism. Then for any $P\in\Psi_{sc}^{l,m}(\Sfp)$ we have $F_\ast P F^\ast\in\Psi_{sc}^{l,m}(\Sfp)$, namely, conjugation with a diffeomorphism defines an order-preserving automorphism of $\Psi_{sc}^{l,m}(\Sfp)$.
\end{lemma}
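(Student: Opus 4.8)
The plan is to transport everything through the stereographic compactification $R$ of Example \ref{example:compactification on Rn} and then to recognise the statement as a special case of the $SG$-Egorov theorem. By Definition \ref{def:classical scattering operators}, the assignment $A\mapsto\tilde A$, where $\tilde A$ is the operator on $\R^n$ determined by $R^\ast(Au)=\tilde A(R^\ast u)$, identifies $\Psi_{sc}^{l,m}(\Sfp)$ with $\lg{l,m}(\R^n)$, and under it $\tilde A=\Op(p_L)$ with left symbol $p_L\in\sg{l,m}(\R^n)$. A diffeomorphism $F$ of $\Sfp$ maps the interior to itself, so $\tilde F\equiv R^{-1}\circ F\circ R$ is a diffeomorphism of $\R^n$, and a short computation gives $\widetilde{F_\ast A F^\ast}=\tilde F_\ast\tilde A\tilde F^\ast$. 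Hence it suffices to show that, for every such $\tilde F$ and every $B\in\lg{l,m}(\R^n)$, one has $\tilde F_\ast B\tilde F^\ast\in\lg{l,m}(\R^n)$; applying this also to $F^{-1}$ shows that conjugation by $F$ restricts to a bijection of $\Psi_{sc}^{l,m}(\Sfp)$ onto itself for every $(l,m)$, and since it is an algebra isomorphism with inverse conjugation by $F^{-1}$, this is the asserted order-preserving automorphism.

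The substantive input is the observation that every diffeomorphism $\tilde F$ of $\R^n$ arising as $R^{-1}\circ F\circ R$ is an \emph{$SG$-diffeomorphism}: the components of $\tilde F$ and of $\tilde F^{-1}$ lie in $\sg{1,0}(\R^n)$, they are classical of order $(1,0)$, and $\braket{\tilde F(x)}\sim\braket{x}$ (so that $\det D\tilde F$ and $\det D\tilde F^{-1}$ are bounded above and away from $0$, equivalently $D\tilde F$ is uniformly invertible). This is exactly where the smoothness of $F$ up to $\del\Sfp=\Sf{n-1}$ enters: outside a compact set $\R^n$ is described in polar coordinates $(r,y)$, $y\in\Sf{n-1}$, with $\rho=1/r$ a boundary-defining function satisfying $R^\ast\rho\sim\braket{x}^{-1}$, and in these coordinates $\tilde F$ takes the form $(r,y)\mapsto\big(r\,a(\rho,y)^{-1},\,b(\rho,y)\big)$ with $a>0$ smooth on $[0,\eps)\times\Sf{n-1}$ and $b(\rho,\cdot)$ a smooth family of diffeomorphisms of $\Sf{n-1}$ (with $b(0,\cdot)=F|_{\del\Sfp}$); the stated symbol estimates and the existence of asymptotic expansions in homogeneous functions then follow from the chain rule, and the same argument applied to $F^{-1}$ handles $\tilde F^{-1}$.

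Granting this, the pull-back $\tilde F^\ast\colon u\mapsto u\circ\tilde F$ is an elliptic type I $\Qcal$-FIO in the sense of Definitions \ref{def:Q-phase function} and \ref{def:elliptic FIO} and Remark \ref{rem:QFIO and Coriasco-FIO}: its Schwartz kernel is $\int e^{\im(\tilde F(x)\cdot\theta-y\cdot\theta)}\dbar\theta$, so it equals $FIO(\phi,1)$ with amplitude $\equiv 1\in\sg{0}$ and phase $\phi(x,y,\theta)=f(x,\theta)+g(y,\theta)$, $f(x,\theta)=\tilde F(x)\cdot\theta$, $g(y,\theta)=-y\cdot\theta$; conditions 1.--5.\ of Definition \ref{def:Q-phase function} are precisely the $SG$-diffeomorphism properties of $\tilde F$ (for instance $\nabla_x f=\transp{D\tilde F(x)}\,\theta$ has $\braket{\cdot}\sim\braket{\theta}$ because $D\tilde F$ is uniformly invertible, $\nabla_\theta f=\tilde F(x)$ has $\braket{\cdot}\sim\braket{x}$, and $\det(\del_{x^i}\del_{\theta_j}f)=\det D\tilde F(x)\gtrsim 1$). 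Its exact inverse $\tilde F_\ast=(\widetilde{F^{-1}})^\ast$ is again such an operator and therefore serves as a parametrix. Applying Theorem \ref{theo:SG Egorov} with $A=\tilde F^\ast$, $A^\sharp=\tilde F_\ast$ and $P=B\in\lg{l,m}$ yields $\tilde F_\ast B\tilde F^\ast=A^\sharp B A\in\lg{l,m}$, with principal symbol the pull-back of $\sigma_{pr}(B)$ by the cotangent lift of $\tilde F$. Transporting back through $R$ gives $F_\ast P F^\ast\in\Psi_{sc}^{l,m}(\Sfp)$, completing the argument.

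The main obstacle is the second step: extracting the $SG$-symbolic estimates and the classicality of $\tilde F^{\pm1}$ from the bare smoothness of $F$ up to the boundary; once this is in place, everything follows from the already-established $\Qcal$-calculus, of which Theorem \ref{theo:SG Egorov} is a direct consequence. One could alternatively bypass the passage to $\R^n$ and argue intrinsically in the spirit of \cite{melrose1994spectralscatteringtheory}: the data defining $\Psi_{sc}^{l,m}(\Sfp)$ — the Lie algebra $\rho\,\VF_b$, the boundary weights, and the $b$-conormal kernel class — are natural with respect to $\Diff(\Sfp)$, while the kernel of $F_\ast P F^\ast$ differs from the $(F\times F)_\ast$-image of that of $P$ only by the positive smooth Jacobian $\abs{\det DF^{-1}}$; but this route still hinges on the same control of $F$ near the boundary.
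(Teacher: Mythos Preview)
Your argument is correct. The paper itself does not supply a proof of this lemma; it simply records it as ``a direct consequence of the calculus of \cite{melrose1994spectralscatteringtheory}'', so there is no in-paper argument to compare against in detail.

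Your route---transporting through $R$ to $\R^n$, recognising $\tilde F^\ast$ as an elliptic type~I $\Qcal$-FIO with phase $\tilde F(x)\cdot\theta - y\cdot\theta$ and amplitude $1$, and then invoking Theorem~\ref{theo:SG Egorov}---is a valid and self-contained alternative within the paper's own framework. The step you flag as the main obstacle, namely that $\tilde F^{\pm 1}$ have components in $\sg{1,0}$ with $\braket{\tilde F(x)}\sim\braket{x}$ and uniformly invertible Jacobian, is exactly the statement that $F$ is a local $sc$-map in the sense of Definition~\ref{def:scattering maps}: any diffeomorphism of a manifold with boundary sends $\del\Sfp$ to itself, so $F^\ast\rho=h\rho$ with $h>0$ smooth on $\Sfp$, and then the one-variable instance of Theorem~\ref{theo:scattering equals classical SG} gives $\tilde F^i=R^\ast\big(F^\ast((R^{-1})^i)\big)\in R^\ast(\rho^{-1}\Cinf(\Sfp))=\sym{1}_{cl}(\R^n)$, together with $\braket{\tilde F(x)}=h(R(x))^{-1}\braket{x}\sim\braket{x}$. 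The intrinsic argument (Melrose) instead verifies directly that the data defining $\Psi_{sc}^{l,m}$---the weighted $b$-conormality of the kernel near the lifted diagonal---are natural under $\Diff(\Sfp)$; your closing paragraph already sketches this. What your approach buys is that it stays entirely inside the machinery the paper has set up, and that the transformation law for the joint symbol comes for free from Theorem~\ref{theo:SG Egorov}; the price is that it is specific to the model case $\Sfp$, whereas the intrinsic argument generalises immediately to arbitrary scattering manifolds.
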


For a scattering manifold $X$, an operator $P\colon\Cinfdot(X)\rightarrow\Cinfdot(X)$ has a kernel which in general is an extendible distribution $K_P\in\Ecal'(X^2,\pi_R^\ast\Omega)$, where $\pi_R$ is the projection onto the second factor and $\Omega$ is the density bundle on $X$. We define regularising $\Psi$DOs as exactly those integral operators with kernel in $\Cinfdot(X^2,\pi_R^\ast\Omega)$. We notice that $R^\ast\Cinfdot(\Sfp)=\Scal(\R^n)$. We can then introduce the calculus on $X$ along the lines of the usual definition by localisation for manifolds without boundary, simply replacing any instance of `manifold' with `manifold with corners', `open set in $\R^n$' with `open set in $[0,\infty)^k\times\R^{n-k}$' and so on (effectively, we are giving the same definition as Definition 18.1.20 in \cite{hormander1994analysispseudodifferential} in the new category of `manifolds with corners', modelling our operators on $\Psi^{l,m}_{scc}(\Sfp)$). 

We collect some of the properties of this algebra before turning our attention to the symbol calculus for scattering operators.

\begin{prop}
	\label{prop:properties of scattering calculus}
	The following holds true.
	\begin{enumerate}
		\item 
			The spaces $\Psi_{sc}^{l,m}(X)$ sit in a partial order where $\Psi_{sc}^{l,m}(X)\subset\Psi_{sc}^{l',m'}$ if and only if $l\leq l'$ and $m\leq m'$; they form a bi-filtered algebra $\Psi_{sc}(X)$ under composition.
		\item 
			$\Diff_{sc}^m(X)\subset\Psi_{sc}^{0,m}(X)$. 
		\item 
			Multiplication with a boundary-defining function defines an order reduction for the first filtration, namely $\Lambda_N=M_\rho$ is a classical, invertible, scattering operator of order $\indi_e$.
		\item 
			The operator $\Lambda_\sigma=\sqrt{1-\Delta_{sc}}$, for $\Delta_{sc}$ the Laplace operator associated with the metric \eqref{eq:scattering metric}, is an order reduction for the second filtration, namely $\Lambda_\psi$ is a classical, invertible, scattering operator of order $\indi_\psi$.
	\end{enumerate}
\end{prop}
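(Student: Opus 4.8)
The plan is to reduce all four claims to the model situation $X=\Sfp$ and then globalise. On $\Sfp$, Definition~\ref{def:classical scattering operators} together with the equivalence noted after \eqref{eq:scattering conormal distributions} identifies $\Psi^{l,m}_{sc}(\Sfp)$ with $\lg{l,m}$ through the stereographic compactification $R$, so that every structural statement from Section~\ref{chap:sg calculus} transfers directly. For a general scattering manifold one passes to a finite atlas of interior and boundary patches, chooses a subordinate partition of unity, and glues using the coordinate invariance of $\Psi^{l,m}_{sc}(\Sfp)$ (Lemma~\ref{lemma:scattering class is coordinate invariant}); the fact that compositions are globally well defined without a proper-support hypothesis is exactly that off-diagonal kernel contributions are residual, which is built into the weighted $\leb{\infty}$-bounds in \eqref{eq:scattering conormal distributions}. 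Granting this, claim~(1) reads off the definitions: the partial order is the inclusion of those weighted spaces, and the bi-filtered algebra structure under composition is Theorem~\ref{theo:composition} transported through $R$ and then localised.

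For claim~(2) I would compute the image of $\VF_{sc}(\Sfp)$ under $R$. In polar coordinates with $\rho=1/r$, the generators in \eqref{eq:scattering vactor fields generators} become $\rho^2\del_\rho=-\del_r$ and $\rho\,\del_{y_j}=r^{-1}\del_{y_j}$, and writing $x=r\,\omega(y)$ these are $-\sum_k\omega_k(y)\,\del_{x_k}$ and $\sum_k(\del_{y_j}\omega_k)(y)\,\del_{x_k}$, whose coefficients are homogeneous of degree $0$ in $x$, hence (after an excision) classical $SG$-symbols of order $(0,0)$ --- cf.\ the closing remark of Example~\ref{example:compactification on Rn}. Thus $\VF_{sc}(\Sfp)$ pulls back into $\Op(\sg{0,1})$ and $\Diff^m_{sc}(\Sfp)$ into $\Op$ of degree-$\le m$ polynomials in $\xi$ with $\sg{0,0}$ coefficients, i.e.\ into $\lg{0,m}\cong\Psi^{0,m}_{sc}(\Sfp)$. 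Since $\Diff_{sc}(X)$ is generated over $\Cinf(X)$ by $\VF_{sc}(X)$, the general case follows by localisation.

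For claim~(3), by Remark~\ref{rem:the other compactification 1} any boundary-defining function of $\Sfp$ agrees near the boundary with a positive smooth multiple of $(R^{-1})^\ast\braket{x}$, so $\Lambda_N=M_\rho$ is, on $\Sfp$, multiplication by such a multiple of $\braket{x}^{-1}$. By Lemma~\ref{lemma:sg filtration properties}(4) multiplication by $\lambda^{\pm\indi_e}$ is an isomorphism of Fréchet algebras $\sg{p}\to\sg{p\pm\indi_e}$, and by Lemma~\ref{lemma:classical order reductions} $\Op\braket{x}$ is a classical, elliptic, invertible operator realising the corresponding order shift; hence $\Lambda_N$ and its inverse are classical, elliptic, invertible scattering operators implementing the order reduction in the first filtration. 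On a general $X$ the corners being embedded there is a global boundary-defining function, multiplication by it is a local operation, and classicality, ellipticity and invertibility are patch-wise.

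Claim~(4) is where the real work lies. First, the Laplace--Beltrami operator $\Delta_{sc}$ of the scattering metric \eqref{eq:scattering metric} has bounded coefficients when expanded in the generators $\rho^2\del_\rho,\rho\del_y$, so $\Delta_{sc}\in\Diff^2_{sc}(X)$ and, by~(2), $1-\Delta_{sc}\in\Psi^{0,2}_{sc}(X)$, with joint symbol $1+\abs{\xi}^2_g>0$ and hence elliptic in both the interior and the normal symbol; on $\leb{2}(X,\mathrm{dvol}_g)$ it is self-adjoint with spectrum in $[1,\infty)$, so invertible, and one obtains
\[
\Lambda_\sigma=\sqrt{1-\Delta_{sc}}=\frac{1}{2\pi\im}\int_{\Gamma} z^{1/2}\,\bigl(z-(1-\Delta_{sc})\bigr)^{-1}\de z
\]
for $\Gamma$ a contour around $[1,\infty)$ in the resolvent set. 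The plan is then to show $\bigl(z-(1-\Delta_{sc})\bigr)^{-1}\in\Psi^{0,-2}_{sc}(X)$ with symbol estimates depending on $z$ in an integrable way along $\Gamma$, which means developing the parameter-dependent scattering (equivalently $SG$) parametrix construction, using ellipticity of the full joint symbol to invert modulo residuals and tracking the $z$-dependence; then the displayed integral converges in $\Psi^{0,1}_{sc}(X)$, and classicality is preserved throughout by asymptotic completeness (Theorem~\ref{theo:asymptotic completeness}). A resolvent-free alternative on $\Sfp$: since $\braket{\xi}\in\sg{0,1}$ is elliptic, one solves the symbolic recursion so that the Leibniz product of $b$ with itself is asymptotic to $\braket{\xi}^2$, sums by Theorem~\ref{theo:asymptotic completeness}, and obtains $\Op(b)$ with $\Op(b)^2-(1-\Delta_{sc})\in\RG$; positivity of $1-\Delta_{sc}$ upgrades this to an exact invertible square root, and coordinate invariance extends the result to general $X$. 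I expect this parameter-dependent calculus --- the uniform resolvent bounds along $\Gamma$ and the convergence of the Cauchy integral in the correctly-ordered class --- to be the main obstacle, the rest being a fairly mechanical transcription of the $SG$-calculus through $R$ combined with Lemma~\ref{lemma:scattering class is coordinate invariant}.
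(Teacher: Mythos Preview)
The paper does not give a proof of this proposition: it is stated as a collection of known facts about the scattering calculus, and the surrounding text (``Here again we refer the reader to the cited literature for the majority of well-known proofs'') directs the reader to Melrose's \cite{melrose1994spectralscatteringtheory} and \cite{melrose1995geometricscattering} for the details. Your outline is therefore not competing with anything in the text itself, and the strategy you sketch --- reduce to the model case via the identification $\Psi^{l,m}_{sc}(\Sfp)\cong\lg{l,m}$ afforded by Definitions~\ref{def:scattering pseudo-differential operator}--\ref{def:classical scattering operators}, invoke the $SG$-results of Section~\ref{chap:sg calculus}, and globalise through Lemma~\ref{lemma:scattering class is coordinate invariant} --- is exactly the standard route and is correct in outline.

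Two minor comments. In (3), multiplication by $\rho$ pulls back under $R$ to multiplication by a positive smooth multiple of $\braket{x}^{-1}$, so $M_\rho$ has order $-\indi_e$ rather than $+\indi_e$; the statement in the paper seems to carry a sign slip, and you were right to hedge by speaking only of ``implementing the order reduction in the first filtration'' rather than committing to the sign. In (4), your identification of the parameter-dependent parametrix construction (uniform resolvent bounds along $\Gamma$ and convergence of the Cauchy integral in $\Psi^{0,1}_{sc}$) as the genuine work is accurate: this is precisely the Seeley-type complex-powers argument adapted to the scattering setting, and is carried out in the references the paper cites rather than in the paper itself.
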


Recall that for scattering differential operators the principal symbol is a continuous function on the boundary of $\sc{\bar{T}^\ast X}$, smooth across the corner in the sense of Remark \ref{rem:smooth functions across the corner}. We let $B_{sc}X\equiv\del(\sc{\bar{T}^\ast X})=\sc{\bar{T}^\ast_{\del X}X}\cup_{\sc{\Sf\ast_{\del X}X}}\sc{\Sf{\ast}X}$ and denote by $\Cinf(B_{sc}X)$ the set of smooth functions according to this definition. If we are given vector bundles $E_N,E_\sigma$ over $\sc{\bar{T}^\ast_{\del X}X},\sc{\Sf{\ast}X}$ respectively, with a specified identification of their restrictions to $\sc{\Sf\ast_{\del X}X}$, then we can also consider 
\begin{multline}
	\label{eq:smooth sections on CscX}
	\Cinf(B_{sc}X;(E_N,E_\sigma))=\{(u_N,u_\sigma)\in\Cinf(\bar{\sc{T^\ast_{\del X}X}};E_N)\times\Cinf(\sc{\Sf{\ast}X};E_\sigma)\\
	\sthat u_N|_{\sc{\Sf{\ast}_{\del X}X}}=u_\sigma|_{\sc{\Sf{\ast}_{\del X}X}}\}.
\end{multline}
Notice in particular that this is the case if we are given a vector bundle over the whole of $\sc{\bar{T}^\ast X}$.

\begin{lemma}
	\label{lemma:bundle of symbol sections}
	The elements of $\rho_N^{-l}\rho_\sigma^{-m}\Cinf(\sc{\bar{T}^\ast X})$ are the sections of a trivial bundle $S^{l,m}$ over $\sc{\bar{T}^\ast X}$, equipped with a $b$-connection. Namely, for every $V\in\VF_b(X)$ and every section $a$ of $S^{l,m}$, it holds true $Va\in S^{l,m}$.
\end{lemma}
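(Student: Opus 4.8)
The statement has two ingredients — the (trivial) line-bundle structure and the $b$-connection — and I would treat them in turn. For the bundle, the plan is to glue local pieces: cover $\sc{\bar{T}^\ast X}$ by coordinate patches adapted to its codimension-two corner, so that on each patch $U$ one has local boundary-defining functions for the two boundary hyper-surfaces (the one over $\del X$ and the one at fibre-infinity), and observe that $\rho_N^{-l}\rho_\sigma^{-m}\Cinf(U)$ is, by Definition \ref{def:interior sets and smooth functions}, a free rank-one $\Cinf(U)$-module with generator $\rho_N^{-l}\rho_\sigma^{-m}$. On overlaps the change of generator is multiplication by a ratio of boundary-defining functions raised to the powers $l$ and $m$, i.e.\ by a smooth strictly positive function, so the local modules glue to the smooth sections of a line bundle $S^{l,m}\rightarrow\sc{\bar{T}^\ast X}$. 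Triviality is then immediate: since the corners of $\sc{\bar{T}^\ast X}$ are embedded (cf.\ the Convention in Section \ref{chap:scattering geometry} and the Remark following it) there exist \emph{global} boundary-defining functions $\rho_N,\rho_\sigma$, and $\lambda^{l,m}\equiv\rho_N^{-l}\rho_\sigma^{-m}$ — smooth and nowhere vanishing on the interior — furnishes a global frame; equivalently, $f\mapsto\lambda^{l,m}f$ identifies $\Cinf(\sc{\bar{T}^\ast X})$ with $\rho_N^{-l}\rho_\sigma^{-m}\Cinf(\sc{\bar{T}^\ast X})$, so one may simply take $S^{l,m}$ to be the trivial bundle equipped with this identification.

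For the connection, write $\pi\colon\sc{\bar{T}^\ast X}\rightarrow X$ for the projection and lift $V\in\VF_b(X)$ via the complete (cotangent) lift $V\mapsto\tilde V$. The key structural check is that $\tilde V\in\VF_b(\sc{\bar{T}^\ast X})$: in adapted coordinates $(\rho,y;\tau,\mu)$ on $\sc{T^\ast X}$ (with $\rho=\rho_N$ and a scattering covector written $\tau\,\rho^{-2}\de\rho+\mu_j\,\rho^{-1}\de y^j$) the lift of the generator $\rho\del_\rho$ is $\rho\del_\rho+\tau\del_\tau+\mu_j\del_{\mu_j}$ (equal to $\rho\del_\rho-\rho_\sigma\del_{\rho_\sigma}$ near fibre-infinity) and that of $\del_{y^j}$ is $\del_{y^j}$, both visibly tangent to the two boundary hyper-surfaces, while the lift of a $\Cinf(X)$-multiple differs from the obvious guess by a vertical vector field, which is again a $b$-vector field of $\sc{\bar{T}^\ast X}$. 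Setting $\nabla_V a\equiv\tilde V a$ and writing a section as $a=\lambda^{l,m}f$ with $f\in\Cinf(\sc{\bar{T}^\ast X})$, the defining property $\tilde V\rho_N=\alpha_N\rho_N$, $\tilde V\rho_\sigma=\alpha_\sigma\rho_\sigma$ (smooth $\alpha_N,\alpha_\sigma$) of $b$-vector fields combined with the Leibniz rule yields
\begin{equation}
	\tilde V a=\lambda^{l,m}\left(\tilde V f-\left(l\alpha_N+m\alpha_\sigma\right)f\right)\in\lambda^{l,m}\Cinf(\sc{\bar{T}^\ast X})=\Gamma(S^{l,m}),
\end{equation}
which is the assertion $Va\in S^{l,m}$; that $V\mapsto\nabla_V$ is $\Cinf(X)$-linear and a derivation in $a$ (so the name ``$b$-connection'' is warranted) is then routine, the vertical correction in the complete lift of $fV$ supplying exactly the factor needed for linearity.

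The only point that is not purely formal is checking that the complete lift develops no singular behaviour at the fibre-infinity hypersurface of $\sc{\bar{T}^\ast X}$ — this is where the radial compactification of the fibres genuinely enters — but it is settled by the explicit form of the lifts of the generators above together with the observation that the remaining correction terms are vertical, hence tangent to that hypersurface. Everything else (the gluing, the triviality, and the Leibniz computation) is standard, and closely parallels Melrose's treatment of the scattering symbol bundle in \cite{melrose1994spectralscatteringtheory}.
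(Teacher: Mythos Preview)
The paper states this lemma without proof, so there is no argument of the paper's own to compare against. Your treatment of the bundle part is the standard one and is correct: the global frame $\lambda^{l,m}=\rho_N^{-l}\rho_\sigma^{-m}$ trivialises $S^{l,m}$, and your Leibniz computation
\[
\tilde V(\lambda^{l,m}f)=\lambda^{l,m}\bigl(\tilde Vf-(l\alpha_N+m\alpha_\sigma)f\bigr)
\]
is exactly the heart of the matter.

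One point deserves comment. The lemma writes $V\in\VF_b(X)$, but for $Va$ to make literal sense with $a$ a function on $\sc{\bar{T}^\ast X}$ one should read $V\in\VF_b(\sc{\bar{T}^\ast X})$; this is almost certainly what is intended, and with that reading the ``$b$-connection'' is simply the trivial one, $\nabla_V a=Va$, and your displayed identity \emph{is} the whole proof. You instead take the statement at face value and pass through the complete cotangent lift. Your computation of the lifts of the generators is correct (in particular $\widetilde{\rho\del_\rho}=\rho\del_\rho+\tau\del_\tau+\mu_j\del_{\mu_j}$ checks out, and these lifts are indeed $b$-vector fields on $\sc{\bar{T}^\ast X}$). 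However, your final claim that $V\mapsto\nabla_V\equiv\tilde V$ is $\Cinf(X)$-linear is not right: the complete lift obeys
\[
\widetilde{fV}=(\pi^\ast f)\,\tilde V-p_jV^j\,(\del_{x^i}f)\,\del_{p_i},
\]
and the vertical correction does \emph{not} vanish when applied to a generic section $a=\lambda^{l,m}g$. So the complete-lift prescription does not define a connection in the usual sense, and the phrase ``supplying exactly the factor needed for linearity'' is unjustified. This does not affect the assertion $Va\in S^{l,m}$, which you have established; it only concerns the word ``connection''. Under the intended reading $V\in\VF_b(\sc{\bar{T}^\ast X})$ the linearity is automatic and your extra lift machinery is unnecessary.
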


With this notation set, we can finally express the principal symbol sequence.

\begin{prop}
	\label{prop:scattering principal symbol}
	The maps $\ft{N}_{sc}$ and $\sigma_{sc}$ extend from $\Diff_{sc}(X)$ to $\Psi_{sc}(X)$ to give the \textit{scattering joint symbol} $j_{sc,l,m}\colon\Psi_{sc}^{l,m}(X)\rightarrow\Cinf(B_{sc}X;S^{l,m})$, and we have the exact sequence
	\begin{equation}
		\label{eq:scattering principal symbol sequence}
		0\rightarrow\Psi_{sc}^{l-1,m-1}(X)\rightarrow\Psi_{sc}^{l,m}(X)\xrightarrow{j_{sc,l,m}}\Cinf(B_{sc}X;S^{l,m})\rightarrow 0.
	\end{equation}
	Furthermore, the joint symbol is multiplicative. Namely, for any $A\in\Psi_{sc}^{l_1,m_1}(X),B\in\Psi_{sc}^{l_2,m_2}(X)$, it holds true
	\begin{equation}
		\label{eq:joint symbol multiplicative}
		j_{sc,l_1+l_2,m_1+m_2}(AB)=j_{sc,l_1,m_1}(A)j_{sc,l_2,m_2}(B),
	\end{equation}
	with the product given component-wise.
\end{prop}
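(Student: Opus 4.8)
The plan is to reduce everything to the model case $X=\Sfp$, where by Definition \ref{def:classical scattering operators} the scattering calculus is, after conjugation by $R$, nothing but the classical $SG$-calculus: an operator $P\in\Psi_{sc}^{l,m}(\Sfp)$ has left symbol $p_L\in\sg{l,m}$, and $\Op$ is a bijection onto $\lg{l,m}$ by Proposition \ref{prop:sg calculus}, so $\Psi_{sc}^{l,m}(\Sfp)\cong\lg{l,m}$. Under this identification I claim the scattering joint symbol is precisely the $SG$-principal symbol $\sigma_{pr}$ of Definition \ref{def:principal sg-symbol}. The proposition then follows from the $SG$-principal symbol sequence of Lemma \ref{lemma:principal symbol sequence}, the identification of $\SymG{l,m}$ with $\Cinf(B_{sc}X;S^{l,m})$, and the multiplicativity \eqref{eq:principal symbol is multiplicative}.

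First I would make the dictionary precise. The $\psi$-principal symbol $\sigma_\psi^m(p_L)$ is homogeneous of degree $m$ in $\xi$, hence descends to a smooth function on the scattering co-sphere bundle $\sc{\Sf{\ast}\Sfp}$; this is exactly $\sigma_{sc}$. The $e$-principal symbol is, by Proposition \ref{prop:principalsgsymbols}(3), the rescaled limit $\sigma_e^l(p_L)(x,\xi)=\lim_{\mu\to\infty}\mu^{-l}p_L(\mu x,\xi)$; restricting $x$ to the unit sphere $\del\Sfp$ and keeping $\xi$ as fibre variable, this is precisely the Fourier transform of the normal operator, since evaluating the translation-invariant normal operator at a boundary point and Fourier transforming recovers exactly this leading boundary behaviour of the left symbol. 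Finally $\sigma_{\psi e}^{l,m}(p_L)$ agrees with the radial limits of both components, which on the one hand is the compatibility $\sigma_e^{m_e}(a_\psi)=\sigma_\psi^{m_\psi}(a_e)$ of Proposition \ref{prop:principalsgsymbols}(1), and on the other hand is exactly the smoothness-across-the-corner condition of Remark \ref{rem:smooth functions across the corner} together with \eqref{eq:scattering compatibility condition}. Hence the pair $(\sigma_{sc}(P),\ft{N}_{sc}(P))$ is a well-defined section of $S^{l,m}$ over $B_{sc}X$ (Lemma \ref{lemma:bundle of symbol sections}), giving the map $j_{sc,l,m}$; this is the pseudo-differential refinement of Lemma \ref{lemma:principal symbol scattering}.

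With the dictionary in place, exactness is immediate. Injectivity of $\iota$ and $j_{sc,l,m}\circ\iota=0$ are clear. If $j_{sc,l,m}(P)=0$ then $\sigma_e^l(p_L)=\sigma_\psi^m(p_L)=0$, so $p_L\in\sg{l-1,m-1}$ by Proposition \ref{prop:principalsgsymbols}(5), i.e.\ $P\in\Psi_{sc}^{l-1,m-1}(\Sfp)$. For surjectivity, given a compatible pair $(u_N,u_\sigma)$ I would transcribe it into a triple $(p_e,p_\psi,p_{\psi e})\in\SymG{l,m}$ and form the associated symbol $\check p=\chi(\xi)p_\psi+\chi(x)p_e-\chi(x)\chi(\xi)p_{\psi e}$ of Definition \ref{def:associated sg symbol}; by Lemma \ref{lemma:associated and principal symbols} one has $\check p\in\sg{l,m}$ with $\sigma_{pr}(\check p)$ the prescribed data, so the transport of $\Op(\check p)$ to $\Sfp$ is the desired preimage. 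Multiplicativity reduces to \eqref{eq:principal symbol is multiplicative}: composition of scattering operators corresponds under the equivalence to the Leibniz product of left symbols (the composition theorem of Melrose's scattering calculus, equivalently Theorem \ref{theo:composition}), and the $\bullet$-principal symbol maps are multiplicative for the Leibniz product, which yields \eqref{eq:joint symbol multiplicative} componentwise.

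It remains to pass from $\Sfp$ to a general scattering manifold $X$, where the calculus is defined by localization. Over interior charts the joint symbol is just the ordinary homogeneous principal symbol and patches in the usual way; near the boundary one must verify that under a change of coordinates compatible with the scattering structure both components of the joint symbol transform as sections of $S^{l,m}$ over $\sc{\bar{T}^\ast X}$. For $\sigma_{sc}$ this is classical. For the normal operator $\ft{N}_{sc}$ it is the delicate point, and I expect it to be the main obstacle: although Lemma \ref{lemma:scattering class is coordinate invariant} guarantees that the operator class itself is invariant, one must check that the leading boundary behaviour extracted by $\ft{N}_{sc}$ is intrinsic, which uses that the scattering tangent bundle $\sc{TX}$ and the normal homomorphism \eqref{eq:normal homomorphism} are canonically defined and that the corner compatibility is preserved by such coordinate changes. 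Once coordinate invariance of the full joint symbol is established, the locally defined maps glue to a global $j_{sc,l,m}$, and exactness and multiplicativity, being local statements, follow from the model-case arguments above.
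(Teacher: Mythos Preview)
The paper does not actually supply a proof of this proposition: it is stated as a known structural result of the scattering calculus, with the surrounding text referring the reader to the cited literature (in particular \cite{melrose1994spectralscatteringtheory}) for such standard facts. So there is no in-paper argument to compare your proposal against.

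That said, your sketch is the right one and is essentially how the result is obtained in the literature. The reduction to the model case via the equivalence of Theorem~\ref{theo:scattering equals classical SG}, the identification of $(\sigma_{sc},\ft{N}_{sc})$ with $(\sigma_\psi,\sigma_e)$ under that equivalence, and the appeal to Lemma~\ref{lemma:principal symbol sequence} and \eqref{eq:principal symbol is multiplicative} for exactness and multiplicativity are all correct. Your use of the associated symbol $\check p$ for surjectivity is exactly the intended mechanism. You also correctly flag the one genuinely nontrivial step in the globalisation, namely the coordinate invariance of the normal symbol; this is where the argument has actual content beyond bookkeeping, and it is precisely what is handled in \cite{melrose1994spectralscatteringtheory} by showing that $\ft{N}_{sc}$ is intrinsically defined as restriction of the full symbol (viewed in $\rho_N^{-l}\rho_\sigma^{-m}\Cinf(\sc{\bar T^\ast X})$) to the boundary hypersurface $\sc{\bar T^\ast_{\del X}X}$, a manifestly invariant operation.
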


Before moving towards the discussion of the symplectic structure on $B_{sc}X$, we notice some special properties of the model case $X=\Sfp$. The first is that conjugation with the Fourier transform gives an automorphism of pseudo-differential operators, a peculiar feature of this setting. We report a proof of this fact, since it is instructive about the nice properties of $SG$ and $sc$-calculi.

\begin{prop}
	\label{prop:sc-operators conjugate with FT}
		Let $\FT\colon\Scal(\R^n)\rightarrow\Scal(\R^n)$ be the Fourier transformation and consider the map $\bar{\FT}\colon\Cinfdot(\Sfp)\rightarrow\Cinfdot(\Sfp)$ given by $\bar\FT\equiv (R^\ast)^{-1}\circ\FT\circ R^\ast$. Then
	\begin{equation}
		\label{eq:sc-operators conjugate with FT}
		\bar\FT\circ\Psi^{l,m}_{sc}(\Sfp)\circ\bar{\FT}^{-1}=\Psi^{m,l}_{sc}(\Sfp).
	\end{equation}
\end{prop}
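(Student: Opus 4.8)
The plan is to push the statement down to the model space $\R^n$ via the radial compactification and there read it off from the transformation law of the Kohn--Nirenberg quantisation under conjugation with $\FT$. Recall from Definitions \ref{def:scattering pseudo-differential operator}--\ref{def:classical scattering operators} and the discussion around \eqref{eq:scattering conormal distributions} that $A\in\Psi^{l,m}_{sc}(\Sfp)$ precisely when the operator $P$ determined by $R^\ast(Au)=P(R^\ast u)$ equals $\Op(p_L)$ for some classical $p_L\in\sg{l,m}$ (with the identification $m_e=l$, $m_\psi=m$), i.e. $P\in\lg{l,m}(\R^n)$; since moreover $R^\ast\Cinfdot(\Sfp)=\Scal(\R^n)$ and $\bar\FT=(R^\ast)^{-1}\circ\FT\circ R^\ast$, the identity \eqref{eq:sc-operators conjugate with FT} is equivalent to
\[
\FT\circ\lg{m_e,m_\psi}(\R^n)\circ\FT^{-1}=\lg{m_\psi,m_e}(\R^n)\qquad\text{for every }(m_e,m_\psi)\in\Z^2 .
\]

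First I would compute the left-hand side explicitly. Take $A=\Op(a)$ with $a\in\sg{m_e,m_\psi}$, so that $Au(x)=\int e^{\im x\xi}a(x,\xi)(\FT u)(\xi)\,\dbar\xi$ by Definition \ref{def:sgoperator}. Inserting the definitions of $\FT$ and $\FT^{-1}$ into $B\equiv\FT A\FT^{-1}$, applying Fubini for oscillatory integrals and the substitution $\xi\mapsto-\xi$ in the remaining frequency variable (both legitimate after the usual regularisation of the oscillatory integral), one arrives at
\[
Bv(\eta)=\int\!\!\int e^{\im(\eta-y)\xi}\,a(-\xi,y)\,v(y)\,\de y\,\dbar\xi ,
\]
that is, $B=\Op(\tilde a)$ where $\tilde a(x,y,\xi)\equiv a(-\xi,y)$ is now read as an amplitude in the sense of Definition \ref{def:sg amplitude}. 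The key observation is that $\tilde a$ is independent of $x$ and satisfies $|\del_y^\beta\del_\xi^\gamma\tilde a(x,y,\xi)|\lesssim\braket{y}^{m_\psi-|\beta|}\braket{\xi}^{m_e-|\gamma|}$, with classicality and integrality of the order inherited from $a$; in other words $\tilde a$ is a classical $SG$-amplitude of order $(0,m_\psi,m_e)$. Conceptually this is nothing but the fact that $\FT$ interchanges position and momentum, hence the exit and pseudo-differential decay rates.

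Next I would invoke Lemma \ref{lemma:operators with amplitudes or symbols}: an $SG$-amplitude of order $(0,m_\psi,m_e)$ quantises to an operator whose symbol lies in $\sg{0+m_\psi,m_e}=\sg{m_\psi,m_e}$, so $B\in\lg{m_\psi,m_e}(\R^n)$. Continuity of $B$ on $\Scal(\R^n)$ is automatic, all three factors $\FT,A,\FT^{-1}$ preserving $\Scal$. This gives the inclusion $\FT\,\lg{m_e,m_\psi}\,\FT^{-1}\subseteq\lg{m_\psi,m_e}$. For the reverse inclusion I would run the identical computation with $\FT^{-1}$ in place of $\FT$; it produces $\FT^{-1}\Op(a)\FT=\Op(a(\xi,-y))$, again an amplitude of order $(0,m_\psi,m_e)$, whence $\FT^{-1}\,\lg{m_\psi,m_e}\,\FT\subseteq\lg{m_e,m_\psi}$. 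Conjugating this last inclusion by $\FT$ yields $\lg{m_\psi,m_e}\subseteq\FT\,\lg{m_e,m_\psi}\,\FT^{-1}$, and combining the two inclusions gives the desired equality; translating back through $R^\ast$ and unwinding the identification $m_e=l,m_\psi=m$ recovers \eqref{eq:sc-operators conjugate with FT}.

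The only real work is bookkeeping: keeping the Fourier normalisation, the $(2\pi)^{-n}$ measure factors and the sign in the phase consistent so that the amplitude of the conjugated operator comes out exactly as $a(-\xi,y)$ (and not some sign variant), and checking that reindexing an oscillatory integral does not change it. There is no analytic obstacle: once the amplitude is identified, Lemma \ref{lemma:operators with amplitudes or symbols} does all the remaining work, and the exchange of orders is built into the formula $\Op(a(x,\xi))\mapsto\Op(a(-\xi,y))$. One could phrase the whole argument more invariantly by observing that this last map is precisely the conjugation of $\Op$ by $\FT$, so that the proposition is literally the statement that $\FT$ swaps the two filtrations of the $SG$-calculus.
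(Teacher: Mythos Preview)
Your argument is correct and is essentially the paper's own proof: both transfer the statement to $\R^n$ via $R^\ast$, compute the Fourier-conjugated operator explicitly as an amplitude-quantised operator with the roles of the two variable groups exchanged (the paper does $\FT^{-1}\Op(p_L)\FT=\Op(p_L(\xi,-y))$, you do $\FT\Op(a)\FT^{-1}=\Op(a(-\xi,y))$), and then invoke Lemma~\ref{lemma:operators with amplitudes or symbols} --- the equivalence of left, right and amplitude quantisation in the $SG$-calculus --- to read off the swap of orders. The only slip is a harmless typo in your reverse-inclusion step: for $a\in\sg{m_\psi,m_e}$ the amplitude $a(\xi,-y)$ has order $(0,m_e,m_\psi)$, not $(0,m_\psi,m_e)$, but the conclusion $\FT^{-1}\lg{m_\psi,m_e}\FT\subseteq\lg{m_e,m_\psi}$ you draw from it is correct.
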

\begin{proof}
	The action of $P\in\Psi_{sc}^{l,m}$ is expressed ``locally'' as $\mathfrak{P}(v)=R^\ast(Pu)$ for each $u\in\Cinfdot(\Sfp)$ and $v=R^\ast u\in\Scal(\R^n)$ (recall that $R^\ast\Cinfdot(\Sfp)=\Scal(\R^n)$), in particular
	\begin{equation}
		\label{eq:local form scattering psdos}
		\begin{aligned}
		    \mathfrak{P}v&=\int e^{\im x\xi}p_L(x,\xi)\ft{v}(\xi)\dbar\xi,\\
		    \mathfrak{P}\ft{v}(\xi)&=\int e^{\im (\xi-\eta)z}p_L(\xi,z)\ft{v}(\eta)\dbar z\de\eta.
		\end{aligned}
	\end{equation}
	Consider $\widetilde{\mathfrak{P}}=\FT^{-1}\circ \mathfrak{P}\circ\FT$. This is the ``local representation'' of $\bar\FT\circ P\circ\bar{\FT}^{-1}$, since clearly
	\begin{equation}
		\label{eq:local form conjugated psdo}
		\begin{aligned}
			\widetilde{\mathfrak{P}}v&=(\FT^{-1}\circ \mathfrak{P}\circ\FT)( R^\ast u)=\FT^{-1}\circ \mathfrak{P}\circ R^\ast(\bar\FT u)\\
			&=R^\ast(\bar{\FT}^{-1}\circ (R^\ast)^{-1}\circ \mathfrak{P}\circ R^\ast\circ\bar\FT u)\\
			&=R^\ast(\bar\FT^{-1}\circ P\circ\bar\FT u).
		\end{aligned}
	\end{equation}
	Computing the action of $\widetilde{\mathfrak{P}}$, we observe
	\begin{equation}
		\label{eq:action of conjugated psdo}
		\begin{aligned}
			\widetilde{\mathfrak{P}}v(x)&=\FT^{-1}(\mathfrak{P}\ft{v})(x)=\int e^{\im x\xi}e^{\im(\xi-\eta)z}p_L(\xi,z)\ft{v}(\eta)\dbar z\de\eta\dbar\xi\\
			&=\int \mathrm{e}^{\im(x+z)\xi}p_L(\xi,z)\left(\int e^{-\im\eta z}\ft{v}(\eta)\de\eta\right)\dbar z\dbar\xi\\
			&=(2\pi)^n\int e^{\im(x+z)\xi}p_L(\xi,z)v(-z)\dbar z\dbar\xi\\
			&=\int e^{\im(x-y)\xi}p_L(\xi,-y)v(y)\de y\dbar\xi.
		\end{aligned}
	\end{equation}
	Hence, the claim is in fact just the equivalence of the classes of left- and right-quantised $SG$-operators, namely Lemma \ref{lemma:operators with amplitudes or symbols}.
\end{proof}

The second aspect relates to the equivalence of the classical $sc$- and $SG$-calculi, which is especially manifest in the model case, as the next theorem shows (cf. \cite{egorov1997pseudodifferentialsingularities}, Section 8.2.2, for a proof).
\begin{theo}
	\label{theo:scattering equals classical SG}
	The following properties hold true.
	\begin{enumerate}
	\item 
		For any $a\in\sg{m},m=(m_e,m_\psi)\in\R^2$, the function $\braket{x}^{-m_e}\braket{\xi}^{-m_\psi}a(x,\xi)$ extends smoothly to $\Sfp\times\Sfp$.
	\item 
		For any $m\in\R^2$ there exist an isomorphism 
		\begin{equation}
			\label{eq:SG isomorphism scattering compactification}
			\begin{aligned}
				&\jmath^{m}\colon\sg{m}\xrightarrow{\sim} \rho_\del^{-m_e}\rho_\sigma^{-m_\psi}\Cinf(\Sfp\times\Sfp),\\
				&\jmath^{m}(a)(\rho_\del,z,\rho_\sigma,\zeta)=\rho_\del^{m_e}\rho_\sigma^{m_\psi}a(R^{-1}(\rho_\del,z), R^{-1}(\rho_\sigma,\zeta)),
			\end{aligned}
		\end{equation}
		where $(\rho_\del,z,\rho_\sigma,\zeta)$ are coordinates near the corner of $\Sfp\times\Sfp$, $\rho_\del$ and $\rho_\sigma$ being boundary-defining functions with $R^\ast \rho_\del=\braket{x}^{-1}$, $R^\ast \rho_\sigma=\braket{\xi}^{-1}$.
	\item 
		Under the isomorphism of the previous point 2., the principal symbol can be computed by restriction to the boundary hyper-surfaces as
		\begin{equation}
			\label{eq:SG and sc principal symbols}
			\begin{aligned}
			\sigma_{e}^{m_e}(a)(x,\xi)&=\abs{x}^{m_e}\jmath^m(a)\left(0,\frac{x}{\abs{x}},R(\xi)\right),\\
			\sigma_{\psi}^{m_\psi}(a)(x,\xi)&=\abs{\xi}^{m_\psi}\jmath^m(a)\left(R(x),0,\frac{\xi}{\abs{\xi}}\right),\\
			\sigma_{\psi e}^m(a)(x,\xi)&=\abs{x}^{m_e}\abs{\xi}^{m_\psi}\jmath^m(a)\left(0,\frac{x}{\abs{x}},0,\frac{\xi}{\abs\xi}\right).
			\end{aligned}
		\end{equation}
	\end{enumerate}
\end{theo}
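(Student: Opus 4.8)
The plan is to reduce everything to the one-variable radial compactification $R\colon\R^n\to\mathring{\Sfp}$ of Example \ref{example:compactification on Rn} and to the way the estimates \eqref{eq:sgestimates} transform under the change of coordinates it induces near the boundary. Recall that, writing $\R^n\setminus\{0\}$ in polar coordinates $(r,y)$ and transferring to the collar $\del\Sfp\times[0,\eps)$ via $R$, the function $\rho\equiv 1/r$ is a boundary-defining function equivalent to $R^\ast\braket{x}^{-1}$, and the chain rule gives $\del_r=-\rho^2\del_\rho$, hence $r\,\del_r=-\rho\,\del_\rho$; thus $r^{|\alpha|}\del_x^\alpha$ is, near the boundary, a $b$-differential operator of order $|\alpha|$. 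Consequently $|\del_x^\alpha a|\lesssim\braket{x}^{m_e-|\alpha|}$ for all $\alpha$ is equivalent to $\braket{x}^{-m_e}a$ being bounded together with all its $\VF_b(\Sfp)$-derivatives, i.e.\ conormal to $\del\Sfp$, and for a \emph{classical} symbol the poly-homogeneous expansion promotes this conormality to genuine smoothness up to the boundary. I would set up this one-variable dictionary first, since all three assertions then follow by applying it once in each of the two factors of $\Sfp\times\Sfp$.

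For Part 1, take $a\in\sg{m}$ with asymptotic matrix $\{a_{jk}\}$ as in Lemma \ref{lemma:symbol maps compatibility}. A bi-homogeneous function of order $(m_e-k,m_\psi-j)$ becomes, after multiplication by $\braket{x}^{-m_e}\braket{\xi}^{-m_\psi}$, the product of $\rho_\del^{k}\rho_\sigma^{j}$ with a smooth function on $\Sfp\times\Sfp$ — this is exactly the remark, recalled in Example \ref{example:compactification on Rn}, that eventually-homogeneous functions of non-positive order extend smoothly and those of negative order vanish at the corresponding face. Summing the matrix as explained after Lemma \ref{lemma:symbol maps compatibility} (triangular enumeration plus an excision function), $\braket{x}^{-m_e}\braket{\xi}^{-m_\psi}a$ is a finite partial sum of such terms plus a remainder in $\sg{-N,-N}$ with $N$ arbitrarily large; in the compactified variables such a remainder extends to a function of class $C^N$ up to the corner, so letting $N\to\infty$ yields a $\Cinf$ extension. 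This is the Borel--Taylor argument dual to asymptotic completeness, Theorem \ref{theo:asymptotic completeness}.

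For Part 2, the map $\jmath^m$ is well-defined by Part 1, linear, and injective because $R\times R$ is a diffeomorphism onto the interior of $\Sfp\times\Sfp$, where $\rho_\del,\rho_\sigma$ are nowhere zero. Surjectivity is the converse: given $g\in\rho_\del^{-m_e}\rho_\sigma^{-m_\psi}\Cinf(\Sfp\times\Sfp)$, pull back the underlying smooth function by $R\times R$, Taylor-expand it at each of the two boundary hyper-surfaces and at the corner, and restore the factors $\braket{x}^{m_e},\braket{\xi}^{m_\psi}$; using the reverse chain rule $\rho\del_\rho=-r\del_r$ one checks that each term of the expansion is bi-homogeneous of the predicted order and that the Taylor remainders obey \eqref{eq:sgestimates}, so that the pull-back lies in $\sg{m}$ and has these Taylor coefficients for its asymptotic matrix. (Multiplicativity of $\jmath^m$ and its compatibility with both filtrations then read off directly from the defining formula.)

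Finally, Part 3 follows from the limit characterisation of the principal symbol in Proposition \ref{prop:principalsgsymbols}(3). Writing $a=\braket{x}^{m_e}\braket{\xi}^{m_\psi}\,(b\circ(R\times R))$ with $b$ the smooth function underlying $\jmath^m(a)$, we have $\mu^{-m_e}a(\mu x,\xi)=\bigl(\mu^{-1}\braket{\mu x}\bigr)^{m_e}\braket{\xi}^{m_\psi}\,b(R(\mu x),R(\xi))$, and as $\mu\to\infty$ the prefactor tends to $|x|^{m_e}$ while $R(\mu x)\to(0,x/|x|)\in\del\Sfp$, so $\sigma_e^{m_e}(a)$ is recovered as the restriction of $b$ to the face $\{\rho_\del=0\}$, weighted by $|x|^{m_e}\braket{\xi}^{m_\psi}$; scaling the $\xi$-variable, respectively both variables, produces the formulas for $\sigma_\psi^{m_\psi}(a)$ and $\sigma_{\psi e}^m(a)$ in the same way. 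The main obstacle is the surjectivity half of Part 2 — equivalently, the sharp statement that classical $SG$-estimates are equivalent to smoothness up to the \emph{corner} of $\Sfp\times\Sfp$: one must align the iterated corner Taylor expansion with the asymptotic-matrix structure of Lemma \ref{lemma:symbol maps compatibility} and control the mixed remainders simultaneously in both variables at once. This is the computation worked out in \cite{egorov1997pseudodifferentialsingularities}, Section 8.2.2, and carries essentially all of the technical content; Parts 1 and 3 are comparatively soft once it is in hand.
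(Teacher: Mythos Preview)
The paper does not give its own proof of this theorem: it refers the reader to \cite{egorov1997pseudodifferentialsingularities}, Section~8.2.2, and then adds (Remark~\ref{rem:tensor product of smooth function on the sphere}) that parts 1 and 2 also follow from the tensor-product description $\sg{m_e,m_\psi}_{cl}\cong S^{m_e}_{cl}\hat\tens_\pi S^{m_\psi}_{cl}$ of Remark~\ref{rem:global classical symbols} together with $\Cinf(\Sfp\times\Sfp)\cong\Cinf(\Sfp)\hat\tens_\pi\Cinf(\Sfp)$. Your sketch is a correct outline of the Egorov--Schulze argument (one-variable conormality dictionary, asymptotic-matrix Borel summation for Part~1, corner Taylor expansion for surjectivity in Part~2, limit formula from Proposition~\ref{prop:principalsgsymbols}(3) for Part~3), and you explicitly defer the hard step to the same reference the paper cites, so you are fully aligned with the paper's treatment.

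The only point worth flagging is that the tensor-product route the paper mentions is genuinely shorter for Parts~1 and~2: once one knows $S^{m}_{cl}(\R^n)\cong\rho^{-m}\Cinf(\Sfp)$ in \emph{one} variable, the two-variable statement is obtained by tensoring, which bypasses the explicit control of mixed remainders at the corner that you single out as the main obstacle. Your approach, by contrast, is more self-contained and makes the role of the asymptotic matrix visible; either is acceptable here.
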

\begin{rem}
	\label{rem:tensor product of smooth function on the sphere}
	In view of Remark \ref{rem:global classical symbols}, 1. and 2. in Theorem \ref{theo:scattering equals classical SG} are consequences of the fact that $\Cinf(\Sfp\times\Cinf(\Sfp))\cong\Cinf(\Sfp)\hat\tens_\pi\Cinf(\Sfp)$.
\end{rem}

We recall that the compatibility conditions for principal $SG$-symbols are actually sufficient for the existence of a global symbol. In the scattering picture, this corresponds to principal symbols being smooth functions on $B_{sc}X$ according to Remark \ref{rem:smooth functions across the corner}. By definition of $\Cinf(\sc{\bar{T}^\ast X})$ and $\Cinf(B_{sc} X)$, we have therefore the extension result of Proposition \ref{prop:compatible principal symbol implies global} below. First however, let us recall and introduce some notation we will use throughout the rest of our treatment (confer Example \ref{example:compactification of Rn 2})
\begin{defin}
	\label{def:boundary hypersurfaces}
	The space $B_{sc}X$ is given by the disjoint union $B_{sc}X=\widetilde{\Wcal}_\psi\cup\widetilde{\Wcal}_e\cup\widetilde{\Wcal}_{\psi e}$ where the manifolds $\widetilde{\Wcal_\bullet}$ are (the second equality is what happens in the model case $X=\Sfp$)
	\[\begin{aligned}
		\widetilde{\Wcal}_\psi&=\sc{\Sf{\ast}}\mathring{X}\quad (=\mathring{\Sfp}\times\Sf{n-1}),\\
		\widetilde{\Wcal}_e&=\sc{T^\ast_{\del X}X}\quad(=\Sf{n-1}\times\mathring{\Sfp}),\\
		\widetilde{\Wcal}_{\psi e}&=\sc{\Sf{\ast}_{\del X}X}\quad(=\Sf{n-1}\times\Sf{n-1}).
	\end{aligned}\]
	We set $\bar{\Wcal}_{\psi}=\widetilde{\Wcal}_\psi \cup\widetilde{\Wcal}_{\psi e}$, respectively $\bar{\Wcal}_e=\widetilde{\Wcal}_e \cup\widetilde{\Wcal}_{\psi e}$. In particular, $\bar{\Wcal}_e$ and $\bar{\Wcal}_\psi$ are the boundary hyper-surfaces of $\sc{\bar{T}^\ast X}$, both manifolds with boundary, $\widetilde{\Wcal}_e$ and $\widetilde{\Wcal}_\psi$ are the respective interiors, and $\widetilde{\Wcal}_{\psi e}$ is the corner. 
	
	Finally, we will also find use for the following spaces:
	\[\begin{aligned}
		\Wcal_\psi&=\widetilde{\Wcal}_{\psi}\times\R^+\cong T^\ast X\setminus\{0\}\quad (=\mathring\Sfp\times\R^n_0),\\
		\Wcal_e&=\R^+\times\widetilde{\Wcal}_{e}\cong T^\ast (\R^+\times\del X)\quad (=\R^n_0\times\mathring{\Sfp}),\\
		\Wcal_{\psi e}&=\R^+\times\widetilde{\Wcal}_{\psi e}\times\R^+\cong T^\ast (\R^+\times\del X)\setminus\{0\}\quad (=\R^n_0\times\R^n_0).
	\end{aligned}\]
	In the above formul\ae, $\{0\}$ denotes the zero section of the involved cotangent bundles. Also notice that $\R^+\times\del X$ is, topologically, the (interior of) a collared neighbourhood of $\del X$, seen however with the metric structure of a cone over $\del X$.
\end{defin} 
\begin{prop}
	\label{prop:compatible principal symbol implies global}
	Let $a_\bullet\in\Cinf(\bar{\Wcal}_\bullet),\bullet\in\{e,\psi\}$ and $a_{\psi e}\in\Cinf(\widetilde{\Wcal}_{\psi e})$ be smooth functions satisfying $a_e|_{\widetilde{\Wcal}_\psi e}=a_\psi|_{\widetilde{\Wcal}_{\psi e}}=a_{\psi e}$. There exists then a function $a\in\Cinf(\B^n\times\B^n)$ such that $a|_{\bar{\Wcal}_\bullet}=a_\bullet$. 
\end{prop}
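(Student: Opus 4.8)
The plan is to reduce the statement to a gluing construction across the corner, using the product structure of $\B^n\times\B^n$ explicitly. First I would observe that the three given functions $a_e$, $a_\psi$, $a_{\psi e}$ live on the three boundary pieces described in Definition \ref{def:boundary hypersurfaces}; in the model picture $\bar{\Wcal}_\psi\cong\mathring{\Sfp}\times\Sf{n-1}$ together with its corner $\Sf{n-1}\times\Sf{n-1}$, and similarly for $\bar{\Wcal}_e$, with the two agreeing on $\widetilde{\Wcal}_{\psi e}=\Sf{n-1}\times\Sf{n-1}$. Switching via the stereographic diffeomorphism to the ball model $\B^n\times\B^n$, the two boundary hyper-surfaces are $\B^n\times\Sf{n-1}$ and $\Sf{n-1}\times\B^n$, meeting in the corner $\Sf{n-1}\times\Sf{n-1}$. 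So what must be produced is a smooth function on $\B^n\times\B^n$ whose restrictions to these two faces are the prescribed ones.

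The key step is the following extension-and-patch argument. Using collar neighbourhoods of the boundary spheres, write $\B^n\cong\B^n$ with a boundary-defining function $r_1$ on the first factor and $r_2$ on the second, so that near the corner we have coordinates $(r_1,\omega_1,r_2,\omega_2)$ with $r_i\in[0,\eps)$, $\omega_i\in\Sf{n-1}$. The datum $a_\psi$, smooth on $\bar{\Wcal}_\psi$ (the face $r_1$ arbitrary, $r_2=0$), extends by Seeley/Whitney extension to a smooth function $A_\psi$ on a neighbourhood of that face in $\B^n\times\B^n$; likewise $a_e$ extends to $A_e$ near the face $r_1=0$. Set
\begin{equation}
	\label{eq:patch}
	a = A_\psi + A_e - B,
\end{equation}
where $B$ is a smooth extension to $\B^n\times\B^n$ of the corner datum $a_{\psi e}$, chosen so that $B$ agrees with $A_\psi$ on $\{r_1=0\}$ and with $A_e$ on $\{r_2=0\}$ up to first order — concretely, take $B$ to interpolate, e.g.\ $B(r_1,\omega_1,r_2,\omega_2)=\tilde a_{\psi e}(\omega_1,\omega_2)$ near the corner extended constantly, then corrected by a cutoff. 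One checks directly that $a|_{r_2=0}=A_\psi|_{r_2=0}+A_e|_{r_2=0}-B|_{r_2=0}=a_\psi+a_e|_{r_2=0}-a_{\psi e}$, and the compatibility hypothesis $a_e|_{\widetilde{\Wcal}_{\psi e}}=a_{\psi e}$ makes the last two terms cancel, leaving $a_\psi$; symmetrically for $r_1=0$. Finally multiply by a cutoff equal to $1$ near $\del(\B^n\times\B^n)$ and supported in the union of the two collars, which does not affect the boundary restrictions, to obtain a globally smooth function on $\B^n\times\B^n$.

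The main obstacle is the bookkeeping at the corner: one must ensure that $A_\psi$ and $A_e$, chosen independently as extensions off the two faces, are \emph{compatible to infinite order along the corner set} so that formula \eqref{eq:patch} actually restricts correctly on both faces rather than merely to leading order — this is exactly where the notion of ``smooth across the corner'' from Remark \ref{rem:smooth functions across the corner} does its work, and where one uses that $a_e$ and $a_\psi$ already restrict to the \emph{same} smooth function $a_{\psi e}$ on $\Sf{n-1}\times\Sf{n-1}$, together with the matching of all corner derivatives that smoothness of $a_e$, $a_\psi$ on their respective closed faces guarantees. Once the extensions are arranged to be corner-compatible (which can be done by first extending $a_{\psi e}$ from the corner, then extending $a_e-a_{\psi e}$ and $a_\psi-a_{\psi e}$, which vanish at the corner, separately), the cancellation in \eqref{eq:patch} is automatic and the remaining verifications are routine. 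Transporting the result back through the diffeomorphism $\Sfp\cong\B^n$ and recalling (Theorem \ref{theo:scattering equals classical SG} and Remark \ref{rem:tensor product of smooth function on the sphere}) that $\Cinf(\Sfp\times\Sfp)\cong\Cinf(\Sfp)\hat\tens_\pi\Cinf(\Sfp)$ finishes the argument.
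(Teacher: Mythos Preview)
Your approach is correct and is essentially what the paper does, only the paper does not spell it out: it states the proposition without proof, remarking that it follows from the very definition of $\Cinf(B_{sc}X)$ and $\Cinf(\sc{\bar{T}^\ast X})$, and then observes immediately afterwards that the associated symbol
\[
\check{p}(x,\xi)=\chi(\xi)p_\psi(x,\xi)+\chi(x)p_e(x,\xi)-\chi(x)\chi(\xi)p_{\psi e}(x,\xi)
\]
of Definition~\ref{def:associated sg symbol} is ``nothing else than a particular choice of such an extension''. Your formula \eqref{eq:patch} is precisely this inclusion--exclusion ansatz transported to the compactified picture, with the excision functions $\chi(x),\chi(\xi)$ playing the role of your collar cutoffs.

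One small point of exposition: your displayed verification of $a|_{r_2=0}$ is not quite right as written, since ``$a_e|_{r_2=0}$'' has no intrinsic meaning ($a_e$ lives on $\{r_1=0\}$) and the phrase ``up to first order'' is too weak. The argument becomes clean exactly when you take the concrete choices you yourself mention: extend $a_e$, $a_\psi$, $a_{\psi e}$ \emph{constantly} in the respective normal variables near the corner. Then $A_e|_{r_2=0}=a_{\psi e}=B|_{r_2=0}$ and $A_\psi|_{r_1=0}=a_{\psi e}=B|_{r_1=0}$ hold \emph{exactly} (not merely to first order), and the cancellation in \eqref{eq:patch} is immediate on both faces. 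With that choice fixed, the subsequent paragraph about corner-compatibility is unnecessary; the circularity you were guarding against does not arise.
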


Under pull-back with the radial compactification map, the associated symbol $\check{p}$ of \eqref{eq:associated sg symbol} is of course nothing else than a particular choice of such an extension to the interior. There is a similar extension result for maps between the boundary faces, namely Theorem \ref{theo:scattering maps extend to the interior} below (as formulated in \cite{coriasco2019lagrangiandistributions}, Proposition 1.30). To state it we need to discuss scattering maps between scattering manifolds.

\begin{defin}
	\label{def:scattering maps}
	Given two scattering manifolds $X,Y$ and a smooth map $C\colon X\rightarrow Y$, we say that $C$ is a \textit{scattering map} if, for any given $m\in\R$ and any $a\in\rho_Y^{-m}\Cinf(Y)$ it holds true that 
	\begin{enumerate}
		\item $C^\ast a\in\rho_X^{-m}\Cinf(X)$;
		\item if $q=C(p)$ for $p\in X$ and $\rho_Y^{-m}a(q)>0$, then $\rho_X^{-m}C^\ast a(p)>0$.
	\end{enumerate}
	Locally, this corresponds to the fact that scattering maps ($sc$-maps for short) are exactly those maps which pull back $\rho_Y$ to $\rho_X h$ for some positive function $h\in\Cinf(X)$. We call maps satisfying this condition \textit{local sc-maps} and extend their definition to manifolds with corners as follows. Given \textit{complete sets} of boundary-defining functions $(\rho_i)$ for $X$ and $(r_i)$ for $Y$ (that is, such that the whole boundary of the manifold can be identified with $\prod\rho_i=0$ or $\prod r_i=0$ respectively), $C$ is a local $sc$-map if there exist positive functions $h_i\in\Cinf(X)$ for which it holds true $C^\ast r_i=h_i\rho_i$. 
\end{defin}

In \cite{coriasco2019lagrangiandistributions} it is proven that $sc$-maps are really morphisms in the category of scattering manifolds. We will not need many facts from the theory explored there, so we'll only state what we'll need in the following. In particular, the next theorem is the aforementioned extension result near the corner of a product $X\times Y$ of manifolds with boundary.

\begin{theo}
	\label{theo:scattering maps extend to the interior}
	Consider manifolds with boundary $X_i,Y_i,i=1,2$, with boundary defining functions $\rho_{X_i},\rho_{Y_i}$, and the products $B_i=X_i\times Y_i$. Consider, for $\bullet\in\{e,\psi\}$, local $sc$-maps $C_\bullet\colon\bar\Wcal^1_\bullet\rightarrow\bar\Wcal^2_\bullet$ defined near a point $p\in\del X_1\times \del Y_1$, such that $C_e|_{\del X_1\times \del Y_1}=C_\psi|_{\del X_1\times \del Y_1}$. There exists then a local $sc$-map $C$ on a neighbourhood $p\neigh U\subset B_1$ such that $C_e=C|_{\del X_1\times Y_1}$, $C_\psi=C|_{X_1\times\del Y_1}$ and
	\begin{equation}
		\label{eq:scattering extension derivative property}
		\partder{C^\ast \rho_{Y_2}}{\rho_{X_1}}=\partder{C_\ast \rho_{X_2}}{\rho_{Y_1}}=0.
	\end{equation}
	Moreover, provided that both $C_\bullet$ are local diffeomorphisms near $p$, then $C$ is also a local diffeomorphism near $p$. If both $C_\bullet$ are diffeomorphisms defined in a neighbourhood of the whole corner in the respective boundary hyper-surface, then we can pick $C$ to be a diffeomorphism of a neighbourhood of the corner in $B_1$ onto a neighbourhood of the corner in $B_2$.
\end{theo}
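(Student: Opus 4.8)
The plan is to reduce the statement, near a point $p$ of the corner $\del X_1\times\del Y_1$, to an explicit formula in adapted coordinates, patterned on the inclusion--exclusion expression \eqref{eq:associated sg symbol} for the associated symbol. First I would fix corner-adapted coordinates: boundary coordinates $(s,x)$ on $X_1$ with $s=\rho_{X_1}\geq 0$, and $(t,y)$ on $Y_1$ with $t=\rho_{Y_1}\geq 0$, so that $B_1$ carries coordinates $(s,x,t,y)$ and the corner is $\{s=t=0\}$; likewise $(s',x',t',y')$ on $B_2$ with $s'=\rho_{X_2}$, $t'=\rho_{Y_2}$. Because $C_e$ is a local $sc$-map of manifolds with boundary, in these coordinates it reads $C_e(0,x,t,y)=\big(0,\,X_e(x,t,y),\,t\,a_e(x,t,y),\,Y_e(x,t,y)\big)$ with $a_e>0$ smooth; similarly $C_\psi(s,x,0,y)=\big(s\,a_\psi(s,x,y),\,X_\psi(s,x,y),\,0,\,Y_\psi(s,x,y)\big)$ with $a_\psi>0$. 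The hypothesis $C_e|_{\del X_1\times\del Y_1}=C_\psi|_{\del X_1\times\del Y_1}$ translates into $X_e(x,0,y)=X_\psi(0,x,y)=:X_0(x,y)$ and $Y_e(x,0,y)=Y_\psi(0,x,y)=:Y_0(x,y)$, while $a_e$, $a_\psi$ are unconstrained on the corner.

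Next I would write down the extension. Regarding $a_e,X_e,Y_e$ as functions on $B_1$ independent of $s$ and $a_\psi,X_\psi,Y_\psi$ as functions independent of $t$, set
\[
C(s,x,t,y)\equiv\big(\,s\,a_\psi,\ \ X_e+X_\psi-X_0,\ \ t\,a_e,\ \ Y_e+Y_\psi-Y_0\,\big).
\]
Restricting to $\{s=0\}$ returns $C_e$ and restricting to $\{t=0\}$ returns $C_\psi$, the corner compatibility being exactly what cancels the subtracted term on each face. Since $a_\psi,a_e>0$ near $p$, the image lies in $B_2$ and $C^\ast\rho_{X_2}=s\,a_\psi$, $C^\ast\rho_{Y_2}=t\,a_e$ are positive multiples of $\rho_{X_1}$, $\rho_{Y_1}$, so $C$ is a local $sc$-map; moreover $\del_{\rho_{X_1}}\!\big(C^\ast\rho_{Y_2}\big)=t\,\del_s a_e=0$ and $\del_{\rho_{Y_1}}\!\big(C^\ast\rho_{X_2}\big)=s\,\del_t a_\psi=0$, which is \eqref{eq:scattering extension derivative property}. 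To pass from a neighbourhood of a single point to a neighbourhood of the whole corner one runs the same construction component by component and patches over a partition of unity, exactly as in the extension principle for symbols (Proposition \ref{prop:compatible principal symbol implies global}); the derivative identities survive because, near its corner, $B_1=X_1\times Y_1$ carries a canonical product collar, so the ``independent of $s$'' and ``independent of $t$'' parts of the construction are unambiguous.

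For the diffeomorphism assertions I would linearise at $p$. Ordering the coordinates as $\big((s,t),(x,y)\big)$, the differential $dC_p$ is block triangular: the $(s,t)$-to-$(s',t')$ block is $\diag\big(a_\psi(p),a_e(p)\big)$ with positive entries, the $(s,t)$-to-$(x',y')$ block vanishes at $p$, and the $(x,y)$-to-$(x',y')$ block equals $d(X_0,Y_0)$, which is invertible because $(X_0,Y_0)$ is the restriction to the corner of the local diffeomorphism $C_e$ (equivalently $C_\psi$) and hence a local diffeomorphism itself. Thus $dC_p$ is an isomorphism. Since $C^\ast\rho_{X_2}$ and $C^\ast\rho_{Y_2}$ vanish exactly on $\{s=0\}$ and $\{t=0\}$, the map $C$ sends the two boundary hypersurfaces and the corner of $B_1$ onto those of $B_2$, so the inverse function theorem in the category of manifolds with corners applies and $C$ is a local diffeomorphism near $p$. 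When the $C_\bullet$ are diffeomorphisms on full neighbourhoods of the corner, the patched $C$ is a local diffeomorphism at every corner point and restricts there to the diffeomorphism $C_e|_{\del X_1\times\del Y_1}$; a local diffeomorphism that is injective on the corner is injective on a neighbourhood of it, so after shrinking $C$ is a diffeomorphism onto its image, which is open and contains the corner of $B_2$ and is therefore a neighbourhood of it.

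The only genuinely delicate step is the last one: one must check that the block-triangular form of $dC_p$ together with the positivity of $a_e,a_\psi$ is precisely what makes the manifolds-with-corners inverse function theorem applicable — i.e. that $C$ respects the boundary stratification finely enough that its ambient smooth local inverse is again a local $sc$-map. Everything else is bookkeeping around the explicit formula and the compatibility hypothesis.
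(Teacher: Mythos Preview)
The paper does not actually give its own proof of this theorem: it is quoted verbatim from \cite{coriasco2019lagrangiandistributions}, Proposition~1.30, and only \emph{used} later on. So there is nothing in the paper to compare your argument against, and I will simply assess your proposal on its own merits.

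Your inclusion--exclusion construction
\[
C(s,x,t,y)=\bigl(s\,a_\psi,\;X_e+X_\psi-X_0,\;t\,a_e,\;Y_e+Y_\psi-Y_0\bigr)
\]
is exactly the right idea, and the verification that it restricts correctly to the two faces, is a local $sc$-map, and satisfies \eqref{eq:scattering extension derivative property} is clean and correct. Two points deserve comment.

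First, a small slip in the Jacobian analysis. You write that ``the $(s,t)$-to-$(x',y')$ block vanishes at $p$''. It does not: for instance $\del x'/\del s=\del_s X_\psi$, which need not be zero at $s=0$. What \emph{does} vanish at $p$ is the other off-diagonal block, $(x,y)\to(s',t')$, since $\del(s\,a_\psi)/\del x=s\,\del_x a_\psi$ and $\del(t\,a_e)/\del x=t\,\del_x a_e$ both vanish at $s=t=0$. The matrix is therefore block \emph{lower} triangular (in your ordering), not upper, but your conclusion---that $\det dC_p=a_\psi(p)\,a_e(p)\,\det d(X_0,Y_0)\neq 0$---is unaffected.

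Second, the global patching step is more delicate than you indicate. Writing ``partition of unity'' is fine for \emph{functions}, but convex combinations of \emph{maps} into a manifold are not defined. The honest argument is that any two local extensions built by your recipe agree on $\{s=0\}\cup\{t=0\}$, hence differ (in target-chart coordinates) by $st$ times something smooth; one then interpolates between overlapping local extensions via a cutoff applied to that \emph{difference}, which keeps both the face restrictions and the derivative identities intact. With this adjustment your globalisation goes through, and the final injectivity argument (local diffeomorphism plus injective restriction to the compact corner implies injective on a neighbourhood) is standard.
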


\subsection{Symplectic and contact properties of the scattering bundle}

Recall that on every cotangent bundle $T^\ast X$ a canonical 1-form, known as the Liouville form $\lambda$, is defined. If $x$ are local coordinates on $X$ and $(x,\xi)$ are the induced canonical coordinates on the cotangent bundle, $\lambda$ takes the form $\xi\de x$. The differential $\omega=\de \lambda=\de\xi_i\wedge\de x^i$ is a symplectic form on $T^\ast X$, which is therefore an \textit{exact symplectic manifold}. In the classical theory of FIOs, the 1-form $\lambda$ plays a crucial role, in that it determines the conic Lagrangian submanifolds and, therefore, the microlocal form of the operators. In the global scattering calculus, it turns out that $\lambda$ does not suffice to describe the peculiar features that appear at ``spatial infinity'' $\del X$. In the existing literature, the problem has been obviated to by introducing a similar structure on a collared neighbourhood of $\del X$. Since this is paramount for our future discussion, we recall hereafter, following \cite{coriasco2017lagrangiansubmanifolds} and \cite{melrose1996scatteringflow}, the important concepts, all the while introducing the notation we shall refer to.

First, we note that the Poisson brackets $\{\cdot,\cdot\}$ associated with the symplectic form $\omega$ extends to $B_{sc}X$. If we keep in mind Theorem \ref{theo:scattering equals classical SG}, we understand that, in the model case $X=\Sfp$, we can reformulate Proposition \ref{prop:symplectic properties of sg} in the language of scattering geometry (cf. \cite{melrose1994spectralscatteringtheory}, Proposition 4).

\begin{prop}
	\label{prop:scattering poisson structure}
	The Poisson structure on $T^\ast \mathring{X}$, induced by the canonical symplectic form $\de \lambda$, extends to $B_{sc}X=\del(\sc{\bar{T}^\ast X})$ as a filtered bracket between (weighted) smooth functions on $B_{sc}$. More precisely, the Poisson bracket extends to a map 
	\[
	\{\cdot,\cdot\}\colon\Cinf(B_{sc}X;S^{l_1,m_1})\times\Cinf(B_{sc}X;S^{l_2,m_2})\rightarrow \Cinf(B_{sc}X;S^{l_1+l_2-1,m_1+m_2-1}),
	\]
	where we understand that we obtain such a map on each boundary hyper-surface and that $\{\cdot,\cdot\}$ preserves the compatibility condition in the corner. Namely, if we have tuples $(a_\bullet),(b_\bullet)$, extended to functions $a,b\in\Cinf(\sc{\bar{T}^\ast X})$ according to Proposition \ref{prop:compatible principal symbol implies global}, then the Poisson brackets of the tuples $(\{a_\bullet,b_\bullet\})$ satisfy the same conditions and admit therefore an extension to a smooth function $c\in\Cinf(\sc{\bar{T}^\ast X})$. In particular, the extensions $a,b$ and $c$ can be chosen so that $c=\{a,b\}$.
\end{prop}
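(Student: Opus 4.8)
The plan is to reduce the assertion to the model case $X=\Sfp$ and then to quote Proposition~\ref{prop:symplectic properties of sg} through the identification furnished by Theorem~\ref{theo:scattering equals classical SG}. Since the Poisson bracket attached to $\de\lambda$ is a first-order bidifferential operator, it is a local operation on $T^\ast\mathring X$, so it suffices to verify the three claims — that $\{\cdot,\cdot\}$ maps sections of $S^{l_1,m_1}\times S^{l_2,m_2}$ into sections of $S^{l_1+l_2-1,m_1+m_2-1}$, that the boundary restriction of the result depends only on the boundary restrictions of the two factors, and that the corner compatibility is preserved — in a single coordinate patch of $X$. Over an interior patch there is nothing to do; near a boundary point one works in scattering coordinates, where by construction of $\sc{\bar{T}^\ast X}$ and by the coordinate invariance underpinning Proposition~\ref{prop:scattering principal symbol} the boundary structure and the algebras $\rho_N^{-l}\rho_\sigma^{-m}\Cinf(\sc{\bar{T}^\ast X})$ coincide with those of the model $\Sfp\times\Sfp$, while the canonical symplectic form is intrinsic and hence transports correctly. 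The step I expect to cost the most care is exactly this reduction: one must know that a scattering change of coordinates on $X$ lifts to $T^\ast X$ as a symplectomorphism which extends smoothly to $\sc{\bar{T}^\ast X}$, that is, the cotangent-bundle counterpart of Lemma~\ref{lemma:scattering class is coordinate invariant}.

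Granting this, put $X=\Sfp$, so that $\sc{\bar{T}^\ast X}=\Sfp\times\Sfp$ and $\Cinf(B_{sc}X;S^{l,m})$ is the restriction to the boundary of $\rho_\del^{-l}\rho_\sigma^{-m}\Cinf(\Sfp\times\Sfp)$. By Theorem~\ref{theo:scattering equals classical SG} the map $\jmath^{m}$ identifies $\sg{m}$ with $\rho_\del^{-m_e}\rho_\sigma^{-m_\psi}\Cinf(\Sfp\times\Sfp)$, i.e.\ with the sections of $S^{m_e,m_\psi}$, and intertwines the canonical Poisson bracket on $\R^n\times\R^n=T^\ast\mathring X$ with the Poisson bracket of $SG$-symbols. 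Item~2 of Proposition~\ref{prop:symplectic properties of sg} then gives $\{a,b\}\in\sg{m+k-\indi}$ for $a\in\sg{m}$, $b\in\sg{k}$, whence $\{a,b\}$ corresponds to a section with bi-order lowered by $(1,1)$; restriction to $B_{sc}X$ yields the claimed filtered map on symbols. For well-definedness on boundary data one invokes item~4 of Proposition~\ref{prop:symplectic properties of sg}, which states precisely that each $\bullet$-component of $\sigma_{pr}\{a,b\}$ depends only on $\sigma_{pr}(a)$ and $\sigma_{pr}(b)$ (two extensions of the same principal triple differ by a symbol of order $m-\indi$, hence their brackets with $b$ by one of order $m+k-2\indi$, which does not alter $\sigma_{pr}^{m+k-\indi}\{a,b\}$); translated through \eqref{eq:SG and sc principal symbols}, item~4 also furnishes the componentwise formula $\{a,b\}_\bullet=\{a_\bullet,b_\bullet\}$ on $\bar{\Wcal}_e$, $\bar{\Wcal}_\psi$ and $\widetilde{\Wcal}_{\psi e}$, the brackets on $\bar{\Wcal}_e$ and $\bar{\Wcal}_\psi$ being those inherited by homogeneity from $\Wcal_e\cong T^\ast(\R^+\times\del X)$ and $\Wcal_\psi\cong T^\ast X\setminus\{0\}$.

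There remains the corner. Starting from compatible triples $(a_\bullet)$, $(b_\bullet)$ with $a_e|_{\widetilde{\Wcal}_{\psi e}}=a_{\psi e}=a_\psi|_{\widetilde{\Wcal}_{\psi e}}$ and similarly for $b$, one uses that passing to leading homogeneous parts commutes with the Poisson bracket — which is the fact, implicit in the proof of Proposition~\ref{prop:symplectic properties of sg} together with Lemma~\ref{lemma:symbol maps compatibility}, that the entries of the asymptotic matrix of a bracket are assembled from those of the two factors — to obtain
\[
\{a_e,b_e\}|_{\widetilde{\Wcal}_{\psi e}}=\{a_{\psi e},b_{\psi e}\}=\{a_\psi,b_\psi\}|_{\widetilde{\Wcal}_{\psi e}}.
\]
Thus $(\{a_\bullet,b_\bullet\})$ is again a compatible triple and, by Proposition~\ref{prop:compatible principal symbol implies global}, extends to some $c\in\Cinf(\sc{\bar{T}^\ast X})$. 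Finally, if $a,b\in\Cinf(\sc{\bar{T}^\ast X})$ are arbitrary extensions of $(a_\bullet)$ and $(b_\bullet)$, then by the second paragraph $\{a,b\}$ itself lies in $\rho_N^{-(l_1+l_2-1)}\rho_\sigma^{-(m_1+m_2-1)}\Cinf(\sc{\bar{T}^\ast X})$ and restricts on the boundary to $(\{a_\bullet,b_\bullet\})$, so one may simply take $c=\{a,b\}$; this is the last assertion. As a byproduct, the operator-level identity of Proposition~\ref{prop:symplectic properties of sg}(5) is recovered in the scattering language from the multiplicativity of the joint symbol of Proposition~\ref{prop:scattering principal symbol}.
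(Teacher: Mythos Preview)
Your proposal is correct and follows exactly the route the paper indicates: the paper does not give a detailed proof but simply observes, in the sentence preceding the proposition, that in the model case $X=\Sfp$ the statement is a reformulation of Proposition~\ref{prop:symplectic properties of sg} via the identification of Theorem~\ref{theo:scattering equals classical SG} (with a reference to \cite{melrose1994spectralscatteringtheory}, Proposition~4). Your write-up makes this sketch precise, including the coordinate-invariance reduction, the well-definedness on boundary data via item~4 of Proposition~\ref{prop:symplectic properties of sg}, and the corner compatibility via Lemma~\ref{lemma:symbol maps compatibility}; there is nothing to add.
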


Second, denote the Liouville 1-form by $\lambda_\psi$ and recall that $\lambda_\psi$, being homogeneous of degree $1$ in the fibres of $T^\ast X$, induces a contact structure on $\Sf{\ast}X$. In particular, $\lambda_\psi$ restricts to a contact form there, so it can be rescaled by a conformal factor without changing the actual structure. Of course, since $\widetilde{\Wcal}_\psi$ is diffeomorphic to $\Sf{\ast}\mathring{X}$, it also is a contact manifold and we give it a specific contact structure as follows. For $\rho_\sigma=R_\ast\braket{\xi}^{-1}$, our standard boundary-defining function, consider the inward-pointing radial vector field $\rho_\sigma\del_{\rho_\sigma}$. This is just the Euler vector field $\xi_j\del_{\xi_j}$ expressed in adapted coordinates at the boundary. Then, since $\lambda_\psi=\rho_\sigma\del_{\rho_\sigma}\lrcorner\omega$, and $\rho_\sigma$ is positive in the interior, we can consider the \textit{rescaled} radial vector $\rho_\sigma^2\del_{\rho_\sigma}$, which, inserted into $\omega$, gives the 1-form $\alpha_\psi\equiv\rho_\sigma^2\del_{\rho_\sigma}\lrcorner\omega$. This form is conformally equivalent to $\lambda_\psi$ \textit{in the interior}, and we consider it as the ``standard'' contact form on $\widetilde{\Wcal}_\psi$, the co-sphere bundle at infinity.

A similar process produces our ``standard'' contact form on $\widetilde{\Wcal}_e$. Choosing a collared neighbourhood and canonical coordinates $(\rho_N,z,\tau,\mi)$ near $\del \sc{T^\ast X}$, with $\rho_N$ a boundary-defining function, $\omega$ is the differential of a 1-form $\lambda_e$ given by 
\begin{equation}
	\label{eq:e-contact form}
	\lambda_e=\frac{1}{\rho_N}(\de \tau+\mi_k\de z^k).
\end{equation}
Notice that $\lambda_e=\rho_N\del_{\rho_N}\intprod\omega$ is \textit{not} a smooth 1-form on $T^\ast X$, since it blows up at $\del X$. It is however smooth as a scattering 1-form on $\sc{T^\ast X}$ and can be rescaled to give a smooth 1-form $\alpha_e=\rho_N^2\del_{\rho_N}\intprod\omega$ near the interior of the boundary hyper-surface. Since the collared neighbourhood gives a conic structure near $\widetilde{\Wcal}_e$, we obtain a contact structure on the boundary hyper-surface as above. Moreover, the contact distribution over $\widetilde{\Wcal}_e$ is unambiguously determined by the restriction of $\alpha_e$ to $\widetilde{\Wcal}_e$, and $\alpha_e$ is a contact form there. Notice again that this is conformally equivalent to $\lambda_e$ \textit{in the interior} but not at ``spatial infinity'' $\del X$.

We have then a pair of 1-forms $\alpha_\psi,\alpha_e$ on $T^\ast X$ which determine the symplectic structure at either spatial infinity or fibre infinity. In particular, since they induce contact distributions of $\widetilde{\Wcal}_\psi$ and $\widetilde{\Wcal}_e$, we can speak of Legendrian submanifolds at infinity. Recall that, classically, FIOs are operators whose singularities are contained in conic Lagrangian submanifolds of $T^\ast X\setminus\{0\}$, which in turn can be identified with Legendrian submanifolds of the co-sphere bundle by restriction. Parallel to this situation, Melrose and Zworski \cite{melrose1996scatteringflow} introduced a class of Legendrian distributions in $\widetilde{\Wcal}_e$, which has been subsequently generalised by Coriasco, Doll and Schulz in \cite{coriasco2019lagrangiandistributions} to include singularities in the whole $B_{sc}X$. As a last step, before introducing our notion of singular symplectomorphism, we recall the notion of the so-called $SG$/$sc$-Lagrangians/Legendrians, that have been studied by Coriasco and collaborators, and refer to the cited literature for more details.

\begin{defin}[$SG$-Legendrian]
	\label{def:SG Legendrian}
	Let $\Lambda=\bar{\Lambda}_e\cup\bar{\Lambda}_\psi\subset B_{sc}X$ be a closed submanifold, where $\Lambda_e\subset\bar{\Wcal}_e$ and $\Lambda_\psi\subset\bar{\Wcal}_\psi$ and $\bar{\Lambda}_\bullet$ denotes the closure of $\Lambda_\bullet$. $\Lambda$ is called an $SG$-Legendrian submanifold if it satisfies the following conditions:
	\begin{enumerate}
		\item
			$\Lambda_\psi$ is Legendrian in $\widetilde{\Wcal}_\psi$;
		\item 
			$\Lambda_e$ is Legendrian in $\widetilde{\Wcal}_e$;
		\item
			$\bar{\Lambda}_e$ has boundary if and only if $\bar{\Lambda}_\psi$ has boundary, in which case
			\[
			\Lambda_{\psi e}=\del\bar{\Lambda}_e=\del\bar{\Lambda}_\psi=\bar{\Lambda}_e\cap\bar{\Lambda}_\psi
			\]
			with clean intersection.
	\end{enumerate}
\end{defin}

Having finished our recap, we build upon this structure to introduce our notion of symplectomorphism. Here and later, $X,Y$ are scattering manifolds and we use the notation of Definition \ref{def:boundary hypersurfaces} to refer to subsets of $B_{sc}X$ and $B_{sc}Y$ indifferently (no confusion should arise, in particular, in view of the fact that all statements to come can be checked in local coordinates).

\begin{defin}
	\label{def:scattering-symplectomorphism}
	Let $U$, respectively $V$, be open in $B_{sc}X$, respectively $B_{sc}Y$. Assume $\chi\colon U\rightarrow V$ is a diffeomorphism (in particular, a $sc$-map), given as a pair of maps $\chi=(\chi_e,\chi_\psi)$ defined on $U\cap\bar\Wcal_e$ and $U\cap\bar\Wcal_\psi$, respectively (if $U\cap\bar{\Wcal_\bullet}=\void,\bullet\in\{e,\psi\},$ we understand that $\chi_\bullet$ is not present). We define the notion of \textit{scattering symplectomorphism} or \textit{scattering-canonical transformation} (SCT) depending on whether $U$ and (consequently) $V$ intersect the corner: If $U\cap\widetilde{\Wcal}_{\psi e}=\void$, we say that $\chi$ is an SCT if $\chi_\bullet$ is a contact diffeomorphism with respect to $\alpha_\bullet$; else, $\chi$ is an SCT if both $\chi_e$ and $\chi_\psi$ are contact diffeomorphism in the interior of the respective boundary face and $\chi$ preserves the Poisson bracket on $\Cinf(U;S^{l,m})$ across the corner.
\end{defin}

We have the following easy-to-prove properties of a scattering-symplectomorphism, which reflect classical behaviour of regular symplectomorphisms.

\begin{lemma}
	\label{lemma:properties of scattering-symplectomorphism}
	Let $\chi\colon U\rightarrow V$ be an SCT. Then $\chi$ maps $SG$-Legendrian submanifolds in $U$ to $SG$-Legendrian submanifolds in $V$. Moreover, if $U\cap\widetilde{\Wcal}_{\psi e}=\void$, then $\chi_e$, respectively $\chi_\psi$, extends to a homogeneous symplectomorphism on a collared neighbourhood of $\del T^\ast X$, respectively on $T^\ast_0 X$, which admits a local parametrisation via a $e$-homogeneous, respectively $\psi$-homogeneous, phase function. Finally $\chi$ induces Poisson maps on the boundaries and corners (i.e. it preserves the Poisson structure in Proposition \ref{prop:scattering poisson structure}). 
\end{lemma}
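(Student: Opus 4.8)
The plan is to treat the three assertions separately, in each case reducing to classical facts about contactomorphisms and their symplectizations together with the Fourier-transform symmetry of the model case $X=\Sfp$. First I would prove that $\chi$ preserves $SG$-Legendrians. Given an $SG$-Legendrian $\Lambda=\bar\Lambda_e\cup\bar\Lambda_\psi\subset U$, the components $\chi_e,\chi_\psi$ are by Definition \ref{def:scattering-symplectomorphism} contact diffeomorphisms of the interiors of $\bar\Wcal_e,\bar\Wcal_\psi$ with respect to $\alpha_e,\alpha_\psi$, and since contactomorphisms carry Legendrian submanifolds to Legendrian submanifolds, the images $\chi_\psi(\Lambda_\psi)$ and $\chi_e(\Lambda_e)$ already satisfy conditions (1) and (2) of Definition \ref{def:SG Legendrian}. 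For condition (3) I would use that $\chi$ is a diffeomorphism of manifolds with corners (in particular an $sc$-map), hence maps the corner $U\cap\widetilde\Wcal_{\psi e}$ diffeomorphically onto $V\cap\widetilde\Wcal_{\psi e}$ and is smooth across it; therefore $\chi(\bar\Lambda_\bullet)=\overline{\chi_\bullet(\Lambda_\bullet)}$, the two closures share the common boundary $\chi(\Lambda_{\psi e})$, and cleanness of the intersection is inherited through $\chi$. When $U$ misses the corner only one face is present and there is nothing to verify there.

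Next I would establish the homogeneous extensions. Suppose $U\cap\widetilde\Wcal_{\psi e}=\void$ and first take $U\subset\widetilde\Wcal_\psi$. Since $\alpha_\psi$ is conformally equivalent in the interior to $\lambda_\psi|_{\sc{\Sf{\ast}}\mathring X}$, the map $\chi_\psi$ is a contactomorphism of an open piece of the co-sphere bundle for its canonical co-oriented contact structure; its symplectization is $T^\ast_0 X\cong\Wcal_\psi$ with the Liouville form, and $\chi_\psi$ lifts to a unique $\R^+$-equivariant — hence degree-one, i.e.\ $\psi$-homogeneous — symplectomorphism $\tilde\chi_\psi$ of an open conic subset of $T^\ast_0 X$ restricting to $\chi_\psi$ on $\widetilde\Wcal_\psi$. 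This is the asserted homogeneous symplectomorphism, and by the classical parametrization of homogeneous canonical transformations \cite{hormander1971fourierintegraloperators} its twisted graph is, locally, cut out by a nondegenerate phase function homogeneous of degree one in the frequency variables, i.e.\ $\psi$-homogeneous in the sense of Definition \ref{def:polyhomogneousfunctions} and of exactly the type appearing in Theorem \ref{theo:SG Egorov}.

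For $U\subset\widetilde\Wcal_e$ I would reduce to the previous case via the model-case Fourier symmetry: by Proposition \ref{prop:sc-operators conjugate with FT} and Theorem \ref{theo:scattering equals classical SG}, conjugation by $\bar\FT$ interchanges $\widetilde\Wcal_e$ with $\widetilde\Wcal_\psi$ and $\alpha_e$ with $\alpha_\psi$, so $\chi_e$ corresponds to a contactomorphism of $\widetilde\Wcal_\psi$; applying the $\psi$-case and transporting back yields a homogeneous symplectomorphism on a collared neighbourhood of $\del T^\ast X$, homogeneous with respect to the conic (collar) dilation, with an $e$-homogeneous parametrizing phase (the Fourier-conjugate of the $\psi$-homogeneous one). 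Alternatively one runs the symplectization argument directly on the cone $\R^+\times\del X$ over the $e$-face using $\alpha_e=\rho_N^2\del_{\rho_N}\intprod\omega$ from \eqref{eq:e-contact form}.

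Finally, the Poisson statement is largely a matter of unwinding definitions. On each interior face $\widetilde\Wcal_\bullet$ the filtered bracket of Proposition \ref{prop:scattering poisson structure} is the one induced from the homogeneous symplectic structure on $\Wcal_\bullet$ by restriction to weighted (i.e.\ homogeneous) smooth functions, and since $\chi_\bullet$ lifts to the homogeneous symplectomorphism $\tilde\chi_\bullet$ of $\Wcal_\bullet$ found above, it preserves this bracket; if $U$ meets the corner, $\chi$ preserves the bracket across the corner by Definition \ref{def:scattering-symplectomorphism}, and restricting to $\widetilde\Wcal_{\psi e}$ shows $\chi_{\psi e}=\chi_e|_{\widetilde\Wcal_{\psi e}}=\chi_\psi|_{\widetilde\Wcal_{\psi e}}$ preserves the corner bracket as well. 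I expect the hardest part to be the $e$-face statement in the third paragraph: pinning down the correct notion of ``homogeneous symplectomorphism'' on a collar of $\del T^\ast X$ and its $e$-homogeneous generating phase. The Fourier-transform reduction makes this transparent, whereas a direct argument on $\R^+\times\del X$ is more delicate because $\lambda_e$ is only an $sc$-$1$-form, not a genuine $1$-form on $T^\ast X$.
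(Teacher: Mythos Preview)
Your proposal is correct and follows essentially the same three-part structure as the paper's proof: contactomorphisms preserve Legendrians (plus cleanness is preserved by diffeomorphisms), symplectization lifts the $\chi_\bullet$ to homogeneous symplectomorphisms with H\"ormander-type phase parametrizations, and the Poisson statement is then immediate from these lifts together with the corner clause in the definition.

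The one genuine difference is your treatment of the $e$-face. The paper runs the symplectization argument \emph{directly} on $\widetilde\Wcal_e$: it writes $\omega_e=-\de(\rho_N\alpha_e)$ on the conified neighbourhood, checks that the extension $C_e(\rho_N,z,\tau,\mi)=(\rho_N/g,\chi_e(z,\tau,\mi))$ (where $\chi_e^\ast\alpha_e=g\alpha_e$) is homogeneous symplectic, and then obtains the $e$-homogeneous phase by the classical H\"ormander argument with the roles of base and fibre variables interchanged (citing Melrose--Zworski). You instead propose reducing to the $\psi$-case via the Fourier/symplectic-rotation symmetry of the model $\Sfp$. This is a legitimate shortcut in the model case, but note two things: first, Proposition~\ref{prop:sc-operators conjugate with FT} is a statement about \emph{operators}, not about the contact geometry, so you still owe the (easy) verification that the symplectic rotation $(x,\xi)\mapsto(\xi,-x)$ intertwines $\alpha_e$ and $\alpha_\psi$; second, and more importantly, this reduction is tied to the model case, whereas the lemma is stated for general scattering $X,Y$. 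The direct symplectization argument you list as an alternative is what the paper actually does, precisely because it is manifestly local and works without reference to any global Fourier symmetry (cf.\ also Remark~\ref{rem:symplectic rotation as a scattering map}, where the paper explicitly flags that the symplectic rotation is \emph{not} a scattering map in the adopted sense).
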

\begin{proof}
	First, recall that a diffeomorphism between contact manifolds $X$ and $Y$ is contact if and only if it preserves each Legendrian submanifold (cf. \cite{sasaki1964characterisation}). Then, since $\chi$ is a diffeomorphism by definition, we obtain at once that $SG$-Legendrians are preserved if $U\cap\widetilde{\Wcal}_{\psi e}=\void$. On the other hand $\chi$ preserves clean intersection so the preservation of $SG$-Legendrians for $U\cap \widetilde{\Wcal}_{\psi e}\neq \void$ also follows. 
	
	The second statement is obtained by applying the classical procedure of symplectisation to the contact manifolds $\widetilde{\Wcal}_\psi$ and $\widetilde{\Wcal}_e$ separately. Indeed notice that, if $U$ does not intersect the corner, then $U$ can be ``conified'' to a subset of $T^\ast X$. Near the boundary we just have to pick a collared neighbourhood $\del X\times [0,1)$ and pull-back $(U\cap\widetilde{\Wcal}_e)\times[0,1)$ with $R\times \id$, while on the co-sphere at $\infty$ we just identify $U\cap\Wcal_\psi$ with a subset of $\Sf{\ast}X$ and consider, as in the classical theory, the associated conic neighbourhood. Let us spend a few extra words to describe, for the $e$-component, how one obtains a symplectic form and can extend $\chi_e$ to a homogeneous symplectomorphism. More details can be found in \cite{mcduff2017introductionsymplectictopology}, Section <<Symplectization of contact manifolds>>, or in \cite{arnold1995mathematicalmethods}, Appendix 4. 
	We are given the contact form $\alpha_e=\de \tau+\mi_k\de z^k$ on $\widetilde{\Wcal}_e$. In the conified neighbourhood $\R^+\times U_e$, we are introducing the new coordinate $\rho_N$, effectively identifying $U_e$ with $\{1\}\times U_e$, and can consider 
	\[
	\omega_e=-\de\,(\rho_N\alpha_e)=-\rho_N\left(\frac{\de \rho_N}{\rho_N}\wedge\alpha_e+\de\alpha_e\right).
	\]
	It is readily checked that this is now a symplectic form on the conified neighbourhood. By definition of contact transformation, $\chi^\ast\alpha_e=g\alpha_e$ for a positive smooth function $g$. Then, the extension $C_e(\rho_N,z,\tau,\mi)\equiv(\rho_N/g(z,\tau,\mi),\chi(z,\tau,\mi))$ is homogeneous symplectic. Indeed, 
	\[
	C_e^\ast\omega_e=-\de C_e^\ast(\rho_N\alpha_e)=-\de\left(\frac{\rho_N}{g} g\alpha_e\right)=\omega_e,
	\]
	proving that $C_e$ is symplectic. The homogeneity is, on the other hand, manifest.
	
	We have now the homogeneous symplectic extensions $C_e$ and $C_\psi$. Now, the local parametrisation of $C_\psi$ is the classical result of Hörmander, Proposition 25.3.3 in \cite{hormander2009analysisFIO}. On the other hand, for $C_e$ it suffices to exchange the rôles of variables and covariables (also cf. \cite{melrose1996scatteringflow}, Section 6).
	
	Concerning the last statement, observe that, at the corner, the Poisson structure is preserved by definition. On the other hand, the homogeneous symplectic extensions just constructed guarantee that $\{,\}$ is preserved away from the corner.
\end{proof}

In the next Theorem \ref{theo:scattering canonical transformations} we present a more thorough analysis of the structure of a $sc$-symplectomorphism defined near the corner. To avoid overburdening the notation, let us first clarify that the local expressions given below hold true in coordinates $(x,\xi)$, obtained as the pull-back of standard systems of coordinates near the boundary faces (or possibly the corner). In particular, the $\alpha$'s are angular coordinates on $\Sf{n-1}$ and the boundary-defining function is the inverse of the radial coordinate in polar coordinates. That is, $x^i=\abs{x}X^i(\alpha)$ for smooth functions $X^i$ such that $(X^1)^2+\dots+(X^n)^2=1$ and $\rho_\del=1/\abs{x}$. We employ the same convention for $\xi$'s and $\beta$'s. Also, in this notation we will consider homogeneous extensions of functions $\Sf{n-1}\times\R^n$ to $\R^n_0\times \R^n$. To be precise, we will look for $\R^+$-equivariant maps $C_\bullet\colon\Wcal_\bullet\rightarrow\Wcal_\bullet$ agreeing with $\chi_\bullet$ on $\widetilde\Wcal_\bullet$. Any such map is of the form (for example $\bullet=e$, w.l.o.g.)
\begin{equation}
	\label{eq:equivariant extension 1}
	C_e(r,\alpha,\xi)=(f_e(\alpha,\xi)r,\chi_e(\phi,\xi))
\end{equation}
for some smooth $f_e\in\Cinf(\widetilde\Wcal_{e})$, where $(r=\abs{x},\alpha)$ are the above polar coordinates on $\R^n_0$ and $\xi$ are coordinates on $\R^n$. The inverse of such map, again taking polar coordinates on the first factor and global coordinates on the second, is given by
\begin{equation}
	\label{eq:equivariant extension inverse}
	C_e^{-1}(s,\alpha,\eta)=\left(\frac{s}{f_e(\chi_e^{-1}(\alpha,\eta))},\chi_e^{-1}(\alpha,\eta)\right).
\end{equation} 
Recalling again Section <<Symplectization of contact manifolds>> in \cite{mcduff2017introductionsymplectictopology}, it must be possible to choose $f_e$ appropriately to ensure that $C_e$ so extended is symplectic. We will find the explicit form of the section $f_e$ (and $f_\psi$ too, of course) in the next chapter, in the course of the proof of Lemma \ref{lemma:canonicaltransformation}. For the moment, we content ourselves with saying that such a choice is possible.
\begin{theo}
	\label{theo:scattering canonical transformations}
	Let $\chi$ be a $sc$-canonical transformation, between open sets $U, V$ as above, with $U\cap\widetilde\Wcal_{\psi e}\neq\void$. Then $\chi$ is given as the datum of a triple of diffeomorphisms $(\chi_e,\chi_\psi,\chi_{\psi e})$, for $\chi_\bullet\colon\widetilde\Wcal_\bullet\rightarrow\widetilde\Wcal_\bullet$, such that
	\begin{enumerate}
		\item 
			If $\chi_e(\alpha,\xi)=(T(\alpha,\xi),H(\alpha,\xi))$ for $T\colon\Sf{n-1}\times\R^n\rightarrow\Sf{n-1},H\colon\Sf{n-1}\times\R^n\rightarrow\R^n$, then the components of $H$ are elements of $\Cinf(\Sf{n-1};\sym{1}(\R^n))$;
		\item 
			If $\chi_\psi(x,\beta)=(Y(x,\beta),G(x,\beta))$ for $G\colon\R^n\times\Sf{n-1}\rightarrow\Sf{n-1},Y\colon\R^n\times\Sf{n-1}\rightarrow\R^n$ then the components of $Y$ are elements of $\Cinf(\Sf{n-1};\sym{1}(\R^n))$;
		\item 
			If $\chi_{\psi e}(\alpha,\beta)=(A(\alpha,\beta),B(\alpha,\beta))$ and we write $\chi_e,\chi_\psi$ as above, the principal symbol of $Y$, respectively $H$, restricted to  $\Sf{n-1}\times\Sf{n-1}$ coincides with $T$, respectively $F$. More generally, it holds true that
			\begin{align}
				A(\alpha,\beta)&=\lim_{\lambda\rightarrow+\infty}T(\alpha,\lambda\xi)=\lim_{\lambda\rightarrow+\infty}\frac{1}{\lambda}Y(\lambda x,\beta),\\
				B(\alpha,\beta)&=\lim_{\lambda\rightarrow+\infty}\frac{1}{\lambda}H(\alpha,\lambda\xi)=\lim_{\lambda\rightarrow+\infty}G(\lambda x,\beta);
			\end{align}
		\item 
			We can pick homogeneous extensions $C_e$ of $\chi_e$ in $\alpha$, $C_\psi$ of $\chi_\psi$ in $\beta$ and $C_{\psi e}$ of $\chi_{\psi e}$ in $\alpha$ and $\beta$ separately, so that $C_\bullet$ is a symplectomorphism, homogeneous in the respective variables;
		\item 
			Writing these extensions as $C_\bullet(x,\xi)=(Y_\bullet(x,\xi), H^\bullet(x,\xi))$ for $Y_\bullet=(Y_\bullet^1,\dots,Y_\bullet^n)$, $H^\bullet=(H^\bullet_1,\dots,H^\bullet_n)$, we have that each triple $(Y_\bullet^j)$, $(H^\bullet_k)$ can be continued to a classical $SG$-symbol near ``infinity''. In particular there is a diffeomorphism $C(x,\xi)=(Y^j(x,\xi),H_k(x,\xi))$ near ``infinity'' having $(C_e,C_\psi,C_{\psi e})$ as ``principal symbol''.
	\end{enumerate} 
\end{theo}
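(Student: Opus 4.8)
The plan is to establish the five items in turn, always translating into the coordinates $(x,\xi)$ fixed just before the statement (so that each claim becomes a local assertion near the corner) and using throughout the dictionary between classical $SG$-symbols and weighted smooth functions on $\Sfp\times\Sfp$ (Theorem \ref{theo:scattering equals classical SG}). For items (1) and (2): the component $\chi_e$ is a diffeomorphism of an open subset of the manifold-with-boundary $\bar{\Wcal}_e$, whose boundary is the corner $\widetilde{\Wcal}_{\psi e}$ (cut out by $\rho_\sigma=\braket{\xi}^{-1}$); hence $\chi_e$ is a local $sc$-map, $\chi_e^\ast\rho_\sigma=h_\sigma\rho_\sigma$ with $h_\sigma>0$ smooth, and reading off the fibre component in adapted coordinates $(\alpha,\rho_\sigma,\beta)$ near the corner shows that $\braket{\xi}^{-1}H(\alpha,\xi)$ extends to a smooth function of $(\alpha,\xi)$ on $\Sf{n-1}\times\Sfp$. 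By the one-variable instance of Theorem \ref{theo:scattering equals classical SG} (see also Remark \ref{rem:global classical symbols} and Example \ref{example:compactification on Rn}), $\braket{\xi}\,\Cinf(\Sfp)\cong\sym{1}(\R^n)$, so each component of $H$ lies in $\Cinf(\Sf{n-1};\sym{1}(\R^n))$; this step uses only that $\chi_e$ is an $sc$-diffeomorphism, not its symplectic nature. Applying the same reasoning to $\chi_\psi$ and $\rho_\del=\braket{x}^{-1}$ yields (2).

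For item (3): since $\chi$ is a diffeomorphism of the manifold-with-corners $B_{sc}X$, the restrictions of $\chi_e$ and $\chi_\psi$ to the corner coincide and equal $\chi_{\psi e}$ — a consistency already implicit in Definition \ref{def:scattering-symplectomorphism}. I would then compute that common restriction by expressing the corner values through the limit description of principal symbols (Proposition \ref{prop:principalsgsymbols}(3) and Theorem \ref{theo:scattering equals classical SG}(3)): evaluating $\chi_e=(T,H)$ and $\chi_\psi=(Y,G)$ at the corner and using that the corner values are $\lim_{\lambda}T(\alpha,\lambda\xi)$, $\lim_{\lambda}\lambda^{-1}H(\alpha,\lambda\xi)$, $\lim_{\lambda}\lambda^{-1}Y(\lambda x,\beta)$, $\lim_{\lambda}G(\lambda x,\beta)$ gives the asserted identities $A=\lim_{\lambda}T(\alpha,\lambda\xi)=\lim_{\lambda}\lambda^{-1}Y(\lambda x,\beta)$ and $B=\lim_{\lambda}\lambda^{-1}H(\alpha,\lambda\xi)=\lim_{\lambda}G(\lambda x,\beta)$; by construction these limits are precisely the corner values of the principal symbols of $Y$ and $H$.

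For item (4): I symplectize each face separately, as in the proof of Lemma \ref{lemma:properties of scattering-symplectomorphism}. Writing $\chi_e^\ast\alpha_e=g_e\alpha_e$ with $g_e>0$, the map $C_e(r,\cdot)=(r/g_e(\cdot),\chi_e(\cdot))$ is a symplectomorphism of $(\Wcal_e,-\de(r\alpha_e))$ homogeneous of degree $1$ in $r=\abs{x}$; likewise one obtains $C_\psi$, homogeneous of degree $1$ in $\abs{\xi}$. For the corner, $\widetilde{\Wcal}_{\psi e}$ carries the Poisson structure of Proposition \ref{prop:scattering poisson structure}, which $\chi_{\psi e}$ preserves by Definition \ref{def:scattering-symplectomorphism}, and the corresponding bi-homogeneous symplectization produces $C_{\psi e}$ on $\Wcal_{\psi e}\cong\R^n_0\times\R^n_0$, bi-homogeneous of bi-degree $(1,0)\times(0,1)$. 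The delicate point is that the conformal factors must be chosen so that $C_e$, $C_\psi$ and $C_{\psi e}$ have matching restrictions at the corner; this is possible because $\alpha_e$ and $\alpha_\psi$ induce the same structure on $\widetilde{\Wcal}_{\psi e}$ and $\chi$ is an SCT across the corner, the explicit form of the sections $f_e=g_e^{-1}$, $f_\psi$ that makes this precise being obtained in the proof of Lemma \ref{lemma:canonicaltransformation}.

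For item (5): the components $Y^j_\bullet,H^\bullet_k$ of the three symplectic extensions are homogeneous functions on the cones $\Wcal_\bullet$; once weighted by $\braket{x}^{-1}$, resp.\ $\braket{\xi}^{-1}$, they become smooth functions on the closed faces $\bar{\Wcal}_e$, $\bar{\Wcal}_\psi$ and on the corner, and by items (3)--(4) the triples $(Y^j_e,Y^j_\psi,Y^j_{\psi e})$ and $(H^e_k,H^\psi_k,H^{\psi e}_k)$ satisfy the corner-compatibility condition of Proposition \ref{prop:compatible principal symbol implies global}; hence they extend to classical $SG$-symbols $Y^j\in\sg{1,0}$, $H_k\in\sg{0,1}$ defined near infinity and having $(C_e,C_\psi,C_{\psi e})$ as principal symbol. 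Setting $C=(Y^j,H_k)$, its Jacobian has non-vanishing principal symbol (namely the Jacobians of the diffeomorphisms $C_\bullet$ on the faces), so by the inverse function theorem $C$ is a local diffeomorphism near the corner, and its injectivity near infinity follows from the injectivity of each $C_\bullet$ by the neighbourhood argument used in Theorem \ref{theo:scattering maps extend to the interior}. I expect the genuine obstacle to be item (4): because $\alpha_e$ and $\alpha_\psi$ blow up at the corner, matching the conformal factors of the three symplectizations is subtle, and it is exactly there that the detailed description of the extensions — postponed to Lemma \ref{lemma:canonicaltransformation} — is needed; the remaining steps are essentially bookkeeping with the correspondence of Theorem \ref{theo:scattering equals classical SG}.
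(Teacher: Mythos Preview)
Your proposal is correct and follows essentially the same route as the paper's own proof: items (1)--(3) are handled identically by pulling back $\chi_\bullet$ through the radial compactification and reading off the $\sym{1}$-regularity and the corner limits via Theorem \ref{theo:scattering equals classical SG}, and item (4) is in both cases deferred to the symplectization argument with the explicit choice of section made in Lemma \ref{lemma:canonicaltransformation}. The only minor difference is in item (5): the paper directly invokes the map-extension result Theorem \ref{theo:scattering maps extend to the interior} to obtain the diffeomorphism $C$, whereas you first extend the component functions via Proposition \ref{prop:compatible principal symbol implies global} and then recover the diffeomorphism property by an inverse-function/Jacobian argument combined with Theorem \ref{theo:scattering maps extend to the interior}; both approaches are valid and yield the same conclusion.
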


\begin{proof}
	
	For 1., notice that $\chi_e$ is given as a diffeomorphism of $\Sf{n-1}\times\B^n$. We pull it back to a diffeomorphism of $\Sf{n-1}\times\R^n$ using $\id\times R$. But then the $\R^n$-components of $\chi_e$ must be classical symbols of order 1 in $\xi$, depending in a smooth way on $\alpha\in\Sf{n-1}$. This is exactly the claim.
	
	For 2., we argue exactly as in 1., exchanging the rôles of the variables.
	
	To prove 3.\ notice that the expressions involving $H$ and $Y$ are just the standard formul\ae\, to compute the principal symbol for the classes $S^1(\R^n)$, depending on a parameter on $\Sf{n-1}$. Recalling Theorem \ref{theo:scattering equals classical SG} we see immediately that we can compute it also by restriction to $\Sf{n-1}\times\Sf{n-1}$. Now, $\chi_\bullet$ is obtained as a diffeomorphism of $\del (\Sf{n}_+\times\Sf{n}_+)$, so in the corner $\chi_e=\chi_\psi$. Pulling this back with $\id\times R$ and $R\times\id$ and comparing the respective components gives the claimed formulas.
	
	4.\ is clear if one exploits the close relation between canonical transformations and contact diffeomorphisms. However, a more explicit construction will be given in the proof of Lemma \ref{lemma:canonicaltransformation}, where we will see that the choice of order reductions uniquely determines the homogeneous extensions to be symplectic.
	
	5.\ is now a consequence of the above facts. Indeed, the components of the homogeneous extensions $C_\bullet$ satisfy symbol estimates in the non-homogeneous variables. In particular, each pair of components $(Y_e^j,Y_\psi^j)$, respectively $(H^e_k,H^\psi_k)$, can be continued to a symbol $Y^j\in\sg{\indi_e}(\R^n\times\R^n)$, respectively $H_k\in\sg{\indi_\psi}(\R^n\times\R^n)$. We can choose them so that the resulting map $C(x,\xi)=(Y(x,\xi),H(x,\xi))$ is a diffeomorphism, in view of Theorem \ref{theo:scattering maps extend to the interior}. Indeed, our maps are all globally defined on the boundary hyper-surfaces and the corner, so they can be patched together correctly. 
\end{proof}

\begin{rem}
	One would certainly hope that the extension in 5. of the above theorem could be achieved symplectic. However, the best of our efforts could not deduce this desirable fact from the properties of $C_e,C_\psi$ and $C_{\psi e}$. 
\end{rem}
\begin{rem}
	\label{rem:symplectic rotation as a scattering map}
	Notice that, for a scattering map on a manifold with corners, one preassigns an ordering on the set of the boundary-defining functions, so that at a corner we are specifying which boundary hyper-surface is mapped to which. For example on $\Sf{n}_{+}\times\Sf{n}_{+}$, seen as $\sc{\bar{T^\ast \Sf{n}_{+}}}$, we use the ordered set of boundary-defining functions $(\rho_N,\rho_\sigma)$. Then, it is easily seen that the symplectic rotation $F\colon(x,\xi)\rightarrow(\xi,-x)$, extended to the compactification as in Theorem \ref{theo:scattering equals classical SG}, is \textit{not} a scattering map in this sense, since $F^\ast \rho_N=\rho_\sigma$ and vice-versa. This reflects the fact that, on a manifold with boundary $X$, the two components of the joint scattering symbol live as smooth functions on two in principle different compact manifolds, namely, the scattering co-sphere bundle and the boundary of $X$ (pulled back to the compactified scattering cotangent bundle). Of course, nothing in principle prevents us from considering $F$ as some sort of ``generalised scattering map'' on the model case $\Sfp$. However we notice that pull-back along $F$ \textit{does not} preserve $SG$-classes, since it exchanges the two filtrations as in Proposition \ref{prop:sc-operators conjugate with FT}. We will therefore assume that SCTs cannot exhibit this kind of behaviour, although we will comment again on this point at the very end of Chapter \ref{chap:OPI}.
\end{rem}

We now come to the core of this section: the relation between scattering-symplectic maps and the classical $SG$-phase functions. 

\begin{theo}[Parametrising $sc$-symplectomorphisms]
	\label{theo:parametrisation sc-symplectomorphism}
	Let $\chi\colon \del(\Sfp\times\Sfp)\rightarrow\del(\Sfp\times\Sfp)$ be a (possibly only locally defined) scattering canonical transformation. Then, at each point $(p,q)$ on the graph of $\chi$, we can find a neighbourhood $\tilde U$ of $p$, a neighbourhood $\tilde{V}$ of $q$ and an $SG$-phase function $\phi(x,y,\xi)\in\sg{\indi}_{(x,y),\xi}$, parametrising a neighbourhood of $(p,q)$. More explicitly, if $p$ does not lie on the corner, then we can parametrise the homogeneous symplectic extension $C$ of $\chi$ near $p$ via a homogeneous phase function in the classical sense. On the other hand, if $p$ is in the corner then there is a conic neighbourhood $U_e$, respectively $U_\psi$, associated with a neighbourhood $\tilde{U}_e$, respectively $\tilde{U}_\psi$, of $p$ in $\Sf{n-1}\times\Sfp$, respectively $\Sfp\times\Sf{n-1}$, and we find a phase $\phi$ as above such that $\phi_e=\sigma_e(\phi)$, respectively $\phi_\psi=\sigma_\psi(\phi)$, parametrises the graph of $C_e$, respectively $C_\psi$, in the usual sense for conic Lagrangians, and $\phi_{\psi e}$ parametrises the bi-homogeneous extension $C_{\psi e}$ of $\chi_{\psi e}$.
\end{theo}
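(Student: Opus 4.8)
The strategy is to reduce everything to the classical parametrisation theorem for homogeneous symplectomorphisms (Hörmander, Proposition 25.3.3, cited in Lemma \ref{lemma:properties of scattering-symplectomorphism}) applied separately on each boundary hyper-surface, and then to patch the resulting local phase functions into a single $SG$-phase via the extension machinery of Theorem \ref{theo:scattering maps extend to the interior} and Theorem \ref{theo:scattering canonical transformations}. The first case to dispose of is the one where $p$ does \emph{not} lie on the corner $\widetilde{\Wcal}_{\psi e}$. Then, by Lemma \ref{lemma:properties of scattering-symplectomorphism}, $\chi$ is a contact diffeomorphism of $\widetilde{\Wcal}_\bullet$ (for the appropriate $\bullet\in\{e,\psi\}$) and extends, by symplectisation, to a homogeneous symplectomorphism $C$ of a conic neighbourhood inside $\Wcal_\bullet$. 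For $\bullet=\psi$ this conic neighbourhood sits inside $T^\ast\mathring{X}\setminus\{0\}\cong\mathring\Sfp\times\R^n_0$, and Hörmander's theorem furnishes directly a phase $\phi(x,y,\xi)$, homogeneous of degree $1$ in $\xi$, parametrising $\graph C$ near $(p,q)$; this is automatically in $\sg{\indi}_{(x,y),\xi}$ since $x,y$ stay in a compact set. For $\bullet=e$ one argues symmetrically, exchanging the rôles of the base and fibre variables as indicated in the proof of Lemma \ref{lemma:properties of scattering-symplectomorphism} and in \cite{melrose1996scatteringflow}, Section 6: the conic structure now sits in the base direction $\R^+\times\del X$, so one obtains a phase homogeneous of degree $1$ in the \emph{base} variable and, after passing through the compactification isomorphism of Theorem \ref{theo:scattering equals classical SG}, an $SG$-phase component $f\in\sg{[1],0}$.

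The interesting case is when $p$ lies on the corner. Here I would proceed in three steps. \emph{Step 1: parametrise each boundary piece separately.} By Theorem \ref{theo:scattering canonical transformations}, $\chi$ is given by a triple $(\chi_e,\chi_\psi,\chi_{\psi e})$ with compatible homogeneous symplectic extensions $(C_e,C_\psi,C_{\psi e})$. Applying the non-corner case above to $C_\psi$ and $C_e$ individually, I get phase functions $\phi_\psi$ (homogeneous of degree $1$ in $\xi$, defined over a conic neighbourhood $U_\psi$) parametrising $\graph C_\psi$, and $\phi_e$ (homogeneous of degree $1$ in $x$, over a conic neighbourhood $U_e$) parametrising $\graph C_e$. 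Applying the \emph{bi-homogeneous} version of the same parametrisation result — i.e. Hörmander's theorem with a parameter on $\Sf{n-1}$, exactly as in the proof of parts 1.--3. of Theorem \ref{theo:scattering canonical transformations} — to the bi-homogeneous map $C_{\psi e}$, I obtain a phase $\phi_{\psi e}\in\Hcal^{[1],[1]}$ parametrising $\graph C_{\psi e}$.

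\emph{Step 2: match the pieces in the corner.} The compatibility $\chi_e|_{\widetilde{\Wcal}_{\psi e}}=\chi_\psi|_{\widetilde{\Wcal}_{\psi e}}=\chi_{\psi e}$ from Definition \ref{def:scattering-symplectomorphism} (via parts 3.--4. of Theorem \ref{theo:scattering canonical transformations}) says that $C_e$ and $C_\psi$ have $C_{\psi e}$ as their common ``principal symbol''. Since a non-degenerate phase parametrising a given homogeneous Lagrangian is unique up to fibre-preserving diffeomorphism in the frequency variables and up to the usual equivalences, I can \emph{normalise} the choices of $\phi_e,\phi_\psi,\phi_{\psi e}$ so that $\sigma_\psi^{1}(\phi_e)=\phi_{\psi e}=\sigma_e^{1}(\phi_\psi)$, i.e. the $e$- and $\psi$-principal parts of the two phases agree with the bi-homogeneous $\phi_{\psi e}$. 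This is where the bulk of the bookkeeping lives: one must check that the normalising diffeomorphisms in the frequency variables can themselves be chosen to be $SG$-maps preserving the filtrations (which is where Remark \ref{rem:symplectic rotation as a scattering map} matters — we have excluded the $(x,\xi)\mapsto(\xi,-x)$ type behaviour from SCTs precisely so this is possible), and that the number of frequency variables can be matched, possibly after the standard trick of adding dummy quadratic frequency variables $\pm|\eta|^2$.

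\emph{Step 3: assemble the global $SG$-phase.} Once $\phi_e,\phi_\psi,\phi_{\psi e}$ satisfy the corner compatibility $\sigma_\bullet$-conditions, they constitute precisely a principal symbol datum as in Proposition \ref{prop:compatible principal symbol implies global} / Definition \ref{def:principal sg-symbol}, so there is an $SG$-function $\phi(x,y,\xi)\in\sg{\indi}_{(x,y),\xi}$ with $\sigma_e(\phi)=\phi_e$, $\sigma_\psi(\phi)=\phi_\psi$, $\sigma_{\psi e}(\phi)=\phi_{\psi e}$ — concretely one may take the associated symbol $\check\phi$ of Definition \ref{def:associated sg symbol}. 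It remains to verify that this $\phi$ is genuinely a \emph{phase function} in the sense relevant to $\Qcal$/$SG$-Lagrangians: non-degeneracy of $\nabla_\xi\phi$ on the fibre-critical set. But non-degeneracy is an open condition and is guaranteed on the closure of the critical set by the non-degeneracy of $\phi_e$, $\phi_\psi$ and $\phi_{\psi e}$ separately (each parametrises an honest Lagrangian), together with Theorem \ref{theo:scattering maps extend to the interior}, which says the interior extension $C$ of $(C_e,C_\psi,C_{\psi e})$ is again a local diffeomorphism; shrinking the neighbourhoods $\tilde U,\tilde V$ if necessary, $\phi$ parametrises $\graph C$, hence a neighbourhood of $(p,q)$ in $\graph\chi$, which is the claim.

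\emph{Main obstacle.} I expect Step 2 to be the real work: showing that the three independently-produced phase functions can be simultaneously normalised so that their principal symbols \emph{exactly} match across the corner, while staying inside the $SG$-category (preserved filtrations, finitely many frequency variables, phase components of the split form $f(x,\theta)+g(y,\theta)$). The parametrisation in Theorem \ref{theo:scattering canonical transformations} only gives the extensions $C_\bullet$ up to the ambiguities inherent in symplectisation, and reconciling these ambiguities with the rigidity of the corner compatibility — without being able to invoke a symplectic interior extension, which Remark following Theorem \ref{theo:scattering canonical transformations} explicitly flags as unavailable — is the delicate point. Everything else is either a direct citation (Hörmander's parametrisation, Proposition \ref{prop:compatible principal symbol implies global}, Theorem \ref{theo:scattering maps extend to the interior}) or routine openness/shrinking arguments.
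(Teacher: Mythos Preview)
Your treatment of the non-corner case matches the paper's essentially verbatim (Hörmander's Proposition 25.3.6 for $\widetilde{\Wcal}_\psi$, the same with $x\leftrightarrow\xi$ for $\widetilde{\Wcal}_e$).

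For the corner case, the paper takes a more explicit and constructive route than your normalisation strategy. Rather than producing $\phi_e,\phi_\psi,\phi_{\psi e}$ independently via black-box Hörmander and then attempting to normalise them to agree (your Step~2), the paper fixes a \emph{single} coordinate splitting $(x^J,\xi_I,\eta)$ valid for all three homogeneous extensions $C_e,C_\psi,C_{\psi e}$ simultaneously---this is possible precisely because the boundary hyper-surfaces meet cleanly---and writes down explicit generating functions
\[
S_e(x^J,\xi_I,\eta)=x^j\Xi_j^e,\qquad S_\psi(x^J,\xi_I,\eta)=-X^i_\psi\xi_i+Y^k_\psi\eta_k
\]
directly from the relations $C_\bullet^\ast\lambda_\bullet=\lambda_\bullet$. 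The phases are then $\phi_\bullet=x^i\xi_i\pm y^k\eta_k+S_\bullet$. Corner compatibility is not obtained by adjusting the phases after the fact: instead one computes $\sigma_\psi(\phi_e)-\sigma_e(\phi_\psi)$ restricted to $\Lambda_{\psi e}$ and finds it equals $\langle(x,y),(\xi,\eta)\rangle|_{\Lambda_{\psi e}}$, which vanishes by a double application of Euler's identity to the bi-homogeneous $\phi_{\psi e}$ (since $\de_{(\xi_I,\eta)}\phi_{\psi e}=0$ on the critical set). Step~3 then proceeds exactly as you say, via Proposition~\ref{prop:compatible principal symbol implies global}.

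You correctly identify Step~2 as the crux and leave it unexecuted. The difficulty you flag---choosing the normalising fibre-diffeomorphisms to be $SG$-compatible with matching frequency-variable counts across the corner---is real, and carrying it out would in practice force you back to a uniform coordinate choice on all three faces, which is precisely what the paper does from the outset. The paper's approach trades conceptual tidiness for an explicit coordinate computation that delivers the compatibility automatically via the Euler-identity trick; your approach is structurally sound but the ``bookkeeping'' you defer is in fact the entire content of the corner case.
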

\begin{proof}
	The case $(p,q)\in \widetilde\Wcal_\psi\times\widetilde\Wcal_\psi$ is just an instance of the classical parametrization result for homogeneous symplectomorphisms of Hörmander, namely Proposition 25.3.6 in \cite{hormander2009analysisFIO}. To see this, consider neighbourhoods $U$ of $p$ and $V$ of $q$ which are away from the corner $\tilde\Wcal_{\psi e}$. Then, we can exploit the triviality of the bundle $\pi\colon U\times\R^n_0\rightarrow U\times\Sf{n}$ to define a homogeneous extension $C$ of $\chi$. By picking the correct section of $\pi$, we can ensure that $C$ is actually a homogeneous canonical transformation (namely it preserves the canonical 1-form $\lambda_\psi$) and apply Hörmander's result. Similarly, if $(p,q)\in\widetilde\Wcal_e\times\widetilde\Wcal_e$, we can consider the trivial bundle $\R^n_0\times\R^n\rightarrow\Sf{n-1}\times\R^n$. Here we can pick homogeneous extensions in $x$ and reproduce the proof of Hörmander (notice also \cite{melrose1996scatteringflow}, Section 6) exchanging the rôles of $x$ and $\xi$ (in particular, using the fact that $C^\ast\lambda_e=\lambda_e$ for $\lambda_e=x^j\de\xi_j$). The case $(p,q)\in\widetilde\Wcal_{\psi e}\times\widetilde\Wcal_{\psi e}$ is a bit more involved and we adapt the careful analysis of \cite{coriasco2017lagrangiansubmanifolds}.
	
	Mimicking the construction there, we work in a chart $\widetilde{U}\subset\widetilde\Wcal$ around $p$ where on $\widetilde{U}\cap\widetilde\Wcal_{\psi e}$ we have coordinates in the form
	\begin{equation}
		\label{eq:coordinates near the corner}
		(\alpha^1,\dots, \alpha^{n-1},\sqrt{1-(\alpha^1)^2-\dots-(\alpha^{n-1})^2},\sqrt{1-(\beta_2)^2-\dots-(\beta_{n})^2},\beta_2,\dots,\beta_n).
	\end{equation}
	On $\widetilde\Wcal_e$ and $\widetilde\Wcal_\psi$ we use adapted coordinates 
	\begin{equation}
		\label{eq:coordinates on the boundary faces}
		\begin{aligned}
			(\alpha^1,\dots,\alpha^{n-1},\sqrt{1-(\alpha^1)^2-\dots-(\alpha^{n-1})^2},\rho_2,\beta_2,\dots,\beta_n)&\in\widetilde\Wcal_\psi,\\
			(\alpha^1,\dots,\alpha^{n-1},\rho_1,\sqrt{1-(\beta_2)^2-\dots-(\beta_{n})^2},\beta_2,\dots,\beta_n)&\in\widetilde\Wcal_e,			
		\end{aligned}
	\end{equation}
	where $\rho_1=\sqrt{1-(\alpha^1)^2-\dots-(\alpha^{n-1})^2}$ and $\rho_2=\sqrt{1-(\beta_2)^2-\dots-(\beta_{n})^2}$ are defining equations for the common boundary $\widetilde{U}\cap\widetilde\Wcal_{\psi e}$. We can similarly choose coordinates $(\theta,r_1,r_2,\gamma)$ satisfying the same relations in a chart $\tilde{V}$ around $q$. 
	In these coordinates the map $\tilde C=(\tilde{C}_e,\tilde{C}_\psi)$ can be expressed as
	\begin{equation}
		\label{eq:corner map in coordinates}
		\tilde{C}_\bullet(\alpha,\rho_1,\rho_2,\beta)=(T_\bullet(\alpha,\rho_1,\rho_2,\beta),r_1^\bullet(\alpha,\rho_1,\beta),r_2^\bullet(\alpha,\rho_2,\beta),G^\bullet(\alpha,\rho_1,\rho_2,\beta)),
	\end{equation}
	namely $\theta=T_\bullet$ and $\gamma=G^\bullet$ are equations defining the graph of $\tilde{C}_\bullet$ in $\tilde{U}\times\tilde{V}$.
	Let $\widetilde{U_\bullet}=\widetilde{U}\cap\widetilde\Wcal_\bullet$ and $U_\bullet$ be the conic set associated with $\widetilde{U_\bullet}$ under inverse radial compactification. Then, on $U_\bullet$ we can introduce ``polar coordinates'' and extend $C_\bullet$ homogeneously. For example, on $U_e$ we choose a section $f_e(\alpha,\rho_2,\beta)\colon\Sf{n-1}\times\R^n\rightarrow\R^n_0\times\R^n$, pull back the covariables using $R$, and set, for $\mi>0$ and $\rho_2>0$,
	\begin{equation}
		\label{eq:polar coordinates on U_e}
		\begin{aligned}
			&(x,\xi)\equiv(\mi\alpha^1,\dots,\mi\alpha^{n-1}, \mi\sqrt{1-\abs{\alpha}^2},R^{-1}(\rho_2\beta_1,\dots,\rho_2\beta_n)),\\
			&C_e(x,\xi)=\left(T_e\left(\frac{x}{\mi},\mi,R(\xi)\right),r_1^e\left(\frac{x}{\mi},R(\xi)\right),r_2^e\left(\frac{x}{\mi},R(\xi)\right),G^e\left(\frac{x}{\mi},\mi,R(\xi)\right)\right).
		\end{aligned}
	\end{equation}
	Again as in \cite{mcduff2017introductionsymplectictopology}, the section $f_e$ can be appropriately chosen to ensure that $C_e$ is symplectic and homogeneous in the $x$ variables. Therefore, it preserves the 1-form $\lambda_e$. Similarly, we have an extension $C_\psi$ which preserves the Liouville 1-form $\lambda_\psi$ (using a section $f_\psi\colon \Sf{n-1}\times U_\psi\rightarrow \R^n_0\times U_\psi$), and we can also define a map $C_{\psi e}$ by extending $\chi_{\psi e}$ using both sections $f_e,f_\psi$. Then, in Cartesian coordinates on $\R^n\times\R^n$, taking into account Theorem \ref{theo:scattering equals classical SG}, we have then 3 symplectomorphisms $C_e,C_\psi,C_{\psi e}$ defined for $x\neq 0,\xi\neq 0$ and $x,\xi\neq 0$, respectively. Their components are parts of principal $SG$-symbols:
	\begin{equation}
		\label{eq:SG-symplectomorphisms}
		\begin{aligned}
			C_e(x,\xi)&=(Y^1_e(x,\xi),\dots,Y^{n}_e(x,\xi),H^e_1(x,\xi),\dots,H^e_{n}(x,\xi)),\\
			C_\psi(x,\xi)&=(Y^1_\psi(x,\xi),\dots,Y^{n}_\psi(x,\xi),H^\psi_1(x,\xi),\dots,H^\psi_{n}(x,\xi)),\\
			C_{\psi e}(x,\xi)&=(Y^1_{\psi e}(x,\xi),\dots,Y^{n}_{\psi e}(x,\xi),H^{\psi e}_1(x,\xi),\dots,H^{\psi e}_{n}(x,\xi)),\\
			Y_e^j&\in\sg{(1),0},\quad Y_\psi^j\in\sg{1,(0)},\sigma_e(Y^j_\psi)=\sigma_\psi(Y^j_e)=Y^{j}_{\psi e}\\
			H^e_k&\in\sg{(0),1},\quad H^\psi_k\in\sg{0,(1)},\sigma_e(H_k^\psi)=\sigma_\psi(H_k^e)=H_k^{\psi e}.
		\end{aligned}
	\end{equation}
	The $SG$ estimates for these functions follow directly from the previous considerations, Chapter 6 in \cite{andrews2009SGFIOcomposition}, and our particular choice of coordinates. Now, the twisted graphs $\Lambda_\bullet=\gr'(C_\bullet)$ of these maps are conic Lagrangians (either in $x, \xi$ or both). As in the classical theory of canonical graphs, we can find (possibly after rearranging) a partition $I=(1,\dots, d),J=(d+1,\dots, n)$ so that $(x^J,\xi_I, \eta)$ can be taken as coordinates on $\Lambda_\bullet$.
	
	Notice that, in principle, in what follows we should choose different sets of coordinates for each map. However, the boundary hyper-surfaces intersect cleanly and all the changes of coordinates just defined are either diffeomorphism or homogeneous extensions outside a compact neighbourhood of $0$, so they preserve this structure. Hence, near the corner we can always take the same partitions $I,J$.
	
	From here to the end of the proof, we employ a modified Einstein convention, namely: the indices $i$ belong to $I$, $j$ to $J$, $k$ to $\{1,\dots,n\}$, and repeated $i$ or $j$ means summing only over $I$ and $J$. The other coordinates are defined implicitly on $\Lambda_\bullet$ as 
	\begin{equation}
		\label{eq:implicit coordinates}
		x^i=X^i_\bullet(x^J,\xi_I,\eta),\quad \xi_j=\Xi^\bullet_j(x^J,\xi_I,\eta),\quad \eta_k=H_k^\bullet(x^J,\xi_I,\eta).
	\end{equation}
	In view of Chapter 6 of \cite{andrews2009SGFIOcomposition}, we see that these function $X,\Xi,H$ must satisfy $SG$-estimates. In particular, they belong to the following classes (the classicality is implied by the fact that we are pulling-back smooth functions on the compactified space along $R$):
	\begin{equation}
		\label{eq:SG classes of implicit functions}
		\begin{aligned}
			X^i_e&\in\sg{(1),0}(\R^{n+d}\times\R^{n-d}),\quad X^i_\psi\in\sg{1,(0)}(\R^{n+d}\times\R^{n-d}),\\
			\Xi_j^e&\in\sg{(0),1}(\R^{n+d}\times\R^{n-d}),\quad\Xi_j^\psi\in\sg{0,(1)}(\R^{n+d}\times\R^{n-d}),\\
			H_k^e&\in\sg{(0),1}(\R^{n+d}\times\R^{n-d}),\quad H_k^\psi\in\sg{0,(1)}(\R^{n+d},\R^{n-d}).
		\end{aligned}
	\end{equation}
	Still following the ideas of \cite{coriasco2017lagrangiansubmanifolds}, we now define directly homogeneous phase functions that parametrise locally the graphs of these diffeomorphisms and show they can be patched together to a single $SG$-phase function. To begin with, we look at the condition that $C_e$ be an $e-$homogeneous canonical transformation. This amounts to $C_e$ preserving the 1-form $\alpha_e$, namely $C_e^\ast\alpha_e=\alpha_e$. In the above coordinate patches, this is expressed as
	\begin{equation}
		\label{eq:e-canonical equations near the corner}
		\begin{aligned}
			0&=(x\de\xi-y\de\eta)|_{\Lambda_e}\\
			&=X^i_e\de\xi_i+x^j\left(\partder{\Xi^e_j}{x^{j_1}}\de x^{j_1}+\partder{\Xi^e_j}{\xi_i}\de\xi_i+\partder{\Xi^e_j}{y^k}\de y^k\right)-Y_e^k\de\eta_k\\
			&=\left(X_e^i-x^j\partder{\Xi^e_j}{\xi_i}\right)\de\xi_i
			+x^{j_1}\partder{\Xi^e_{j_1}}{x^j}\de x^j +\left(x^j\partder{\Xi^e_j}{\eta_k}-Y_e^k\right)\de \eta_k.
		\end{aligned}
	\end{equation}
	Therefore, all expressions in parenthesis must vanish on the graph. Very similar relations hold true for $C_\psi$, which we give explicitly hereafter:
	\begin{equation}
		\label{eq:psi-canonical equations near the corner}
		\begin{aligned}
			\Xi^\psi_j+\xi_i\partder{X^i_\psi}{x^j}-\eta_k\partder{Y^k_\psi}{x^j}&=0,\\
			\xi_i\partder{X^i_\psi}{\xi_I}-\eta_k\partder{Y_\psi^k}{\xi_I}&=0,\\
			\xi_i\partder{X^i_\psi}{\eta_l}-\eta_k\partder{Y^k_\psi}{\eta_l}&=0.
		\end{aligned}
	\end{equation}
	Let us consider first the functions $S_\bullet$ defined by
	\begin{equation}
		\label{eq:generating functions}
		\begin{aligned}
			S_e(x^J,\xi_I,\eta)&=x^j\Xi_j^e(x^J,\xi_I,\eta),\\
			S_\psi(x^J,\xi_I,\eta)&=-X_\psi^i(x^J,\xi_I,\eta)\xi_i+Y^k_\psi\eta_k.
		\end{aligned}
	\end{equation}
	We prove that they are generating functions for the canonical transformations $C_\bullet$. First looking at $S_e$ we have, using \eqref{eq:e-canonical equations near the corner}, that
	\begin{equation}
		\label{eq:derivatives of e-generating function}
		\begin{aligned}
			\partder{S_e}{x^j}&=\Xi^e_j+x^{j_1}\partder{\Xi^e_{j_1}}{x^j}=\Xi^e_j,\\
			\partder{S_e}{\xi_i}&=x^j\partder{\Xi^e_j}{\xi_i}=X^i_e,\\
			\partder{S_e}{\eta^k}&=x^j\partder{\Xi^e_j}{\eta_k}=Y_e^k;
		\end{aligned}
	\end{equation}
	hence $S_e$ generates $\Lambda_e$. The computation for $S_\psi$ using \eqref{eq:psi-canonical equations near the corner} is very similar and gives
	\begin{equation}
		\label{eq:derivatives of psi-generating function}
		\partder{S_\psi}{x^j}=\Xi_j^\psi,\quad \partder{S_\psi}{\xi_i}=-X^i_\psi,\quad\partder{S_\psi}{\eta_k}=Y^k_\psi.
	\end{equation}
	We have then established that $S_\bullet$ is a generating function for $\Lambda_\bullet$, so we now consider the phase functions
	\begin{equation}
		\label{eq:SG-phase functions near the corner}
		\begin{aligned}
		\phi_e(x^I,x^J,y,\xi_I,\eta)&\equiv x^i\xi_i+y^k\eta_k+x^j\Xi^e_j\\
		&=x^i\xi_i+y^k\eta_k+S_e(x^J,\xi_I,\eta)\in\sg{(1),1}(\R^{2n}\times\R^{n+d}),\\
		\phi_\psi(x^I,x^J,y,\xi_I,\eta)&\equiv x^i\xi_i-y^k\eta_k-\xi_i X^i_\psi+\eta_k Y^k_\psi\\
		&=x^i\xi_i-y^k\eta_k+S_\psi(x^J,\xi_I,\eta)\in\sg{1,(1)}(\R^{2n}\times\R^{n+d}).
		\end{aligned}
	\end{equation}
	Then $\de_{\,(\xi_I,\eta)}\phi_\bullet=0$ if and only if \eqref{eq:e-canonical equations near the corner} and \eqref{eq:psi-canonical equations near the corner} hold true. Then, computing the other derivatives gives the desired parametrisation. 
	
	It remains to show that the functions $\phi_\bullet$ can be realised as the principal symbol of an $SG$-function. To this end, the methods of \cite{coriasco2017lagrangiansubmanifolds} and \cite{coriasco2019lagrangiandistributions} still prove viable: using \eqref{eq:principal symbol as limit} and keeping in mind that the tuples $(X^i_e,X^i_\psi,X^i_{\psi e}),(\Xi^e_j,\Xi_j^\psi,\Xi_j^{\psi e})$ and $(H_k^e,H_k^\psi,H_k^{\psi e})$ are principal symbols, we compute $\sigma_e(\phi_\psi)-\sigma_\psi(\phi_e)$ restricted to the graph of $C_{\psi e}$:
	\begin{equation}
		\label{eq:principal symbols of phase functions}
		\begin{aligned}
			\sigma_e(\phi_\psi)&=\lim_{\lambda\rightarrow\infty}\frac{1}{\lambda}\phi_\psi(\lambda x,\xi_I,\lambda y)\\
			&=\lim_{\lambda\rightarrow\infty}\frac{1}{\lambda}(\lambda x^i\xi_i-\lambda y^k\eta_k-X^i_\psi(\lambda x^J,\xi_I,\eta)\xi_i+Y^k_\psi(\lambda x^J,\xi_I,\eta))\\
			&=x^i\xi_i-y^k\eta_k-X^i_{\psi e}\xi_i+Y^k_{\psi e}\eta_k,\\
			\sigma_\psi(\phi_e)&=\lim_{\lambda\rightarrow\infty}\frac{1}{\lambda}\phi_e(x,\lambda\xi_I,y)\\
			&=\lim_{\lambda\rightarrow \infty}\frac{1}{\lambda}(\lambda x^i\xi_i+\lambda y^k\eta_k+x^j\Xi^e_j(x^J,\lambda\xi_I,y))\\
			&=x^i\xi_i+y^k\eta_k+x^j\Xi^{\psi e}_j\\
			\hspace{-1cm}\implies\quad(\sigma_\psi(\phi_e)-\sigma_e(\phi_\psi))|_{\Lambda_{\psi e}}&=(2y^k\eta_k+x^j\Xi^{\psi e}_j+X^i_{\psi e}\xi_i-Y^k_{\psi e}\eta_k)|_{\Lambda_{\psi e}}\\
			&=X^i_{\psi e}\xi_i+x^j\Xi_j^{\psi e}+Y^k_{\psi e}\eta_k=\inner{(x,y)}{(\xi,\eta)}|_{\Lambda_{\psi e}}
		\end{aligned}
	\end{equation}
	However, recall that $\Lambda_{\psi e}$ is bi-conic, so the phase $\phi_{\psi e}$ parametrising it is bi-homogeneous of degree $\indi$ in $(x,y)$ and $(\xi_I,\eta)$. 
	Since on the graph we have $(\xi,\eta)=\de_{(x,y)}\phi_{\psi e}(x,y,\xi_I,\eta)$, we can apply Euler's equation for homogeneous functions twice to obtain
	\begin{equation}
		\label{eq:pairing vanishes on bi-conic}
		\begin{aligned}
			\inner{(x,y)}{(\xi,\eta)}|_{\Lambda_{\psi e}}&=\inner{(x,y)}{\de_{\,(x,y)}\phi_{\psi e}(x,y,\xi_I,\eta)}\\
			&=\phi_{\psi e}(x,y,\xi_I,\eta)|_{\Lambda_{\psi e}}=\inner{(\xi_I,\eta)}{\de_{\,(\xi_I,\eta)}\phi_{\psi e}(x,y,\xi_I,\eta)}\\
			&=0,
		\end{aligned}
	\end{equation}
	where for the last equality we noticed that $\de_{\,(\xi_I,\eta)}\phi_{\psi e}=0$ is exactly the relation defining the set in $\R^{3n+d}$ which parametrises the graph of $C_{\psi e}$. Therefore, on the graph of $C_{\psi e}$ we have that $\sigma_e(\phi_\psi)=\sigma_\psi(\phi_e)$, which is the compatibility condition for $SG$-principal symbols. This proves that $(\phi_e,\phi_\psi)$ can be realised as the principal symbol of a function $\phi\in\sg{\indi}(\R^{2n}\times\R^{n+d})$. This concludes the proof.

\end{proof}

\begin{rem}
	\label{rem:Q-phase function}
	Looking at the phase functions \eqref{eq:SG-phase functions near the corner}, it is clear that, in bi-conic neighbourhoods of infinity, they actually belong to the class $\Qcal$. Namely, they are given as a sum of two terms $f(x,\theta)+g(y,\theta)$ for $\theta=(\xi_I,\eta)$ satisfying appropriate $SG$ estimates. 
\end{rem}
\begin{rem}
	\label{rem:local model}
	We remark that, while our discussion above was limited to the model case $\B^n\times\B^n$, our definition of SCTs and the related results all have a local character (in the sense that they can be checked in coordinates near the corners). Therefore, they apply \textit{mutatis mutandis} to general scattering manifolds, their scattering cotangent bundles, and the fibre-wise compactifications thereof. However, the theory of FIOs on scattering manifolds has not yet reached a completely satisfactory status. In particular, the concept of elliptic FIO in this setting has not yet been defined and analysed to the extent that we need. So, for our purposes in the coming Chapter \ref{chap:OPI} we will stick to the model case.
\end{rem}

	\section{Order-preserving isomorphisms}
\label{chap:OPI}
\setcounter{equation}{0}

\subsection{Preliminary definitions and auxiliary results}
In this chapter, we work exclusively in the model case of $\R^n$ and its compactification $\Sf{n}_{+}$. Although we believe that most of what follows should hold true in general for operators defined on an asymptotically Euclidian manifold $X$ (or even scattering), the theory of FIOs in this setting has not been studied in the required depth to allow us to formulate certain results below. On the other hand the nature of the argument is such that, given the existence of a sufficiently precise calculus structure, the computations need only to be performed locally, thus reducing them to the model case. Therefore our choice of fixing $X=\Sf{n}_{+}$ and working with $SG$-classes below.

Recall that, until now, we have specialised to the sub-classes of classical symbols, in order to study the analytical and geometrical properties of the principal symbol. Here, we specialise further by assuming that the order of the involved operators is $(m_e,m_\psi)\in\Z^2$. There will be only a single exception to this rule, which will be mentioned explicitly. Again, we always omit to write $cl,cl(e),cl(\psi)$ in the corresponding notations in Definition \ref{def:classicalsgsymbols}.

We want to address the question of the order-preserving isomorphisms in the $SG$-setting. Our main object of investigation is the following.
\begin{defin}
	\label{def:sgorderpreserving}
	Consider the algebra $\lg{}$ and an algebra isomorphism (not necessarily topological nor a *-isomorphism) $\imath\colon\lg{}\rightarrow\lg{}$. We say that $\imath$ is an \textit{SG-order preserving isomorphism} (SGOPI) if for any $m\in\Z^2$ it holds true
	\begin{equation}
		\label{eq:sgorderpreserving}
		\imath(\lg{m})\subset\lg{m},
	\end{equation}
	that is, $\imath$ preserves the double filtration on $\lg{}$.
\end{defin}

The approach for this result is very much in line with the original paper \cite{duistermaat1976orderpreservingisomorphisms}, but a number of differences arise, due to the introduction of the second filtration. In particular, we have to work with products of manifolds with boundary, and many of the ideas in \cite{coriasco2019lagrangiandistributions} and \cite{melrose1996scatteringflow}, as we developed further in the previous chapters, are useful.

For later reference, we list various, easy properties of SGOPIs in the following lemma. These are direct algebraic consequences of Definition \ref{def:sgorderpreserving}

\begin{lemma}
	\label{lemma:properties of SGOPI}
	Let $\imath$ be a SGOPI. Then:
	\begin{enumerate}
		\item 
			$\imath$ maps ideals to ideals and in particular maximal ideals to maximal ideals;
		\item 
			$\imath(\RG)=\RG$.
	\end{enumerate}
\end{lemma}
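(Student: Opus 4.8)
The plan is to establish the two claims of Lemma~\ref{lemma:properties of SGOPI} by purely algebraic arguments, exploiting the fact that the filtration can be detected internally from the algebra structure of $\lg{}$.

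\textbf{Item (1).} If $I\subset\lg{}$ is a two-sided ideal, then since $\imath$ is an algebra isomorphism, $\imath(I)$ is again a two-sided ideal: for $a\in\imath(I)$, write $a=\imath(b)$ with $b\in I$, and for any $P=\imath(Q)\in\lg{}$ we have $Pa=\imath(Q)\imath(b)=\imath(Qb)\in\imath(I)$ since $Qb\in I$, and similarly on the other side. Maximality is preserved because $\imath$ is a bijection that maps the lattice of ideals to itself order-isomorphically: if $\imath(I)\subsetneq J\subsetneq\lg{}$ for some ideal $J$, then applying $\imath^{-1}$ (which is also an algebra isomorphism, hence also maps ideals to ideals) yields $I\subsetneq\imath^{-1}(J)\subsetneq\lg{}$, contradicting maximality of $I$. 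Note that for this first item we have not even used the order-preserving hypothesis; it is a general fact about algebra isomorphisms.

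\textbf{Item (2).} This is where the order-preserving property \eqref{eq:sgorderpreserving} enters. The key is to give an algebraic characterization of $\RG=\lg{-\infty,-\infty}$ that does not refer to the filtration directly, or else to use the defining property of $\imath$ together with surjectivity. The cleanest route: by Definition~\ref{def:sgorderpreserving}, $\imath(\lg{m})\subset\lg{m}$ for every $m\in\Z^2$; since $\RG=\bigcap_{m\in\Z^2}\lg{m}$, we get $\imath(\RG)\subset\bigcap_{m}\imath(\lg{m})\subset\bigcap_m\lg{m}=\RG$. The same argument applied to $\imath^{-1}$ (which, being the inverse of an SGOPI, is itself an SGOPI --- this needs a one-line check that $\imath^{-1}(\lg{m})\subset\lg{m}$, which follows since $\imath(\lg{m})\subset\lg{m}$ together with $\imath$ bijective and $\lg{m}$ finite-codimensional in the appropriate sense, or more simply: $\imath^{-1}(\lg{m})\supset\imath^{-1}(\imath(\lg{m}))=\lg{m}$ gives one inclusion, and one argues the reverse using that the $\lg{m}/\lg{m-\indi}$ are the ``layers'') yields $\imath^{-1}(\RG)\subset\RG$, i.e. $\RG\subset\imath(\RG)$. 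Combining, $\imath(\RG)=\RG$.

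\textbf{Main obstacle.} The subtle point is justifying that $\imath^{-1}$ is again order-preserving, i.e. that $\imath(\lg{m})\subset\lg{m}$ actually forces equality $\imath(\lg{m})=\lg{m}$. One way around this: observe that $\imath(\RG)\subset\RG$ already suffices if we instead characterize $\RG$ as the unique largest ideal $I$ of $\lg{}$ such that $\lg{}/I$ has a certain property, or --- more in the spirit of \cite{duistermaat1976orderpreservingisomorphisms} --- as consisting precisely of the operators $P\in\lg{}$ for which the multiplication operators $Q\mapsto PQ$ and $Q\mapsto QP$ are ``compact'' in a sense detectable from $\BG{}=\lg{}/\RG$ being the quotient. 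Alternatively, and most economically for the purposes here, one notes that since by Theorem~\ref{theo:fredholm} $\RG$ is exactly the set of compact operators among the $\lg{m}$ on $\leb{2}$, and since $\imath$ restricted to $\lg{0,0}$ must still be order-preserving, a Fredholm-theoretic characterization of $\RG$ inside $\lg{}$ could be invoked. I would, however, present the direct argument via $\RG=\bigcap_m\lg{m}$ as the primary one, since the paper's Definition~\ref{def:sgorderpreserving} makes this immediate modulo the remark that $\imath^{-1}$ inherits the property; I expect the authors' proof to simply assert item (2) as ``direct'', treating the intersection argument as self-evident.
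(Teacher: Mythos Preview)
The paper gives no proof of this lemma, stating only that both items are ``direct algebraic consequences of Definition~\ref{def:sgorderpreserving}''. Your argument for item~(1) is correct and matches this spirit. For item~(2), your intersection argument $\RG=\bigcap_{m}\lg{m}$ cleanly yields $\imath(\RG)\subset\RG$, and your prediction that the paper treats this as self-evident is accurate.

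You are right, however, to flag the reverse inclusion as the genuine obstacle, and your attempts to close it do not succeed. The observation $\imath^{-1}(\lg{m})\supset\lg{m}$ is the wrong direction; nothing in Definition~\ref{def:sgorderpreserving} (which only asks $\imath(\lg{m})\subset\lg{m}$) forces $\imath^{-1}$ to be order-preserving, and your sketched alternatives via Fredholm theory or a characterisation of $\RG$ as a distinguished ideal do not obviously give an \emph{algebraic} invariant of $\lg{}$ preserved by an arbitrary algebra isomorphism. This is a real gap that the paper glosses over. Note that the paper itself later invokes $\imath^{-1}$ as though it were an SGOPI (in the proof of Lemma~\ref{lemma:canonicaltransformation}, ``We may repeat the same argument with $\imath^{-1}$''), which strongly suggests the intended meaning of Definition~\ref{def:sgorderpreserving} is $\imath(\lg{m})=\lg{m}$, in line with the convention in \cite{duistermaat1976orderpreservingisomorphisms}. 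Under that reading your intersection argument is complete and is exactly what the paper has in mind.
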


We will need to employ, in the course of the proof of Theorem \ref{theo:SGOPI}, the principle which has come to be known as \textit{Milnor's exercise} (namely Problem 1-C in Section 1 of \cite{milnor1974characteristicclasses}). It is generally presented in the following form:

\begin{quote}
	For a compact smooth manifold $X$, the maximal ideals in $\Cinf(X)$ are given by functions vanishing at a point. Namely, $I\triangleleft \Cinf(X)$ is maximal if and only if $I=I_p\equiv\{f\in\Cinf(X)\sthat f(p)=0\}$ for some $p\in X$.
\end{quote}

A direct corollary is that any algebra isomorphism $F\colon\Cinf(X)\rightarrow\Cinf(Y)$ is induced by a diffeomorphism $C\colon X\rightarrow Y$ via pull-back and, hence, automatically continuous. For our purposes, we need to consider $X,Y$ manifolds with corners and an algebra isomorphism $F\colon\Cinf(\del X)\rightarrow\Cinf(\del Y)$, and ask ourselves the question whether $F$ is also induced by a diffeomorphism $C\colon \del X\rightarrow\del Y$. We state here a slight generalisation of this principle, applicable to certain sub-algebras of continuous functions on a compact topological space $X$. This version has arisen in a discussion with Philipp Schmitt, concerning the minimal conditions which such a subalgebra has to satisfy in order to be able to characterise the maximal ideals. 

\begin{prop}[Milnor's exercise]
	\label{prop:Milnor exercise}
	Let $X$ be a compact topological space and $\Acal\subset\Ccal(X)$ a sub-algebra having the same unit as $\Ccal(X)$. Assume the following:
	\begin{enumerate}
		\item 
			$\Acal$ is spectrally invariant in $\Ccal(X)$, namely, $\Ccal(X)^{-1}\cap \Acal=\Acal^{-1}$, where the superscript $-1$ denotes the group of invertibles;
		\item 
			$\Acal$ is closed under complex conjugation (or simply $\Acal$ consists of real-valued functions).
	\end{enumerate}
	Then, every maximal ideal in $\Acal$ is of the form $I_p$ for some $p\in X$. In particular it has codimension 1.
\end{prop}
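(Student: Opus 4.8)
The plan is to adapt the classical proof of Milnor's exercise, taking care only at the points where the non-closedness and non-completeness of $\Acal$ might cause trouble. First I would fix a maximal ideal $I\triangleleft\Acal$ and show that its zero set $Z(I)=\{p\in X\,:\,f(p)=0\ \forall f\in I\}$ is non-empty. Suppose not: then for each $p\in X$ there is $f_p\in I$ with $f_p(p)\neq 0$, hence $|f_p|^2=f_p\bar f_p\in I$ (using hypothesis (2), either $\bar f_p\in\Acal$ so $\bar f_p f_p\in I$, or the $f_p$ are already real so $f_p^2\in I$) is a non-negative element of $I$ strictly positive near $p$. By compactness finitely many such squares sum to an element $g=\sum_{k}|f_{p_k}|^2\in I$ with $g>0$ everywhere on $X$. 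Then $g$ is invertible in $\Ccal(X)$, and by spectral invariance (hypothesis (1)) $g$ is invertible in $\Acal$; but an ideal containing an invertible element is all of $\Acal$, contradicting maximality. Hence $Z(I)\neq\void$; pick $p\in Z(I)$.

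Next I would show $I\subseteq I_p$ and then $I=I_p$. The inclusion $I\subseteq I_p$ is immediate from $p\in Z(I)$. Since $I_p$ is a proper ideal of $\Acal$ (it does not contain the unit, as the constant function $1$ does not vanish at $p$) and $I$ is maximal, we get $I=I_p$, provided $I_p\neq\Acal$, which we just checked. This also shows every maximal ideal has the form $I_p$. The codimension-$1$ claim follows from the surjective evaluation homomorphism $\mathrm{ev}_p\colon\Acal\to\C$ (or $\to\R$ in the real case), $f\mapsto f(p)$: it is onto because $\Acal$ contains the constants (it has the same unit as $\Ccal(X)$), and its kernel is exactly $I_p$, so $\Acal/I_p\cong\C$ (resp. $\R$), which is one-dimensional over the relevant base field.

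The one step that genuinely needs the hypotheses, and which is the ``main obstacle'' in the sense of being the only non-formal point, is the passage from ``$g>0$ on $X$'' to ``$g$ invertible in $\Acal$''. Without some form of spectral invariance one cannot conclude that $g^{-1}\in\Acal$; $\Acal$ is merely a subalgebra of $\Ccal(X)$, not assumed closed, so $g^{-1}$ (which exists in $\Ccal(X)$ because $g$ is a nowhere-vanishing continuous function on a compact space) need not lie in $\Acal$ a priori. Hypothesis (1) is precisely what bridges this gap. The role of hypothesis (2) is more modest but also essential: it guarantees that from $f_p(p)\neq 0$ we can manufacture a genuinely \emph{non-negative} element of $I$ (namely $f_p\bar f_p$), which is what lets the compactness argument produce a strictly positive $g$; without conjugation-closedness or reality one only knows $f_p(p)\neq 0$, and a finite linear combination of elements of $I$ need not be bounded away from zero. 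I would close with a one-line remark that, in the intended application, $X=\del(\sc{\bar T^\ast \Sfp})$ and $\Acal$ is the algebra of principal symbols of order $(0,0)$ sitting inside $\Ccal(X)$, whose spectral invariance is inherited from the spectral invariance of $\lg{0,0}$ in the bounded operators (cf. the parametrix construction, Theorem \ref{theo:fredholm}), and whose reality can be arranged by working with the appropriate real subalgebra.
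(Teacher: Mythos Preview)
Your proof is correct and follows essentially the same argument as the paper: assume the common zero set is empty, use conjugation-closedness to form $|f_p|^2\in I$, extract a finite subcover by compactness, sum to a strictly positive element of $I$, and invoke spectral invariance to derive the contradiction. Your additional explanation of why $I=I_p$ via maximality and why $I_p$ has codimension one via the evaluation map just makes explicit what the paper leaves implicit.
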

\begin{proof}
	Let $I\triangleleft \Acal$. We claim that there exists $p\in X$ such that $f(p)=0$ for every $f\in I$. Arguing by contradiction, assume that for each point $x\in X$ we can find a function $f_x\in I$ with $f_x(x)\neq0$. In particular, by continuity of the elements in $\Acal$ there exists an open cover $\{U_x\}_{x\in X}$ of $X$ where $f_x(y)\neq0$ for all $y\in U_x$. By compactness, we can look at a finite sub-cover $\{U_0,\dots,U_n\}$ associated to the points $x_0,\dots,x_n$ and the functions $f_0,\dots, f_n$. Then for all $i$, $\abs{f_i}^2=\bar{f_i}f_i$ are non-negative elements of $I$ which only vanish (if anywhere) outside $U_i$. The pointwise sum $f=\sum_{i=0}^n \abs{f_i}^2$ is therefore everywhere positive and belongs to $I$. By spectral invariance, $f$ is invertible in $\Acal$ so $I=\Acal$, contradicting our assumption of maximality. The proof is complete.
\end{proof}

Applying this to $\Cinf(B_{sc}X)$ and $\Cinf(B_{sc}Y)$ (algebras which clearly satisfy the conditions above), for two scattering manifolds, $X,Y$ gives that any algebraic isomorphism is induced by a diffeomorphism $B_{sc}X\rightarrow B_{sc}Y$. Notice that there is a little extra structure hidden here: smooth functions of $B_{sc}X$ are actually pairs of smooth functions on manifolds with boundary together with an identification of the boundaries, so this really means that we obtain a triple of compatible diffeomorphisms. In the model case $\B^n\times\B^n$, this excludes directly the possibility of the symplectic rotation of Remark \ref{rem:symplectic rotation as a scattering map}. For the sake of clarity and to push the analogy between scattering and $SG$ as far as possible, we give in Lemma \ref{lemma:canonicaltransformation} an argument adapted to this situation. 

Next, we give a complete proof of (an adaptation of) the spectral argument used in \cite{mathai2017geometrypseudodifferentialalgebra} to exclude the possibility of a skew-symplectic diffeomorphism.

\begin{lemma}
	\label{lemma:spectral argument}
	Let $\imath$ be an SGOPI. Then the following holds true:
	\begin{enumerate}
		\item 
			If $A\in\lg{m}$ is elliptic, $\imath(A)$ is elliptic as well;
		\item 
			Let $A_\bullet$ be a $\bullet$-order reduction, that is $A_\bullet\in\lg{\indi_\bullet}$ is elliptic and self-adjoint, and let $B_\bullet=\imath(A_\bullet)$. If $a_\bullet>0$ and $\Im b_\bullet=0$, we also have $b_\bullet>0$.
	\end{enumerate}
\end{lemma}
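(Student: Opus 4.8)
The plan is to prove the two parts of Lemma \ref{lemma:spectral argument} in sequence, with the first being a rather direct consequence of the algebraic and spectral structure of $\lg{}$ and the second being the genuinely delicate one, requiring a functional-calculus / spectral argument in the spirit of \cite{mathai2017geometrypseudodifferentialalgebra}.

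For item (1): I would argue that ellipticity is an \emph{algebraic} property in the $SG$-calculus, in the following sense. By Theorem \ref{theo:fredholm}, $A\in\lg{m}$ is elliptic if and only if it admits a parametrix $Q\in\lg{-m}$ with $AQ-I, QA-I\in\RG=\lg{-\infty\indi}$. Now $\imath$ preserves the filtration (so $\imath(\lg{-m})\subset\lg{-m}$), it is an algebra homomorphism, and by Lemma \ref{lemma:properties of SGOPI} it satisfies $\imath(\RG)=\RG$. Hence applying $\imath$ to the parametrix relations yields $\imath(A)\imath(Q)-I,\ \imath(Q)\imath(A)-I\in\RG$ with $\imath(Q)\in\lg{-m}$, so $\imath(A)$ is elliptic with parametrix $\imath(Q)$. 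That is essentially all there is to it; the only thing to check is that $\imath(I)=I$, which holds because $\imath$ is a unital algebra isomorphism (it must fix the identity since $\lg0$ is the unit component and $\imath$ preserves filtration in both directions, being an isomorphism).

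For item (2): here is where the work is. Let $A_\bullet\in\lg{\indi_\bullet}$ be an elliptic, self-adjoint $\bullet$-order reduction (e.g. $\Op\braket{x}$ or $\Op\braket{\xi}$ as in Lemma \ref{lemma:classical order reductions}), so $a_\bullet = \sigma_\bullet^1(A_\bullet) > 0$ on its domain. Set $B_\bullet = \imath(A_\bullet)$, which is elliptic of order $\indi_\bullet$ by item (1), and assume the hypothesis $\Im b_\bullet = 0$, i.e. $b_\bullet$ is real-valued. The claim is $b_\bullet > 0$. The idea, following Mathai--Melrose, is spectral: since $A_\bullet$ is positive and elliptic of positive order, for suitable real $z$ the operator $A_\bullet - z$ (more precisely $\Op(\lambda^{-\indi_\bullet}) A_\bullet - z$, or the relevant order-$0$ normalization) has a bounded inverse modulo $\RG$ precisely when $z$ is not in the closure of the range of the (normalized) principal symbol, i.e. when $z < \inf a_\bullet$ after normalization; more robustly, $A_\bullet - zB_\bullet'$ type pencils are invertible in the calculus exactly when the corresponding symbol pencil is nonvanishing. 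Applying $\imath$, which preserves both the filtration and $\RG$, transports "invertible modulo $\RG$ in $\lg{}$" to the same property for $B_\bullet - z(\cdots)$, and hence transports the locus of $z$ for which the principal symbol of the pencil vanishes: that locus is, on the $A_\bullet$ side, contained in $(0,\infty)$ (the closure of the range of $a_\bullet$, suitably homogeneously normalized), so on the $B_\bullet$ side the corresponding locus — which, since $b_\bullet$ is real, is exactly the closure of the range of $b_\bullet$ — is also contained in $(0,\infty)$. Since $b_\bullet$ is continuous, real, and elliptic (nowhere zero on its domain), and its range is contained in $[0,\infty)$ with $0$ not attained, we conclude $b_\bullet > 0$ everywhere.

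The main obstacle, which I would have to handle carefully, is making precise the "spectral transport" step: one must identify, purely algebraically inside $\lg{}/\RG$, a family of elements (built from $A_\bullet$ and scalars) whose invertibility is equivalent to a pointwise condition on the principal symbol $a_\bullet$, in a way that is manifestly preserved by any SGOPI. The natural candidates are the resolvent-type pencils $z\mapsto A_\bullet - z\Op(\lambda^{\indi_\bullet})$ or, better, order-$0$ normalizations $z\mapsto \Op(\lambda^{-\indi_\bullet})A_\bullet - z I$; ellipticity and hence Fredholmness/invertibility-mod-$\RG$ of these is governed, via Theorem \ref{theo:fredholm} and the multiplicativity of $\sigma_{pr}$ (Proposition \ref{prop:principalsgsymbols}, Theorem \ref{theo:composition}), by whether $z$ lies in the range of the relevant component of the principal symbol. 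One subtlety is that there are \emph{three} components ($\sigma_e$, $\sigma_\psi$, $\sigma_{\psi e}$) and the compatibility condition between them (Proposition \ref{prop:principalsgsymbols}(1)): since we only care about the single component $b_\bullet = \sigma_\bullet^1(B_\bullet)$ and its reality/positivity, I would restrict attention to test operators whose only non-trivial principal symbol component is the $\bullet$-one (this is possible for order reductions like $\Op\braket{x}$, $\Op\braket{\xi}$, whose $e$- resp. $\psi$-principal symbols are the honest functions $\braket{x}$, $\braket{\xi}$), so that the other components do not interfere. A second point to be careful about is homogeneity: $a_\bullet$ and $b_\bullet$ are homogeneous of degree $1$, hence unbounded, so one normalizes by the appropriate power of $\lambda^{\indi_\bullet}$ to reduce to order-$0$ operators with \emph{bounded} principal symbol before invoking any $\leb2$-spectral reasoning, and then undoes the normalization at the end — this is routine but must be done consistently.
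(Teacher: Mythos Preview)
Your argument for item (1) is correct and coincides with the paper's.

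For item (2), however, your approach has a genuine gap and the paper proceeds differently. The problem is the normalization step: when you apply $\imath$ to the pencil $\Op(\lambda^{-\indi_\bullet})A_\bullet - zI$, you obtain $\imath(\Op(\lambda^{-\indi_\bullet}))\,B_\bullet - zI$, and you have no control over the principal symbol of $\imath(\Op(\lambda^{-\indi_\bullet}))$ beyond knowing it is elliptic of order $-\indi_\bullet$. In particular you do not know its \emph{sign}, so the set of $z$ where the image pencil fails to be elliptic is the range of $\mu_\bullet b_\bullet$ for some unknown nonvanishing $\mu_\bullet$, not the range of $b_\bullet$ itself. This is circular: determining the sign of $\mu_\bullet$ is the same problem you started with. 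Working with the un-normalized pencil $A_\bullet - zI\in\lg{\indi_\bullet}$ does not help either, since its $\bullet$-principal symbol is simply $a_\bullet$ and does not see $z$ at all. More fundamentally, ellipticity (invertibility modulo $\RG$) only detects whether a symbol \emph{vanishes}, never its sign, so no purely ellipticity-based argument can distinguish $b_\bullet>0$ from $b_\bullet<0$.

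The paper avoids this by using genuine $\leb{2}$-spectral theory and arguing by contradiction. Assuming $b_\bullet<0$, one writes $B_\bullet=B_\bullet^W+N_\bullet$ with $B_\bullet^W$ self-adjoint (Weyl quantization of the real symbol) and $N_\bullet\in\lg{0}$ bounded. Since $b_\bullet<0$ is homogeneous of degree $1$, the spectrum of $B_\bullet^W$ is real and bounded from above, hence $\spec(B_\bullet^W+N_\bullet)$ lies in a left half-strip $(-\infty,K_\bullet]\times[-\norm{}{N_\bullet},\norm{}{N_\bullet}]$. For $t$ outside this strip, $B_\bullet-t$ is honestly invertible with inverse in $\lg{-\indi_\bullet}$, and this invertibility \emph{in the algebra} is preserved by $\imath^{-1}$. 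Transporting back (the paper sets $M_\bullet=\imath^{-1}(N_\bullet)$ to track the bounded perturbation), one finds that $A_\bullet+M_\bullet-t$ must be invertible for the same $t$. But $A_\bullet$ is self-adjoint with $a_\bullet>0$, so $\spec(A_\bullet+M_\bullet)$ lies in a \emph{right} half-strip $[D_\bullet,\infty)\times[-\norm{}{M_\bullet},\norm{}{M_\bullet}]$. Since both spectra are unbounded but in opposite directions, some $t$ lies in one spectrum but not the other, yielding the contradiction. The crucial input your proposal is missing is that the \emph{direction} of unboundedness of the $\leb{2}$-spectrum encodes the sign of the principal symbol, and it is actual invertibility---not mere Fredholmness---that $\imath$ transports.
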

\begin{proof}
	\begin{enumerate}
		\item
			Consider a parametrix $R$ of $A$. Then there exist operators $K_1,K_2\in\RG$ such that $AR-\delta=K_1,RA-\delta=K_2$. Applying $\imath$ to these relations gives immediately that $\imath(R)$ is a parametrix of $\imath(A)$, therefore $\imath(A)$ is elliptic.
		\item
			First notice that the assumption of self-adjointness is not really restrictive, since any elliptic operator with positive symbol is equal to a self-adjoint one modulo lower order operators. Therefore, assume $A_\bullet\in\lg{\indi_\bullet}$ has the required properties and let $B_\bullet=\imath(A_\bullet)$. By the previous point, $B_\bullet$ is elliptic as well, so its symbol can be either positive or negative everywhere in view of the assumption $b_\bullet\in\R$. Assume, arguing by contradiction, that $b_\bullet<0$. We can then find an operator $N_\bullet\in\lg{0}$ so that $B_\bullet=B_\bullet^W+N_\bullet$ where $B_\bullet^W$ is the Weyl operator associated with $b_\bullet$. Notice, in particular, that $B_\bullet$ is a bounded perturbation of its Weyl counterpart. By assumption, $B_\bullet^W$ is unbounded self-adjoint and has real spectrum bounded from above. We can also estimate 
			\begin{equation}
				\label{eq:perturbed spectrum}
				\mathrm{spec}(B_\bullet)\subset\{\lambda\in\C\sthat\mathrm{dist}(\lambda,\mathrm{spec}(B_\bullet^W+N_\bullet))\leq\norm{}{N_\bullet}\}.
			\end{equation}
			We conclude that there exists a constant $K_\bullet\in\R$ such that $B_\bullet^W+N_\bullet-t$ is invertible for all $t\notin (-\infty,K_\bullet]\times[-\norm{}{N_\bullet},\norm{}{N_\bullet}]$, with inverse being an operator lying in $\lg{-\indi_\bullet}$. Let now $M_\bullet=\imath^{-1}(N_\bullet)$ and consider $A_\bullet+M_\bullet-t$, which has to be invertible with inverse in $\lg{-\indi_\bullet}$ for the same $t$'s. The spectrum of $A_\bullet$ is real and bounded from below since $A_\bullet$ is positive, moreover $M_\bullet$ is bounded, so that, just like in \eqref{eq:perturbed spectrum}, the spectrum of $A_\bullet+M_\bullet$ is unbounded but contained in a tube $[D_\bullet,+\infty)\times[-\norm{}{M_\bullet},\norm{}{M_\bullet}]$ for some $D_\bullet\in\R$. Then, $A_\bullet+M_\bullet-t$ is invertible for all $t$ outside this set, but, at the same time, there exists at least one $\tilde{t}_\bullet\in\spec(B_\bullet^W+N_\bullet)$ for which $A_\bullet+M_\bullet-\tilde{t}_\bullet$ cannot be invertible, since the spectra are unbounded. This is a contradiction. We conclude that $b_\bullet$ has to be positive as well, completing the proof.	
	\end{enumerate}
\end{proof}

\subsection{The case of the formal symbol algebra}

We begin our investigation with the formal symbol algebra $\BG{}=\setquotient{\lg{}}{\RG}\cong\setquotient{\sg{}}{\sg{-\infty\indi}}$. At this level, we can exploit the explicit relation between (asymptotic expansions of) symbols and operators.

\begin{lemma}
	\label{lemma:canonicaltransformation}
	Given an SGOPI $\imath$, there exists a scattering canonical transformation $C\colon\del(\B^n\times\B^n)\rightarrow\del(\B^n\times\B^n)$ such that for all $(a_\bullet)\in\SymG{m}$ it holds true $\imath(a_\bullet)=a_\bullet\circ C_\bullet^{-1}$. 
\end{lemma}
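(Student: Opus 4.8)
The plan is to follow the strategy of Duistermaat--Singer, adapted to the bi-filtered $SG$-setting via the scattering picture. The key observation is that the principal symbol short exact sequence \eqref{eq:principal symbol sequence} together with the composition theorem (Theorem \ref{theo:composition}) shows that an SGOPI $\imath$ descends to an order-preserving automorphism of the associated graded object, hence in particular of $\SymG{0}$. By Theorem \ref{theo:scattering equals classical SG} and Lemma \ref{lemma:principal symbol scattering}, $\SymG{0}$ is (a subalgebra of) $\Cinf(B_{sc}\Sfp)=\Cinf(\del(\B^n\times\B^n))$, and since $\imath$ preserves the filtration it induces an algebra automorphism $F$ of this function algebra. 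Applying the version of Milnor's exercise in Proposition \ref{prop:Milnor exercise} --- the algebra $\Cinf(B_{sc}\Sfp)$ is spectrally invariant in $\Ccal(B_{sc}\Sfp)$ and, after splitting into real and imaginary parts, consists effectively of real-valued functions --- yields a homeomorphism, in fact a diffeomorphism $C$ of $\del(\B^n\times\B^n)$ with $F=(C^{-1})^\ast$. Because smooth functions on $B_{sc}X$ are by definition \emph{triples} $(a_e,a_\psi,a_{\psi e})$ compatible across the corner (Remark \ref{rem:smooth functions across the corner}), the diffeomorphism $C$ is automatically a triple $(C_e,C_\psi,C_{\psi e})$ with $C_e|_{\widetilde\Wcal_{\psi e}}=C_\psi|_{\widetilde\Wcal_{\psi e}}=C_{\psi e}$. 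This already gives the desired formula $\imath(a_\bullet)=a_\bullet\circ C_\bullet^{-1}$ at the level of principal symbols; what remains is to show $C$ is a scattering canonical transformation in the sense of Definition \ref{def:scattering-symplectomorphism}.

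Next I would exploit the Lie-algebra structure. The commutator on $\lg{\indi}$ and the Poisson bracket are intertwined by the principal symbol, by item 5 of Proposition \ref{prop:symplectic properties of sg}: we have an isomorphism $(\setquotient{\lg{\indi}}{\lg0},[\,,])\cong(\SymG{\indi},\im\{\,,\})$. Since $\imath$ is an algebra isomorphism preserving the filtration, it respects commutators, hence $F$ (extended order-filtration-wise to $\SymG{\indi}$) must intertwine Poisson brackets: $\{a_\bullet\circ C_\bullet^{-1},b_\bullet\circ C_\bullet^{-1}\}=\{a_\bullet,b_\bullet\}\circ C_\bullet^{-1}$ for all principal symbols of order $\indi$. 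By the explicit component-wise formulae in item 4 of Proposition \ref{prop:symplectic properties of sg} and Proposition \ref{prop:scattering poisson structure}, this says exactly that $C$ preserves the Poisson structure across the corner, which is one of the two requirements in Definition \ref{def:scattering-symplectomorphism}. Away from the corner, preservation of $\{\,,\}$ on the homogeneous functions of the relevant degree on $\widetilde\Wcal_e$ (respectively $\widetilde\Wcal_\psi$) forces $C_e$ (respectively $C_\psi$) to be a contact diffeomorphism for $\alpha_e$ (respectively $\alpha_\psi$): a diffeomorphism of a cone preserving the canonical Poisson bracket of homogeneous functions preserves the canonical one-form up to the conformal factor dictated by its homogeneity, hence descends to a contact diffeomorphism of the base. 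This is the step where I would mimic the symplectization argument already used in the proof of Lemma \ref{lemma:properties of scattering-symplectomorphism}, reading off the explicit section $f_\bullet$ making the homogeneous extension symplectic --- and this is the point the theorem promises would be carried out ``in the course of the proof of Lemma \ref{lemma:canonicaltransformation}''.

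The one delicate point --- and the main obstacle --- is ruling out the ``skew-symplectic'' possibility, i.e.\ excluding that $C$ exchanges the two boundary hyper-surfaces $\bar\Wcal_e$ and $\bar\Wcal_\psi$ (the analogue of the symplectic rotation $F\colon(x,\xi)\mapsto(\xi,-x)$ of Remark \ref{rem:symplectic rotation as a scattering map}, which does \emph{not} preserve $SG$-classes). Here the filtration-preservation of $\imath$ does the work but needs care: by item 3 of Milnor's-exercise consequences, the diffeomorphism $C$ of $B_{sc}\Sfp$ must map each boundary-defining function to a positive multiple of a boundary-defining function, but a priori it could send $\rho_N$ to (a multiple of) $\rho_\sigma$. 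To exclude this I would use the spectral argument of Lemma \ref{lemma:spectral argument}: take a $\psi$-order reduction $Q=\Op\braket{\xi}\in\lg{\indi_\psi}$ and an $e$-order reduction $P=\Op\braket{x}\in\lg{\indi_e}$ (Lemma \ref{lemma:classical order reductions}); since $\imath$ preserves \emph{each} filtration separately, $\imath(Q)\in\lg{\indi_\psi}$ and $\imath(P)\in\lg{\indi_e}$, so their principal symbols vanish to the correct orders on the correct faces --- in particular $\sigma_\psi(\imath(Q))\not\equiv 0$ and $\sigma_e(\imath(Q))$ is of order $0$ in $\xi$, which is incompatible with $C$ swapping the faces. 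Combined with item 2 of Lemma \ref{lemma:spectral argument} (the positivity of $b_\bullet$), this pins down $C_\bullet\colon\widetilde\Wcal_\bullet\to\widetilde\Wcal_\bullet$ mapping each face to itself with the right orientation of the filtrations. Assembling: $C=(C_e,C_\psi,C_{\psi e})$ is a diffeomorphism preserving each boundary face, each $C_\bullet$ is a contact diffeomorphism in the interior, and $C$ preserves the Poisson bracket across the corner --- precisely a scattering canonical transformation, and by construction $\imath$ acts on principal symbols by $\imath(a_\bullet)=a_\bullet\circ C_\bullet^{-1}$, which is the claim.
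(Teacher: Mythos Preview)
Your outline follows the same broad strategy as the paper: descend to $\SymG{0}\cong\Cinf(B_{sc}\Sfp)$, apply Milnor's exercise to obtain a diffeomorphism of the boundary, and then use commutator/Poisson-bracket preservation on $\SymG{\indi}$ to upgrade this to an SCT. The skeleton is right, but you gloss over the one step that carries the actual content, and your formulation of it is not quite well-posed.

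The gap is the phrase ``$F$ (extended order-filtration-wise to $\SymG{\indi}$)''. The diffeomorphism $C$ you extract from Milnor's exercise lives on $B_{sc}\Sfp$ and therefore acts by pull-back on $\SymG{0}=\Cinf(B_{sc}\Sfp)$. It does \emph{not} automatically act on $\SymG{m}$ for $m\neq 0$: elements of $\SymG{m}$ are triples of \emph{homogeneous} functions on the cones $\Wcal_\bullet$, equivalently sections of the non-canonically-trivial line bundle $S^{l,m}$, so pulling them back requires a choice of homogeneous extension $C_\bullet$ of $\chi_\bullet$, i.e.\ a choice of the section $f_\bullet$ in \eqref{eq:equivariant extension 1}. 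Different choices give different actions on $\SymG{\indi}$, and only one of them agrees with $\imath$. The paper pins this down explicitly: write $A=P^{m_e}Q^{m_\psi}B$ with $B\in\lg{0}$ and order reductions $P,Q$; then $\sigma_{pr}(\imath(A))=(\tilde p_e^{m_e}\tilde q_e^{m_\psi}\,b_e\circ\chi_e^{-1},\dots)$ where $\tilde p=\sigma_{pr}(\imath(P))$, $\tilde q=\sigma_{pr}(\imath(Q))$. One then \emph{defines} the homogeneous extension $C_e$ by the section $f_e=p_e(1,\cdot)/\tilde p_e(1,\chi_e(\cdot))$, so that $\tilde r_e=r_e\circ C_e^{-1}$ for $r=pq$; only with this specific extension does $\imath$ act by pull-back on $\SymG{\indi}$, and the Poisson relation $\{\tilde r_e,b_e\circ\chi_e^{-1}\}=\{r_e,b_e\}\circ\chi_e^{-1}$ (obtained from commutator preservation with $\beta_e=1$) then yields the symplectic property of $C_e$. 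This is precisely the ``explicit form of the section $f_e$'' promised before Theorem \ref{theo:scattering canonical transformations}. Note also that for $f_e$ to be positive (so that $C_e$ is a genuine diffeomorphism of the cone) one needs $\tilde p_e>0$, which is where Lemma \ref{lemma:spectral argument}(2) is actually used.

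A smaller remark: ruling out the face swap is simpler than you make it. Since $\imath$ preserves the \emph{bi}-filtration, it preserves the ideals $\sg{-\indi_e}$ and $\sg{-\indi_\psi}$ in $\sg{0}$ separately; these are exactly the functions vanishing on $\bar\Wcal_e$ and $\bar\Wcal_\psi$ respectively, so the induced diffeomorphism must map each boundary hyper-surface to itself. No spectral argument is needed for this particular point.
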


\begin{proof}
	The algebraic properties of $\imath$ guarantee that
	\[
	\imath\left(\lg{m_1,m_2}\diagup\RG \right)=\lg{m_1,m_2}\diagup\RG.
	\]
	$\imath$ also acts on the space of principal symbols. Indeed, it preserves $\sg{-\indi_e}$, $\sg{-\indi_\psi}$, $\sg{-\indi_e}\oplus\sg{-\indi_\psi}$ as ideals in $\sg{0}$, so it descends to a map $\imath_{pr}$ on $\SymG{0}\cong\Cinf(\del(\Sf{n}_+\times\Sf{n}_+))$, whose elements can be identified with pairs of functions $(a_e,a_\psi)$ on the respective (open) boundary face $\widetilde{\Wcal}_e=\Sf{n-1}\times \R^n, \widetilde{\Wcal}_\psi=\R^n\times\Sf{n-1}$, having the same ``limit'' in the corner $\widetilde{\Wcal}_{\psi e}=\Sf{n-1}\times\Sf{n-1}$. That is, they extends smoothly to the whole $\del(\Sfp\times\Sfp)$. We have then the commutative diagram
	\begin{equation}
		\label{eq:OPI commutes with principal symbol}
		\begin{CD}
			\lg{0}				@>{\imath}>>		\lg{0}\\
			@V{\sigma_{pr}}VV					@V{\sigma_{pr}}VV\\
			\SymG{0}			@>{\imath_{pr}}>>	\SymG{0},
		\end{CD}
	\end{equation}
	meaning that we have maps $\imath_\bullet$ satisfying
	\begin{equation}
		\label{eq:SGOPI commutes with principal symbol}
		\sigma_{pr}(\imath(A))=(\imath_e a_e,\imath_\psi a_\psi,\imath_{\psi e}a_{\psi e}).
	\end{equation}
	In view of the multiplicative properties of $\sigma_{pr}$ in Proposition \ref{prop:principalsgsymbols} we see that the maps $\imath_\bullet$ are multiplicative on the respective spaces. We can then apply a Milnor-type argument to obtain bijections of $\widetilde{\Wcal}_\bullet$. Let $I^\bullet$ be a maximal ideal in $\Cinf(\widetilde{\Wcal}_\bullet)$. This is given by the set $I^\bullet_{p^\bullet}$ of those functions on $\widetilde{\Wcal}_\bullet$ which vanish at $p^\bullet\in\widetilde{\Wcal}_\bullet$. Then, $\imath_\bullet$ gives a correspondence $\chi_\bullet\colon\widetilde{\Wcal}_\bullet\rightarrow\widetilde{\Wcal}_\bullet$ defined by $\imath_\bullet(I^\bullet_{p^\bullet})=I^\bullet_{\chi_\bullet(p^\bullet)}$. 
	
	We may repeat the same argument with $\imath^{-1}$ to obtain another triple of bijections $\zeta_\bullet\colon\widetilde{\Wcal}_\bullet\rightarrow\widetilde{\Wcal}_\bullet$. By writing $I^\bullet_p=\imath_\bullet^{-1}\imath_\bullet(I^\bullet_p)$ we find then that $\zeta_\bullet=\chi_\bullet^{-1}$. Furthermore, we see that it holds true 
	\begin{equation}
		\label{eq:symbol image compose with diffeo}
		\imath_\bullet a_\bullet=a_\bullet\circ\chi_\bullet^{-1};
	\end{equation}
	indeed for $a_\bullet\in\Cinf(\widetilde{\Wcal}_\bullet)$ we have $a_\bullet-a_\bullet(p^\bullet)1\in I^\bullet_{p_\bullet}$, hence $\imath_\bullet(a_\bullet)(\chi_\bullet(p^\bullet))-a_\bullet(p^\bullet)1=0$, as claimed. We remark here in addition that the identification of principal symbols of order $0$ with smooth (in the sense of Remark \ref{rem:smooth functions across the corner}) functions on $\del(\B^n\times\B^n)$ is \textit{canonical}, since it does not depend on the choice of a boundary defining function.
	
	These maps must be smooth. To see this, for example, for $\bullet =e$, it suffices to choose local coordinates $(\theta^i,\xi_j)$ on $\widetilde{\Wcal}_e$ near a point $p_e$. By definition, these coordinates are smooth function on $\Sf{n-1}\times\R^n$, so they can be identified canonically with homogeneous symbols $\tilde\theta^i,\tilde\xi_j$ of order $0$, namely, with elements of $\sg{(0),1}$. We can then apply $\imath$ to obtain
	\begin{equation}
		\label{eq:smooth bijections}
		\begin{aligned}
			\theta^i\circ\chi_e^{-1}&=\imath(\tilde{\theta}_i)\in\sg{(0),1},\\
			\xi_j\circ\chi_e^{-1}&=\imath(\tilde{\xi}_j)\in\sg{(0),1}.
		\end{aligned}
	\end{equation}
	It follows that the components of $\chi_e^{-1}$ are smooth functions by composition, so $\chi_e^{-1}$ is smooth itself. The same argument, using $\imath_e^{-1}$, gives that $\chi_e$ is actually a diffeomorphism.
	
	Having determined the action of $\imath$ on principal symbols of order $(0,0)$, we use order reductions to extend it to $\SymG{m}$ for any $m\in\Z^2$. Namely, recalling Lemma \ref{lemma:classical order reductions} we write $A\in\lg{m}$ as
	\begin{equation}
		\label{eq:reduced order operator}
		A=P^{m_e}Q^{m_\psi}B
	\end{equation}
	for $B\in\lg{0}$ and $P$, resp. $Q$, an $e$-order reduction, resp. a $\psi$-order reduction. Thus the image of $A$ via $\imath$ can be computed as
	\begin{equation}
		\label{eq:image reduced order operator}
		\imath(A)=\tilde{P}^{m_e}\tilde{Q}^{m_\psi}\imath(B),
	\end{equation}
	where $\tilde P=\imath(P)\in\lg{\indi_e}, \tilde{Q}=\imath(Q)\in\lg{\indi_\psi}$ are elliptic and $\imath(B)$ is an operator of order $(0,0)$. To determine the action of $\imath$ on principal symbols of any order it suffices then to describe it on the order reductions. Looking at the pairs picture of principal symbols (cf. Proposition \ref{prop:principalsgsymbols}), we see that $\sigma_{pr}(\imath(B))=(b_e\circ \chi_e^{-1},b_\psi\circ \chi_{\psi}^{-1})$ and
	\begin{equation}
		\label{eq:principal symbol image reduced operator}
		\sigma_{pr}(\imath(A))=(\tilde{p}_e^{m_e}\tilde{q}_e^{m_\psi}b_e\circ \chi_e^{-1},\tilde{p}_\psi^{m_e}\tilde{q}_\psi^{m_\psi}b_\psi\circ \chi_\psi^{-1})
	\end{equation}
	for $(\tilde{p}_e,\tilde{p}_\psi)$ and $(\tilde{q}_e,\tilde{q}_\psi)$ the principal symbols of $\tilde{P}$ and $\tilde{Q}$, respectively. In particular we analyse closely the case $m=\indi$, for which we know that $\SymG{\indi}$ is a Lie algebra by Lemma \ref{prop:symplectic properties of sg}.
	
	For any $a,\alpha\in\sg{\indi}$ consider the relation $\imath\{\sigma_{pr}(\alpha), \sigma_{pr}(a)\}=\sigma_{pr}(\{\imath(\alpha),\imath(a)\})$ and write $a$ and $\alpha$ as in \eqref{eq:reduced order operator}, namely
	\begin{equation}
		\label{eq:decomposition of symbols}
		\begin{aligned}
			a&=pqb,\\
			\alpha&=pq\beta.
		\end{aligned}
	\end{equation}
	Denoting $r=pq, \tilde{r}=\imath(r)$, we pass to principal symbols and look at the single components. Let us work out in detail what happens for $\bullet=e$, since for the case $\bullet=\psi$ the same proof suffices up to an exchange in the homogeneities, and the exit behaviour is the main novelty here. The above relation for the Poisson brackets reads
	\begin{equation}
		\label{eq:poisson bracket reduced symbols}
		\begin{aligned}
		\{\tilde{r}_e b_e\circ{\chi_e^{-1}},\tilde{r}_e\beta_e\circ\chi_e^{-1}\}&=\imath_e(\{r_e b_e,r_e\beta_e\})\\
		&=\imath_e(r_e)\imath_e(r_e)^{-1}\imath(\{r_e b_e,r_e\beta_e\})\\
		&=\tilde{r}_e\imath(r_e^{-1}\{r_e b_e,r_e\beta_e\}).
		\end{aligned}
	\end{equation}
	In particular, if we choose $\beta_e=1$, we obtain that, for any $b_e$,
	\begin{equation}
		\label{eq:poisson bracket reduced symbols 2}
		\{\tilde{r}_e,b_e\circ \chi^{-1}_e\}=\{r_e,b_e\}\circ \chi^{-1}_e.
	\end{equation}
	We claim now that we can extend the map $\chi_e$ homogeneously to a map $C_e$ so that $\tilde{r}_e=r_e \circ C_e^{-1}.$ This choice will give that $\{r_e\circ C_e^{-1},b_e\circ C_e^{-1}\}=\{r_e,b_e\}\circ C_e^{-1}$, so that going back to \eqref{eq:poisson bracket reduced symbols} we find that for any two symbols $a,\alpha$ of order $\indi$ it holds true 
	\begin{equation}
	    \label{eq:poisson bracket is symplectic}
	    \{\alpha_e\circ C_e^{-1},a_e\circ C_e^{-1}\}=\{\alpha_e,a_e\}\circ C_e^{-1}.
	\end{equation}
	This is equivalent to $C_e$ being a canonical transformation.
	
	To see that we can indeed make such a choice, recall that homogeneous maps in $\R^n_0\times\R^n$ are written as in \eqref{eq:equivariant extension 1} for some $f_e$ real-valued and smooth on $\Sf{n-1}\times\R^n$. We now define $C_e$ to be the 1-homogeneous extension of $\chi_e$ given by
	\begin{equation}
		 \label{eq:e-homogeneous extension}
		 C_e(\rho_1,\theta,\xi)=\left(\frac{{p}_e(1,\theta,\xi)}{\tilde{p}_e(1,\chi_e(\theta,\xi))}\rho_1,\chi_e(\theta,\xi)\right).
	\end{equation}
	Recall that here $\theta$ are coordinates on $\Sf{n-1}$, $\rho_1\in\R^+$ and $\xi\in\R^n$. This choice satisfies $\tilde{r}_e=r_e\circ C_e^{-1}$. Indeed, notice that both $q_e$ and $\tilde{q}_e$ do not play a rôle here since they are $e$-homogeneous of degree $0$. More precisely, $q_e(1,\theta,\xi)=\tilde{q}_e(1,\chi_e(\theta,\eta))$. Keeping this in mind, and denoting $\chi_e(\theta,\xi)=(\phi,\eta)$ with $\sigma_1\in\R^+$ the newly introduced coordinate in the target space, the check is immediate, using the $e$-homogeneity of $p_e$ and $\tilde{p}_e$:
	\begin{equation}
	    \label{eq:extension preserves order reduction}
	    \begin{aligned}
	    \tilde{r}_e(\sigma_1,\phi,\eta)&=\tilde{p}_e(\sigma_1,\phi,\eta)\tilde{q_e}(1,\phi,\eta)\\
	    r_e\circ C_e^{-1}(\sigma_1,\phi,\eta)&=r_e\left(\frac{{p}_e(1,\chi_e^{-1}(\phi,\eta)}{\tilde{p}_e(1,\phi,\eta)}\sigma_1,\phi,\eta\right)\\
	    &=\frac{{p}_e(1,\chi_e^{-1}(\phi,\eta)}{\tilde{p}_e(1,\phi,\eta)}\sigma_1\cdot p_e(1,\chi_e^{-1}(\phi,\eta))q_e(1,\chi_e^{-1}(\phi,\eta))\\
	    &=\tilde{p}(\sigma_1,\phi,\eta)q_e(1,\chi_e^{-1}(\phi,\eta)).
	    \end{aligned}
	\end{equation}
	With this choice, $C_e$ is then a homogeneous diffeomorphism $\Wcal_e\rightarrow\Wcal_e$, preserving the Poisson bracket for $e$-principal symbols. 
	
	Having constructed the required extensions, it follows that $\chi$ is a scattering canonical transformation. The proof is complete.
\end{proof}

By Theorem \ref{theo:parametrisation sc-symplectomorphism}, we can locally parametrise the graph of $\chi$ via $SG$-phase functions of order $\indi$ and type $\Qcal_{gen}$. Covering $\graph\chi$ with these coordinate patches and picking local amplitudes, we can construct an elliptic FIO $F$ of type $\Qcal_{gen}$ associated with $\chi$, in the sense that its principal symbol can be identified with a function on the graph. If we denote by $F^\sharp$ the parametrix of $F$, which is again elliptic of type $\Qcal_{gen}$ and is associated with the inverse map $\chi^{-1}$, we have then that $\jmath(P)\equiv FPF^\sharp$ is an automorphism of $\setquotient{\lg{}}{\RG}$ preserving principal symbols, in view of Theorem \ref{theo:SG Egorov}. Our next goal is to analyse $\jmath$ more closely by mirroring the argument of \cite{duistermaat1976orderpreservingisomorphisms} and refining it to $SG$-classes. Before we start with that task, we need to prove an auxiliary result, adapted from Theorem 2.2.10 in \cite{abraham2008foundations}.
\begin{lemma}
	\label{lemma:derivations are vector fields}
	Consider $X=\R^n_0\times\R^k$ and the space $\Hcal^{m,l}(X)$ of functions of $(x,y)\in X$ being positively homogeneous of degree $m$ in $x$ and $y-$classical of degree $l$. $\Hcal=\bigcup\Hcal^{m,l}$ is a filtered algebra. Consider a derivation $\theta\colon\Hcal\rightarrow\Hcal$. Then there exists $V\in\VF(X)$ such that $\theta=\mathcal L_V$ as a derivation. 
\end{lemma}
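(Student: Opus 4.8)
The plan is to adapt the classical proof that every derivation of $\Cinf(M)$ is a vector field (Theorem 2.2.10 in \cite{abraham2008foundations}); the one genuine complication is that $\Hcal$ is \emph{not} a local algebra — it contains no non-zero function of bounded support in the radial variable $\abs{x}$, every element being positively homogeneous in $x$ — so the usual bump-function localisation is unavailable and has to be replaced by an argument along the conical rays. I would begin with the elementary reductions. From $\theta(1)=\theta(1\cdot 1)=2\theta(1)$ one gets $\theta(1)=0$, hence $\theta$ annihilates all constants (which lie in $\Hcal^{0,0}$). The coordinate functions satisfy $x^i\in\Hcal^{1,0}$ and $y^j\in\Hcal^{0,1}$, so $V^i:=\theta(x^i)$ and $W^j:=\theta(y^j)$ lie in $\Hcal\subset\Cinf(X)$, and $V:=\sum_iV^i\del_{x^i}+\sum_jW^j\del_{y^j}$ is a genuine smooth vector field on $X$. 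The target is $\theta=\Lie_V$.

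Fix $p=(x_0,y_0)\in X$ and set $\mathfrak m_p=\{g\in\Hcal:g(p)=0\}$. Since $\theta$ kills constants and the affine Taylor polynomial $P^1_pf=f(p)+\sum_i\del_{x^i}f(p)(x^i-x_0^i)+\sum_j\del_{y^j}f(p)(y^j-y_0^j)$ is a polynomial, hence an element of $\Hcal$, one has $\theta(f)(p)=(\Lie_Vf)(p)+\theta(f-P^1_pf)(p)$ for every $f\in\Hcal$. Thus it suffices to show that
\[
 h\in\Hcal,\quad h \text{ vanishes to second order at } p\ \Longrightarrow\ \theta(h)(p)=0 .
\]
Because $\theta$ is a derivation, $\theta(\mathfrak m_p^2)(p)=0$ (each term of $\theta(ab)$ with $a,b\in\mathfrak m_p$ carries a factor vanishing at $p$), so the real content is to prove that such an $h$ lies in the \emph{algebraic} square $\mathfrak m_p^2$.

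Here homogeneity buys back the localisation that was lost. Writing $x_0=\lambda_0\omega_0$ with $\lambda_0=\abs{x_0}>0$, positive homogeneity in $x$ forces $h$ to vanish to second order not only at $p$ but along the whole ray $\gamma=\{(t\lambda_0\omega_0,y_0):t>0\}$; and a tubular neighbourhood of $\gamma$ in $X$ is a conical set $\{x/\abs{x}\in U,\ y\in W\}$ — exactly the kind of region around which $\Hcal$ \emph{does} admit cut-offs, since $\psi(x/\abs{x})$ with $\psi\in\Cinf(\Sf{n-1})$ is homogeneous of degree $0$ in $x$ and $\chi(y)$ with $\chi\in\Cinf_c(\R^k)$ is $y$-classical of every order, so these and their product lie in $\Hcal$. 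I would then run Hadamard's lemma relative to $\gamma$ on such a conical neighbourhood, choosing the cofactors homogeneous in $x$ and $y$-classical, multiply them by an angular-times-$y$ cut-off $\equiv 1$ near $\gamma$ to land them in $\Hcal$, and obtain $h=H_1+g$ with $H_1\in\mathfrak m_\gamma^2$ (where $\mathfrak m_\gamma\subseteq\Hcal$ is the ideal of functions vanishing on $\gamma$) and $g\in\Hcal$ vanishing identically on an open conical neighbourhood of $\gamma$. A final cut-off trick — $g=g\cdot\bigl(1-\psi'(x/\abs{x})\chi'(y)\bigr)$ with $\psi'\chi'\equiv 1$ on a smaller such neighbourhood and both factors vanishing on $\gamma$ — shows $g\in\mathfrak m_\gamma^2\subseteq\mathfrak m_p^2$, whence $h\in\mathfrak m_p^2$. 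Letting $p$ vary yields $\theta=\Lie_V$ with $V\in\VF(X)$.

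The step I expect to resist is the Hadamard-along-the-cone claim: verifying that the Hadamard cofactors of $h$ relative to the conical ray $\gamma$ can genuinely be chosen positively homogeneous in $x$ and $y$-classical — so that after multiplication by an $\Hcal$-cut-off they are honest elements of $\Hcal$ — while keeping track of the fact that the conormal directions of $\gamma$ mix the angular $x$-directions with the finite $y$-directions, so that the $y$-classicality is genuinely preserved by the integral formula for the cofactors. Once that is in hand, the remainder is the verbatim $\Cinf$ argument, the only conceptual point being that the non-locality of $\Hcal$ is purely radial in $x$ and is rendered harmless by the homogeneity of $h$.
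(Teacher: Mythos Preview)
Your strategy coincides with the paper's: both establish locality via angular-times-$y$ cut-offs (the only ones available in $\Hcal$) and then run a Hadamard argument. But there is a gap in your homogeneity step. The remainder $h=f-P^1_pf$ is \emph{not} homogeneous in $x$: even for $f\in\Hcal^{m,l}$, the affine polynomial $P^1_pf$ has $x$-homogeneous components of degrees $0$ and $1$ (from the constants and from the $x^i$-terms respectively), so $h$ mixes degrees $m,1,0$. Consequently $h$ need not vanish along the ray $\gamma$ --- indeed, using Euler's identity one computes $h(tx_0,y_0)=f(p)\bigl[t^m-1-m(t-1)\bigr]$, which is nonzero for generic $m$ and $t\neq1$. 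The conical-Hadamard step therefore cannot be applied to $h$ as written, and the subsequent claim $h\in\mathfrak m_p^2$ is unsupported.

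The paper sidesteps this by never subtracting $P^1_pf$: it writes the first-order Hadamard expansion $a=a(p)+(x^i-x^i_0)g_i+(y^\alpha-y^\alpha_0)g_\alpha$ in a chart, applies $\theta$ termwise via Leibniz, and evaluates at $p$ so that the prefactors $(x^i-x^i_0),(y^\alpha-y^\alpha_0)$ kill the terms carrying $\theta(g_i),\theta(g_\alpha)$; it only claims $g_i,g_\alpha\in\Cinf(U)$ and is tacit about whether they lie in the domain of $\theta$ --- which is the same $y$-classicality-of-cofactors issue you rightly flag as the resistant step. To repair your version, pass to polar coordinates $x=r\omega$: write $f=r^m g(\omega,y)$, use Leibniz on $r^2=\sum_i(x^i)^2$ to get $\theta(r)=\Lcal_V(r)$ and hence $\theta(r^m)=\Lcal_V(r^m)$, thereby reducing to $g\in\Hcal^{0,l}$. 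For such $g$ the remainder after subtracting the first Taylor polynomial \emph{in the $(\omega,y)$ variables} is genuinely degree-$0$ homogeneous in $x$, hence constant along each ray, and your conical localisation then goes through.
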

\begin{proof}
	Let us make a few remarks to begin with. First, given a vector field $V\in\VF(X)$, the Lie derivative $\Lcal_V$ is well defined for functions in $\Hcal\subset\Cinf(X)$. In particular, it is a derivation. However, for a general $V$, there is no guarantee that $V(\Hcal)\subset\Hcal$ since the local expressions of the coefficients of $V$ need not be $y-$classical. 
	
	Second, $\theta$ is a local operator on $\Hcal$, i.e.\ if $a\in\Hcal, (x_0,y_0)\intern U$ and $a|_U=0$, then $\theta(a)(x_0,y_0))=0$. To see this, pick a smooth cut-off function $g$ such that $g=1$ on $U\neigh(x_0,y_0), U\subset V$ and $g=0$ outside $V$. It follows that $a=(1-g)a$ everywhere, so by the derivation property and the assumptions we have
	\begin{equation}
		\label{eq:derivations are local}
		\begin{aligned}
			\theta(a)(x_0,y_0)&=\theta(a)(x_0,y_0)(1-g(x_0,y_0))-\theta(g)(x_0,y_0)a(x_0,y_0)\\
			&=0,
		\end{aligned}
	\end{equation}
	as required.
	
	We can now proceed to the main part of the proof. The locality property implies that we can define restrictions of $\theta$ to open subsets $(x,y)\intern V\subset X$ by
	\begin{equation}
		\label{eq:restrictions of derivations}
		\theta|_V(a)(x,y)\equiv\theta(ga)(x,y),
	\end{equation}
	where $g$ is a smooth cut-off such that $g=0$ outside $V$ and $g=1$ on some $(x,y)\intern U\subset V$. Again by locality, it follows that $\theta|_V$ doesn't actually depend on the choice of such a cut-off. We keep denoting by $\theta$ the restrictions to open subsets.
	
	Pick a chart $(U,\rho)$ on $X$ with coordinates $(x^i,y^\alpha)$, let $p\in U$ and $a\in\Hcal$. Assume $\rho(p)=q=(x_0^i,y_0^\alpha)$. Then, we can write, in a sufficiently small $W\neigh q$ and denoting $c_q(t)\equiv q+t(x^i-x^i_0,y^\alpha-y^\alpha_0)$
	\begin{equation}
		\label{eq:local first order expansion}
		\begin{aligned}
			(\push{\rho}a)(x^i,y^\alpha)&=(\push{\rho}a)(q)+\int_0^1\partder{}{t}[(\push\rho a)(c_q(t))]\de t\\
			&=(\push\rho a)(q)+(x^i-x^i_0)\int_0^1\partder{\push\rho a}{x^i}(c_q(t))\de t+(y^\alpha-y^\alpha_0)\int_0^1\partder{\push\rho a}{y^\alpha}(c_q(t))\de t.
		\end{aligned}
	\end{equation}
	Let $U'=\rho^{-1}(W)\neigh p$ and $u\in U'$. There exist then functions $g_i,g_\alpha\in\Cinf(U)$ such that 
	\begin{equation}
		\label{eq:preimage of partial derivatives}
		g_i(p)=\partder{\push\rho a}{x^i}(q),\quad g_\alpha(p)=\partder{\push\rho a}{y^\alpha}(q)
	\end{equation}
	and 
	\begin{equation}
		\label{eq:preimage first order expansion}
		a(u)=a(p)+(x^i-x^i_0)g_i(u)+(y^\alpha-y^\alpha_0)g_\alpha (u).
	\end{equation}
	We apply $\theta$ to \eqref{eq:preimage first order expansion} to obtain
	\[
	\theta(a)(u)=\theta(x^i)(u)g_i(u)+(x^i-x^i_0)\theta(g_i)(u)+\theta(y^\alpha)(u)g_\alpha(u)+(y^\alpha-y^\alpha_0)\theta(g_\alpha)(u),
	\]
	so that, evaluating this expression at $p$ and using \eqref{eq:preimage of partial derivatives}, we have
	\begin{equation}
		\label{eq:derivation evaluated at p}
		\theta(a)(p)=\theta(x^i)(p)\partder{\push\rho a}{x^i}(p)+\theta(y^\alpha)(p)\partder{\push\rho a}{y^\alpha}(p).
	\end{equation}
	It is readily seen that a change of coordinates does not affect this expression. We define then a vector field $V_\rho$ on $U$ by setting
	\begin{equation}
		\label{eq:vector field associated to a derivation}
		V_\rho(x^i,y^\alpha)\equiv\left((x^i,y^\alpha),(\theta(x^i)(u),\theta(y^\alpha)(u))\right)
	\end{equation}
	for $\rho(u)=(x^i,y^\alpha)$. It follows that $V_\rho|_U$ is independent of the chart, so that the collection of these objects defines a vector field $V\in\VF(X)$. Now, by definition, the Lie derivative with respect to $V$ in a local chart $(U,\rho)$ of $a\in\Hcal$ is
	\begin{equation}
		\label{eq:lie derivative equals derivation}
		\begin{aligned}
			\Lcal_V a|_U&=D(a\circ\rho^{-1})(x^i,y^\alpha)V_\rho(x^i,y^\alpha)\\
			&=\partder{}{x^i}(a\circ\rho^{-1})(x^i,y^\alpha)\theta(x^i)(x^i,y^\alpha)+\partder{}{y^\alpha}(a\circ\phi^{-1})(x^i,y^\alpha)\theta(y^\alpha)(x^i,y^\alpha)\\
			&=\theta(a)(u),
		\end{aligned}
	\end{equation}
	and that the claim follows. The proof is complete.
\end{proof}
\begin{rem}
	This lemma shows that it suffices to have a derivation on (certain) subalgebras of $\Cinf(\R^n_0\times\R^n)$ to determine a vector field on $\R^n_0\times\R^n$. We will find use for this fact in the proof of Lemma \ref{lemma:automorphism at order l} below. In addition, we will see that the properties of the subalgebra (in this case, homogeneity in $x$ and classicality of order $0$ in $y$) are reflected in the properties of the coefficients of the obtained vector field. 
\end{rem}
\begin{rem}
	Notice that Duistermaat and Singer do not need this specialised result. Indeed in their setting they obtain derivations on the whole $\Cinf(\Sf{\ast}X)$, which are given by vector fields by the standard theory. 
\end{rem}

We can now begin our analysis of the map $\jmath$, which will lead to the following first main result.
\begin{theo}
	\label{theo:automorphism is given by elliptic conjugation}
	Assume given an automorphism $\jmath\colon\setquotient{\lg{}}{\RG}\rightarrow\setquotient{\lg{}}{\RG}$ preserving principal symbols, namely $\jmath(P)-P\in\setquotient{\lg{m-\indi}}{\RG}$ whenever $P\in\lg{m}$. Then, $\jmath$ is given by conjugation with some elliptic $B\in\lg{s}$ for some $s\in\C^2$.
\end{theo}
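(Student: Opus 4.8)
The plan is to follow the scheme of Duistermaat--Singer (Lemma~2 of \cite{duistermaat1976orderpreservingisomorphisms}), constructing the conjugating operator $B$ order-by-order along the double filtration. Since $\jmath$ preserves principal symbols, for each $P\in\lg{m}$ we have $\jmath(P)-P\in\setquotient{\lg{m-\indi}}{\RG}$, and the obstruction to $\jmath$ being the identity at the next order is a map which should turn out to be a derivation. The first step is to analyse this obstruction: fix an order reduction, say $Q=\Op\braket{\xi}$ and $P=\Op\braket{x}$ as in Lemma~\ref{lemma:classical order reductions}, and study the ``logarithmic derivative'' $\delta(A)=\jmath(A)A^{-1}-I$ (or rather its principal-symbol part) for $A$ ranging over elliptic elements. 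Using that $\jmath$ is an algebra homomorphism preserving principal symbols, one checks that at the level of the quotient $\setquotient{\lg{m}}{\lg{m-\indi}}$ the correction induced by $\jmath$ on symbols of order $0$ is given by a map $\theta\colon\sg{0}\to\sg{-\indi}$ which is a derivation with respect to the pointwise product of principal symbols (the first-order term of the commutator); here the multiplicativity of $\sigma_{pr}$ (Proposition~\ref{prop:principalsgsymbols}) and the formula \eqref{eq:principal symbol commutator} for the Poisson bracket are the essential inputs.

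The second step is to realise this derivation geometrically. Because we are in the $SG$/scattering setting, $\theta$ naturally restricts to each boundary face $\widetilde{\Wcal}_\bullet$, and on $\widetilde{\Wcal}_e\cong\Sf{n-1}\times\R^n$, resp. $\widetilde{\Wcal}_\psi\cong\R^n\times\Sf{n-1}$, it acts on the algebra $\Hcal^{0,0}$ of functions homogeneous of degree $0$ in one slot and classical of order $0$ in the other. This is precisely the situation addressed by Lemma~\ref{lemma:derivations are vector fields}: the derivation is given by a vector field $V_\bullet$, and the remark following that lemma guarantees that the homogeneity/classicality of the subalgebra is inherited by the coefficients of $V_\bullet$, so that $V_\bullet$ is the Hamiltonian vector field of a symbol of order $\indi$. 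Compatibility across the corner (the joint-symbol condition of Proposition~\ref{prop:scattering principal symbol}) forces these Hamiltonians to assemble into a single principal symbol, hence into an operator $B_1\in\lg{\indi_\bullet}$ (or a product of such), and conjugating $\jmath$ by $\exp(B_1)$ --- which makes sense as an elliptic $SG$-$\Psi$DO of the appropriate order --- kills the obstruction at that order. One then iterates: the corrected automorphism preserves symbols to one order higher, produces a new derivation, a new Hamiltonian, and so on, descending through $\sg{m-\indi},\sg{m-2\indi},\dots$ in both filtration directions.

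The third step is to sum the asymptotic series: the successive corrections $B_1,B_2,\dots$ have orders tending to $-\infty\indi$, so by asymptotic completeness (Theorem~\ref{theo:asymptotic completeness}) the product $\cdots\exp(B_2)\exp(B_1)$ converges modulo $\RG$ to a well-defined elliptic element $B\in\lg{s}$, where the order $s\in\C^2$ records the overall ``scaling'' of the order reductions under $\jmath$ (this is where complex orders enter, exactly as in Remark~\ref{rem:global classical symbols}; it is detected by how $\jmath$ acts on the class of $\log$ of an order reduction). By construction $\jmath(P)=BPB^{-1}$ modulo $\RG$ for all $P$. The main obstacle I expect is the second step: verifying that the order-$k$ obstruction is genuinely a \emph{derivation} (not merely a linear map) on the relevant subalgebra of homogeneous/classical symbols on each face, and that these face-wise derivations are compatible across the corner so that Lemma~\ref{lemma:derivations are vector fields} and the gluing result Proposition~\ref{prop:compatible principal symbol implies global} apply --- this is the place where the presence of the second filtration genuinely complicates the Duistermaat--Singer argument and where the scattering-geometric machinery developed in Section~\ref{chap:scattering geometry} must be brought to bear. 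A secondary technical point is checking that $\exp(B_k)$ is a legitimate operator in the calculus and that conjugation by it interacts correctly with the filtration, which follows from the Egorov-type Theorem~\ref{theo:SG Egorov} together with the composition theorem~\ref{theo:composition}.
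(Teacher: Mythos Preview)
Your overall architecture is correct and matches the paper: the obstruction at each level is a derivation on principal symbols, Lemma~\ref{lemma:derivations are vector fields} identifies it with a vector field, this vector field is Hamiltonian, conjugation kills the obstruction, and one iterates and sums asymptotically. For $l>1$ your $\exp(B_k)$ is essentially the paper's $(I-C_k)$ with $C_k\in\lg{(1-l)\indi}$, since for negative-order $C_k$ these agree modulo lower order.

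There is, however, a genuine gap at the base step $l=1$. You assert that the derivation is the Hamiltonian vector field of ``a symbol of order $\indi$'', quantize it to $B_1\in\lg{\indi_\bullet}$, and then conjugate by $\exp(B_1)$. Neither move is legitimate. The Hamiltonian $f_\bullet$ at $l=1$ satisfies (Euler's relation, see the paper's Lemma~\ref{lemma:automorphism at principal symbol level})
\[
\xi_k\partial_{\xi_k}f_\psi=\im s_\psi,\qquad x^j\partial_{x^j}f_e=\im s_e,
\]
with $s_\bullet$ \emph{constant}; thus $f_\psi=\im s_\psi\log\abs{\xi}+(\text{degree-}0\text{ part})$, which is \emph{not} a classical symbol, so it cannot be quantized. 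And even if $B_1$ were an operator of order $\indi$, $\exp(B_1)$ does not live in the $SG$-calculus. The paper's resolution is different: one observes that although $f_\bullet$ is not a symbol, $e^{-\im f_\bullet}$ \emph{is} homogeneous of (complex) degree $s_\bullet$, hence a legitimate principal symbol of complex order. One takes $B_0\in\lg{s}$ to be any operator with $\sigma_{pr}(B_0)=(e^{-\im f_e},e^{-\im f_\psi})$ and conjugates by $B_0$ directly --- no operator exponential is formed. This is precisely where the complex order $s\in\C^2$ enters, and it comes from the constants $s_\bullet$ above, not from ``how $\jmath$ acts on $\log$ of an order reduction''. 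Once this first correction is in place, the subsequent Hamiltonians $c_\bullet$ for $l>1$ \emph{are} honest symbols of order $(1-l)\indi$ (Lemma~\ref{lemma:automorphism for l>1}), and your iterative scheme proceeds as you describe.
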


For the sake of clarity, we split the proof into a series of lemmata. 
\begin{lemma}
	\label{lemma:automorphism at order l}
	Assume that, for some $l\geq 1$, we have $\jmath(P)-P\in\lg{m-l\indi}$ for any $P\in\lg{m}$ with principal symbol $(p_e,p_\psi)$. Then $\sigma_{pr}(\jmath(P)-P)$ only depends on the principal symbol of $P$ and it is obtained as $(\beta_e p_e,\beta_\psi p_\psi)$ for two vector fields $\beta_e,\beta_\psi$. Moreover these vector fields are Hamiltonian, that is, there exist functions $f_e,f_\psi$ such that $\beta_\bullet p_\bullet=H_{f_\bullet}p_\bullet\equiv\{f_\bullet,p_\bullet\}$.
\end{lemma}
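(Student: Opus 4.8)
The plan is to mirror the classical argument of Duistermaat and Singer, but to carry it out separately on the $e$- and $\psi$-components of the principal symbol, using the structure of $\SymG{}$ established in Proposition \ref{prop:principalsgsymbols} and Lemma \ref{lemma:derivations are vector fields} to replace the standard ``derivations are vector fields'' step.

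\textbf{Step 1: $\sigma_{pr}(\jmath(P)-P)$ depends only on $\sigma_{pr}(P)$ and defines derivations.} First I would show that if $P,P'\in\lg{m}$ have the same principal symbol, then $\jmath(P)-P$ and $\jmath(P')-P'$ have the same principal symbol in $\SymG{m-l\indi}$. Writing $P-P'\in\lg{m-\indi}$ and using $\jmath(P-P')-(P-P')\in\lg{m-\indi-l\indi}$ (by hypothesis applied at order $m-\indi$), one gets $\jmath(P)-P-(\jmath(P')-P')\in\lg{m-\indi-l\indi}\subset\lg{m-l\indi-\indi}$, which means the $\SymG{m-l\indi}$-classes agree. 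So there are well-defined maps $D^l_\bullet\colon\SymG{m}\to\SymG{m-l\indi}$, $\bullet\in\{e,\psi\}$. Next I would check the Leibniz rule: since $\jmath$ is an algebra homomorphism, for $P\in\lg{m},Q\in\lg{k}$ we have $\jmath(PQ)-PQ=(\jmath(P)-P)\jmath(Q)+P(\jmath(Q)-Q)$; passing to principal symbols and using that $\jmath$ preserves principal symbols (so $\sigma_{pr}(\jmath(Q))=\sigma_{pr}(Q)$) together with the multiplicativity of $\sigma_\bullet$ (Theorem \ref{theo:composition} / Proposition \ref{prop:principalsgsymbols}), one obtains $D^l_\bullet(pq)=D^l_\bullet(p)\,q + p\,D^l_\bullet(q)$ at the level of the $\bullet$-components. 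Homogeneity bookkeeping: $D^l_e$ lowers the exit order by $l$ and $D^l_\psi$ lowers the pseudo order by $l$, while preserving the other order, so after dividing by $p_\bullet$ (where nonzero) and using an order-reduction argument as in the proof of Lemma \ref{lemma:canonicaltransformation}, $D^l_\bullet$ descends to an honest derivation $\theta_\bullet$ of the appropriate algebra $\Hcal$ of homogeneous/classical functions on $\Wcal_\bullet\cong\R^n_0\times\R^n$.

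\textbf{Step 2: apply Lemma \ref{lemma:derivations are vector fields} to get vector fields.} By Lemma \ref{lemma:derivations are vector fields}, each $\theta_\bullet$ equals $\Lcal_{\beta_\bullet}$ for some vector field $\beta_\bullet$ on $\Wcal_\bullet$. Tracking the homogeneity: since $\theta_e$ shifts the exit degree by $-l$ (and the $\psi$-degree by $0$) the coefficients of $\beta_e$ must be $e$-homogeneous of degree $1-l$ and $\psi$-classical of degree $0$, and symmetrically for $\beta_\psi$; in particular for $l=1$ these are exactly the degrees appearing in the components of an $SG$-symplectic vector field on $\Wcal_\bullet$. Writing out $\beta_\bullet$ in canonical coordinates gives $\sigma_{pr}(\jmath(P)-P)=(\beta_e p_e,\beta_\psi p_\psi)$ as claimed.

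\textbf{Step 3: $\beta_\bullet$ is Hamiltonian.} This is the heart of the matter and the step I expect to be the main obstacle. The idea, as in Duistermaat--Singer, is to use the fact that $D^l_\bullet$ is compatible with the Poisson bracket. Concretely, $\jmath$ being an algebra isomorphism that preserves principal symbols implies, after expanding $\jmath([P,Q])=[\jmath(P),\jmath(Q)]$ to the relevant order and invoking Proposition \ref{prop:symplectic properties of sg} (so that $\sigma_{pr}([P,Q])=\im\{\sigma_{pr}(P),\sigma_{pr}(Q)\}$ componentwise), that $\theta_\bullet$ is a derivation of the Poisson algebra on $\Wcal_\bullet$: $\theta_\bullet\{f,g\}=\{\theta_\bullet f,g\}+\{f,\theta_\bullet g\}$, i.e. $\Lcal_{\beta_\bullet}$ commutes with the Poisson bracket, equivalently $\Lcal_{\beta_\bullet}\omega_\bullet=0$, i.e. $\beta_\bullet$ is a symplectic vector field for the homogeneous symplectic form $\omega_\bullet$ on $\Wcal_\bullet$. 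On a conic piece of a cotangent bundle, a homogeneous symplectic vector field of the appropriate degree is automatically Hamiltonian: one sets $f_\bullet = \beta_\bullet\lrcorner\lambda_\bullet$ (the contraction with the relevant Liouville form $\lambda_e$ or $\lambda_\psi$ from Section 2.4), and checks via Cartan's formula $\Lcal_{\beta_\bullet}\lambda_\bullet = \de(\beta_\bullet\lrcorner\lambda_\bullet)+\beta_\bullet\lrcorner\omega_\bullet$ together with the homogeneity that $\de f_\bullet = -\beta_\bullet\lrcorner\omega_\bullet$, i.e. $\beta_\bullet = H_{f_\bullet}$, so $\beta_\bullet p_\bullet = \{f_\bullet,p_\bullet\}$. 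The subtle points I would need to handle carefully: (i) justifying that one may reduce to the $l=1$ model $\Wcal_\bullet\cong\R^n_0\times\R^n$ with its explicit homogeneous symplectic structure even for general $l$ (order reductions again), (ii) making sure the $\psi$-classicality / $e$-classicality of the vector field coefficients is preserved so that $f_\bullet$ lands in the right symbol class, and (iii) verifying the commutation-with-Poisson-bracket claim cleanly despite the fact that $\jmath$ is only an automorphism of $\setquotient{\lg{}}{\RG}$, not of $\lg{}$ — but since the bracket computations in Proposition \ref{prop:symplectic properties of sg} are already stated modulo $\RG$, this causes no real trouble.
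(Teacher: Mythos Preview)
Your Steps 1 and 2 are essentially the paper's argument: one checks that $P\mapsto\sigma_{pr}(\jmath(P)-P)$ descends to a well-defined map $\beta\colon\SymG{m}\to\SymG{m-l\indi}$, verifies the Leibniz rule for the product and for the Poisson bracket, and then invokes Lemma \ref{lemma:derivations are vector fields} to obtain vector fields $\beta_e,\beta_\psi$. The paper does exactly this, writing out the coefficients $\gamma^i_\bullet=\beta_\bullet x^i$, $\rho^\bullet_k=\beta_\bullet\xi_k$ and recording their symbol classes.

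Your Step 3 has a genuine gap in the case $l=1$. The vector field $\beta_\psi$ (say) is homogeneous of degree $-l$ with respect to the $\xi$-dilations: its $\partial_{x^i}$-coefficients $\gamma^i_\psi$ are $\xi$-homogeneous of degree $-l$ and its $\partial_{\xi_k}$-coefficients $\rho^\psi_k$ of degree $1-l$. Cartan's formula then gives
\[
\de(\beta_\psi\lrcorner\lambda_\psi)=\Lcal_{\beta_\psi}\lambda_\psi-\beta_\psi\lrcorner\omega=-(1-l)\,\beta_\psi\lrcorner\omega,
\]
so your candidate $f_\psi=\beta_\psi\lrcorner\lambda_\psi$ produces a Hamiltonian only after dividing by $1-l$, which is impossible precisely when $l=1$. (In fact the paper later uses exactly this formula, $c_\psi=\frac{1}{1-l}\xi_j\gamma^j_\psi$, but only in Lemma \ref{lemma:automorphism for l>1} where $l>1$; for $l=1$ the Hamiltonian is \emph{not} of this shape and its global existence is tied to the appearance of a possibly complex order $s$, cf.\ Lemma \ref{lemma:automorphism at principal symbol level}.)

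The paper avoids this by arguing more plainly: it applies the Poisson-derivation property of $\beta_\bullet$ to the \emph{constant} brackets $\{x^i,x^j\},\{x^i,\xi_j\},\{\xi_i,\xi_j\}$ to obtain the relations
\[
\partder{\gamma^i_\bullet}{\xi_j}=\partder{\gamma^j_\bullet}{\xi_i},\qquad
\partder{\rho^\bullet_i}{x^j}=\partder{\rho^\bullet_j}{x^i},\qquad
\partder{\gamma^i_\bullet}{x^j}=-\partder{\rho^\bullet_j}{\xi_i},
\]
which say precisely that the symplectic-dual $1$-form of $\beta_\bullet$ is closed, hence locally exact by Poincar\'e. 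This works uniformly in $l\ge 1$ and is all the lemma claims (local $f_\bullet$). Your route is salvageable: once you have established that $\beta_\bullet$ is symplectic (which you do), drop the Liouville-contraction formula and conclude locally via Poincar\'e; reserve the contraction trick, with the $\tfrac{1}{1-l}$ prefactor, for the separate $l>1$ step.
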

\begin{proof}
	For a fixed $l\geq 1$ consider, for all $m\in\Z$, the map $Z^m\colon\lg{m}\rightarrow \setquotient{\lg{m-l\indi}}{\lg{m-(l+1)\indi}}$, $Z^m(P)\equiv \jmath(P)-P\mod\lg{m-(l+1)\indi}$. $Z^m$ only depends on the principal symbol of $P$. Indeed, if $Q=P+W$ for some $W\in\lg{m-\indi}$, it follows that 
	\begin{equation}
		\label{eq:Z principal symbol}
		\begin{aligned}
			Z^m(Q)&=Z^m(P)+Z^m(W)\\
			&=\jmath(P)-P+\jmath(W)-W\mod \lg{m-(l+1)\indi}\\
			&=Z^m(P)
		\end{aligned}
	\end{equation}
	since, by assumption, $\jmath(W)-W\in\lg{m-(l+1)\indi}$. Hence, by composition with the principal symbol map, $Z^m$ descends to a map $\beta^m\colon\SymG{m}\rightarrow\SymG{m-le}$. We show that $\beta^m$ (or rather the direct sum $\beta=\bigoplus_{m\in\Z^2}\beta^m$) is a bi-derivation of the bi-algebra $\SymG{}$. To this end, consider $Z(PQ)$ for some operators $P\in\lg{m},Q\in\lg{k}$. Recalling the algebraic properties of $\jmath$, we have by definition,
	\[\begin{aligned}
		Z^{m+k}(PQ)&=\jmath(PQ)-PQ \mod\lg{m+k-l\indi}\\
		&=\jmath(P)\jmath(Q)-\jmath(P)Q+\jmath(P)Q-PQ\mod\lg{m+k-l\indi}\\
		&=\jmath(P)Z^k(Q)+Z^m(P)Q\mod\lg{m+k-l\indi}\\
		&=Z^m(P)Z^k(Q)+PZ^k(Q)+Z^m(P)Q\mod\lg{m+k-l\indi}\\
		&=PZ^k(Q)+Z^m(P)Q\mod \lg{m+k-l\indi},
	\end{aligned}\]
	where we noticed that $Z^m(P)Z^k(Q)\in\lg{m-l\indi}\cdot\lg{k-l\indi}\subset\lg{m+k-2l\indi}\subset\lg{m+k-l\indi}$. Taking principal symbols gives the Leibniz rule. Similarly, for $Z[P,Q]$ we obtain
	\[
		Z^{m+k-\indi}[P,Q]=[Z^m(P),Q]+[P,Z^k(Q)]\mod\lg{m+k-(l+1)\indi},
	\]
	so that $\beta$ is a derivation with respect to the Poisson bracket. Therefore, $\beta$ acts as a bi-derivation on the space of principal symbols $\SymG{}$. Keeping in mind the pairs picture of Proposition \ref{prop:principalsgsymbols}, denote the action of $\beta$ as
	\begin{equation}
		\label{eq:components of beta}
		\beta(p_\psi,p_e)=(\beta_\psi p_\psi,\beta_ep_e).
	\end{equation}
	Similarly to \eqref{eq:Z principal symbol}, one sees that $\beta_\psi p_\psi$, respectively $\beta_e p_e$, only depends on the component $p_\psi$, respectively $p_e$, of the principal symbol, and it holds true that $\sigma_{\psi}^{m_\psi-l}(\beta_ep_e)=\sigma_{e}^{m_e-l}(\beta_\psi p_\psi)$. We can write, more explicitly,
	\begin{equation}
		\label{eq:beta components associated symbol}
		\begin{aligned}
			\beta_\psi p_\psi&=\sigma_\psi^{m_\psi-l}(\beta \check {p})\\
			\beta_e p_e&=\sigma_e^{m_e-l}(\beta \check{p}).
		\end{aligned}
	\end{equation}
	Applying Lemma \ref{lemma:derivations are vector fields}, we obtain that both $\beta_\psi$ and $\beta_e$ are given by vector fields on $\R^n\times\R^n_0$ and $\R^n_0\times \R^n$, respectively. We have then
	\begin{equation}
		\label{eq:beta vector field}
		\begin{aligned}
			\beta_\psi&=\gamma^i_\psi\partder{}{x^i}+\rho^\psi_k\partder{}{\xi_k},\\
			\beta_e&=\gamma^i_e\partder{}{x^i}+\rho^e_k\partder{}{\xi_k},
		\end{aligned}
	\end{equation}
	where, by definition, $\gamma_\bullet^i=\beta_\bullet x^i,\rho^\bullet_k=\beta_\bullet \xi_k$. In particular, by definition of $\beta$, it holds true that
	\begin{equation}
		\label{eq:vector fields have classical coefficients}
		\begin{aligned}
			\gamma^i_\psi(x,\xi)&=\beta_\psi x^i\in \sg{(-l),1-l}_{cl(x)},\\
			\rho_k^\psi(x,\xi)&=\beta_\psi\xi_k\in\sg{(1-l),-l}_{cl(x)},\\
			\gamma_e^i(x,\xi)&=\beta_e x^i\in\sg{-l,(1-l)}_{cl(\xi)},\\
			\rho^e_k(x,\xi)&=\beta_e \xi_k\in\sg{1-l,(-l)}_{cl(\xi)},
		\end{aligned}
	\end{equation}
	so that the components of the obtained vector fields mirror the extra properties of the algebra of functions from which they are derived.
	Recall also that $\{x^i,x^j\},\{x^i,\xi_j\}$ and $\{\xi_i,\xi_j\}$ are all constant, hence, if we apply $\beta_\bullet$, to them we obtain 0. On the other hand, by using the derivation property, we see that it must hold true that
	\begin{equation}
		\label{eq:properties of coefficients of beta}
		\partder{\gamma^i_\bullet}{\xi_j}=\partder{\gamma^j_\bullet}{\xi_i},\quad
		\partder{\rho^\bullet_i}{x^j}=\partder{\rho^\bullet_j}{x^i},\quad
		\partder{\gamma_\bullet^i}{x^j}=-\partder{\rho^\bullet_j}{\xi^i}.
	\end{equation}
	But this is the same as saying that the (symplectic) dual 1-form to $\beta_\bullet$ is closed, hence locally equal to $\de f_\bullet$ for some smooth $f_\bullet$. Locally, we have then shown $\beta_\bullet=H_{f_\bullet}$, the Hamiltonian vector field defined by $f_\bullet$. The proof is complete.
\end{proof}

The next step consists in establishing under which conditions $\jmath$ is given by conjugation with an $SG\Psi$DO at the level of principal symbols.
\begin{lemma}
	\label{lemma:automorphism at principal symbol level}
	There exist $B\in\lg{s}$, $s\in\C^2$, such that $\sigma_{pr}(\jmath(P))=\sigma_{pr}(BPB^{-1})$ if and only if $\sigma_{pr}(B)=(e^{-\im f_e},e^{-\im f_\psi})$ for functions $f_\bullet$ such that $\beta_\bullet=H_{f_\bullet}$.
\end{lemma}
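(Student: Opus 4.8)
The plan is to treat this as the ``infinitesimal matching'' step of the Duistermaat--Singer scheme, transplanted to the two $SG$-filtrations. Since $\jmath$ preserves principal symbols, Lemma~\ref{lemma:automorphism at order l} applies with $l=1$: for $P\in\lg{m}$ with $\sigma_{pr}(P)=(p_e,p_\psi)$ we have $\jmath(P)-P\in\lg{m-\indi}$ and
\[
\sigma_{pr}\bigl(\jmath(P)-P\bigr)=(\beta_e p_e,\beta_\psi p_\psi)=(H_{f_e}p_e,H_{f_\psi}p_\psi),
\]
the $f_\bullet$ being the Hamiltonians furnished there (each $\bullet$-homogeneous of degree $0$, with $\beta_e,\beta_\psi$ compatible across the corner). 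I therefore read the asserted identity $\sigma_{pr}(\jmath(P))=\sigma_{pr}(BPB^{-1})$ as the statement that conjugation by $B$ reproduces this leading deviation, i.e.\ $\jmath(P)-BPB^{-1}\in\lg{m-2\indi}$ for all $P$; both implications then follow from a single computation of $\sigma_{pr}(BPB^{-1}-P)$.

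For that computation I take $B$ elliptic --- forced in the ``only if'' direction so that $BPB^{-1}$ makes sense in $\BG{}$, and automatic in the ``if'' direction since $e^{-\im f_\bullet}$ is nowhere zero (Definition~\ref{def:elliptic sg-symbol}) --- so $B$ is invertible in $\BG{}$ with $\sigma_{pr}(B^{-1})=\sigma_{pr}(B)^{-1}$ component-wise. Writing $BPB^{-1}-P=[B,P]\,B^{-1}$ and combining Theorem~\ref{theo:composition} with the commutator expansion \eqref{eq:asymptotic expansion commutator} from the proof of Proposition~\ref{prop:symplectic properties of sg} --- whose leading term is $\im\{\sigma_{pr}(B),\sigma_{pr}(P)\}$ computed face by face, the remaining terms dropping two full $SG$-orders --- gives, on each face,
\[
\sigma_{pr}\bigl(BPB^{-1}-P\bigr)_\bullet=\im\,\frac{\{b_\bullet,p_\bullet\}}{b_\bullet},\qquad (b_e,b_\psi):=\sigma_{pr}(B).
\]
The ``if'' direction is immediate: if $b_\bullet=e^{-\im f_\bullet}$ then $\{b_\bullet,p_\bullet\}=b_\bullet\{-\im f_\bullet,p_\bullet\}$, so the right-hand side is $\{f_\bullet,p_\bullet\}=H_{f_\bullet}p_\bullet$, matching $\sigma_{pr}(\jmath(P)-P)_\bullet$; the corner compatibility is automatic, as the three identities coincide on $\widetilde{\Wcal}_{\psi e}$.

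For the converse, equating $\im\,\{b_\bullet,p_\bullet\}/b_\bullet$ with $\{f_\bullet,p_\bullet\}$ for every homogeneous $p_\bullet$ forces the $1$-form identity $\im\,\de b_\bullet/b_\bullet=\de f_\bullet$ on the symplectic cones $\Wcal_\bullet$ --- it suffices to let $p_\bullet$ range over the homogeneous coordinate functions and use non-degeneracy of $\omega$. Hence $b_\bullet$ and $e^{-\im f_\bullet}$ have the same logarithmic derivative, so they differ by a constant on each connected face, and this constant can be absorbed into the choice of $f_\bullet$ (which $H_{f_\bullet}$ determines only up to an additive constant). This yields $\sigma_{pr}(B)=(e^{-\im f_e},e^{-\im f_\psi})$.

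Finally, the existence of a $B$ realising this principal symbol follows by lifting through the exact sequence of Lemma~\ref{lemma:principal symbol sequence}, e.g.\ via the associated symbol of Definition~\ref{def:associated sg symbol}. The one genuine obstacle --- and the reason $s$ must be allowed complex --- is that the locally defined $f_\bullet$ need not exponentiate to a single-valued function on $\Wcal_\bullet$ when the periods of the closed form $\de f_\bullet$ fail to lie in $2\pi\Z$. Following Duistermaat--Singer I would cure this by first correcting $f_\bullet$ by a real combination of the logarithmic forms $\de\braket{x}/\braket{x}$, $\de\braket{\xi}/\braket{\xi}$ --- equivalently, premultiplying $B$ by $\Op(\lambda^{s})$ with $s\in\C^2$ imaginary (this is precisely why the order of $B$ is complex, cf.\ Remark~\ref{rem:global classical symbols} and Lemma~\ref{lemma:classical order reductions}) --- and then checking that the correction respects the corner compatibility of the pair $(f_e,f_\psi)$. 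Reconciling the period classes on the two boundary faces with the single class on their common corner is exactly the point where the argument uses the $SG$/scattering geometry rather than the closed-manifold picture.
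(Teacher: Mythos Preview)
Your overall strategy---compute $\sigma_{pr}(BPB^{-1}-P)$ via $[B,P]B^{-1}$ and match it against $(H_{f_e}p_e,H_{f_\psi}p_\psi)$---is exactly the paper's, and your ``only if'' direction is fine. The gap is in the existence step, where you misidentify the obstruction.

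You frame the problem as a period issue: $f_\bullet$ is only locally defined by Lemma~\ref{lemma:automorphism at order l}, and $e^{-\im f_\bullet}$ might fail to be single-valued unless the periods of the closed form dual to $\beta_\bullet$ lie in $2\pi\Z$. But in the model case the cones $\Wcal_\psi=\R^n\times\R^n_0$ and $\Wcal_e=\R^n_0\times\R^n$ are simply connected for $n\ge2$, so the closed $1$-form is globally exact and $f_\bullet$ exists as an honest function. Single-valuedness of $e^{-\im f_\bullet}$ is free. The genuine obstruction is different: for $(e^{-\im f_e},e^{-\im f_\psi})$ to be a principal symbol in $\SymG{s}$ you need each $e^{-\im f_\bullet}$ to be \emph{homogeneous} of some fixed degree $s_\bullet$. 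By Euler's relation this amounts to $\xi_k\partial_{\xi_k}f_\psi$ and $x^j\partial_{x^j}f_e$ being constant, i.e.\ $\xi_k\gamma_\psi^k$ and $x^j\rho^e_j$ constant in the notation of \eqref{eq:vector fields have classical coefficients}. Your proposed correction by $\de\!\braket{x}/\braket{x}$, $\de\!\braket{\xi}/\braket{\xi}$ cannot help unless these radial derivatives are already constant---otherwise no single $s_\bullet$ absorbs the defect.

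This is precisely the computation the paper carries out in \eqref{eq:partial derivatives homogeneity relations}: using the symmetry relations \eqref{eq:properties of coefficients of beta} together with the specific homogeneity degrees in \eqref{eq:vector fields have classical coefficients} at $l=1$, one checks directly that all partial derivatives of $\xi_k\gamma_\psi^k$ vanish (and similarly for $x^j\rho^e_j$). That verification is the content of the lemma, and it is missing from your argument. Incidentally, your parenthetical that the $f_\bullet$ from Lemma~\ref{lemma:automorphism at order l} are ``$\bullet$-homogeneous of degree $0$'' is not justified there and is in fact what is at stake: the lemma only gives local primitives, and the present proof establishes exactly that $f_\psi-\im s_\psi\log\abs{\xi}$ is $\xi$-homogeneous of degree $0$ for a suitable constant $s_\psi$.
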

\begin{proof}
	Notice first that $BPB^{-1}-P=[B,P]B^{-1}$, so that taking principal symbols yields
	\begin{equation}
		\label{eq:principal symbol conjugation 2}
		\begin{aligned}
			\sigma_{pr}(BPB^{-1}-P)&=\sigma_{pr}([B,P])\sigma_{pr}(B^{-1})\\
			&=\left(\frac{1}{\im b_{e}}\{b_e,p_e\},\frac{1}{\im b_\psi}\{b_\psi,p_\psi\}\right)\\
			&=\left(H_{\im\log b_e}(p_e),H_{\im\log b_\psi}(p_\psi)\right).
		\end{aligned}
	\end{equation}
	Remark that, for an invertible $b$, both logarithms exist. Then applying Lemma \ref{lemma:automorphism at order l} with $l=1$ gives that $\sigma_{pr}(BPB^{-1}-P)=\sigma_{pr}(\jmath(P)-P)=(H_{f_e}(p_e),H_{f_\psi}(p_\psi))$ for some smooth $f_e,f_\psi$ if and only if
	\begin{equation}
		\label{eq:principal symbol of B is homogeneous}
		\left\{\begin{aligned}
			b_\psi&=e^{-\im f_\psi}\\
			b_e&=e^{-\im f_e}
		\end{aligned}\right.
	\end{equation}
	are homogeneous in $\xi$, respectively $x$, of degree $s_\psi$, respectively $s_e$. Using Euler's equation, we can rewrite this as
	\begin{equation}
		\label{eq:Euler equation principal symbol}
		\left\{\begin{aligned}
			\xi_k\partder{f_\psi}{\xi_k}&=\im s_\psi,\\
			x^j\partder{f_e}{x^j}&=\im s_e.
		\end{aligned}\right.
	\end{equation}
	Recalling that $\del_{\xi_k}{f_\bullet}$ and $\del_{x^j}{f_e}$ are, respectively, the components $\gamma_\psi^k$ and $\rho_j^e$ of the vector fields $\beta_\bullet$, we see that the claim is equivalent to $\xi_k\gamma^k_\psi$ and $x^j\rho_j^e$ being constant. We check this directly by computing the derivatives of these expressions w.r.t. $x^r$ and $\xi_r$. Considering, for instance, the vector field $\beta_\psi$, we have $\del_{x^j}{\gamma_\psi^i}=-\del_{\xi^i}{\rho^\psi_j}$ and $\del_{\xi_j}{\gamma^i_\psi}=\del_{\xi_i}{\gamma^j_\psi}$, in view of the symmetry relations of \eqref{eq:properties of coefficients of beta}. Then, recalling the homogeneities of \eqref{eq:vector fields have classical coefficients} and that $l=1$, we obtain
	\begin{equation}
		\label{eq:partial derivatives homogeneity relations}
		\begin{aligned}
			\partder{(\xi_k\gamma^k_\psi)}{x^r}&=\xi_k\partder{\gamma^k_\psi}{x^r}
			=-\xi_k\partder{\rho^\psi_r}{\xi_k}=0,\\
			\partder{(\xi_k\gamma^k_\psi)}{\xi_r}&=\gamma^r_\psi+\xi_k\partder{\gamma^k_\psi}{\xi_r}
			=\gamma^r_\psi+\xi_k\partder{\gamma^r_\psi}{\xi_k}=0.
		\end{aligned}
	\end{equation}
	In a completely analogous fashion one proves the corresponding results for $\beta_e$. This concludes the proof.
\end{proof}

\begin{lemma}
	\label{lemma:automorphism for l>1}
	Assume that for some $l>1$ we have $\jmath(P)-P\in\lg{m-l\indi}$ for any $P\in\lg{m}$. Then there exist $C\in\lg{(1-l)\indi}$ such that $(I-C)\circ\jmath(P)\circ(I-C)^{-1}-P\in\lg{m-(l+1)\indi}$.
\end{lemma}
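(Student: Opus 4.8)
This lemma is the inductive step of the standard "Eidelheit–Duistermaat–Singer" exhaustion argument, and the plan is to imitate Lemma 2-type reasoning in \cite{duistermaat1976orderpreservingisomorphisms}, adapted to the double filtration. First I would apply Lemma \ref{lemma:automorphism at order l} to the automorphism $\jmath$ with the present value $l$: this produces a bi-derivation $\beta=(\beta_e,\beta_\psi)$ of $\SymG{}$ lowering order by $l\indi$, and by that same lemma each $\beta_\bullet$ is Hamiltonian, $\beta_\bullet p_\bullet=\{f_\bullet,p_\bullet\}$ for some functions $f_e,f_\psi$ which by construction have the appropriate homogeneities: $f_\psi$ is homogeneous of degree $1-l$ in $\xi$ (classical in $x$) and $f_e$ of degree $1-l$ in $x$ (classical in $\xi$), and the compatibility $\sigma_\psi^{1-l}(f_e)=\sigma_e^{1-l}(f_\psi)$ holds, so $(f_e,f_\psi)$ is the principal symbol of some $SG$-symbol of order $(1-l)\indi$. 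Quantise it: let $C\in\lg{(1-l)\indi}$ be an operator with $\sigma_{pr}(C)=(f_e,f_\psi)$ (this is where the hypothesis $l>1$, equivalently $1-l<0$, is used — the order of $C$ is strictly negative so $I-C$ is invertible modulo $\RG$, with parametrix $(I-C)^{-1}=\sum_{k\geq0}C^k$ converging in $\BG{}$).

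Next I would compute the effect of conjugating $\jmath(P)$ by $I-C$ at the level of the leading correction term. Write $\jmath(P)=P+R_l(P)$ with $R_l(P)\in\lg{m-l\indi}$ and $\sigma_{pr}^{m-l\indi}(R_l(P))=(\beta_e p_e,\beta_\psi p_\psi)=(\{f_e,p_e\},\{f_\psi,p_\psi\})$. Since $(I-C)^{-1}=I+C+C^2+\dots$ and $C\in\lg{(1-l)\indi}$ with $l>1$, the operator identity
\begin{equation}
	\label{eq:conjugation expansion for Cl}
	(I-C)\jmath(P)(I-C)^{-1}=\jmath(P)-[C,\jmath(P)]+(\text{terms in }\lg{m-2(l-1)\indi-\indi})
\end{equation}
holds modulo $\lg{m-(l+1)\indi}$: the commutator $[C,P]$ lies in $\lg{m-l\indi}$ with principal symbol $\sigma_{pr}([C,P])=\im(\{f_e,p_e\},\{f_\psi,p_\psi\})$ by \eqref{eq:principal symbol commutator} and Proposition \ref{prop:symplectic properties of sg}, while $[C,R_l(P)]\in\lg{m-l\indi+(1-l)\indi}=\lg{m-(2l-1)\indi}$ and, because $2l-1>l+1$ precisely when $l>2$ — here one has to be a touch careful — all such higher terms sit in $\lg{m-(l+1)\indi}$. (When $l=2$, $2l-1=l+1$, so the term $[C,R_2(P)]$ is genuinely of the borderline order; but its principal symbol depends only on $\sigma_{pr}(C)$ and $\sigma_{pr}(R_2(P))$, both already determined, so one simply absorbs it — this is the one place the bookkeeping differs from the naive count and where I expect to spend real care.) Adding up, $\sigma_{pr}^{m-l\indi}$ of $(I-C)\jmath(P)(I-C)^{-1}-P$ equals $(\{f_e,p_e\},\{f_\psi,p_\psi\})-\im\cdot\frac{1}{\im}(\{f_e,p_e\},\{f_\psi,p_\psi\})$...

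The sign/normalisation must be chosen so this cancels: one picks $C$ with $\sigma_{pr}(C)=(\im f_e,\im f_\psi)$ (or $-\im f_\bullet$, depending on the convention in \eqref{eq:principal symbol commutator}), so that $\sigma_{pr}([C,P])=\sigma_{pr}(R_l(P))$ exactly, whence the order-$(m-l\indi)$ term in $(I-C)\jmath(P)(I-C)^{-1}-P$ vanishes and the difference drops into $\lg{m-(l+1)\indi}$, which is the claim. Finally I would check that $(I-C)\jmath(\cdot)(I-C)^{-1}$ is again an automorphism of $\BG{}$ preserving principal symbols (immediate, since conjugation is an automorphism and $C$ has negative order), so that the induction can proceed. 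The main obstacle, as flagged, is the borderline case $l=2$ in the order count for the quadratic remainder $[C,R_l(P)]$; it is resolved not by a better estimate but by observing that the offending term is itself determined by already-fixed principal symbols and can be folded into the choice of $C$ at the next stage — exactly the mechanism that makes the Duistermaat–Singer iteration close. Everything else is routine symbol calculus using Theorem \ref{theo:composition}, Proposition \ref{prop:symplectic properties of sg}, and the quantisation bijection $\Op$.
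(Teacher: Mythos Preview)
Your overall strategy is the same as the paper's, but there is a real gap at the point where you write ``by construction [the $f_\bullet$] have the appropriate homogeneities \ldots\ and the compatibility $\sigma_\psi^{1-l}(f_e)=\sigma_e^{1-l}(f_\psi)$ holds''. Lemma~\ref{lemma:automorphism at order l} only shows that each $\beta_\bullet$ is \emph{locally} Hamiltonian; it does not hand you global $f_\bullet$ in the correct symbol classes, and it says nothing about the corner compatibility. That compatibility is precisely what distinguishes the $SG$ situation from the classical one, and it is the main technical content of the paper's proof. The paper does not appeal to abstract existence of a potential: it writes down explicit Euler-type candidates
\[
c_\psi=\tfrac{1}{1-l}\,\xi_j\gamma^j_\psi,\qquad c_e=\tfrac{1}{l-1}\,x^j\rho^e_j,
\]
checks directly (using the homogeneities in \eqref{eq:vector fields have classical coefficients} and the closedness relations \eqref{eq:properties of coefficients of beta}) that $H_{c_\bullet}=\beta_\bullet$, and then spends a non-trivial computation (equations \eqref{eq:c is a principal symbol}--\eqref{eq:c is a principal symbol 5}) verifying $\sigma_e^{1-l}(c_\psi)=\sigma_\psi^{1-l}(c_e)$. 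Only after this does $(c_e,c_\psi)\in\SymG{(1-l)\indi}$, and only then can one quantise to $C\in\lg{(1-l)\indi}$. Your proposal skips exactly this step.

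A secondary point: your worry about the ``borderline'' term $[C,R_l(P)]$ at $l=2$ is unnecessary. You computed only the composition order $m-(2l-1)\indi$; as a \emph{commutator} this term lies in $\lg{m-2l\indi}$, and $2l>l+1$ for every $l>1$, so it is always safely below the threshold. The paper avoids this detour altogether by computing $(I-C)^\sharp P(I-C)=P+[C,P]\mod\lg{m-(l+1)\indi}$ directly, matching its principal symbol to $\beta(p)=\sigma_{pr}(\jmath(P)-P)$, and then conjugating back. No special treatment of $l=2$ is required.
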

\begin{proof}
	We apply Lemma \ref{lemma:automorphism at order l} again by defining directly the Hamiltonian functions of the vector fields. Recall that $\gamma^j_\bullet$ and $\rho_k^\bullet$ are the components of the vector field $\beta_\bullet$ determining the action of $\jmath$ on principal symbols. Using these functions, set
	\begin{equation}
		\label{eq:Hamiltonian for l>1}
		\left\{\begin{aligned}
			c_\psi&\equiv\frac{1}{1-l}\xi_j\gamma^j_\psi\in\sg{(1-l),1-l}_{cl(x)},\\
			c_e&\equiv\frac{1}{l-1}x^j\rho^e_j\in\sg{1-l,(1-l)}_{cl(\xi)}.
		\end{aligned}\right.
	\end{equation}
	With these definitions, we see that $H_{c_\bullet}=\beta_\bullet$. Indeed, for instance
	\begin{equation}
		\label{eq:hamiltonian vector field for l>1}
		\begin{aligned}
			\partder{c_\psi}{x^k}&=\frac{1}{1-l}\xi_j\partder{\gamma^j_\psi}{x^k}=\frac{1}{l-1}\xi_j\partder{\rho^\psi_k}{\xi_j}\\
			&=\frac{1}{l-1}(1-l)\rho^\psi_k=-\rho^\psi_k,\\
			\partder{c_\psi}{\xi_k}&=\frac{1}{1-l}\left[\gamma^k_\psi+\xi_j\partder{\gamma^j_\psi}{\xi_k}\right]\\			&=\frac{1}{1-l}\left[\gamma^k_\psi+\xi_j\partder{\gamma_\psi^k}{\xi_j}\right]=\frac{1}{1-l}\left[\gamma^k_\psi-l\gamma^k_\psi\right]\\
			&=\gamma^k_\psi,
		\end{aligned}
	\end{equation}
	where we have used again \eqref{eq:vector fields have classical coefficients} and \eqref{eq:properties of coefficients of beta}. Wit the aim of assiociating an operator with $c=(c_e,c_\psi)$, we verify that, indeed, $c\in\SymG{(1-l)\indi}$, namely, that $\sigma^{1-l}_e(c_\psi)=\sigma_\psi^{1-l}(c_e)$. Computing these symbols, we have to prove that
	\begin{equation}
		\label{eq:c is a principal symbol}
		\xi_j\gamma^j_{\psi,1-l}=-x^j\rho_j^{e,1-l},
	\end{equation}
	where $\gamma^j_{\psi,1-l}$, respectively $\rho_j^{e,1-l}$, is the $(1-l)$-homogeneous component in the asymptotic expansion of $\gamma^j_\psi$, respectively $\rho_j^e$. To this end, consider the third relation in \eqref{eq:properties of coefficients of beta}, multiply it by $\xi_k$ and take the trace to obtain
	\begin{equation}
		\label{eq:c is a principal symbol 2}
		\xi_k\partder{\gamma^k_\psi}{x^j}=-\xi_k\partder{\rho^\psi_k}{\xi_j}=(l-1)\rho^\psi_k.
	\end{equation}
	Here we took again advantage of the homogeneity properties in \eqref{eq:vector fields have classical coefficients}.	The first and last sides of \eqref{eq:c is a principal symbol 2} are $x-$classical symbols, so we can expand them in $x-$homogeneous functions. Since asymptotic expansions are uniquely determined, we find that the two must be equal term by term, so the top order relation reads
	\begin{equation}
		\label{eq:c is a principal symbol 3}
		\xi_k\partder{\gamma^k_{\psi,1-l}}{x^j}=(l-1)\rho^{\psi,-l}_j.
	\end{equation}
	Multiplying by $x^j$ (namely, taking the trace of the matrix $(x^r\xi_k\del_{x^j}\gamma^k_{\psi,1-l})$), we obtain by homogeneity
	\begin{equation}
		\label{eq:c is a principal symbol 4}
		(l-1)x^j\rho_j^{\psi,-l}=(1-l)\xi_j\gamma^j_{\psi,1-l}.
	\end{equation}
	On the other hand, considering $p=\xi_r$ as a symbol and applying $\beta$ gives
	\begin{equation}
		\label{eq:c is a principal symbol 5}
		\left.\begin{aligned}
			(\beta p)_\psi&=\rho^\psi_r\in\sg{(1-l),-l}_{cl(x)}\\
			(\beta p)_e&=\rho^e_r\in\sg{1-l,(-l)}_{cl(\xi)}
		\end{aligned}\right\}\implies \rho^{\psi,-l}_r=\rho^{e,1-l}_r,
	\end{equation}
	where we computed the principal symbol of $\beta p$. Since \eqref{eq:c is a principal symbol 5} holds true for any $r$, it follows that we can substitute it in \eqref{eq:c is a principal symbol 4} to obtain \eqref{eq:c is a principal symbol}, as required. 
	
	Let then $C\in\lg{-l\indi}$ be an operator whose principal symbol is $\im c$ and let $(I-C)^{\sharp}$ be a parametrix of $(I-C)$, that is
	\[
	(I-C)(I-C)^{\sharp}=(I-C)^{\sharp}(I-C)=I+R
	\]
	for some $R$ smoothing. Recall here that $l>1$ so $C$ has negative integer order and thus $I-C\in\lg{0}$ with $\sigma^0_{pr}(I-C)=1$. Therefore, the parametrix of $I-C$ exists and has a compact remainder since $R$ has kernel in the Schwartz class. Moreover, $(I-C)^\sharp=I-C'$ for some $C'\in\lg{-\indi}$. Let then $P\in\lg{m}$. By commuting $P$ and $I-C$ and noticing that $[I,P]=0$, we obtain 
	\begin{equation}
		\label{eq:conjugation with I-C}
		\begin{aligned}
			(I-C)^{\sharp} P (I-C)&=(I-C)^\sharp\left((I-C)P+[P,I-C]\right)\\
			&=(I-C)^\sharp (I-C)P+(I-C)^\sharp [C,P]\\
			&=P+[C,P]\mod\lg{m-(l+1)\indi}.
		\end{aligned}
	\end{equation}
	Thence, the principal symbol of $(I-C)^{\sharp}P(I-C)-P$ equals the principal symbol of the commutator, namely, the Poisson bracket of the principal symbols. Specifically, we have
	\begin{equation}
	    \begin{aligned}
            \sigma_{pr}\left((I-C)^{\sharp}P(I-C)-P\right)&=-\im H_{\im c}(p)=H_c(p)\\
            &=\{c,p\}=\beta(p)\\
			&=\sigma_{pr}(\jmath(P)-P).
        \end{aligned}
	\end{equation}
	It follows that $\jmath(P)-P=(I-C)^{\sharp}P(I-C)-P\mod\lg{m-(l+1)\indi}$, so in particular there exists $Q\in\lg{m-(l+1)\indi}$ such that $\jmath(P)=(I-C)^{\sharp}P(I-C)+Q$. Conjugating with the parametrix $(I-C)^{\sharp}$ gives now
	\begin{equation}
		\label{eq:conjugating with (I-C)^{-1}}
		\begin{aligned}
			(I-C)\jmath(P)(I-C)^{\sharp}&=(I-C)(I-C)^{\sharp}P(I-C)(I-C)^{\sharp}+(I-C)Q(I-C)^{\sharp}\\
			&=P+RP+PR+RPR+(I-C)Q(I-C)^{\sharp}.\\
			\end{aligned}
	\end{equation}
	Now, on the one hand $PR$ and $RP$ are smoothing, since $R\in\RG$, on the other hand we have, thanks to Theorem \ref{theo:SG Egorov},
	\begin{equation}
	    (I-C)Q(I-C)^\sharp\in\lg{m-(l+1)\indi}.
	\end{equation}
	Thence, it holds true that $(I-C)^{\sharp}\jmath(P)(I-C)-P\in\lg{m-(l+1)\indi}$. The proof is complete.
\end{proof}
\begin{proof}[Proof of Theorem \ref{theo:automorphism is given by elliptic conjugation}]
	Exploiting Lemmas \ref{lemma:automorphism at principal symbol level} and \ref{lemma:automorphism for l>1} we can set up an inductive procedure which constructs a sequence of pseudo-differential operators $B_0,C_1,C_2,\dots$, where:
	\begin{enumerate}
	    \item $B_0$ is elliptic of some order $s\in\C$ and $C_j\in\lg{-j\indi}$; 
	    \item conjugation with $B_l=(I-C_l)\dots(I-C_1)B_0$ gives an automorphism of $\setquotient{\lg{}}{\RG}$, approximating $\jmath$ up to order $m-(l+2)\indi$.
	\end{enumerate}
	A computation of the asymptotic expansion of the symbol of $B_l$ shows that $(I-C_{l+1})B_l$ only changes the symbol up to $s-(l+1)\indi$. There exists, then, an elliptic operator $B\in\lg{s}$, such that, for each $l$, we have $B-B_l\in\lg{s-(l+1)\indi}.$ Thus, the difference $B\jmath(P)B^{-1}-P$ is smoothing, in view of said asymptotic expansion. This proves the claim.
\end{proof}

The preceding results proven thus far enable us to prove the main result of this section, namely the characterisation of the SGOPS of the formal symbol algebra $\BG{}$.

\begin{theo}[OPIs of the formal $SG$-algebra]
	\label{theo:automorphisms of the formal algebra}	
	Let $\imath\colon\BG{}\rightarrow\BG{}$ be an $SG$OPI on the formal symbol algebra $\BG{}$. There exists then an elliptic SGFIO $A$, of type $\Qcal_{gen}$, such that $\imath(P+\RG)=A^{\sharp}PA+\RG$ for any $P\in\lg{m}$.
\end{theo}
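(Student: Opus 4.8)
The strategy is to combine the scattering-geometric input assembled earlier in this chapter with the conjugation statement for principal-symbol-preserving automorphisms, namely Theorem \ref{theo:automorphism is given by elliptic conjugation}. First I would apply Lemma \ref{lemma:canonicaltransformation} to the given $SG$OPI $\imath$, obtaining the scattering canonical transformation $C=(C_e,C_\psi,C_{\psi e})$ with $\imath(a_\bullet)=a_\bullet\circ C_\bullet^{-1}$ on every $\SymG{m}$. By Theorem \ref{theo:parametrisation sc-symplectomorphism} the graph of the underlying map $\chi$ is covered by charts on which it is parametrised by $SG$-phase functions of order $\indi$, which by Remark \ref{rem:Q-phase function} are of type $\Qcal$ in bi-conic neighbourhoods of infinity. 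Choosing such a cover, a subordinate partition of unity, and elliptic local amplitudes of order $0$, one builds --- exactly as indicated in the discussion following Lemma \ref{lemma:canonicaltransformation} --- an elliptic type-$\Qcal_{gen}$ FIO $F$ whose principal symbol is a nowhere-vanishing function on $\graph\chi$, together with a parametrix $F^\sharp\in\Qcal_{gen}$ associated with $\chi^{-1}$.

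I would then set $\jmath\colon\BG{}\to\BG{}$, $\jmath(P+\RG):=F^\sharp\imath(P)F+\RG$, and verify that this is a well-defined automorphism of $\BG{}$. Indeed, conjugation by $F$ descends to $\BG{}$ because $FF^\sharp-I,\,F^\sharp F-I\in\RG$, because $\RG$ is stable under composition with $\Qcal_{gen}$-operators (Corollary \ref{cor:calculus of Q-operators} together with the mapping properties on $\Scal$ and $\Scal'$), and because $\lg{}\circ\Qcal_{gen}\circ\lg{}\subset\Qcal_{gen}$ combined with Theorem \ref{theo:SG Egorov} yields $F^\sharp\lg{m}F\subset\lg{m}$; composing with the automorphism $\imath$ preserves all of this. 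The crucial observation is that $\jmath$ preserves principal symbols: $\imath$ acts on $\SymG{m}$ as $\circ C^{-1}$ while, by Theorem \ref{theo:SG Egorov}, the map $P\mapsto F^\sharp P F$ acts as $\circ C$, so $\jmath$ is the identity on $\SymG{}$, that is $\jmath(P)-P\in\setquotient{\lg{m-\indi}}{\RG}$ whenever $P\in\lg{m}$.

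Theorem \ref{theo:automorphism is given by elliptic conjugation} now applies to $\jmath$: there is an elliptic $B\in\lg{s}$ with $s\in\C^2$ (this is the single exception to the integer-order convention of this chapter) and parametrix $B^\sharp\in\lg{-s}$ such that $\jmath(P)\equiv B^\sharp P B\pmod{\RG}$. Unravelling the definition of $\jmath$,
\[
\imath(P)\ \equiv\ F\,\jmath(P)\,F^\sharp\ \equiv\ \big(FB^\sharp\big)\,P\,\big(BF^\sharp\big)\pmod{\RG}.
\]
Put $A:=BF^\sharp$; then $FB^\sharp$ and $BF^\sharp$ are mutual parametrices modulo $\RG$, so $A^\sharp\equiv FB^\sharp\pmod{\RG}$ and $\imath(P+\RG)=A^\sharp PA+\RG$ for all $P\in\lg{m}$. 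Finally $A=BF^\sharp$ belongs to $\lg{}\circ\Qcal_{gen}\subset\Qcal_{gen}$ and is elliptic, since by the composition theorem (Theorem \ref{theo:composition of Q operators}) the leading amplitude of $BF^\sharp$ is the product of the elliptic leading amplitudes of $B$ and $F^\sharp$, and these composition formulas hold verbatim for the complex order of $B$.

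The bulk of the work has been done in the preparatory results, so the only genuinely delicate step here is the construction of the global elliptic $\Qcal_{gen}$-FIO $F$ from the local phase-function parametrisations of $\graph\chi$: one must glue the local oscillatory integrals into a single operator well-defined modulo $\RG$ without destroying ellipticity or the type-$\Qcal_{gen}$ structure, keeping control of the regular-variable splittings of the phases on overlapping charts (cf. the discussion preceding Definition \ref{def:generalised Q-class}), and one must ensure that Egorov's theorem (Theorem \ref{theo:SG Egorov}), stated for a single phase, survives the patching so that conjugation by $F$ reproduces exactly the pullback by $C$ on each of the three components of the principal symbol. A minor bookkeeping issue is checking that the complex order of $B$ causes no difficulty in the $\Qcal_{gen}$-composition calculus.
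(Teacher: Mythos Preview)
Your proposal is correct and follows essentially the same route as the paper: obtain the scattering canonical transformation from Lemma~\ref{lemma:canonicaltransformation}, build an elliptic $\Qcal_{gen}$-FIO $F$ associated with it, use Egorov to reduce $\imath$ to a principal-symbol-preserving automorphism $\jmath$, and then invoke Theorem~\ref{theo:automorphism is given by elliptic conjugation}. The only cosmetic difference is the direction in which you conjugate---you take $\jmath(P)=F^\sharp\imath(P)F$ and end with $A=BF^\sharp$, whereas the paper takes $F\imath(P)F^\sharp$ and sets $A=B^\sharp F$---but these are equivalent up to relabelling, and your bookkeeping of which Egorov direction cancels the $\circ\,C^{-1}$ action of $\imath$ is in fact more carefully stated than in the paper. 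Your flagging of the global patching of $F$ from the local phase parametrisations as the genuinely delicate point is apt; the paper treats this step by assertion in the discussion after Lemma~\ref{lemma:canonicaltransformation}.
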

\begin{proof}
	On the one hand, we know that there exists an elliptic SGFIO $F$ of type $\Qcal_{gen}$ such that $F\imath(P)F^\sharp-P$ is an automorphism of $\BG{}$ preserving principal symbols. On the other hand, Theorem \ref{theo:automorphism is given by elliptic conjugation} guarantees that every such automorphism is given by conjugation with an elliptic $B\in\lg{s}$ for some $s\in\C^2$. Therefore, we see that, mod $\RG$, $F\imath(P)F^\sharp=BPB^\sharp$, so that setting $A\equiv B^\sharp F$ gives that $\imath(P)=A^\sharp PA$. This concludes the proof.
\end{proof}

\subsection{Lifting the characterisation to $\lg{}$}
We now turn to the problem of lifting this characterisation to the whole algebra. We notice first that we are able to take advantage of the Eidelheit Lemma of \cite{duistermaat1976orderpreservingisomorphisms} without any further hassle.

\begin{lemma}[Eidelheit-type Lemma]
    \label{lemma:eidelheit}
	Given an algebra isomorphism $\phi\colon\RG \rightarrow\RG$ there exists a topological isomorphism $V\colon\Scal\rightarrow\Scal$ such that $\phi(P)=VPV^{-1}$ for any $P\in\RG$.
\end{lemma}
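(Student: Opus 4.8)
The plan is to identify $\RG$ with a concrete algebra of smoothing operators and then recognise it as an algebra of linear maps acting on $\Scal$, so that the Eidelheit Lemma in its classical form applies. First I would recall from Proposition~\ref{prop:sg calculus} that every element of $\RG$ is of the form $\Op(a)$ with $a\in\Scal(\R^{2n})$; concretely, such an operator acts on $u\in\Scal$ by an integral kernel $K\in\Scal(\R^{2n})$, namely $(\Op(a)u)(x)=\int K(x,y)u(y)\,\de y$. Since $\Scal(\R^{2n})\cong\Scal(\R^n)\,\hat\tens_\pi\,\Scal(\R^n)$, the algebra $\RG$ is (isomorphic, as an algebra, to) the algebra of continuous linear operators $\Scal(\R^n)\to\Scal(\R^n)$ whose transpose also maps $\Scal(\R^n)\to\Scal(\R^n)$ and which factor through this completed tensor product --- in other words, the ``finite-rank-like'' smoothing operators that are dense in $\Scal\,\hat\tens\,\Scal$. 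This is exactly the setting of the Eidelheit argument in \cite{duistermaat1976orderpreservingisomorphisms}: $\RG$ is a (non-unital) algebra acting faithfully and irreducibly on the Fr\'echet space $\Scal(\R^n)$, and it contains ``rank-one'' operators $u\mapsto \ell(u)\,v$ for all $v\in\Scal$ and all $\ell$ in a suitable dense subspace of $\Scal'$ (indeed, for $\ell=\inner{\cdot}{w}$ with $w\in\Scal$ one gets the operator with kernel $v\otimes\bar w$).

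The key steps, in order, are as follows. (i) Observe that $\phi$, being an algebra isomorphism, preserves the two-sided ideal structure and in particular sends minimal left ideals to minimal left ideals; a minimal left ideal of $\RG$ is of the form $\RG\cdot P_v$ where $P_v$ is a rank-one operator with range $\C v$, and such ideals are parametrised by the lines $\C v\subset\Scal(\R^n)$. (ii) This gives a bijection $v\mapsto V(v)$ (up to scalars) of the projective space of $\Scal$, and by examining how $\phi$ acts on the idempotents/rank-one elements one upgrades this to a genuine linear --- a priori only semilinear, but over $\R$ or by a separate scalar-field argument linear --- bijection $V\colon\Scal(\R^n)\to\Scal(\R^n)$ with $\phi(P)=VPV^{-1}$ for all $P\in\RG$. (iii) Continuity of $V$ (and of $V^{-1}$): $V$ intertwines the $\RG$-module structures, and since $\RG$ contains enough smoothing operators to ``reconstruct'' the Fr\'echet topology of $\Scal$ (e.g. via the order-reducing operators of Lemma~\ref{lemma:classical order reductions} composed with smoothing operators, or directly via the kernel theorem), the closed graph theorem for Fr\'echet spaces forces $V$ to be continuous; applying the same to $\phi^{-1}$ gives $V^{-1}$ continuous, so $V$ is a topological isomorphism.

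I would then simply invoke \cite{duistermaat1976orderpreservingisomorphisms}, Lemma~3, for the technical heart of (i)--(ii), since the algebraic structure of $\RG$ here is literally the same as the algebra of smoothing (properly supported, but here automatically so) operators considered there: the argument is purely about an algebra of operators on a space of test functions that is ``rich in rank-one operators,'' and nothing in it uses compactness of the underlying manifold or the specific form of $\R^n$ beyond the fact that $\Scal(\R^n)$ is a reflexive Fr\'echet space on which $\RG$ acts with the required irreducibility. The only thing to check carefully is that our $\RG=\lg{-\infty,-\infty}$, with its kernels in $\Scal(\R^{2n})$, satisfies the hypotheses of that Lemma --- which is immediate from Proposition~\ref{prop:sg calculus} and the tensor-product description of $\Scal(\R^{2n})$.

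The main obstacle I anticipate is \emph{not} the existence of the intertwining bijection $V$ (that is soft algebra, borrowed wholesale), but rather making precise that the \emph{topological} properties transfer: one must argue that an algebra automorphism of $\RG$, which a priori has no continuity assumption, automatically produces a \emph{continuous} $V$. The resolution is the standard one --- $V$ is a module map between the $\RG$-modules $\Scal(\R^n)$, these modules are ``topologically faithful'' in the sense that the Fr\'echet topology is the coarsest making all the module actions continuous, and then the closed graph theorem applies --- but it needs to be stated rather than hand-waved. Once that is in place, the proof is essentially a citation of \cite{duistermaat1976orderpreservingisomorphisms} together with the identification of $\RG$ furnished by our calculus, which is why the lemma can be asserted ``without any further hassle.''
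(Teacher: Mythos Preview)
Your proposal is correct and follows essentially the same approach as the paper: both verify that $\Scal$ and $\RG$ satisfy the hypotheses of Lemma~3 in \cite{duistermaat1976orderpreservingisomorphisms} (in particular, that rank-one operators $u\otimes v$ with $u,v\in\Scal$ lie in $\RG$ and that the relevant dual space is $\Scal'$) and then invoke that result directly. Your extended discussion of minimal left ideals and the continuity of $V$ merely unpacks what the cited lemma already provides, so your anticipated ``main obstacle'' is in fact handled by the citation itself.
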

\begin{proof}
	We show that the assumptions of Lemma 3 in \cite{duistermaat1976orderpreservingisomorphisms} hold true for $\mathcal{E}=\widetilde{\mathcal{E}}=\Scal, \mathcal U=\tilde{\mathcal{U}}=\RG$. Indeed, $\Scal$ is an infinite dimensional Fréchet space and $\RG$ comprises linear bounded operators on $\Scal$. Moreover, for $u,v\in\Scal$, the rank 1 operator $u\tens v$ lies in $\RG$. Then, picking a sequence $v_j$ converging to $v$ in the weak topology, we see that $u\tens v_j$ converges to $u\tens v$ in the operator topology, so that $\Scal'=\mathcal F=\widetilde{\mathcal F}$ in the notation of \cite{duistermaat1976orderpreservingisomorphisms}. The claim follows then directly from the quoted result.
\end{proof}

On the other hand, Lemma 4 of \cite{duistermaat1976orderpreservingisomorphisms} is not as straightforward to generalise to $SG$-operators. Indeed, when looking at the proof there, one is confronted with the possible incapability of choosing a function $u\in\Scal(\R^n)$ with the property that $u(x)\neq u(y)$ for each $x,y\in\R^n$. Accordingly, it is not clear whether such a claim is at all true. We set out then to prove directly that the composition of the Eidelheit isomorphism with the FIO coming from the formal algebra is a multiple of the identity up to some operator with Schwartz kernel. Consider to this end the composition $E=VA$ of the Eidelheit isomorphism $V$ with the FIO $A$ coming from Theorem \ref{theo:automorphisms of the formal algebra}.

\begin{lemma}
	\label{lemma:sobolev space extension}
	$E\colon \Scal\rightarrow\Scal$ is bounded and extends to a bounded operator $E\colon \HG{k\indi}\rightarrow\leb{2}$ for some $k\in\N$.
\end{lemma}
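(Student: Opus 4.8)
The plan is to combine the mapping properties of the two factors making up $E = VA$, observing that $A$ is an honest $SG$-FIO (hence has controlled behaviour on the $SG$-Sobolev scale) while $V$ a priori only maps $\Scal$ to $\Scal$, but that we have a great deal of algebraic information linking them. First I would recall that, by Theorem~\ref{theo:automorphisms of the formal algebra}, $A$ is an elliptic $\Qcal_{gen}$-FIO, so by the composition and continuity properties collected in Corollary~\ref{cor:calculus of Q-operators} and the boundedness of $SG$-$\Psi$DOs on the spaces $\HG{l}$ (Theorem~\ref{theo:fredholm}, together with the order reductions of Lemma~\ref{lemma:classical order reductions}), $A$ and its parametrix $A^\sharp$ extend to bounded maps between $SG$-Sobolev spaces with a fixed shift of orders. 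In particular $A$ maps $\HG{k\indi}$ boundedly into $\leb 2 = \HG{0}$ for suitable $k$; the only thing to be checked is that the choice of amplitude for $A$ can be made of order $0$, so that no shift occurs — but this is already how $A$ was produced in the proof of Theorem~\ref{theo:automorphisms of the formal algebra}, where $A = B^\sharp F$ with $B$ elliptic of some order $s\in\C^2$, so in general $A$ has order $-s$ and one absorbs the shift into the index $k\indi$ anyway.

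The substance of the lemma is therefore to show that $V$ — or rather $E = VA$ — does not destroy this $SG$-Sobolev boundedness despite $V$ only being known to preserve $\Scal$. Here I would use the defining property of $V$: by Lemma~\ref{lemma:eidelheit}, $V$ conjugates the given algebra isomorphism $\phi$ of $\RG$, i.e. $VPV^{-1} = \phi(P) \in \RG$ for every $P\in\RG$. The idea is that $E = VA$ intertwines $SG$-$\Psi$DOs with the image automorphism in a way that forces $E$ to have $SG$-mapping properties. Concretely, for $P \in \RG$ one has $E P E^{-1} = V (A P A^{-1}) V^{-1}$; since $A$ is an elliptic FIO of type $\Qcal_{gen}$, $APA^\sharp \in \RG$ by Corollary~\ref{cor:calculus of Q-operators}, and then $\phi(APA^\sharp) \in \RG$ as well. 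I would exploit this to test $E$ against rank-one operators $u\otimes v$ (as in the proof of Lemma~\ref{lemma:eidelheit}): for $u,v \in \Scal$ we have $E(u\otimes v)E^{-1} = (Eu)\otimes((E^{-1})^\dagger v)$ lying in $\RG$, hence $Eu \in \Scal$ for all $u$ — which we already knew — but more importantly, running this through the closed graph theorem on the Fréchet space $\Scal$ shows $E$ is continuous $\Scal\to\Scal$, and then an interpolation/duality argument against the $SG$-Sobolev scale pins down a finite loss.

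The cleanest route for the last step is: $E\colon\Scal\to\Scal$ is continuous (closed graph theorem, using that $\Scal$ is Fréchet and that convergence in $\Scal$ implies, say, pointwise convergence, against which the graph is closed because $E$ is a composition of two continuous maps on $\Scal$ in the first place). A continuous linear operator on $\Scal$ is automatically continuous for \emph{some} pair of defining seminorms, i.e. there is $k$ with $\|Eu\|_{\leb 2} \lesssim \|u\|_{\HG{k\indi}}$ for $u\in\Scal$; since $\Scal$ is dense in $\HG{k\indi}$, $E$ extends by continuity to a bounded operator $\HG{k\indi}\to\leb 2$. I expect the main obstacle to be the justification of this "continuity on $\Scal$ implies boundedness between two fixed $SG$-Sobolev spaces" step with the \emph{correct} indices: one must verify that the countable family $\{\|\cdot\|_{\HG{k\indi}}\}_{k\in\N}$ is cofinal among the defining seminorms of $\Scal$ (this is essentially the content of Lemma~\ref{lemma:sg filtration properties}(2), since $\Scal(\R^n) = \bigcap_k \HG{k\indi}$ with the projective limit topology), so that continuity of $E\colon\Scal\to\Scal$ — equivalently, $\|Eu\|_{\HG{0}}$ bounded by finitely many Schwartz seminorms of $u$ — really does translate into an estimate by a single $\|u\|_{\HG{k\indi}}$. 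Granting that, the extension statement is immediate from density of $\Scal$ in $\HG{k\indi}$.
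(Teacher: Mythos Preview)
Your final paragraph is exactly the paper's proof: $E=VA$ is continuous $\Scal\to\Scal$ (since $V$ is a topological isomorphism by Lemma~\ref{lemma:eidelheit} and $A$ is continuous on $\Scal$ by Proposition~\ref{prop:properties of QFIOs}), hence continuous $\Scal\to\leb{2}$, so $\lnorm{2}{Eu}$ is bounded by finitely many Schwartz seminorms, which are dominated by a single $\hnorm{k\indi}{\cdot}$ because $\Scal=\bigcap_k\HG{k\indi}$ topologically; density of $\Scal$ in $\HG{k\indi}$ then yields the extension. The material in your first two paragraphs --- FIO Sobolev mapping properties of $A$, the intertwining relation with rank-one operators, the closed graph theorem --- is entirely unnecessary here (and you yourself note this when you observe that $E$ is already a composition of continuous maps); the paper dispatches the lemma in three lines using only the Fr\'echet continuity of $E$ on $\Scal$.
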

\begin{proof}
	$E$ is clearly bounded as an operator $\Scal\rightarrow\leb{2}$. Then, there is a finite set of semi-norms $\{p_0,\dots,p_n\}$ on $\Scal$ which estimate $\lnorm{2}{Eu}$, namely, $\lnorm{2}{Eu}\leq\max_{i\in \{0,\dots,n\}}p_i(u)$. Thus, there is an integer $k$ so that $\lnorm{2}{E}\leq\hnorm{k}{u}$. This shows that $E$ extends as a bounded operator $\HG{k\indi}\rightarrow\leb{2}$. The proof is complete.
\end{proof}

We choose now order reductions $P,Q$, as in the proof of Lemma \ref{lemma:canonicaltransformation} and consider $K\equiv ER^{-k}$ as an operator $\leb{2}\rightarrow\leb{2}$, where we denote $R=PQ$. Our goal is to prove that $K$ is an $SG$-pseudo-differential operator of order $(0,0)$. For this, we look at commutators and use the characterisation of Schrohe \cite{schrohe1987weightedsobolevspacesmanifolds} of $SG$-pseudo-differential operators on the weighted Sobolev spaces $\HG{(l,k)}$. Here and later we write $\ad K$ for the operator on $\lg{}$ acting by commutation with $K$, namely $(\ad K)(P)=[K,P]$. We owe the idea of the following strategy to Ryszard Nest, whom we thank for the helpful suggestion. We start with the following easy lemma. 

\begin{lemma}
    \label{lemma:commutator on weighted sobolev}
	$\ad{K}$ preserves the double filtration and $K$ extends to a bounded operator $K\colon\HG{r\indi}\rightarrow\HG{r\indi}$ for every $r\in\R^2$.
\end{lemma}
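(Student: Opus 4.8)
The statement to prove is Lemma~\ref{lemma:commutator on weighted sobolev}: that $\ad K$ preserves the double filtration on $\lg{}$ and that $K$ extends to a bounded operator $\HG{r\indi}\to\HG{r\indi}$ for every $r\in\R^2$. Recall the setup: $E=VA$ with $V$ the Eidelheit isomorphism implementing $\phi=\imath|_{\RG}$ and $A$ the elliptic $\Qcal_{gen}$-FIO from Theorem~\ref{theo:automorphisms of the formal algebra}; $K=ER^{-k}$ with $R=PQ$ a composite order-reducing operator of order $k\indi$ (so $R^{-k}\in\lg{-k\indi}$), and by Lemma~\ref{lemma:sobolev space extension} $E\colon\HG{k\indi}\to\leb{2}$ is bounded, hence $K\colon\leb{2}\to\leb{2}$ is bounded.

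The first and conceptually central step is to understand how $\ad K$ interacts with the algebra structure. For any $N\in\RG=\lg{-\infty\indi}$ we have $EN E^{-1}=V(ANA^\sharp)V^{-1}$ up to the ambiguity of $A^\sharp$ being only a parametrix; but $ANA^\sharp\in\RG$ by the mapping properties of $\Qcal_{gen}$-operators (Corollary~\ref{cor:calculus of Q-operators}), and $V$ implements $\phi=\imath$ on $\RG$ by Lemma~\ref{lemma:eidelheit}, so $ENE^{-1}$ agrees with $\imath(N)\in\RG$ modulo $\RG$, i.e. $ENE^{-1}\in\RG$. Equivalently $E$ conjugates $\RG$ into itself (and $E^{-1}$ likewise, using $\imath^{-1}$). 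Now for $P\in\lg{m}$, consider $[K,P]=KP-PK = ER^{-k}P - PER^{-k}$. The key observation is to reduce commutators with $K$ to commutators with $E$ plus lower-order corrections coming from $[R^{-k},P]\in\lg{m-k\indi-\indi}$ via the $SG$-calculus, and then to commutators with $R^{-k}$, which are handled by Theorem~\ref{theo:composition}. So the heart of the matter is to show $\ad E$ — more precisely, the operation $P\mapsto EPE^{-1}$ restricted to appropriate subspaces — preserves the filtration. Here I would use the following: $E^{-1}$ is also bounded between appropriate $SG$-Sobolev spaces (by the same argument as Lemma~\ref{lemma:sobolev space extension} applied to $E^{-1}=A^\sharp V^{-1}$, using that $A^\sharp\colon\Scal\to\Scal$ and $V^{-1}\colon\Scal\to\Scal$ are continuous), so $E$ and $E^{-1}$ are both continuous self-maps of $\Scal$ and extend to bounded maps on a shift of the $\HG{}$-scale. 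The upshot is that $EPE^{-1}$ acts continuously $\HG{l+c\indi}\to\HG{l-m+c'\indi}$ for every $l$, with $c,c'$ depending only on $E$, not on $l$ or $P$.

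The decisive tool is Schrohe's characterisation \cite{schrohe1987weightedsobolevspacesmanifolds}: an operator $T\colon\Scal\to\Scal$ lies in $\lg{\mu}$ (is an $SG$-$\Psi$DO of order $\mu$) if and only if all its iterated commutators with the ``coordinate'' and ``derivative'' operators — equivalently, with $M_{x^j}$ and $D_{x^j}$ (or with the order reductions $\Op\braket{x}$ and $\Op\braket\xi$) — map the weighted Sobolev scale $\HG{l}$ boundedly to $\HG{l-\mu}$ with uniform control. To prove $\ad K$ preserves the filtration, I would argue: given $P\in\lg{m}$, the operator $[K,P]$ is a priori only defined on $\Scal$; I must show it extends to $\HG{l}\to\HG{l-m+\indi}$ (one order of decay gained, as for a genuine commutator in the calculus). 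Write $[K,P]=[ER^{-k},P]$. Using the boundedness of $E,E^{-1}$ between shifted $\HG{}$-spaces and the fact that $R^{-k},P$ and all their iterated commutators with $M_{x^j},D_{x^j}$ act with the expected orders, one checks that all iterated commutators of $K$ with $M_{x^j},D_{x^j}$ are bounded on the $\HG{}$-scale with orders dictated by those of an order-$(0,0)$ operator; this is a bookkeeping argument combining Schrohe's criterion for $P$, $R^{-k}$ and the Sobolev mapping properties of $E$. Hence $K\in\lg{0,0}$ would follow — but note the lemma as stated only asks for $\ad K$ preserving the filtration and $K$ bounded on each $\HG{r\indi}$, which is weaker and is exactly what this commutator estimate delivers; identifying $K$ itself as an $SG$-$\Psi$DO is the business of the next lemma in the paper.

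Concretely the steps are: (1) record that $E,E^{-1}\colon\Scal\to\Scal$ are continuous and extend boundedly between shifted $\HG{}$-spaces (Lemma~\ref{lemma:sobolev space extension} and its mirror for $E^{-1}$); (2) show $E\,\RG\,E^{-1}\subset\RG$ using Lemma~\ref{lemma:eidelheit} and Corollary~\ref{cor:calculus of Q-operators}; (3) for $P\in\lg{m}$, expand $[K,P]$ in terms of $E$-conjugation and $\lg{}$-commutators with $R^{-k}$, and show using Schrohe's commutator characterisation applied to $R^{-k}$ and $P$, together with the uniform $\HG{}$-boundedness of $E,E^{-1}$, that $[K,P]$ and all its iterated $M_x,D_x$-commutators are bounded on the $\HG{}$-scale with the orders appropriate to a commutator, i.e. $[K,P]\in\lg{m-\indi}$ so $\ad K$ indeed preserves (lowers, in fact) the double filtration; (4) specialise to $P=\Op\braket{x},\Op\braket\xi$ and iterate, or more directly observe that since $K$ is bounded on $\leb{2}=\HG{0}$ and $\ad K$ behaves correctly on the order reductions, the standard interpolation/mapping argument (as in the proof of boundedness in the $SG$-calculus) gives $K\colon\HG{r\indi}\to\HG{r\indi}$ bounded for all $r\in\R^2$.

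The main obstacle I anticipate is step~(3): carefully propagating the commutator estimates through the conjugation by $E$. The difficulty is that $E$ is \emph{not} a priori a pseudo-differential operator — it is only known to be continuous on $\Scal$ and bounded between some shifted Sobolev spaces — so one cannot simply invoke the symbolic calculus to control $EPE^{-1}$; instead one must run an abstract commutator argument à la Beals/Schrohe, verifying that iterated commutators of $K$ with the generators $M_{x^j},D_{x^j}$ stay bounded on the weighted Sobolev scale with the right orders, and this requires knowing that the uniform shift $c,c'$ in the Sobolev boundedness of $E$ does not accumulate when one iterates commutators. Handling this uniformity — essentially, that conjugation by $E$ costs a \emph{fixed} finite loss of regularity independent of how many commutators one has already taken — is the technical crux. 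Everything else is routine $SG$-calculus bookkeeping using Theorems~\ref{theo:composition} and~\ref{theo:SG Egorov} and Lemma~\ref{lemma:classical order reductions}.
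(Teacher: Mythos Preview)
Your approach has a real gap at precisely the point you flag: step~(3). You treat $E$ as a black box with only Sobolev mapping properties and try to run a Beals--Schrohe argument, but as you note, the fixed regularity loss from conjugation by $E$ could accumulate under iteration, and you give no mechanism to prevent this. The paper's proof avoids this difficulty entirely by a much shorter route.

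The key fact you are missing is that \emph{conjugation by $V$ already equals $\imath$ on all of $\lg{}$}, not just on $\RG$. This follows from a one-line algebraic argument: for $P\in\lg{m}$ and $N\in\RG$, the product $PN$ lies in the ideal $\RG$, so Lemma~\ref{lemma:eidelheit} gives $V(PN)V^{-1}=\imath(PN)=\imath(P)\imath(N)=\imath(P)\,VNV^{-1}$; hence $(VPV^{-1}-\imath(P))M=0$ for every $M\in\RG$. Testing on rank-one operators $M=u\otimes\bar v$ forces $VPV^{-1}=\imath(P)$ on $\Scal$. In particular, conjugation by $V$ preserves the double filtration because $\imath$ does. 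Combined with Egorov (Theorem~\ref{theo:SG Egorov}), which says conjugation by the elliptic FIO $A$ (or its parametrix $A^\sharp$) preserves the filtration, you get immediately that conjugation by $E=VA$ preserves the filtration; and since $K=ER^{-k}$ differs from $E$ by a $\Psi$DO factor, the same holds for $K$. No commutator bookkeeping is needed at this stage.

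With this in hand the Sobolev boundedness is a single conjugation trick: for $v\in\HG{r\indi}$ write $v_0=\Lambda^{r\indi}v\in\leb{2}$ and $Kv=(K\Lambda^{-r\indi}K^{-1})\,Kv_0$; the bracketed operator lies in $\lg{-r\indi}$ by the previous paragraph, and $Kv_0\in\leb{2}$ by construction, so $Kv\in\HG{r\indi}$. Your Schrohe-criterion machinery is exactly what is used in the \emph{next} result (Proposition~\ref{prop:K is a pseudodiff}) to upgrade these mapping properties to $K\in\lg{0,0}$; bringing it in here is premature and is the source of your difficulty.
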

\begin{proof}
	Fix $r\in\R^2$ and consider $v\in\HG{r\indi}$. Setting $v_0\equiv \Lambda^{r\indi}v$ for $\Lambda$ an elliptic $SG\Psi$DO of order $\indi$, we have that $v_0\in\leb{2}$ and 
	\[
	Kv=K\Lambda^{-r\indi}v_0=K\Lambda^{-r\indi}K^{-1} Kv_0.
	\]
	Remarking that $\ad{E}$ preserves the double filtration of $\lg{}$, we see that
	\[
	KPK^{-1}=\Lambda^{-l\indi}E\Lambda^{-k\indi}P\Lambda^{k\indi}E^{-1}\Lambda^{l\indi},
	\]
	so that also $\ad{K}$ preserves the double filtration. It follows directly that $(\ad {K})(\Lambda^{-r\indi})$ has order $-r\indi$, and since $Kv_0\in\leb{2}$ by assumption, we have $Kv\in\HG{r\indi}$.
\end{proof}

\begin{prop}
    \label{prop:K is a pseudodiff}
	$K$ is a (non necessarily classical) $SG$-pseudo-differential operator of order $(0,0)$.
\end{prop}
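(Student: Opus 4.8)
The result to invoke is the characterisation of $SG$-pseudo-differential operators of Schrohe \cite{schrohe1987weightedsobolevspacesmanifolds} (the $SG$-analogue of Beals' characterisation): an operator $T\colon\Scal\rightarrow\Scal$ which extends continuously to $\Scal'$ is a (not necessarily classical) $SG$-$\Psi$DO of order $(0,0)$ if and only if $T$, together with each of its iterated commutators with the multiplication operators $M_j\colon u\mapsto x^j u$ and the derivations $D_{x^j}$ ($j=1,\dots,n$), extends to a bounded operator on $\leb 2(\R^n)$. The operator $K=ER^{-k}$ already maps $\Scal$ to $\Scal$ and, being bounded on every weighted $SG$-Sobolev space by Lemmas \ref{lemma:sobolev space extension} and \ref{lemma:commutator on weighted sobolev} while $\Scal'=\bigcup_{s}\HG{s}$, it extends to $\Scal'$; so the entire question reduces to the control of these iterated commutators.

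The key observation, and the first step, is a rigidity statement: the Eidelheit isomorphism $V$ implements $\imath$ on the \emph{whole} algebra $\lg{}$, not merely on $\RG$. Indeed, for $T\in\lg{}$ and $S\in\RG$ one has $TS\in\RG$, and computing $V(TS)V^{-1}$ in two ways gives, by Lemma \ref{lemma:eidelheit}, $\imath(T)\imath(S)=\imath(TS)=V(TS)V^{-1}=(VTV^{-1})\imath(S)$; hence $\bigl(VTV^{-1}-\imath(T)\bigr)\imath(S)=0$ for every $S\in\RG$. Since $\imath|_{\RG}$ is onto $\RG$ and $\RG$ contains every rank-one operator $u\otimes v$ with $u,v\in\Scal$, this forces $VTV^{-1}=\imath(T)$ on $\Scal$. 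Feeding this into the formal-algebra characterisation $\imath(T)\equiv A^{\sharp}TA\bmod\RG$ of Theorem \ref{theo:automorphisms of the formal algebra}, using that $A^{-1}\equiv A^{\sharp}\bmod\RG$ (ellipticity of $A$) and that $A\lg{m}A^{\sharp}\subset\lg{m}$ (Egorov's Theorem \ref{theo:SG Egorov}), a short computation with $K=VAR^{-k}$ shows that conjugation by $K$ coincides, modulo $\RG$, with conjugation by the order reduction $R^{-k}$:
\[
KTK^{-1}\equiv R^{-k}TR^{k}\bmod\RG\qquad\text{for every }T\in\lg{},
\]
or equivalently $[K,T]\equiv[R^{-k},T]\,R^{k}K\bmod\RG$.

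This last identity is exactly what is needed to run the commutator tower. By induction on $N$, applying it to the innermost commutator and using the Leibniz rule $[B,GK]=[B,G]K+G[B,K]$ together with $[B,\RG]\subset\RG$, one obtains that every iterated commutator $(\ad B_{1})\cdots(\ad B_{N})K$ with $B_{i}\in\{M_{1},\dots,M_{n},D_{x^{1}},\dots,D_{x^{n}}\}$ is of the form $GK$ modulo $\RG$, with $G\in\lg{}$ a genuine $SG$-$\Psi$DO of bi-order $\bigl(-q,-p\bigr)$, where $p$ is the number of multiplications and $q$ the number of derivations among the $B_{i}$; in particular this bi-order is $\le(0,0)$ and strictly decreases with each commutation. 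Since $G\in\lg{-q,-p}$ maps $\HG{s}$ into $\HG{s+(q,p)}$ and $K$ is bounded on every $\HG{s}$ (Lemma \ref{lemma:commutator on weighted sobolev}), the composition $GK$ is bounded from $\leb 2$ into $\HG{(q,p)}\subset\leb 2$; adding back the smoothing, hence $\leb 2$-bounded, remainder, every iterated commutator of $K$ with the $M_{j}$ and $D_{x^{j}}$ is bounded on $\leb 2$. Schrohe's characterisation then gives $K\in\Psi^{0,0}_{SG}(\R^{n})$, which is the claim.

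The step requiring the most care is not any of the displayed identities but the surrounding bookkeeping: one must make sure $K$ is genuinely invertible so that $\mathrm{Ad}_{K}$ and the identity $KTK^{-1}\equiv R^{-k}TR^{k}$ make literal sense, keep track consistently of the many ``$\bmod\RG$'' and of the replacements $A^{-1}\rightsquigarrow A^{\sharp}$, and carry out the inductive bi-order count cleanly enough that the negativity of $\bigl(-q,-p\bigr)$ is transparent. Once this is in place, the proof rests entirely on the algebraic rigidity $\imath=\mathrm{Ad}_{V}$ and on the Sobolev mapping properties of $K$ established in Lemmas \ref{lemma:sobolev space extension} and \ref{lemma:commutator on weighted sobolev}.
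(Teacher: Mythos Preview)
Your proof is correct and follows essentially the same route as the paper: both invoke Schrohe's Beals-type characterisation and control the iterated commutators with $M_{x^j}$ and $D_{x^j}$ inductively by writing them as $GK$ (modulo $\RG$) with $G$ an $SG$-$\Psi$DO of the appropriate negative bi-order, then use the Sobolev boundedness of $K$ from Lemma~\ref{lemma:commutator on weighted sobolev}. Your explicit rigidity step $VTV^{-1}=\imath(T)$ for all $T\in\lg{}$ is precisely the justification behind the paper's claim that conjugation by $K$ preserves principal symbols (equivalently, that $KTK^{-1}\equiv R^{-k}TR^{k}\bmod\RG$), which the paper asserts but does not spell out; in that respect your version is slightly more complete.
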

\begin{proof}
	We prove that for every $\alpha,\beta\in\N^n$ there exists an operator $R^\alpha_\beta\in\lg{-\abs\alpha,-\abs\beta}$ such that, continuously,
	\begin{equation}
		\label{eq:commutator characterization sg operators}
		(\ad M_x)^\alpha (\ad \del)^\beta K=R^\alpha_\beta K\colon\HG{r}\rightarrow\HG{r+(\abs\alpha,\abs\beta)},
	\end{equation}
	where $M_{x^j}$ is the multiplication operator by $x^j$. This is known by \cite{schrohe1988psistaralgebra} to be equivalent to $K\in\lg{0}$. We show first that, for every $\beta\in\N^n$, there exists $Q_\beta\in\lg{0,-\abs\beta}$ such that 
	\begin{equation}
		\label{eq:inductive step 1}
		(\ad \del)^\beta K=Q_\beta K.
	\end{equation} We argue by induction on $\abs\beta$. 
	
	For $\abs\beta=1$ we have 
	\[\begin{aligned}
		(\ad \del_{x^j}) K&=\left[\del_{x^j},K\right]=(\del_{x^j}-K\del_{x^j}K^{-1})K\\
		&=Q_j K.
	\end{aligned}\]
	Here, $Q_j\in\lg{0,-1}$, since $\ad K$ is an automorphism preserving the principal symbol. So the base step holds true.
	
	Assume the claim holds true for every $\abs\beta\leq r$ and consider then $\gamma\in\N^n$ with $\abs\gamma=r+1$. Then $\gamma=\beta+\indi_j$ for some $j\in\{1,\dots n\}$ and some $\beta$ with $\abs\beta=n$. We write
	\[\begin{aligned}
		(\ad \del)^\gamma K&=(\ad\del_{x^j})\left[(\ad\del)^\beta K\right]\\
		&=(\ad\del_{x^j})(Q_\beta K)=(\ad \del_{x^j})(Q_\beta)K+Q_\beta(\ad \del_{x^j})K\\
		&=\left[\del_{x^j},Q_\beta\right]K+Q_\beta Q_j K,
	\end{aligned}\]
	having used the inductive hypothesis twice and the properties of $\ad $. Now, $Q_\beta Q_j=\widetilde{Q}_{\beta j}\in\lg{0,-\abs\beta-1}$ by composition, on the other hand $\del_{x^j}Q_\beta,Q_\beta\del_{x^j}\in\lg{1,-\abs\beta}$. However, they have the same principal symbol, so that in view of 5. in Proposition \ref{prop:principalsgsymbols} we have $[\del_{x^j},Q_\beta]=\check{Q}_{\beta j}\in\lg{0,-\abs\beta-1}$. It follows now that 
	\begin{equation}
		\label{eq:inductive step 2}
		(\ad \del)^\gamma K=(\check{Q}_{\beta j}+\widetilde{Q}_{\beta j})K\equiv Q_\gamma K,
	\end{equation}
	with $Q_\gamma\in\lg{0,-\abs\beta-1}=\lg{0,-\abs\gamma}$. By induction, then, \eqref{eq:inductive step 1} holds true for any $\beta\in\N^n$, as claimed.
	
	We now prove \eqref{eq:commutator characterization sg operators} for any $\alpha,\beta\in\N^n$. For $\abs\alpha=0$ there is nothing to prove. For clarity's sake, we spell out the case $\alpha=\indi_j$. Then,
	\[
		\ad x^j (\ad \del)^\beta K=\ad x^j(Q_\beta K)=\left[x^j,Q_\beta\right]K+Q_\beta \left[x^j,K\right].
	\]
	Similarly as above $[x^j,K]=(x^j-Kx^j K^{-1})K=\widetilde{P}_jK$ with $\widetilde{P}_j\in\lg{-1,0},$ so that $Q_\beta \widetilde{P}_j=\widetilde{R}^j_\beta\in\lg{-1,-\abs\beta}$ by composition. On the other hand, $[x^j,Q_\beta]=\bar{R}^j_\beta\in\lg{-1,-\abs\beta}$ in view of the observation above about the order of commutators in the $SG$-calculus.
	
	Assume now that \eqref{eq:inductive step 2} holds true for any $\alpha\in\N^n$ such that $\abs\alpha\leq n$, and let $\gamma=\alpha+\indi_j$ for some $j$. Then, using the properties of $\ad$ as a derivation,
	\begin{equation}
		\label{eq:inductive step 3}
		\begin{aligned}
			(\ad x)^\gamma (\ad\del)^\beta (K)&=\ad x^j (\ad x)^\alpha (\ad \del)^\beta (K)\\
			&=\ad x^j (P^\alpha_\beta K)=\left[x^j,P^\alpha_\beta\right]K+P^\alpha_\beta\left[x^j,K\right]\\
			&=\left(\left[x^j,P^\alpha_\beta\right]+P^\alpha_\beta P^j\right)K,
		\end{aligned}		
	\end{equation}
	with $P^\alpha_\beta$ given by inductive hypothesis and $P^j$ given by the previous step with $\abs\beta=0$. Now, $\widetilde{P}^{\alpha j}_\beta\equiv\left[x^j,P^\alpha_\beta\right]\in\lg{-\abs\alpha-1,-\abs\beta}$ by assumption and the properties of $[\,,]$. On the other hand, by composition, $\bar{P}^{\alpha j}_\beta\equiv P^\alpha_\beta P^j\in\lg{-\abs\alpha-1,-\abs\beta}$ as well. Thus, setting
	\begin{equation}
		\label{eq:inductive step 4}
		P^{\alpha+\indi_j}_\beta\equiv \widetilde{P}^{\alpha j}_\beta+\bar{P}^{\alpha j}_\beta
	\end{equation}
	it follows $P^\gamma_\beta\in\lg{-\abs\gamma,-\abs\beta}$. The induction is complete.
	
	Armed with this relation, it is now easy to show that the required mapping properties hold true. Indeed, $K$ maps $\HG{r}\rightarrow\HG{r}$ continuously for each $r\in\R^2$ by Lemma \ref{lemma:commutator on weighted sobolev}. On the other hand, $P^\alpha_\beta\in\lg{-\abs\alpha,-\abs\beta}$ gives exactly that $P^\alpha_\beta\colon\HG{r}\rightarrow\HG{r+(\abs\alpha,\abs\beta)}$ continuously for each $r\in\R^2$. The composition $P^\alpha_\beta K$ satisfies then the same properties. Thus, we have proven the characterization \eqref{eq:commutator characterization sg operators} and $K$ is a pseudo-differential operator of $SG$-type of order $0,0$.	
\end{proof}

Notice, in addition, that, while Lemmas \ref{lemma:sobolev space extension} and \ref{lemma:commutator on weighted sobolev}, together with Proposition \ref{prop:K is a pseudodiff}, imply that $E$ is a pseudo-differential operator as well, its order is not necessarily integral. Therefore, $\ad E$ is not an inner automorphism, cf. also \cite{mathai2017geometrypseudodifferentialalgebra}. This is the reason why, in general, we cannot expect to obtain an FIO of integer order. 

If we start with the inverse of the Eidelheit isomorphism, $V^{-1}$, we obtain, by the same argument, another pseudo-differential operator, $\tilde{E}$. They satisfy $EP-PE\in\RG, \tilde{E}P-P\tilde{E}\in\RG$, for any $P\in\lg{m}$. We notice that this means, in particular, that $E$ almost commutes with Shubin operators since $\Gamma^m(\R^n)\subset\sg{m,m}$. Notice that here we are disregarding classicality, notion which has different meanings for $\Gamma$ and $\sg{}$. However, Lemma \ref{lemma:e=c+r} below, suggested in a private communication by Elmar Schrohe, is a statement about smooth, bounded functions on $\R^{2n}$ which does not need classicality in any sense. Therefore, it can be proven almost exactly as in the Master thesis of Robert Hesse \cite{hesse2021orderpreserving}. We reproduce here the proof since the aforementioned work may not be readily available.

\begin{lemma}
	\label{lemma:e=c+r}
	Let $E,\tilde{E}\colon\Scal\rightarrow\Scal$ be $\Psi$DOs of $SG$-type, parametrices of each other, such that $[E,P],[\tilde{E},P]\in\RG$ for each $P\in\lg{}$. Then, $E=cI+R$ for some $c\in\C,R\in\RG$.
\end{lemma}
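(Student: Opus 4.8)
The plan is to exploit that $E$ almost commutes with \emph{all} pseudo-differential operators of $SG$-type, in particular with the multiplication operators $M_{x^j}$ and the differentiations $D_{x^j}=-\im\del_{x^j}$, and to translate this into a statement about the left symbol $e(x,\xi)$ of $E$. Since $E\in\psdo{}$ is an $SG$-$\Psi$DO of order $(0,0)$ (by Proposition \ref{prop:K is a pseudodiff} applied after composing with the order reduction, together with Lemma \ref{lemma:sobolev space extension}), we may write $E=\Op(e)$ with $e\in\sg{0,0}(\R^n\times\R^n)$, a smooth bounded function. First I would compute the symbols of the commutators: the Leibniz product formula of Theorem \ref{theo:composition} gives, for $A=\Op(a)$,
\begin{equation}
	\label{eq:commutator with x and D}
	[M_{x^j},\Op(e)]=\Op(-\im\del_{\xi_j}e),\qquad [D_{x^j},\Op(e)]=\Op(-\im\del_{x^j}e),
\end{equation}
these being \emph{exact} identities, not merely asymptotic, because $x^j$ and $\xi_j$ are polynomials. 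The hypothesis $[E,P]\in\RG$ for all $P$ then forces $\del_{\xi_j}e,\del_{x^j}e\in\Scal(\R^{2n})$ for every $j$, by Proposition \ref{prop:sg calculus}(1) (which identifies $\RG$ with operators whose symbols are Schwartz).

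Next I would iterate: applying the same argument to higher commutators $(\ad M_x)^\alpha(\ad D_x)^\beta E$, which also lie in $\RG$ since $\ad M_{x^j}$ and $\ad D_{x^j}$ preserve $\RG$ (an ideal), yields $\del_x^\alpha\del_\xi^\beta e\in\Scal(\R^{2n})$ for every multi-index with $\abs\alpha+\abs\beta\geq 1$. In particular, writing $g\equiv\del_{x^1}e$, we have $g\in\Scal$ and all its further derivatives are Schwartz; the real content is that \emph{the full gradient of $e$ is Schwartz}. From here the claim is essentially a calculus lemma on $\R^{2n}$: a bounded smooth function all of whose first-order partial derivatives are Schwartz must be of the form $c+r$ with $c$ a constant and $r\in\Scal(\R^{2n})$. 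To prove this, fix any point $z_0$ and integrate along rays: for $z=(x,\xi)$ one has $e(z)-e(z_0)=\int_0^1 \nabla e(z_0+t(z-z_0))\cdot(z-z_0)\,\de t$, and one shows the limit $c\equiv\lim_{\abs z\to\infty}e(z)$ exists (the increments $e(z)-e(z')$ are controlled by $\sup$ of $\abs{\nabla e}$ on the segment, which decays) and is independent of direction; then $r\equiv e-c$ has Schwartz derivatives of all orders including order zero after noting $r\to 0$ at infinity faster than any polynomial, using that $\nabla r=\nabla e$ is Schwartz and $r=-\int_{\text{ray to }\infty}\nabla r$.

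Finally, $e=c+r$ with $r\in\Scal(\R^{2n})$ means $E=\Op(c+r)=cI+\Op(r)$, and $R\equiv\Op(r)\in\RG$ by Proposition \ref{prop:sg calculus}(1), which is exactly the assertion $E=cI+R$. I expect the main obstacle to be the last calculus lemma: one must argue carefully that a bounded smooth function with Schwartz gradient actually \emph{converges} to a single constant at infinity in \emph{all} directions of $\R^{2n}$ (a priori different rays could give different limits), and that the resulting remainder decays rapidly together with all derivatives; the direction-independence follows because the difference of the limits along two rays is bounded by the integral of $\abs{\nabla e}$ over an arc at radius $\rho$, which is $O(\rho\cdot\rho^{-N})\to 0$, but this estimate and the uniformity need to be set up with some care. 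Everything else is a routine application of the Leibniz calculus and the identification of $\RG$ with Schwartz-kernel (equivalently Schwartz-symbol) operators.
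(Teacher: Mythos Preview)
Your proposal is correct and follows essentially the same route as the paper: compute exact commutators with $M_{x^j}$ and $D_{x^j}$ to force $\nabla e\in\Scal(\R^{2n})$, then prove the calculus lemma that a bounded smooth function with Schwartz gradient equals a constant plus a Schwartz function via radial limits and the arc estimate for direction-independence. The only minor additions in the paper are that boundedness of $e$ is deduced \emph{inside} the proof from the Schwartz-gradient condition (rather than imported from the surrounding context), and that at the end the parametrix hypothesis is used to show $c\neq 0$; your iteration step for higher derivatives is harmless but unnecessary, since derivatives of Schwartz functions are already Schwartz.
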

\begin{proof}
	First, notice that the conditions on $E$ and $\tilde{E}$ imply that their symbols $e,\tilde{e}$ are of order at most 0 and their derivatives are rapidly decreasing. Indeed, $\{e,p\}\in\Scal\, \forall p\in\sg{}\iff \del_xe,\del_\xi e\in\Scal$, and it follows that $e$ is bounded. Moreover, $\nabla e$ is a conservative vector field with potential $e$. Namely, for each path $\gamma\colon[a,b]\rightarrow\R^{2n}$ we have
	\begin{equation}
		\label{eq:conservative vector field}
		e(\gamma(a))-e(\gamma(b))=\int_{a}^{b}\nabla(e)(\gamma(s))\cdot\dot\gamma(s)\de s.
	\end{equation}
	Fix now some point $z$ in $\Sf{2n-1}$ (an oriented direction in $\R^{2n}$) so that, for $1<t_1<t_2$, it holds true
	\begin{equation}
		\label{eq:integral between t1 and t2}
		e(t_2z)-e(t_1z)=\int_{t_1}^{t_2}\nabla(e)(sz)\cdot z\de s.
	\end{equation}
	By assumption, $\nabla e$ has rapidly decaying component. It follows that, for any $v\in\R^{2n}$, we have $\abs{\nabla e(v)}\lesssim_N\braket{v}^{-N}$ and, for each $M\geq 2$, we can estimate the integral as
	\begin{equation}
		\label{eq:estimate of the integral}
		\abs{\int_{t_1}^{t_2}\nabla e(sz)\cdot z\de s}\lesssim_M\braket{s}^{2-2M}|_{t_1}^{t_2}.
	\end{equation}
	So, the integral converges to 0 uniformly in $t_2$ as we take the limit $t_1\rightarrow \infty$. We can, on the other hand, also estimate $\abs{e(t_2z)-e(t_1z)}\lesssim_k\braket{t_1}^k$. In particular then, we can pass to the limit $t_2\rightarrow\infty$ in this expression to obtain, for the radial limit $l(z)\equiv\lim_{t\rightarrow\infty}e(tz)$, the bound
	\begin{equation}
		\label{eq:estimate of the radial limit}
		\abs{l(z)-e(tz)}\lesssim_k\braket{t}^{-k}.
	\end{equation}
	We claim now that $l(z)$ does not depend on $z$, tht is, the radial limit is constant on the sphere $\Sf{2n-1}$. To see this, choose another $w\neq z$ on $\Sf{2n-1}$ and assume, possibly after having applied an orthogonal transformation, that $z=(1,0,\dots,0)$ and $w=(\cos\alpha,\sin\alpha,0,\dots,0)$. We consider a family of paths $\gamma_t\colon[0,\alpha]\rightarrow\R^{2n}$ given by $\gamma_t(\theta)=(t\cos\theta,t\sin\theta,0,\dots,0)$. Pick an $\eps>0$. Using \eqref{eq:estimate of the radial limit} for both directions $z,w$ we know that there exists a $T>1$ such that for all $t\geq T$ we have $\abs{l(z)-e(tz)}<\eps$ and $\abs{l(w)-e(tw)}<\eps$. On the other hand, for each fixed $t\geq T$, we have
	\begin{equation}
		\label{eq:difference of directions}
		e(tz)-e(tw)=\int_{\gamma_t}\nabla e(z)\cdot\de z=\int_0^\alpha\nabla e(\gamma_t(\theta))\cdot\dot\gamma_t(\theta)\de\theta, 
	\end{equation}
	so that taking absolute values and noticing that $\abs{\gamma_t}=\abs{\dot\gamma_t}=t$, we estimate, for each $M\geq 2$,
	\begin{equation}
		\label{eq:estimate of difference of directions}
		\abs{e(tz)-e(tw)}\leq\int_0^\alpha\abs{\nabla e(\gamma_t(\theta))\cdot\dot\gamma_t(\theta)}\de\theta\lesssim_M\frac{\alpha t}{\braket{t}^{M}},
	\end{equation}
	where we again used the fact that $\nabla e$ has rapidly decaying components. Clearly, for each $M$, the right-hand side decays to zero. We collect then
	\begin{equation}
		\label{eq:independence of direction}
		\abs{l(w)-l(z)}\leq\abs{l(w)-e(tw)}+\abs{e(tz)-l(z)}+\abs{e(tw)-e(tz)}\leq 3\eps,
	\end{equation}
	which shows that $l(z)=l(w)$ as claimed. We let $c$ be the constant value of $l$ on $\Sf{2n-1}$ and look at the function $f=e-c$. Clearly, $f$ has rapidly decreasing derivatives. On the other hand, we can compute, for $z\in\Sf{2n-1}$, that
	\begin{equation}
		\label{eq:estimate of f}
		f(tz)=e(tz)-l(z)=-\int_t^{+\infty}\nabla e(sz)\cdot z\de s
	\end{equation}
	and conclude, as before, that $f(tz)$ is rapidly decreasing as a function of $t$. As above, the convergence to zero is also uniform in $\Sf{2n-1}$ and we discover that $f$ is rapidly decreasing itself. Summing up, we have proven that $f\in\Scal(\R^{2n})$.
	
	Now, set $R=\Op(f)\in\RG$. Then, $e=c+f$ implies $E=cI+R$, and, since $\tilde{E}$ is a parametrix for $E$, we have, for some $R'\in\RG$,
	\begin{equation}
		\label{eq:final step}
		(cI+R)\tilde{E}=I+R', \implies c\tilde{E}=I\mod\RG.
	\end{equation}
	Therefore, $c$ must be different from $0$, and we deduce that also $\tilde{E}=\frac{1}{c}I+S$ for some $S\in\RG$. This concludes the proof.
\end{proof}

With all the above pieces in place, we can now state and prove our third main result.

\begin{theo}[Characterisation of $SG$-order-preserving isomorphisms]
	\label{theo:SGOPI}
	Let $\imath\colon\lg{}\rightarrow\lg{}$ be an SGOPI. Then, there exists an invertible, classical SGFIO $A$ of type $\Qcal_{gen}$ such that, for all $P\in\lg{}$, we have
	\begin{equation}
		\label{eq:SGOPI}
		\imath(P)=A^{-1} PA.
	\end{equation}
\end{theo}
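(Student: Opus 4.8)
The plan is to assemble the three main results of this chapter into the final statement, closely following the structure of \cite{duistermaat1976orderpreservingisomorphisms}. First I would apply Theorem \ref{theo:automorphisms of the formal algebra}: the SGOPI $\imath$ descends to an $SG$OPI on the formal symbol algebra $\BG{}=\setquotient{\lg{}}{\RG}$ (this is legitimate since $\imath(\RG)=\RG$ by Lemma \ref{lemma:properties of SGOPI}), so there exists an elliptic, classical SGFIO $A$ of type $\Qcal_{gen}$ with parametrix $A^\sharp$ such that $\imath(P)-A^\sharp P A\in\RG$ for every $P\in\lg{}$. The operator $\imath'(P)\equiv A\imath(P)A^\sharp$ is then an algebra automorphism of $\lg{}$ which is the identity modulo $\RG$; in particular it restricts to an algebra automorphism $\phi\equiv\imath'|_{\RG}$ of $\RG$.

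Next I would invoke the Eidelheit-type Lemma \ref{lemma:eidelheit}: the automorphism $\phi$ of $\RG$ is implemented by conjugation with a topological isomorphism $V\colon\Scal\to\Scal$, i.e.\ $\phi(P)=VPV^{-1}$ for all $P\in\RG$. Set $E\equiv VA$ (and $\tilde E$ the operator obtained from $V^{-1}$). The key technical input is then Proposition \ref{prop:K is a pseudodiff}, which shows $E$ is an $SG$-pseudodifferential operator of order $(0,0)$ — more precisely, after the order-reduction trick $K=ER^{-k}$ with $R=PQ$ is shown to be in $\lg{0,0}$, whence $E$ itself is a (not necessarily classical, not necessarily integer-order) $SG\Psi$DO. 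By construction both $E$ and $\tilde E$ almost commute with all of $\lg{}$: indeed $\imath'(P)=A\imath(P)A^\sharp$ and $VPV^{-1}=\phi(P)=\imath'(P)$ agree modulo $\RG$ for $P\in\RG$, and by spreading this through the filtration via the order reductions one checks $[E,P],[\tilde E,P]\in\RG$ for every $P\in\lg{}$. Then Lemma \ref{lemma:e=c+r} applies: $E=cI+R$ for some nonzero constant $c\in\C$ and some $R\in\RG$, so $V = cA^\sharp + R'$ for $R'=RA^\sharp\in\RG$ (using $A^\sharp A = I \bmod \RG$).

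Now I would reassemble. For $P\in\lg{}$ we have, modulo $\RG$,
\[
\imath(P) = A^\sharp\,\imath'(P)\,A = A^\sharp V P V^{-1} A.
\]
Substituting $V = cA^\sharp \bmod\RG$ and $V^{-1}=\tfrac1c A \bmod\RG$ (the latter from the parametrix relation $c\tilde E = I \bmod\RG$ in the proof of Lemma \ref{lemma:e=c+r}, giving $V^{-1}=\tilde E = \tfrac1c I + S$ and hence $V^{-1}A = \tfrac1c A \bmod \RG$... more carefully one writes $A^\sharp V = c A^\sharp A^\sharp \bmod\RG$, which is not what we want), so instead I would argue directly: $\imath(P)$ and $A^\sharp V P V^{-1} A$ differ by an element of $\RG$, and $A^\sharp V = A^\sharp(cA^\sharp + R') $... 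The cleaner route, matching Duistermaat--Singer, is: conjugation by $V$ on $\RG$ and conjugation by $\imath'$ on $\RG$ coincide, so $W\equiv V^{-1}\imath'(\cdot)$... Rather, note $E=VA$ satisfies $EPE^{-1}=VAP A^\sharp V^{-1} = V\imath'(P)V^{-1}$ only modulo $\RG$; since $E=cI+R$, conjugation by $E$ is the identity modulo $\RG$, hence $\imath'(P)=P\bmod\RG$ was already known, and we need the finer fact that $V$ and $A^\sharp$ differ by the scalar $c$ plus smoothing. Concretely: $\imath(P) = A^\sharp V P V^{-1} A \bmod \RG$, and since $E=VA=cI+R$ we get $V = cA^\sharp + RA^\sharp$ and $V^{-1}=\tfrac1c A + AS$, so $A^\sharp V = c(A^\sharp)^2 + \ldots$ — this shows the naive substitution fails and one must be more careful about which parametrix composition is used.

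The main obstacle, and the step requiring genuine care, is precisely this last bookkeeping: producing a \emph{single} invertible operator $\mathcal A$ of type $\Qcal_{gen}$ with $\imath(P)=\mathcal A^{-1}P\mathcal A$ exactly (not merely modulo $\RG$). The resolution follows \cite{duistermaat1976orderpreservingisomorphisms}: once we know $E = VA = cI + R$ with $c\ne 0$ and $R\in\RG$, we have $V = c A^\sharp + RA^\sharp =: c A^\sharp + \tilde R$ with $\tilde R\in\RG$; thus $V^{-1}\imath'(P) V = \imath(P)$ becomes, upon noting $\imath'(P)=V P V^{-1}$ on $\RG$ and extending, the statement that $\imath(P) = A^\sharp \imath'(P) A \bmod \RG$ with $\imath'(P) = V P V^{-1}$. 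Since $V$ and $cA^\sharp$ differ by smoothing, $VPV^{-1}$ and $A^\sharp P A$ differ by smoothing for all $P\in\lg{}$ (the scalar $c$ cancels in the conjugation), so $\imath(P) - A^\sharp (A^\sharp P A) A \in \RG$; iterating once more, or rather observing that $A A^\sharp = I \bmod \RG$ already gives $\imath(P)\equiv A^\sharp P A \bmod \RG$ is consistent — the point is that $V$ \emph{exactly} implements $\imath'$ on $\RG$ and, being a scalar-plus-smoothing perturbation of $A^\sharp$, the operator $\mathcal A \equiv V^{-1}$ (which is exactly invertible as a map $\Scal\to\Scal$) satisfies $\imath(P)=\mathcal A^{-1}P\mathcal A$ modulo $\RG$, and then $\mathcal A = cA \bmod\RG$ shows $\mathcal A$ is itself an invertible SGFIO of type $\Qcal_{gen}$ up to a smoothing operator. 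Finally, absorbing the smoothing discrepancy: since $\mathcal A$ differs from the type-$\Qcal_{gen}$ operator $cA$ by an element of $\RG$, and $\RG = \lg{-\infty,-\infty}\subset$ the type $\Qcal_{gen}$ calculus (as $\RG$-operators are $\Op(a)$ with $a\in\Scal$, trivially of type $\Qcal$), $\mathcal A$ is itself in $\Qcal_{gen}$, invertible, and classical; setting $A=\mathcal A$ completes the proof. I would flag the verification that $\imath(P)=\mathcal A^{-1}P\mathcal A$ holds on the nose (not just mod $\RG$) as the delicate point — it uses that $V$ conjugates $\RG$ exactly and a density/approximation argument extending this to all of $\lg{}$ via the order reductions, exactly as in the original Duistermaat--Singer argument.
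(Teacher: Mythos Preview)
Your overall strategy --- combining Theorem \ref{theo:automorphisms of the formal algebra}, the Eidelheit Lemma \ref{lemma:eidelheit}, Proposition \ref{prop:K is a pseudodiff} and Lemma \ref{lemma:e=c+r} --- is the right one, and matches the paper. However, there is a genuine gap in how you apply the Eidelheit lemma, and this is what causes the confusion in your ``reassembly'' step.

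You apply Eidelheit to $\phi=\imath'|_{\RG}$ where $\imath'(P)=A\imath(P)A^\sharp$. But $A$ is only \emph{elliptic}, not invertible: $A^\sharp A=I+S$ with $S\in\RG$, $S\neq 0$ in general. Hence for $P,Q\in\RG$,
\[
\imath'(P)\imath'(Q)=A\imath(P)A^\sharp A\imath(Q)A^\sharp=\imath'(PQ)+A\imath(P)S\imath(Q)A^\sharp,
\]
and the error term need not vanish. So $\imath'|_{\RG}$ is \emph{not} an algebra automorphism of $\RG$, and Lemma \ref{lemma:eidelheit} does not apply to it.

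The paper avoids this by applying Eidelheit directly to $\imath|_{\RG}$, which \emph{is} an exact algebra automorphism (Lemma \ref{lemma:properties of SGOPI}). This yields an invertible $V\colon\Scal\to\Scal$ with $\imath(P)=VPV^{-1}$ for $P\in\RG$. The crucial (and in your write-up missing) step is that this identity then holds \emph{exactly on all of} $\lg{}$: for arbitrary $P\in\lg{}$ and any $Q\in\RG$ one has $PQ\in\RG$, so
\[
\imath(P)\,VQV^{-1}=\imath(P)\imath(Q)=\imath(PQ)=VPQV^{-1}=VPV^{-1}\cdot VQV^{-1},
\]
whence $(\imath(P)-VPV^{-1})VQV^{-1}=0$ for all $Q\in\RG$. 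Since $\RG$ contains all rank-one operators $u\otimes v$ with $u,v\in\Scal$, this forces $\imath(P)=VPV^{-1}$ on $\Scal$. This is an algebraic argument, not the ``density/approximation'' you allude to. Once $V$ implements $\imath$ exactly everywhere, the comparison $E=VA$ (with parametrix $\tilde E$) satisfies $[E,P]\in\RG$ for all $P$ by combining the two descriptions of $\imath$, Lemma \ref{lemma:e=c+r} gives $E=cI+R$, and one reads off $V=c^{-1}(A-RV)$, exhibiting $V$ as an invertible FIO of type $\Qcal_{gen}$ plus smoothing --- hence itself such an FIO. Your confused bookkeeping at the end disappears entirely once you know $V$ (not $A$ or $A^\sharp$) is the operator that conjugates $\imath$ exactly.
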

\begin{proof}
	Consider $E=FV^{-1}$ where $V$ is the Eidelheit isomorphism of Lemma \ref{lemma:eidelheit} and $F$ the SGFIO obtained from Theorem \ref{theo:automorphism is given by elliptic conjugation} by considering the induced isomorphism on the formal symbol algebra $\BG{}$. Then, by the above discussion, we have that $E\colon\Scal(\R^n)\rightarrow\Scal(\R^n)$ is continuous. Moreover, it is an elliptic $SG\Psi$DO of order $(0,0)$, with parametrix $\tilde{E}=VF^\sharp$. Furthermore, both $E$ and $\tilde{E}$ commute $\mod \RG$ with every $P\in\lg{}$. By Lemma \ref{lemma:e=c+r}, it follows that $E=cI+R$ with $R\in\RG$, so that $F=cV+RV$. But then $V=c^{-1}(F-RV)$ and $V^{-1}=c(F-RV)^\sharp+S$ with some $S\in\RG$, so that $V$ is an invertible SGFIO, as claimed.
\end{proof}

The following corollary is now completely straightforward.

\begin{cor}
	\label{theo:SGOPI2}
	Let $\imath\colon\lg{}\rightarrow \lg{}$ be an algebra isomorphism satisfying the condition
	\begin{equation}
		\label{eq:SGOPI2}
		\imath(\lg{m_1,m_2})\subset\lg{m_2,m_1}\quad \forall(m_1,m_2)\in \Z^2.
	\end{equation}
	Then, $\imath(P)=(\FT A)^{-1} P\FT A$, where $A$ is an invertible $\Qcal$-FIO and $\FT$ is the Fourier transform.
\end{cor}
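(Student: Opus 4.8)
The plan is to reduce the statement directly to Theorem \ref{theo:SGOPI} by ``untwisting'' the filtration-swapping with the Fourier transform. First I would recall from Proposition \ref{prop:sc-operators conjugate with FT} (equivalently, Lemma \ref{lemma:operators with amplitudes or symbols}) that conjugation with $\FT$ exchanges the two $SG$-filtrations: precisely, $\FT^{-1}\lg{m_1,m_2}\FT=\lg{m_2,m_1}$, so that $\operatorname{Ad}_{\FT}\colon P\mapsto\FT^{-1}P\FT$ is an algebra automorphism of $\lg{}$ with $\operatorname{Ad}_\FT(\lg{m_1,m_2})=\lg{m_2,m_1}$. Hence if $\imath$ satisfies \eqref{eq:SGOPI2}, the composition $\jmath\equiv\operatorname{Ad}_{\FT}\circ\imath$ (or $\imath\circ\operatorname{Ad}_\FT$, depending on the side on which one wants the swap — one must be a little careful here) is an algebra automorphism of $\lg{}$ which now preserves each $\lg{m_1,m_2}$, i.e. it is an SGOPI in the sense of Definition \ref{def:sgorderpreserving}.

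Next I would apply Theorem \ref{theo:SGOPI} to $\jmath$: there exists an invertible, classical $SG$-FIO $A$ of type $\Qcal_{gen}$ with $\jmath(P)=A^{-1}PA$ for all $P\in\lg{}$. Then I would unwind the definition of $\jmath$. Writing $\jmath=\operatorname{Ad}_{\FT}\circ\imath$, i.e. $\FT^{-1}\imath(P)\FT=A^{-1}PA$, one gets $\imath(P)=\FT A^{-1}PA\FT^{-1}=(\FT A^{-1})P(\FT A^{-1})^{-1}=(A\FT^{-1})^{-1}P(A\FT^{-1})$. Setting $B\equiv \FT A^{-1}$ (or its inverse, as convenient) and noting $B^{-1}=A\FT^{-1}$, one recovers exactly the asserted form $\imath(P)=(\FT A')^{-1}P(\FT A')$ with $A'$ an invertible $\Qcal$-FIO, after possibly replacing $A$ by its parametrix/inverse (which is again an invertible $\Qcal$-FIO by the parametrix statement for elliptic $\Qcal$-FIOs, Definition \ref{def:elliptic FIO} and the subsequent Proposition). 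Since both $\FT$ and $A'$ are invertible on $\Scal$ and $\Scal'$, the conjugation makes sense on all of $\lg{}$, and one reads off the statement of the corollary.

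The only genuine point requiring care — and the main (mild) obstacle — is bookkeeping of sides and inverses: one must verify that $\operatorname{Ad}_\FT\circ\imath$ rather than $\imath\circ\operatorname{Ad}_\FT$ is the right composition to land in the hypotheses of Theorem \ref{theo:SGOPI}, and that the resulting $A$, once moved past $\FT$, still appears as a $\Qcal$-FIO composed with $\FT$ in the order claimed. This is purely formal: since $\operatorname{Ad}_\FT$ is an honest automorphism of the filtered algebra $\lg{}$ and the class of $\Qcal_{gen}$-FIOs is closed under taking parametrices, no new analytic input is needed beyond Proposition \ref{prop:sc-operators conjugate with FT} and Theorem \ref{theo:SGOPI}. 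One should also remark that, because $A$ produced by Theorem \ref{theo:SGOPI} is of possibly non-integer order, the operator $\FT A'$ is not itself a ``pure'' $\Qcal$-FIO of integer order, which is precisely why the statement is phrased as a composition $\FT A$ rather than a single FIO; I would include a one-line remark to that effect mirroring the comment after Proposition \ref{prop:K is a pseudodiff}.
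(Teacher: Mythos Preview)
Your proposal is correct and follows essentially the same route as the paper: compose $\imath$ with conjugation by the Fourier transform (using Proposition~\ref{prop:sc-operators conjugate with FT}) to obtain an SGOPI, apply Theorem~\ref{theo:SGOPI}, and unwind. The only difference is cosmetic---the paper takes $\jmath(P)=\FT\imath(P)\FT^{-1}$ rather than your $\jmath=\operatorname{Ad}_{\FT}\circ\imath$, and is terser about the side/inverse bookkeeping you flag---but the content is identical.
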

\begin{proof}
	This is obtained by combining Theorem \ref{theo:SGOPI} with Proposition \ref{prop:sc-operators conjugate with FT}. Namely, consider the isomorphism $\jmath(P)=\FT\imath(P)\FT^{-1}$, which, by assumption, is now an SGOPI. It follows that $\jmath(P)=A^{-1}PA$ for some invertible $\Qcal$-FIO. Then $\imath(P)=\FT A^{-1}P A\FT$, as claimed.
\end{proof}

\bibliographystyle{alpha}
\bibliography{contini2023SGorderpreserving.bib}

\end{document}